\def\smallcoxintext#1{\strut\smash{\kern3pt\raise3pt\hbox{#1}}\kern51pt}%
\def\bigtextdot#1{\strut\smash{\kern0pt\raise4pt\hbox{#1}\kern1.5pt}}
\def\smalltextdot#1{\strut\smash{\kern3pt\raise3pt\hbox{#1}\kern2.5pt}}
\def\input{\relax}
\theoremstyle{plain}
\newtheorem{theorem}{Theorem}
\newtheorem{lemma}[theorem]{Lemma}
\newtheorem{corollary}[theorem]{Corollary}
\theoremstyle{remark}
\newtheorem*{caution}{Caution}
\newtheorem*{remark}{Remark}
\newtheorem*{remarks}{Remarks}
\numberwithin{theorem}{section}
\numberwithin{equation}{section}
\numberwithin{table}{section}
\numberwithin{figure}{section}
\newcommand\forms{\EuScript{C}}  
\newcommand\D{\Delta}           
\newcommand\framed{\EuScript{F}} 
\newcommand\moduli{\EuScript{M}} 
\renewcommand\H{\EuScript{H}}    
\newcommand\PGamma{P\Gamma}     
\newcommand\w{\omega}           
\newcommand\wbar{{\bar\omega}}  
\newcommand\x{\chi} 
\newcommand\WEck{W^{\rm\scriptstyle Eck}} 
\newcommand\e{\varepsilon}
\newcommand\F{\mathbb{F}}       
\newcommand\R{\mathbb{R}}       
\newcommand\C{\mathbb{C}}       
\newcommand\Z{\mathbb{Z}}       
\newcommand\Q{\mathbb{Q}}       
\newcommand\E{\EuScript{E}}      
\newcommand\A{\EuScript{A}}      
\newcommand\ch{\C H}            
\newcommand\cp{\C P}            
\newcommand\rp{\R P}            
\newcommand\GL{{\rm GL}}
\newcommand\PO{{\rm PO}}
\newcommand\SO{{\rm SO}}
\renewcommand\O{{\rm O}}
\DeclareMathOperator{\Isom}{Isom}
\DeclareMathOperator{\Aut}{Aut}
\DeclareMathOperator{\PAut}{\hbox{$P$\kern-1pt Aut}}
\DeclareMathOperator{\Trace}{\hbox{Tr}}
\DeclareMathOperator{\Ad}{\hbox{Ad}}
\DeclareMathOperator{\dimension}{\hbox{dim}}
\DeclareMathOperator{\diag}{\hbox{diag}}
\newcommand\isomorphism{\cong}
\newcommand\tensor{\otimes}
\newcommand\sset{\subseteq}
\newcommand\semidirect{\rtimes}
\newcommand\orbpi{\pi^{\rm orb}}
\newcommand\breakok{\discretionary{}{}{}}
\def\mathllap#1{\mathchoice
{\llap{$\displaystyle #1$}}%
{\llap{$\textstyle #1$}}%
{\llap{$\scriptstyle #1$}}%
{\llap{$\scriptscriptstyle #1$}}}
\def\set#1#2{\left\{\,#1\mathllap{\phantom{#2}}\mathrel{\relax}\right|\left.#2\mathllap{\phantom{#1}}\,\right\}}
\def\centeroverfull#1{\setbox0=\hbox{#1}\newdimen\foo\foo=\wd0\advance\foo
by -\hsize\divide\foo by 2\advance\foo by\hsize\rlap{\kern\foo\llap{\box0}}}
\newcommand\0{\phantom{0}}
\newcommand{\phantomi}{\phantom{i}}
\let\originaldash\-
\newdimen\edgelengthD
\newdimen\noderadiusD
\def\largediagrams{%
  \edgelengthC=70
  \noderadiusC=5
  \doublebondoffsetC=180
  \triplebondoffsetC=280
  \edgelengthD=\edgelengthC pt
  \noderadiusD=\noderadiusC pt
  \nodediameterC=\noderadiusD
  \multiply\nodediameterC by 2
}
\def\smalldiagrams{%
  \edgelengthC=50
  \noderadiusC=3
  \doublebondoffsetC=120
  \triplebondoffsetC=200
  \edgelengthD=\edgelengthC pt
  \noderadiusD=\noderadiusC pt
  \nodediameterC=\noderadiusD
  \multiply\nodediameterC by 2
}
\newdimen\dashlengthD
\def\bond#1#2#3#4{%
  \thinlines
  \UxC=#1
  \UyC=#2
  \VxC=#3
  \VyC=#4
  \drawline(\UxC,\UyC)(\VxC,\VyC)
}
\def\heavybond#1#2#3#4{%
  \Thicklines
  \UxC=#1
  \UyC=#2
  \VxC=#3
  \VyC=#4
  \drawline(\UxC,\UyC)(\VxC,\VyC)
}
\def\dashedbond#1#2#3#4{%
  \Thicklines
  \UxC=#1
  \UyC=#2
  \VxC=#3
  \VyC=#4
  \dashlengthD=8pt
  \dashlengthC=\dashlengthD
  \dashline[60]{\dashlengthC}(\UxC,\UyC)(\VxC,\VyC)
}
\newdimen\UxD
\newdimen\UyD
\newdimen\VxD
\newdimen\VyD
\newdimen\XoffsetD
\newdimen\YoffsetD
\def\doublebond#1#2#3#4#5#6{%
  \thinlines
  \UxD=#1
  \UyD=#2
  \VxD=#3
  \VyD=#4
  \XoffsetD=#5
  \YoffsetD=#6
  \multiply\XoffsetD by \doublebondoffsetC
  \multiply\YoffsetD by \doublebondoffsetC
  \divide\XoffsetD by 100
  \divide\YoffsetD by 100
  \advance\UxD by \XoffsetD
  \advance\UyD by \YoffsetD
  \advance\VxD by \XoffsetD
  \advance\VyD by \YoffsetD
  \bond\UxD\UyD\VxD\VyD
  \advance\UxD by -\XoffsetD
  \advance\UyD by -\YoffsetD
  \advance\VxD by -\XoffsetD
  \advance\VyD by -\YoffsetD
  \advance\UxD by -\XoffsetD
  \advance\UyD by -\YoffsetD
  \advance\VxD by -\XoffsetD
  \advance\VyD by -\YoffsetD
  \bond\UxD\UyD\VxD\VyD
}
\def\triplebond#1#2#3#4#5#6{%
  \thinlines
  \UxD=#1
  \UyD=#2
  \VxD=#3
  \VyD=#4
  \XoffsetD=#5
  \YoffsetD=#6
  \multiply\XoffsetD by \triplebondoffsetC
  \multiply\YoffsetD by \triplebondoffsetC
  \divide\XoffsetD by 100
  \divide\YoffsetD by 100
  \bond\UxD\UyD\VxD\VyD
  \advance\UxD by \XoffsetD
  \advance\UyD by \YoffsetD
  \advance\VxD by \XoffsetD
  \advance\VyD by \YoffsetD
  \bond\UxD\UyD\VxD\VyD
  \advance\UxD by -\XoffsetD
  \advance\UyD by -\YoffsetD
  \advance\VxD by -\XoffsetD
  \advance\VyD by -\YoffsetD
  \advance\UxD by -\XoffsetD
  \advance\UyD by -\YoffsetD
  \advance\VxD by -\XoffsetD
  \advance\VyD by -\YoffsetD
  \bond\UxD\UyD\VxD\VyD
}
\def\hollownode#1#2{%
  \thinlines
  \UxC=#1
  \UyC=#2
  \filltype{white}\put(\UxC,\UyC){\circle*{\nodediameterC}}
}
\def\solidnode#1#2{%
  \thinlines
  \UxC=#1
  \UyC=#2
  \filltype{black}\put(\UxC,\UyC){\circle*{\nodediameterC}}
}
\def\spokeup#1#2{%
  \thinlines
  \UxD=#1
  \UyD=#2
  \VxD=0pt
  \VyD=\noderadiusD
  \advance\VxD by \UxD
  \advance\VyD by \UyD
  \bond\UxD\UyD\VxD\VyD
}
\def\spokedown#1#2{%
  \thinlines
  \UxD=#1
  \UyD=#2
  \VxD=0pt
  \VyD=-\noderadiusD
  \advance\VxD by \UxD
  \advance\VyD by \UyD
  \bond\UxD\UyD\VxD\VyD
}
\def\spokeupleft#1#2{%
  \thinlines
  \UxD=#1
  \UyD=#2
  \VxD=\noderadiusD
  \VyD=\noderadiusD
  \multiply\VxD by -1732
  \divide\VxD by 2000
  \divide\VyD by 2
  \advance\VxD by \UxD
  \advance\VyD by \UyD
  \bond\UxD\UyD\VxD\VyD
}
\def\spokeupright#1#2{%
  \thinlines
  \UxD=#1
  \UyD=#2
  \VxD=\noderadiusD
  \VyD=\noderadiusD
  \multiply\VxD by 1732
  \divide\VxD by 2000
  \divide\VyD by 2
  \advance\VxD by \UxD
  \advance\VyD by \UyD
  \bond\UxD\UyD\VxD\VyD
}
\def\spokedownleft#1#2{%
  \thinlines
  \UxD=#1
  \UyD=#2
  \VxD=\noderadiusD
  \VyD=\noderadiusD
  \multiply\VxD by -1732
  \divide\VxD by 2000
  \divide\VyD by -2
  \advance\VxD by \UxD
  \advance\VyD by \UyD
  \bond\UxD\UyD\VxD\VyD
}
\def\spokedownright#1#2{%
  \thinlines
  \UxD=#1
  \UyD=#2
  \VxD=\noderadiusD
  \VyD=\noderadiusD
  \multiply\VxD by 1732
  \divide\VxD by 2000
  \divide\VyD by -2
  \advance\VxD by \UxD
  \advance\VyD by \UyD
  \bond\UxD\UyD\VxD\VyD
}
\def\twonode#1#2{%
  \hollownode{#1}{#2}
  \spokeup{#1}{#2}
  \spokedown{#1}{#2}
}
\def\threenode#1#2{%
  \hollownode{#1}{#2}
  \spokeup{#1}{#2}
  \spokedownright{#1}{#2}
  \spokedownleft{#1}{#2}
}
\def\sixnode#1#2{%
  \hollownode{#1}{#2}
  \spokeup{#1}{#2}
  \spokedown{#1}{#2}
  \spokeupright{#1}{#2}
  \spokedownright{#1}{#2}
  \spokeupleft{#1}{#2}
  \spokedownleft{#1}{#2}
}
\def\myput#1#2#3{%
  \UxC=#1
  \UyC=#2
  \put(\UxC,\UyC){#3}
}
\def\nearnode#1#2#3#4#5#6#7#8{%
  \UxD=#1
  \UyD=#2
  \XoffsetC=#3
  \YoffsetC=#4
  \XoffsetD=\noderadiusD
  \YoffsetD=\noderadiusD
  \multiply\XoffsetD by \XoffsetC
  \multiply\YoffsetD by \YoffsetC
  \divide\XoffsetD by 100
  \divide\YoffsetD by 100
  \advance\UxD by \XoffsetD
  \advance\UyD by \YoffsetD
  \myput\UxD\UyD{\kern#5\makebox(0,0)[#7]{\raise#6\hbox{#8}}}
}
\newdimen\Ax
\newdimen\Ay
\newdimen\Bx
\newdimen\By
\newdimen\Cx
\newdimen\Cy
\newdimen\Dx
\newdimen\Dy
\newdimen\Ex
\newdimen\Ey
\newdimen\Fx
\newdimen\Fy
\newdimen\Gx
\newdimen\Gy
\newdimen\ABperpX
\newdimen\ABperpY
\newdimen\BCperpX
\newdimen\BCperpY
\newdimen\CDperpX
\newdimen\CDperpY
\begin{document}

\title[Moduli of Real Cubic Surfaces]{Hyperbolic geometry and moduli of real cubic surfaces }
\author{Daniel Allcock}
\address{Department of Mathematics\\University of Texas at Austin\\Austin, TX 78712}
\email{allcock@math.utexas.edu}
\urladdr{http://www.math.utexas.edu/\textasciitilde allcock}
\author{James A. Carlson}
\address{Department of Mathematics\\University of Utah\\Salt
Lake City, UT 84112}
\curraddr{Clay Mathematics Institute, One Bow Street,
Cambridge, Massachusetts 02138}
\email{carlson@claymath.org}
\urladdr{http://www.math.utah.edu/\textasciitilde carlson}
\author{Domingo Toledo}
\address{Department of Mathematics\\University of Utah\\Salt
Lake City, UT 84112}
\email{toledo@math.utah.edu}
\urladdr{http://www.math.utah.edu/\textasciitilde toledo}
\date{18 Apr 2009}
\thanks{First author partly supported by NSF grants DMS~0070930,
  DMS-0231585 and DMS-0600112.
Second and third authors partly supported by NSF grants DMS~9900543
, DMS-0200877 and DMS-0600816.  The second author thanks the Clay Mathematics
  Institute for its support.}
\begin{abstract}
Let  $\moduli_0^\R$ be the moduli space of smooth real cubic
surfaces. We show that each of its components admits a real hyperbolic structure.  More precisely, one can
 remove some lower-dimensional geodesic subspaces from a real hyperbolic
space $H^4$ and form the
quotient by an arithmetic group to obtain an 
orbifold isomorphic to a component of
the moduli space.  There are five components.  For each we describe the
corresponding lattices in $\PO(4,1)$.  We also derive several new
and several old results on the topology of $\moduli_0^\R$.  
Let $\moduli_s^\R$ be the moduli space of
real cubic surfaces that are stable in the sense of geometric
invariant theory.  We show that this space carries a hyperbolic structure 
whose restriction to $\moduli_0^\R$ is that just
mentioned. The corresponding lattice in $\PO(4,1)$, for which we find an explicit
fundamental domain, is
nonarithmetic. 
\end{abstract} 
\maketitle

\section{Introduction}
\label{sec-intro}
The purpose of this paper is to study the geometry and topology of the moduli space of real cubic surfaces in $\mathbb{R} P^3$.    It is a classical fact, going back to Schl\"afli \cite{schlafli-five, schlafli-italian} and Klein \cite{klein}, that the moduli space of smooth real cubic surfaces has five connected components.  We show in this paper that each of these components has a real hyperbolic structure that we  compute explicitly both in arithmetic and in geometric terms.  We use this geometric structure to compute, to a large extent,  the topology of each component.   These structures are not complete.  We also prove a more subtle result, that the moduli space of stable real cubic surfaces  has a real hyperbolic structure, which is complete, and that restricts, on each component of the moduli space of smooth surfaces, to the (incomplete) structures just mentioned.  The most surprising fact to us is that the resulting discrete group of isometries of hyperbolic space is not arithmetic.

To describe our results, we use the following notation.  We write $\forms$ for the space of non-zero cubic forms with complex coefficients in $4$ variables, $\D$ for the discriminant locus (forms where all partial derivatives have a common zero), $\forms_0$ for the space $\forms-\D$ of forms that define a smooth hypersurface in $\cp^3$, and $\forms_s$ for the space of forms that are stable in the sense of geometric invariant theory for the action of $GL(4,\C)$ on $\forms$.  It is classical that these are the forms that define a cubic surface which is either smooth or has only nodal singularities \cite[\S19]{Hilbert}.

We denote all the corresponding real objects with a superscript $\R$.
Thus $\forms^\R$ denotes the space of non-zero cubic forms with real
coefficients, and $\D^\R, \forms_0^\R$ and $\forms_s^\R$ the
intersection with $\forms^\R$ of the corresponding subspaces of
$\forms$.  We will also use the prefix $P$ for the corresponding
projective objects, thus $P\forms^\R\cong\rp^{19}$ is the projective
space of cubic forms with real coefficients, and $P\D^R$,
$P\forms_0^\R$, $P\forms_s^\R$ are the images of the objects just
defined.  The group $GL(4,\R)$ acts properly on $\forms_0^\R$ and
$\forms_s^\R$ (equivalently, $PGL(4,\R)$ acts properly on
$P\forms_0^\R$ and $P\forms_s^\R$) and we write $\moduli_0^\R$ and
$\moduli_s^\R$ for the corresponding quotient spaces, namely the
moduli spaces of smooth and of stable real cubic surfaces.

The space $P\D^\R$ has real codimension one in $P\forms$, its
complement $P\forms_0^\R$ has five connected components, and the
topology of a surface in each component is classically known
\cite{schlafli-italian, klein, segre}.  We label the components
$P\forms_{0,j}^\R$, for $j = 0, 1, 2, 3, 4$, choosing the indexing so
that a surface in $P\forms_{0,j}^\R$ is topologically a real
projective plane with $3 - j$ handles attached (see
table~\ref{tab-volumes-table}; the case of $-1$ many handles means the
disjoint union $\rp^2\sqcup S^2$.)  It follows that the moduli space
$\moduli_0^\R$ has five connected components, $\moduli_{0,j}^\R$, for
$j = 0, 1, 2, 3, 4$.  We can now state our first theorem:

\begin{theorem}
\label{thm-main-theorem-smooth}
For each $j = 0, \dots , 4$ there is a union $\H_j$ of two- and three-dimensional geodesic subspaces
of the four-dimensional real hyperbolic space 
$H^4$ and an isomorphism of real analytic orbifolds 
\[
   \moduli_{0,j}^\R \cong \PGamma^\R_j\backslash (H^4 - \H_j).
\]
Here $\PGamma^\R_j$ is the projectivized group of integer matrices
which are orthogonal with respect to the quadratic form obtained from
the diagonal form $[-1,1,1,1,1]$ by replacing the last $j$ of the
$1$'s by $3$'s.
\end{theorem}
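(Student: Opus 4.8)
The plan is to derive the theorem from the complex-hyperbolic uniformization of the moduli space of \emph{complex} cubic surfaces. Recall that to a smooth complex cubic surface $S\sset\cp^3$ one associates the cubic threefold $T\sset\cp^4$ realized as the triple cover of $\cp^3$ branched along $S$; the order-three deck automorphism $\w$ of $T$ makes $H^3(T;\Z)$ a free $\Z[\w]$-module of rank $5$ carrying a $\Z[\w]$-valued Hermitian form of signature $(4,1)$, and the Hodge structure of $T$ selects a negative line, i.e.\ a point of complex hyperbolic space $\ch^4$. The period map so defined induces an isomorphism of complex-analytic orbifolds $\moduli_0^\C\cong\PGamma\backslash(\ch^4-\H)$, where $\PGamma$ is the projectivized isometry group of the $\Z[\w]$-lattice and $\H$ is the arrangement of hyperplanes coming from nodal degenerations. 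I would build the theorem on top of this.

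A real structure on $S$ is an antiholomorphic involution; it lifts to an antiholomorphic involution $\hat\sigma$ of $T$ which conjugates $\w$ to $\w^{-1}$ (being antiholomorphic, $\hat\sigma$ carries the Eisenstein multiplication on $H^3(T)$ to its complex conjugate), hence acts $\Z[\w]$-antilinearly on $H^3(T;\Z)$ and descends to an antiholomorphic involution of the pair $(\ch^4,\H)$ normalizing $\PGamma$. The fixed locus of such an involution is a totally geodesic copy of real hyperbolic $4$-space $H^4\sset\ch^4$, and the trace of $\H$ on it is a union $\H_j$ of totally geodesic $H^3$'s (from hyperplanes of $\H$ that are $\hat\sigma$-stable) and $H^2$'s (from conjugate pairs of hyperplanes), as in the statement. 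The first step is to classify, up to conjugacy in $\PGamma$, the antiholomorphic involutions arising this way: I expect exactly five classes, matching the five topological types of smooth real cubic surfaces, the class being determined by how $\hat\sigma$ interacts with the Eisenstein structure; index them so that class $j$ is the one named in the theorem.

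For fixed $j$ let $\PGamma^\R_j$ denote the subgroup of $\PGamma$ commuting with the chosen involution $\sigma_j$; it acts on $H^4-\H_j$. Identifying $\PGamma^\R_j$ with the stated integral orthogonal group is the lattice-theoretic heart of the argument: the $\sigma_j$-fixed sublattice of the rank-$5$ $\Z[\w]$-lattice of signature $(4,1)$ is a rank-$5$ $\Z$-lattice on which the Hermitian form restricts to a $\Z$-valued quadratic form, and one must exhibit a $\Z$-basis in which this form is $[-1,1,1,1,1]$ with the last $j$ of the $1$'s replaced by $3$'s. The factors of $3$ appear exactly on the rank-one $\Z[\w]$-summands on which $\sigma_j$ acts through a unit multiple of $\sqrt{-3}$ rather than fixing a real generator, and counting these summands is what the invariant $j$ records. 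One also checks that $H^4-\H_j$ is connected (or that $\PGamma^\R_j$ permutes its components transitively), so the quotient is connected.

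It remains to prove that the resulting real period map $\moduli_{0,j}^\R\to\PGamma^\R_j\backslash(H^4-\H_j)$ is a bijective morphism of real-analytic orbifolds. Surjectivity: the complex period map is surjective, and one checks that every point of $H^4-\H_j$ is the period of a complex cubic surface carrying a real structure of the prescribed type, a descent statement using that $\sigma_j$ upstairs is realized by an honest real structure downstairs. Injectivity: by the Torelli theorem for cubic surfaces two real cubic surfaces with the same period point are isomorphic over $\C$; since the period point remembers the conjugacy class of the real structure, the topological type is determined by the period data, and two smooth real cubic surfaces of the same type with isomorphic complexifications and matching real structures are isomorphic over $\R$ (equivariant Torelli). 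Real-analyticity of the map and the matching of the orbifold stabilizer structures are inherited from the complex statement by restriction to the real locus. The main obstacles I anticipate are the lattice classification of the antiholomorphic involutions together with the explicit diagonalization of their fixed lattices — where the precise form $[-1,1,\dots,1,3,\dots,3]$ and the correct value of $j$ are pinned down — and the descent and equivariant-Torelli bookkeeping needed to establish bijectivity component by component.
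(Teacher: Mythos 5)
Your proposal follows essentially the same route as the paper: pass to the cyclic cubic threefold, use the complex period isomorphism $\moduli_0\cong\PGamma\backslash(\ch^4-\H)$, recognize that real structures give anti-linear involutions of the Eisenstein lattice (conjugating $\sigma^*$ to its inverse), classify these up to $\PGamma$-conjugacy into five projective classes, identify the fixed $\Z$-lattice $\Lambda^{\chi_j}=\Z^{5-j}\oplus\theta\Z^j$ with the form $\Psi_j$, and prove the real period map is a bijection by a surjectivity/descent argument and an equivariant injectivity argument. This is exactly the structure of sections~\ref{sec-smoothmoduli}--\ref{sec-discr-and-integral-H4s}: the surjectivity and injectivity steps you describe informally as ``descent'' and ``equivariant Torelli'' are carried out via the abstract group-action lemmas~\ref{lem-general-principle-used-for-surjectivity} and~\ref{lem-general-principle-used-for-injectivity}, the five-class count is theorem~\ref{thm-classification-of-anti-involutions} (proved using the $\F_3$-quadratic space $V=\Lambda/\theta\Lambda$ as the discriminating invariant, which you do not name but would need), and the step you rightly flag as ``the lattice-theoretic heart'' --- that the $\PGamma$-stabilizer of $H^4_j$ is the full $\PO(\Psi_j)$ rather than a proper subgroup --- is lemma~\ref{lem-real-lattice-isometries-extend}, showing every isometry of $\Lambda_j$ extends to $\Lambda$. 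No genuine gap, though the proposal leaves all of the substantive lemmas (conjugacy classification, extension of isometries, the two bijectivity lemmas, and the $H^2$/$H^3$ description of $\H_j$ via $G_2$-root-systems in lemma~\ref{lem-intersections-of-mirrors-andH4s}) as acknowledged work to be done.
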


The real hyperbolic structure on the component $\moduli_{0,0}^\R$
has been studied by Yoshida \cite{Yoshida-realcubics}.  The other
cases are new.

The space $P\forms_s^\R$ is connected, since it is obtained from the manifold $P\forms^\R$ by removing a subspace of codimension two (part of the singular set of $P\D^\R$).  Thus the moduli space $\moduli_s^\R$ is connected.  We have the following uniformization theorem for this space:

\begin{theorem}
\label{thm-main-theorem-stable}
There is a nonarithmetic lattice 
$\PGamma^\R\subset PO(4,1)$ and a homeomorphism
\[
  \moduli_s^\R \cong \PGamma^\R \backslash H^4.
\]
Moreover, there is a $\PGamma^\R$-invariant union of two- and
three-dimensional geodesic subspaces $\H'$ of $H^4$ so that this homeomorphism restricts to an isomorphism of real analytic
orbifolds,
\[
   \moduli_0^\R \cong \PGamma^\R \backslash (H^4 - \H').
\]
\end{theorem}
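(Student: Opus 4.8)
The plan is to extend the period-map framework underlying Theorem~\ref{thm-main-theorem-smooth} from the smooth locus to the stable locus, and then to identify the enlarged lattice and prove its nonarithmeticity. The first step is to recall the complex picture: the moduli space of stable complex cubic surfaces is uniformized by a complex hyperbolic orbifold $\Gamma\backslash\ch^4$, with the discriminant corresponding to a hyperplane arrangement $\H_\C$, and the component moduli spaces $\moduli_{0,j}^\R$ arise as real forms, i.e.\ as quotients of totally geodesic copies of $H^4$ inside $\ch^4$ by the stabilizers of the associated anti-involutions. I would begin by fixing the relevant anti-holomorphic involutions $\sigma$ of $\ch^4$ (one conjugacy class of each of the five types, indexed by $j$) and showing that the five copies $H^4_j = \mathrm{Fix}(\sigma_j)$, together with their images under $\Gamma$, cover the ``real points'' of the complex ball quotient. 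The point is that over $\moduli_s^\R$ the various real forms fit together: a real cubic surface with nodes has a period point lying on $\H_\C$, and the nodes of real surfaces are exactly what force the real hyperbolic spaces $H^4_j$ corresponding to different $j$ to meet along lower-dimensional geodesic subspaces. So the space $\H'$ will be the union, inside a single chosen $H^4$, of the geodesic subspaces coming from (a) nodes that do \emph{not} change the component (these are the $\H_j$ of Theorem~\ref{thm-main-theorem-smooth}) and (b) ``walls'' where two components $\moduli_{0,j}^\R$ and $\moduli_{0,j'}^\R$ become adjacent across a stable (nodal) surface.

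The second step is the gluing. Concretely, I would take the explicit arithmetic lattices $\PGamma^\R_j \subset \PO(4,1)$ from Theorem~\ref{thm-main-theorem-smooth}, realize each quotient $\PGamma^\R_j\backslash(H^4-\H_j)$ as $\moduli_{0,j}^\R$, and then show that these five orbifolds-with-removed-loci glue along their common boundary strata (the walls of type (b) above) to form a single complete hyperbolic orbifold. The natural way to organize this is to build an explicit fundamental domain: take a fundamental domain $F_j$ for each $\PGamma^\R_j$ acting on $H^4$, arranged so that the walls of type (b) appear as genuine faces of the $F_j$, and glue the $F_j$ along those faces by the isometric identifications dictated by the node degenerations. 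The resulting polytope $F = \bigcup_j F_j$ (after possibly further identifying faces) is the claimed fundamental domain for $\PGamma^\R$, and $\PGamma^\R$ is the group generated by the $\PGamma^\R_j$ together with the face-pairing isometries coming from the gluing. Poincar\'e's polyhedron theorem then yields that $\PGamma^\R$ is a lattice, that $\PGamma^\R\backslash H^4$ is complete, and that removing the $\PGamma^\R$-orbit of the type-(a) walls gives back $\coprod_j \moduli_{0,j}^\R = \moduli_0^\R$; continuity of the period map across nodal surfaces upgrades the component-wise isomorphisms to a homeomorphism $\moduli_s^\R\cong\PGamma^\R\backslash H^4$.

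The third and most delicate step is nonarithmeticity. The subgroups $\PGamma^\R_j$ are each arithmetic, but built from \emph{different} quadratic forms (the form $[-1,1,1,1,1]$ with the last $j$ entries changed to $3$), hence from inequivalent commensurability classes; the group $\PGamma^\R$ generated by all of them therefore cannot lie in any single arithmetic lattice. I would make this precise via Margulis's commensurator criterion: show that the commensurator of $\PGamma^\R$ in $\PO(4,1)$ is discrete (equivalently, that $\PGamma^\R$ is not commensurable to any $\PGamma^\R_j$), for instance by exhibiting an element of $\PGamma^\R$ whose trace field or whose fixed-point data is incompatible with each of the five arithmetic classes, or by a direct argument that any lattice containing two of the $\PGamma^\R_j$ must contain hyperbolic elements with contradictory invariant trace fields. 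An alternative, and perhaps cleaner, route is to use the explicit fundamental domain from step two to compute the volume of $\PGamma^\R\backslash H^4$ and compare it with the volumes of the $\PGamma^\R_j\backslash H^4$: if the index relationships forced by arithmeticity fail numerically, nonarithmeticity follows. I expect this nonarithmeticity argument to be the main obstacle, since it requires either a genuinely new invariant-theoretic computation or a careful volume bookkeeping; the gluing in step two, while laborious, is essentially a matter of tracking node degenerations and applying Poincar\'e's theorem.
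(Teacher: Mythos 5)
Your step two contains the gap that the paper itself warns about at the start of section~\ref{sec-stable-moduli}: gluing the five orbifolds along their discriminant walls and invoking Poincar\'e's polyhedron theorem does produce a hyperbolic orbifold, but it does \emph{not} by itself prove that this orbifold is homeomorphic to $\moduli_s^\R$. Your phrase ``continuity of the period map across nodal surfaces upgrades the component-wise isomorphisms to a homeomorphism'' elides the actual difficulty, which is the ramification of the framing cover $\framed_s\to\forms_s$ at nodal surfaces: already over $\C$ the identification $\moduli_s\cong\PGamma\backslash\ch^4$ is an isomorphism of analytic spaces but \emph{not} of orbifolds. Over $\R$ one must intrinsically construct a hyperbolic orbifold structure on $\moduli_s^\R$, and the paper does this by equipping $K=G^\R\backslash\framed_s^{\,\R}$ with the path metric pulled back along the local embedding $K\to\ch^4$, then proving (lemma~\ref{lem-preparation-for-orbifold-charts} and corollary~\ref{cor-locally-a-hyperbolic-orbifold}) that $\PGamma\backslash K$ is locally isometric to $H^4$ modulo a finite group. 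The delicate case is a surface with a conjugate pair of nodes: there $K$ is locally six copies of $H^4$ meeting along an $H^2$, and one needs the careful analysis involving the nested stabilizers $B_f\subset\PGamma_{\!f}$ and $B_f\subset A_f$ to recognize $A_f\backslash K_f$ as a finite quotient of a single $H^4$. Completeness (theorem~\ref{thm-MsR-uniformization}) is also established from this intrinsic picture, and is then \emph{used} to verify Maskit's condition (vii) in Poincar\'e's theorem in section~\ref{sec-gluing} --- so your logic has the dependency reversed; the gluing computes a structure already known to exist rather than constructing it.

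On nonarithmeticity, your primary heuristic (``the pieces come from inequivalent commensurability classes, so the glued group cannot be arithmetic,'' in the spirit of Gromov--Piatetski-Shapiro) does not directly apply here, as the paper's final remark explains: GPS needs the glued pieces to have Zariski-dense fundamental groups, whereas the relevant groups here are not Zariski dense in $\PO(4,1)$ (the $\WEck_j$ are even finite for $j=0,1,2$). Note also that $\PGamma^\R$ is \emph{not} generated by the $\PGamma^\R_j$; it is generated by reflections in Eckardt walls together with the face-pairing maps $\tau,\tau'$ and the symmetry $S$. Of your suggested alternatives, the trace-field computation is the one that works and is what the paper does: it exhibits $\gamma=(R_CR_{D'}R_{E'})^2$ with $\Trace\Ad\gamma=34+18\sqrt3\notin\Q$, concluding $E=\Q(\sqrt3)$, and then applies the Deligne--Mostow criterion (their Corollary 12.2.8), which fails because the nontrivial Galois conjugate of the $\Z[\sqrt3]$-form $\diag[-1,1,1,1,1]$ still has signature $(4,1)$. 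The volume-comparison idea is not pursued in the paper and it is unclear how it could be made into a proof.
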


 To our knowledge this is the first appearance of a non-arithmetic lattice in a moduli problem for real varieties.  Observe that the group $\PGamma^\R$ uniformizes a space assembled from arithmetic pieces much in the spirit of the construction by Gromov and Piatetskii-Shapiro of non-arithmetic lattices in real hyperbolic space.  We thus view this theorem as an appearance ``in nature'' of their construction.
 
\newbox\Wzerobox
\setbox\Wzerobox=\hbox{%
\begin{picture}(0,0)%
\smalldiagrams
\setlength{\unitlength}{1sp}%
  \Ax=0pt    \Ay=0pt
  \Bx=\edgelengthD  \By=0pt
  \Cx=\edgelengthD  \Cy=0pt
  \Dx=\edgelengthD  \Dy=0pt
  \multiply\Cx by 2
  \multiply\Dx by 3
  \Ex=\edgelengthD  \Ey=-\edgelengthD
  \bond\Ax\Ay\Bx\By
  \bond\Bx\By\Cx\Cy
  \bond\Cx\Cy\Dx\Dy
  \doublebond\Bx\By\Ex\Ey{1pt}{0pt}%
  \hollownode\Ax\Ay
  \hollownode\Bx\By
  \hollownode\Cx\Cy
  \hollownode\Dx\Dy
  \solidnode\Ex\Ey
  \UxD=\edgelengthD \UyD=-\edgelengthD
  \divide\UxD by 2
  \divide\UyD by 2
  \nearnode\UxD\UyD{0}{0}{0pt}{0pt}{c}{$W_0$}%
\end{picture}%
}
\newbox\Wonebox
\setbox\Wonebox=\hbox{%
\begin{picture}(0,0)%
\smalldiagrams
\setlength{\unitlength}{1sp}%
%
%
%
%
%
  \Ax=     0pt  \Ay= 1.152pt
  \Bx=  .901pt  \By=  .718pt
  \Cx= 1.123pt  \Cy=- .256pt
  \Dx=  .5  pt  \Dy=-1.038pt
  \Ex=- .5  pt  \Ey=-1.038pt
  \Fx=-1.123pt  \Fy=- .256pt
  \Gx=- .901pt  \Gy=  .718pt
  \multiply\Ax by \edgelengthC
  \multiply\Ay by \edgelengthC
  \multiply\Bx by \edgelengthC
  \multiply\By by \edgelengthC
  \multiply\Cx by \edgelengthC
  \multiply\Cy by \edgelengthC
  \multiply\Dx by \edgelengthC
  \multiply\Dy by \edgelengthC
  \multiply\Ex by \edgelengthC
  \multiply\Ey by \edgelengthC
  \multiply\Fx by \edgelengthC
  \multiply\Fy by \edgelengthC
  \multiply\Gx by \edgelengthC
  \multiply\Gy by \edgelengthC
  \ABperpX=.434pt
  \ABperpY=.901pt
  \doublebond\Ax\Ay\Bx\By\ABperpX\ABperpY
  \doublebond\Ax\Ay\Gx\Gy{-\ABperpX}\ABperpY
  \bond\Bx\By\Cx\Cy
  \bond\Gx\Gy\Fx\Fy
  \heavybond\Dx\Dy\Ex\Ey
  \dashedbond\Cx\Cy\Dx\Dy
  \dashedbond\Ex\Ey\Fx\Fy
  \solidnode\Ax\Ay
  \hollownode\Bx\By
  \hollownode\Cx\Cy
  \solidnode\Dx\Dy
  \solidnode\Ex\Ey
  \hollownode\Fx\Fy
  \hollownode\Gx\Gy
  \nearnode{0pt}{0pt}{0}{0}{0pt}{0pt}{c}{$W_1$}%
\end{picture}%
}
\newbox\Wtwobox
\setbox\Wtwobox=\hbox{%
\begin{picture}(0,0)%
\smalldiagrams
\setlength{\unitlength}{1sp}%
%
%
%
%
%
  \Ax=     0pt  \Ay= 1.152pt
  \Bx=  .901pt  \By=  .718pt
  \Cx= 1.123pt  \Cy=- .256pt
  \Dx=  .5  pt  \Dy=-1.038pt
  \Ex=- .5  pt  \Ey=-1.038pt
  \Fx=-1.123pt  \Fy=- .256pt
  \Gx=- .901pt  \Gy=  .718pt
  \multiply\Ax by \edgelengthC
  \multiply\Ay by \edgelengthC
  \multiply\Bx by \edgelengthC
  \multiply\By by \edgelengthC
  \multiply\Cx by \edgelengthC
  \multiply\Cy by \edgelengthC
  \multiply\Dx by \edgelengthC
  \multiply\Dy by \edgelengthC
  \multiply\Ex by \edgelengthC
  \multiply\Ey by \edgelengthC
  \multiply\Fx by \edgelengthC
  \multiply\Fy by \edgelengthC
  \multiply\Gx by \edgelengthC
  \multiply\Gy by \edgelengthC
  \ABperpX=.434pt
  \ABperpY=.901pt
  \BCperpX=.975pt
  \BCperpY=.223pt
  \CDperpX=.782pt
  \CDperpY=-.623pt
  \doublebond\Ax\Ay\Bx\By\ABperpX\ABperpY
  \doublebond\Ax\Ay\Gx\Gy{-\ABperpX}\ABperpY
  \triplebond\Bx\By\Cx\Cy\BCperpX\BCperpY
  \triplebond\Gx\Gy\Fx\Fy{-\BCperpX}\BCperpY
  \heavybond\Dx\Dy\Ex\Ey
  \doublebond\Cx\Cy\Dx\Dy\CDperpX\CDperpY
  \doublebond\Ex\Ey\Fx\Fy{-\CDperpX}\CDperpY
  \solidnode\Ax\Ay
  \hollownode\Bx\By
  \hollownode\Cx\Cy
  \solidnode\Dx\Dy
  \solidnode\Ex\Ey
  \hollownode\Fx\Fy
  \hollownode\Gx\Gy
  \nearnode{0pt}{0pt}{0}{0}{0pt}{0pt}{c}{$W_2$}%
\end{picture}%
}
\newbox\Wthreebox
\setbox\Wthreebox=\hbox{%
\begin{picture}(0,0)%
\smalldiagrams
\setlength{\unitlength}{1sp}%
%
  \Cx=-.851pt  \Cy= 0    pt
  \Dx=-.263pt  \Dy=- .809pt
  \Ex= .688pt  \Ey=- .5pt
  \Fx= .688pt  \Fy=  .5pt
  \Gx=-.263pt  \Gy=  .809pt
  \multiply\Cx by \edgelengthC
  \multiply\Cy by \edgelengthC
  \multiply\Dx by \edgelengthC
  \multiply\Dy by \edgelengthC
  \multiply\Ex by \edgelengthC
  \multiply\Ey by \edgelengthC
  \multiply\Fx by \edgelengthC
  \multiply\Fy by \edgelengthC
  \multiply\Gx by \edgelengthC
  \multiply\Gy by \edgelengthC
  \Bx=\Cx  \By=0pt
  \advance\Bx by -\edgelengthD
  \Ax=\Bx  \Ay=0pt
  \advance\Ax by -\edgelengthD
  \CDperpX=.809pt
  \CDperpY=.588pt
  \triplebond\Ax\Ay\Bx\By{0pt}{1pt}%
  \bond\Bx\By\Cx\Cy
  \doublebond\Cx\Cy\Dx\Dy\CDperpX\CDperpY
  \dashedbond\Dx\Dy\Ex\Ey
  \heavybond\Ex\Ey\Fx\Fy
  \dashedbond\Fx\Fy\Gx\Gy
  \bond\Gx\Gy\Cx\Cy
  \hollownode\Ax\Ay
  \hollownode\Bx\By
  \hollownode\Cx\Cy
  \solidnode\Dx\Dy
  \solidnode\Ex\Ey
  \solidnode\Fx\Fy
  \hollownode\Gx\Gy
  \nearnode{0pt}{0pt}{0}{0}{0pt}{0pt}{c}{$W_3$}%
\end{picture}%
}
\newbox\Wfourbox
\setbox\Wfourbox=\hbox{%
\begin{picture}(0,0)%
\smalldiagrams
\setlength{\unitlength}{1sp}%
  \Ax=0pt    \Ay=0pt
  \Bx=\edgelengthD  \By=0pt
  \Cx=\edgelengthD  \Cy=0pt
  \Dx=\edgelengthD  \Dy=0pt
  \Ex=\edgelengthD  \Ey=0pt
  \Fx=\edgelengthD  \Fy=0pt
  \multiply\Cx by 2
  \multiply\Dx by 3
  \multiply\Ex by 4
  \multiply\Fx by 5
  \triplebond\Ax\Ay\Bx\By{0pt}{1pt}%
  \bond\Bx\By\Cx\Cy
  \bond\Cx\Cy\Dx\Dy
  \doublebond\Dx\Dy\Ex\Ey{0pt}{1pt}%
  \heavybond\Ex\Ey\Fx\Fy
  \hollownode\Ax\Ay
  \hollownode\Bx\By
  \hollownode\Cx\Cy
  \hollownode\Dx\Dy
  \solidnode\Ex\Ey
  \solidnode\Fx\Fy
  \UxD=-\edgelengthD
  \divide\UxD by 2
  \nearnode\UxD{0pt}{0}{0}{0pt}{0pt}{c}{$W_4$}%
\end{picture}%
}
\begin{figure}
\def\LLx{0}
\def\LLy{0}
\def\width{320}
\def\height{220}
\def\Wtwoheight{85}
\def\Wthreeheight{54}
\setlength{\unitlength}{1bp}
\begin{picture}(\width,\height)(\LLx,\LLy)
\put(15,\height){\makebox(0,0)[tl]{\unhbox\Wzerobox}}
\put(305,\height){\makebox(-60,-57)[tr]{\unhbox\Wonebox}}
\put(60,\Wtwoheight){\makebox(-60,-57)[bl]{\unhbox\Wtwobox}}
\put(\width,\Wthreeheight){\makebox(-40,0)[br]{\unhbox\Wthreebox}}
\put(45,0){\makebox(0,0)[br]{\unhbox\Wfourbox}}
\end{picture}
\caption{Coxeter polyhedra for the reflection subgroups $W_j$ of
$\PGamma_j^\R$.  The blackened nodes and triple bonds correspond to
faces of the polyhedra that represent singular cubic surfaces.  See the text for the explanation of the edges.}
\label{fig-small-Coxeter-diagrams}
\end{figure}
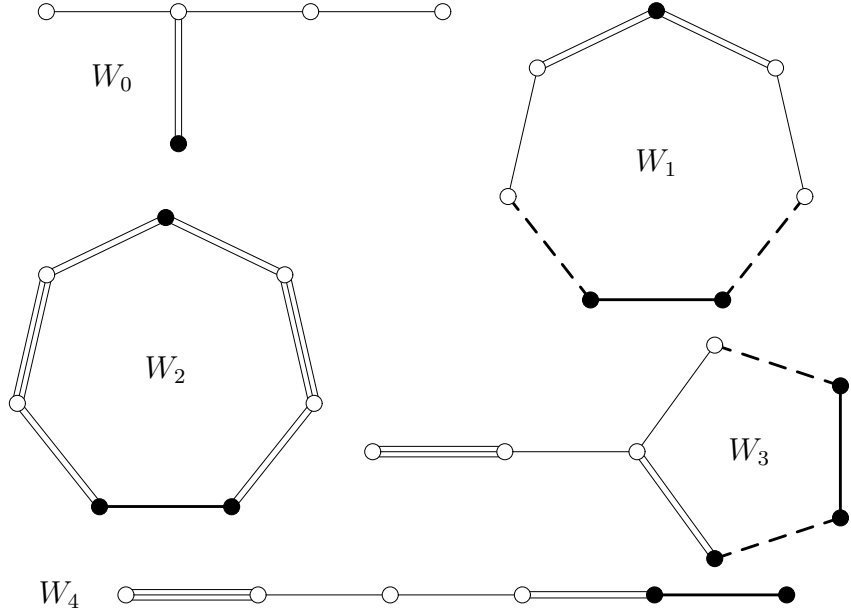

We obtain much more information about the groups $\PGamma_j^\R$ of and
$\PGamma^\R$ than we have stated here.
Section~\ref{sec-H4-stabilizers} gives an arithmetic description
of each $\PGamma_j^\R$ and shows that they are essentially Coxeter
groups. (Precisely: they are Coxeter groups for $j = 0, 3, 4$ and
contain a Coxeter subgroup of index $2$ if $j = 1,2$).  We use
Vinberg's algorithm to derive their Coxeter diagrams and consequently
their fundamental domains.  So we have a very explicit geometric
description of the groups $\PGamma_j^\R$.  The results are summarized
in figure~\ref{fig-small-Coxeter-diagrams}.  In these diagrams the
nodes represent facets of the polyhedron, and two facets meet at an
angle of $\pi/2,\ \pi/3,\ \pi/4$ or $\pi/6$, or are parallel (meet at
infinity) or are ultraparallel, if the number of bonds between the two
corresponding nodes is respectively $0$, $1$, $2$, $3$, or a heavy or
dashed line.  See section~\ref{sec-H4-stabilizers} for more details.
 
The group $\PGamma^\R$ is not a Coxeter group (even up to finite
index) but we find that a subgroup of index two has a fundamental
domain that is a Coxeter polyhedron.  We describe this polyhedron
explicitly in section~\ref{sec-gluing}, thus we have a concrete
geometric description of $\PGamma^\R$, and we also find a
representation of this group by matrices with coefficients in
$\Z[\sqrt{3}]$.
 
\begin{table}
\begin{tabular}[t]{ccccc}
$j$&Topology&Real Lines&Real Tritang. & Monodromy on Lines  \\ 
{}&of Surface&{}& Planes &{}\\
\noalign{\vskip1.5pt}
\hline
\noalign{\vskip2pt}
0 & $\rp^2 \ \# \  3 T^2$& 27 &45 & $A_5$  \\
1 & $\rp^2\  \# \  2 T^2$& 15  & 15 & $S_3\times S_3$ \\
2 & $\rp^2  \# \ T^2$ &7 &  5 &$(\Z/2)^3\rtimes\Z/2$      \\
3 & $\rp^2$ &3 & 7 &  $S_4$\\
4 & $\rp^2 \sqcup S^2$ & 3 & 13 & $S_4$  \\
\noalign{\vskip1pt}
\hline
\end{tabular}
\vskip.5cm
\caption{The  classical results on the components of the moduli space
  of real cubic surfaces.   The components are indexed by $j$ according to
  our conventions.   The third item in the last column corrects an
  error of Segre.} 
\label{tab-volumes-table}
\end{table}

\begin{table}
\begin{tabular}[t]{ccccc}
$j$&Euler&Volume &Fraction &$\orbpi_1(\moduli_{0,j}^\R)$\\ 
\noalign{\vskip1.5pt}
\hline
\noalign{\vskip2pt}
0&$1/1920$& .00685&\phantom{0}2.03\% &$S_5$\\                                     
1& 1/288&.04569&13.51\%&$(S_3\times S_3)\rtimes\Z/2$ \\
2&5/576&.11423&33.78\%&$(D_\infty\times D_\infty)\rtimes\Z/2$\\
3& 1/96 &  .13708  & 40.54\%&
\newcommand{\braceheight}{12pt}
\newcommand{\braceloweramount}{6pt}
\newcommand{\diagloweramount}{5pt}
\smash{\lower\braceloweramount\hbox{$\left.\vrule width0pt
height\braceheight\right\rbrace$}}%
\smash{\lower\diagloweramount\hbox{%
\;\;\begin{picture}(0,0)
\smalldiagrams
\setlength{\unitlength}{1sp}
%
%
%
\Ax=0pt    \Ay=0pt
\Bx=\edgelengthD  \By=0pt
\Cx=\edgelengthD  \Cy=0pt
\Dx=\edgelengthD  \Dy=0pt
\multiply\Cx by 2
\multiply\Dx by 3
\bond\Ax\Ay\Bx\By
\bond\Bx\By\Cx\Cy
\bond\Cx\Cy\Dx\Dy
\hollownode\Ax\Ay
\hollownode\Bx\By
\hollownode\Cx\Cy
\hollownode\Dx\Dy
\divide\Bx by 2
\myput\Bx\By{\makebox(0,0)[b]{\raise 4pt\hbox{$\infty$}}}
\end{picture}
}}%
\kern151pt
\\
4& 1/384 & .03427  & 10.14\%\\
\hline
\noalign{\vskip1pt}
&  &   \\
\end{tabular}
\caption{The orbifold Euler characteristic, volume, fraction of total volume, and orbifold fundamental groups of the moduli
  spaces $\moduli_{0,j}^\R$. 
  See theorem~\ref{thm-orbifold-fundamental-groups} for the notation.}
\label{tab-monodromy-groups}
\end{table}

Much of the classical theory of real cubic surfaces, as well as new
results, are encoded in these Coxeter diagrams.  The new results are
our computation of the groups $\orbpi_1(\moduli_{0,j}^\R)$ (see
table~\ref{tab-monodromy-groups}) and our proof that each
$\moduli_{0,j}^\R$ has contractible universal cover.  These results
appear in section~\ref{sec-topologysmoothmoduli}, where we describe
the topology of the spaces $\moduli_{0,j}^\R$.  As an application to
the classical theory, we re-compute the monodromy representation of
$\pi_1(P\forms_{0,j}^\R )$ on the configuration of lines on a cubic
surface, which was first computed by Segre in his treatise
\cite{segre}.  We confirm four of his computations and correct an
error in the remaining one (the case $j=2$).  See the last column of
table~\ref{tab-volumes-table} and section~\ref{sec-Segre} for details.
We also compute the hyperbolic volume of each component in
section~\ref{sec-volume}.  The results are summarized in
table~\ref{tab-monodromy-groups}.


\medskip Our methods are based on our previous work on the complex
hyperbolic structure of the moduli space of complex cubic surfaces
\cite{ACT}.  We proved that this moduli space $\moduli_s$ is
isomorphic to the quotient $\PGamma\backslash\ch^4$ of complex
hyperbolic 4-space $\ch^4$ by the lattice $\PGamma = PU(4,1,\E)$ in
$PU(4,1)$, where $\E$ is the ring of integers in $\Q(\sqrt{-3})$.  We
also showed that there is an infinite hyperplane arrangement $\H$ in
$\ch^4$ which is $\PGamma$-invariant and corresponds to the
discriminant $P\D$.  Thus there is also an identification of the
moduli space $\moduli_0$ of smooth cubic surfaces with the quotient
$\PGamma\backslash(\ch^4 - \H)$.  The natural map
$\moduli_0^\R\to\moduli_0$ (which is finite to one but not injective)
allows us to give a real hyperbolic structure on $\moduli_0^\R$ and
thus prove Theorem~\ref{thm-main-theorem-smooth}.  The essence of the
proof appears in section~\ref{sec-smoothmoduli}, with refinements in
sections~\ref{sec-five-families}--\ref{sec-discr-and-integral-H4s}.
Theorem~\ref{thm-main-theorem-stable} is considerably more subtle, and
does not follow simply from the corresponding map $\moduli_s^\R
\to\moduli_s$.  Its proof occupies
sections~\ref{sec-stable-moduli}--\ref{sec-nonarithmeticity}.

The subject of real cubic surfaces has a long and fascinating history.
It was Schl\"afli who first discovered that there are five distinct
types of real smooth cubic surfaces, distinguished by the numbers of
real lines and real tritangent planes; see table~\ref{tab-volumes-table}.
He summarized his results in his 1858 paper \cite{schlafli-five}.  In
a later paper \cite{schlafli-italian}, followed by corrections after 
correspondence with Klein, Schl\"afli also determined the topology
(more precisely, the \lq\lq connectivity") of a surface in each of his
five types.  In particular he showed that the topology is constant
in each type.  It is clear from these two later papers that he had a
mental picture of the adjacency relationship of the components and how
the topology of a smooth surface changes by \lq\lq surgery" in going
from one class to a neighboring one by crossing the discriminant.

Even though it is not closely related to the present discussion, we
should also mention Schl\"afli's monumental paper
\cite{schlafli-distribution} where he classifies all possible real 
cubic surfaces with all their possible singularities.  One particular
fact is that a cubic surface can have at most four nodes, and that
there is a real cubic surface with four real nodes.

In 1873 Klein \cite{klein} gave a very clear picture of the space of
smooth real cubic surfaces, of how the discriminant separates it into
components, and of the topology of a surface in each of Schl\"afli's
classes.  He obtained all cubic surfaces by deforming the
four-real-nodal cubic surface, and in this way he could see the
topology of each nodal or smooth real cubic surface.  He also proved
that the space of smooth real cubic surfaces has five connected
components and that Schl\"afli's classes coincide with the
components. The proof required the knowledge of the space of nodal
cubic surfaces, and also some information on cuspidal ones.  Klein was
not satisfied with the arguments in his original 1873 paper
\cite{klein}, and made corrections and substantial amplifications in
the version published in his collected works in 1922.

A proof, by Klein's method, that $P\forms_0^\R$ has five connected components  is given in \S 24 of  Segre's book \cite{segre}.  Segre also determines in \S 64--68  the topology of a surface in each component by studying how the  surface is divided into cells by its real lines.  This book also contains a wealth of information about the real cubic surfaces, including the monodromy representations mentioned above, and much  detailed information on the various configurations of lines.

Since the classification of real cubic surfaces is a special case of
the classification of real cubic hypersurfaces, we also review the
history of the latter.  The earliest work is Newton's classification
of real cubic curves \cite{newton-optics, newton-works}.  But in
dimensions higher than two the classification results are very recent.
We have Krasnov's classification of real cubic threefolds
\cite{krasnov}, which is based on Klein's method of determining the
discriminant and then deforming away to see the components.  Finashin
and Kharlamov have two papers on the classification of real cubic
fourfolds.  The first one \cite{finashin-kharlamov} is based partly on complex period maps (to study
the discriminant) and partly on Klein's method.  
The second \cite{finashin-kharlamov-2} is based on the surjectivity
of the period map for complex cubic fourfolds recently proved by Laza
and Looijenga \cite{laza, looijenga}.  We are not aware of any
classification of real cubic hypersurfaces beyond dimension four.
  
Our approach to $\moduli_0^\R$, namely studying a complex period map
and its interaction with anti-holomorphic involutions is not new.  It has been used, for instance,  by
Kharlamov in the study of quartic surfaces
\cite{kharlamov-surf-types}, by Nikulin for all the families of K3
surfaces \cite{nikulin-1979} and by Gross and Harris for curves and
abelian varieties \cite{gross-harris}.  In particular, Nikulin
parametrizes the different connected components of real K3 surfaces by
quotients of products of real hyperbolic spaces by discrete groups
generated by reflections.  Vinberg's algorithm has been used by
Kharlamov \cite{kharlamov-quartic-surfaces} and more recently by
Finashin and Kharlamov \cite{finashin-kharlamov} to study the topology of some real moduli spaces.

Many authors, in addition to the ones already cited, have studied
moduli of real algebraic varieties in terms of Coxeter diagrams or in
terms of the action of complex conjugation on homology.  We would
like to mention the work of Degtyarev, Itenberg and Kharlamov on
Enriques surfaces \cite{Degtyarev-etal} and of Moriceau on nodal
quartic surfaces \cite{moriceau}.  There is considerable literature on
moduli of $n$-tuples in $\R P^1$; see for example
\cite{thurston-shapes-of-polyhedra}, \cite{yoshida-6tuples},
\cite{yoshida-5tuples} and the papers they cite.  Chu's paper
\cite{chu} gives a real hyperbolic cone manifold structure to the
moduli of stable real octuples in $P^1$.

We have two
expository articles \cite{allcock-lecnotes, toledo-lecnotes}
that develop the ideas of this paper less formally, and in the context
of related but simpler moduli problems, for which the moduli space has
dimension${}\leq3$.  The lower dimension means that all the
fundamental domains can be visualized directly.  The
results of this paper were announced in \cite{announcement}.

We would like to thank J\'anos Koll\'ar for helpful discussions at the
early stages of this work.  And we would like to thank the referee for
numerous constructive comments that have greatly improved  our
exposition.

\section{Moduli of complex cubic surfaces}
\label{sec-complex-moduli}

We record here the key constructions and results of our description
\cite{ACT} of the moduli space of smooth complex cubic surfaces as a
quotient of an open dense subset of complex hyperbolic 4-space
$\ch^4$.  Everything we need from that paper appears here.  Only the
results through theorem~\ref{thm-main-theorem-smooth-complex-case} are
required for section~\ref{sec-smoothmoduli} (moduli of smooth real
surfaces).  The last part of this section is needed for
sections~\ref{sec-five-families} and~\ref{sec-Segre} (relations with
classical work).  For background on singular complex cubic surfaces,
see section~\ref{sec-stable-complex-moduli}.

\subsection{Notation}
\label{subsec-definitons-forms}
The key object is the moduli space $\moduli_0$ of smooth cubic
surfaces in $\cp^3$, which we now define.  As in \cite[(2.1)]{ACT},
and in the introduction, let $\forms$ be the space of all nonzero
cubic forms in 4 complex variables, $\D$ the discriminant locus, and
$\forms_0=\forms-\D$ the set of forms defining smooth cubic surfaces.
We take $g\in\GL(4,\C)$ to act in the usual way on the left on $\C^4$,
and on the left on $\forms$ by
\begin{equation}
\label{eq-GL4-action-on-forms}
(g.F)(X)=F(g^{-1}X)
\end{equation}
for $X\in\C^4$.  This is as in
\cite[(2.17)]{ACT}.  The action on $\forms$ is not faithful: the
subgroup acting trivially is $D=\{I,\w I,\wbar I\}$ where $\w=e^{2\pi
  i/3}$ is a primitive cube root of unity fixed throughout the paper.
We write $G$ for the group $\GL(4,\C)/D$ acting effectively, and define
$\moduli_0$ as $G\backslash\forms_0$.  As discussed in
\cite[(2.18) and (3.1)]{ACT}, $G$ acts properly on $\forms_0$, so the
moduli space $\moduli_0$ is a complex-analytic orbifold in a natural
way.  

\subsection{Framed cubic surfaces}
\label{subsec-definition-framed}
The relation between $\moduli_0$ and $\ch^4$ depends on a
(multi-valued) period map whose construction involves cubic threefolds.
Briefly, we first construct the space $\framed_0$ of ``framed smooth
cubic forms'', a certain covering space of $\forms_0$, and then we
define a (single-valued) period map $g:\framed_0\to\ch^4$.  This map is
equivariant with respect to  the deck group of the
cover.  Taking the quotient by this action gives a period
map from $\forms_0$ to a quotient of $\ch^4$, and this map factors
through $\moduli_0$.  It takes some work to define $\framed_0$, so we
begin with that.

If $F\in\forms$ then we write $S$ for the surface it defines in
$\cp^3$ and $T$ for the threefold in $\cp^4$ defined by
\begin{equation}
\label{eq-def-of-T}
Y^3-F(X_0,\dots,X_3)=0.
\end{equation}
Whenever we have a form $F$ in mind, we implicitly define $S$ and $T$
in this way.  $T$ is the 3-fold cyclic covering of $\cp^3$ with
ramification along $S$.  We call it the cyclic cubic threefold
associated to $F$.  We define $\sigma\in\GL(5,\C)$ by
\begin{equation}
\label{eq-def-of-sigma}
\sigma(X_0,\dots,X_3,Y)=(X_0,\dots,X_3,\w Y).
\end{equation}
It generates the deck group of $T$ over $\cp^3$.  All the notation of
this paragraph is from \cite[(2.1)]{ACT}.

Now suppose $F\in\forms_0$.  Then it is easy to see that $T$ is
smooth.  In \cite[(2.2)]{ACT} we show that
$H^3(T,\Z)\isomorphism\Z^{10}$ and that $\sigma$ fixes no element of
this cohomology group (except $0$).  
Because there are no $\sigma$-invariant elements, we may regard
$H^3(T,\Z)$ as a module over the Eisenstein integers $\E:=\Z[\w]$,
with $\w$ acting as $(\sigma^*)^{-1}$.  This gives a free $\E$-module
of rank~$5$, which we call $\Lambda(T)$.  It is a key ingredient in
the rest of the construction.  (In \cite[(2.2)]{ACT} we took $\w$ to
act as $\sigma^*$, but unfortunately this made the period map
antiholomorphic rather than holomorphic, as discussed in the note
added in proof. )

Combining the action of $\sigma$ with the natural symplectic form $\Omega$ on
$H^3(T,\Z)$  gives an $\E$-valued Hermitian form on
$\Lambda(T)$, defined by
\begin{equation}
\label{eq-def-of-h}
\textstyle
h(x,y)=\frac{1}{2}\bigl[\Omega(\theta x,y)+\theta\Omega(x,y)\bigr]
\end{equation}
Here and throughout the paper, $\theta$ represents the
Eisenstein integer $\w-\wbar=\sqrt{-3}$; in particular, the first
$\theta$ in \eqref{eq-def-of-h} is the action of $\theta$ on $\Lambda(T)$, namely
$(\sigma^*)^{-1}-\sigma^*$.  This definition of $h$ is from
\cite[(2.3)]{ACT}, except that the sign is changed
because of the  change of $\E$-module structures.  The fact that $h$ is $\E$-valued, $\E$-linear in its
first coordinate and $\E$-antilinear in its second is part of
\cite[lemma~4.1]{ACT}.  Finally, in \cite[(2.7)]{ACT} we show that
$\Lambda(T)$ is isometric to the lattice $\Lambda:=\E^{4,1}$, meaning
the free module $\E^5$ equipped with the Hermitian form
\begin{equation}
\label{eq-standard-form-on-E4,1}
h(x,y)=-x_0\bar{y}_0+x_1\bar{y}_1+\cdots+x_4\bar{y}_4.
\end{equation}

Even though $\Lambda(T)$ is isometric to $\Lambda$, there is no
preferred isometry, so we must treat them all equally.  So we define a
framing of $F\in\forms_0$ as a projective equivalence class $[i]$ of
$\E$-linear isometries $\Lambda(T)\to\Lambda$; thus $[i]=[i']$ just if
$i$ and $i'$ differ by multiplication by a unit of $\E$.  A framed
smooth cubic form is a pair $(F,[i])$ with $F\in\forms_0$ and $[i]$ a
framing of it, and $\framed_0$ denotes the family of all framed smooth
cubic forms.  Usually we blur the distinction between $i$ and $[i]$;
the main reason for introducing the equivalence relation is so that
the action of $G$ on $\framed_0$ is well-defined (see below).  In
\cite[(3.9)]{ACT} we defined a natural complex manifold structure on
$\framed_0$, for which the obvious projection $\framed_0\to\forms_0$
is a holomorphic  covering map, and we proved the following.

\begin{theorem}[\protect{\cite[(3.9)]{ACT}}]
\label{thm-smooth-framed-space-connected}
$\framed_0$ is connected.
\qed
\end{theorem}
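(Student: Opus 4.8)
The plan is to exploit the structure of $\framed_0$ as a covering space of $\forms_0$ and to identify the image of $\pi_1(\forms_0)$ inside the deck group. First I would recall that $\framed_0\to\forms_0$ is a covering map whose deck group is the group of projective isometries $\PAut(\Lambda)$ of the Eisenstein lattice $\Lambda=\E^{4,1}$ (equivalently $PU(4,1,\E)$), acting on framings by post-composition: a loop in $\forms_0$ based at $F$ acts on a framing $[i\colon\Lambda(T)\to\Lambda]$ via the monodromy automorphism of $\Lambda(T)$ induced on $H^3(T,\Z)$, conjugated over to $\Lambda$ by $i$. Since $\forms_0$ is connected (it is the complement of the irreducible hypersurface $\D$ in the vector space $\forms$), the covering $\framed_0\to\forms_0$ has connected total space precisely when the monodromy representation $\rho\colon\pi_1(\forms_0,F)\to\PAut(\Lambda)$ is surjective. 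So the theorem reduces to a monodromy surjectivity statement.

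Next I would establish that surjectivity. The natural approach is to show that the monodromy group contains enough generators of $\PAut(\Lambda(T))$. The key mechanism is the Picard–Lefschetz formula: a loop in $\forms_0$ that encircles a generic point of the discriminant $\D$ (where $T$ acquires a single ordinary double point) acts on $H^3(T,\Z)$ as (plus or minus) a transvection in the corresponding vanishing cycle, and — because the vanishing cycle is a $\sigma$-eigenvector configuration — this descends to a \emph{complex reflection} of order $6$ (an ``$\E$-reflection'', or hexaflection) in the root $r\in\Lambda(T)$ determined by the vanishing cycle, with $h(r,r)=1$ up to units. Thus the monodromy group contains the subgroup generated by all hexaflections in the roots that arise as vanishing cycles. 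One then needs two facts: (i) the vanishing cycles sweep out a full $\PAut(\Lambda)$-orbit of norm-$1$ roots (this follows from the irreducibility of $\D$ together with the transitivity of $\PAut(\Lambda)$ on such roots, which is a standard fact about $\E^{4,1}$ since it is a unimodular Eisenstein lattice of this signature); and (ii) the group generated by the hexaflections in \emph{all} norm-$1$ roots of $\Lambda=\E^{4,1}$ is all of $\PAut(\Lambda)$. For (ii) one can cite the known computation of the reflection group of $\E^{4,1}$ (it is generated by the order-$6$ reflections in its norm-$1$ roots, with finite index — in fact equal to — the full automorphism group modulo the scalars, which is what $\PAut$ records), or prove it directly by induction on rank using the fact that the norm-$1$ roots span $\Lambda$ and the orthogonal complement of such a root is $\E^{3,1}$.

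I expect the main obstacle to be step (ii) together with the precise bookkeeping of signs and scalars: the Picard–Lefschetz transvection on $H^3(T,\Z)$ is only \emph{up to sign} a power of a hexaflection once one divides by the scalar group $D$, and one must check that passing to the \emph{projective} isometry group $\PAut(\Lambda)$ — which is exactly the deck group, since framings are taken up to units — absorbs these ambiguities cleanly, so that surjectivity onto $\PAut(\Lambda)$ (not merely onto a finite-index subgroup) actually holds. A secondary point requiring care is that $\D$ may have several strata, and one must be sure the relevant discriminant component crossed is the one where $T$ (the cyclic cubic \emph{threefold}), not merely $S$, degenerates by a single node, so that the Picard–Lefschetz picture on $H^3(T)$ applies. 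Once these are pinned down, connectedness of $\framed_0$ follows formally: a connected base with surjective monodromy has connected covering space.
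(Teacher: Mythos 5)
Your overall plan---reduce connectedness of $\framed_0$ to surjectivity of the monodromy onto the deck group $\PAut(\Lambda)$, then obtain surjectivity from Picard--Lefschetz theory plus a generation statement for the Eisenstein reflection group---is precisely the strategy behind the cited result; it rests on \cite[Theorem~2.14]{ACT} (that $\Aut\Lambda=\Gamma\times\{\pm I\}$ with $\Gamma$ the linear monodromy group, so that the image of $\Gamma$ in $\PAut\Lambda$ is all of it). However, the Picard--Lefschetz step as you state it is not correct: when $S$ acquires a node at $p$, the threefold $T=\{Y^3=F\}$ does \emph{not} acquire an ordinary double point at $(p,0)$. Its local equation there is $Y^3=(\text{nondegenerate quadratic in three variables})$, which is an $A_2$ threefold singularity. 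The vanishing cohomology therefore has $\Z$-rank two (equivalently, $\E$-rank one), and the local monodromy on $H^3(T,\Z)$ is from the outset an order-$6$ complex reflection in that $\E$-line, not a symplectic transvection in a single $\Z$-cycle that afterwards ``descends'' to a hexaflection. You in fact flag exactly this issue at the end (``one must be sure \dots $T$ \dots degenerates by a single node''), and the worry is warranted---the parenthetical picture you describe needs to be replaced---but your conclusion (hexaflections of order $6$ in norm-$1$ roots) is correct and is what the argument actually uses.

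On your step (ii), the generation of $\PAut(\E^{4,1})$ by these hexaflections: this is the substantive point, and the route you propose (cite the known reflection-group structure of $\Aut\E^{4,1}$, or induct on rank) is a genuinely different one from \cite{ACT}'s. There the surjectivity is obtained via reduction modulo $\theta$: one uses the natural isometry $V(S)\to V(T)$ (Lemma~\ref{lem-isometry-finite-spaces} here) to transfer the classical surjectivity of $\pi_1(\forms_0)\to W(E_6)\isomorphism\PO(V)$, and then a separate lemma about the kernel of $\PAut\Lambda\to\PO(V)$ (generated by squares of hexaflections) to lift back to $\PAut\Lambda$. Your citation route buys a shorter statement at the price of importing a result from elsewhere; the mod-$\theta$ route is longer but self-contained and also sets up the $V(S)\cong V(T)$ dictionary that is reused heavily later in the paper. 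Your step (i) (one monodromy orbit of vanishing cycles from irreducibility of $\D$) and the reduction of connectedness to monodromy surjectivity over the connected base $\forms_0$ are both correct.
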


\subsection{Group actions on the space of framed surfaces}
\label{subsec-def-Gamma}
The deck group for $\framed_0\to\forms_0$ is obviously
$\PAut(\Lambda)$.  We will write $\Gamma$ for $\Aut\Lambda$ and  $\PGamma$ for $\PAut(\Lambda)$.   The meaning of $\PGamma$ is the same as in  \cite{ACT}, but the meaning of $\Gamma$ is slightly different.  There, $\Gamma$ was the
linear monodromy group, defined precisely in \cite[(2.11)]{ACT}, and it was proved
in \cite[theorem~2.14]{ACT} that $\Aut\Lambda=\Gamma\times\{\pm I\}$,
so that $\PAut\Lambda=\PGamma$.  Since the the precise linear monodromy group will not be needed here, it will be convenient to use the abbreviation $\Gamma$ for $\Aut\Lambda$. 

As in \cite[(3.9)]{ACT} we write the action of $\PGamma$ on
$\framed_0$ explicitly by
\begin{equation}
\label{eq-def-of-PGamma-action-on-framed}
\gamma.\bigl(F,[i]\bigr)=\bigl(F,[\gamma\circ i]\bigr),
\end{equation}
and define an action of $G$ on $\framed_0$ as follows.   Any 
$h\in\GL(4,\C)$ acts on $\C^5$ by 
\begin{equation}
\label{eq-GL4-action-on-C5}
h(X_0,\dots,X_3,Y)=(h(X_0,\dots,X_3),Y).
\end{equation}
If $(F,[i])\in\framed_0$ then $h$ carries the points of the threefold
$T_F$ defined by \eqref{eq-def-of-T} to those of the one $T_{hF}$ defined by the
same formula with $hF$ replacing $F$.  So $h$ induces an isometry
$h^*:\Lambda(T_{hF})\to\Lambda(T_F)$.  Therefore $i\circ h^*$ is an
isometry $\Lambda(T_{hF})\to\Lambda$, so $\bigl(hF,[i\circ
h^*]\bigr)\in\framed_0$.  We may therefore define an action of $\GL(4,\C)$ on
$\framed_0$ by
\begin{equation}
\label{eq-def-of-G-action-on-framed}
h.\bigl(F,[i]\bigr)
:=
\bigl(h.F,[i\circ h^*]\bigr)
=
\bigl(F\circ h^{-1},[i\circ h^*]\bigr).
\end{equation}
This action factors through $G$ because $\w I\in\GL(4,\C)$ fixes every
$F$ and acts on $\cp^4$ in the same way as a power of $\sigma$.  That
is, $\w I$ sends every $\Lambda(T)$ to itself by a scalar, so it fixes
every framing.

As in \cite[(2.18)]{ACT} we define the moduli space of framed cubic
surfaces $\moduli_0^f$ as $G\backslash\framed_0$.  It is an analytic
space because $G$ acts properly on $\framed_0$ (since it does on
$\forms_0$).  But more is true:

\begin{theorem}[\protect{\cite[lemma~3.14]{ACT}}]
\label{thm-G-action-on-framed-is-free}
The action of $G$ on $\framed_0$ is free, so $\moduli_0^f$ is a
complex manifold
\qed
\end{theorem}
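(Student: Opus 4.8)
The plan is to show that $G$ acts \emph{freely} on $\framed_0$; together with the properness already noted and the holomorphy of the $G$-action, this forces $\moduli_0^f=G\backslash\framed_0$ to be a complex manifold --- indeed $\framed_0\to\moduli_0^f$ to be a holomorphic principal $G$-bundle --- by the slice theorem for proper free actions. So suppose $[h]\in G$ fixes a point $(F,[i])\in\framed_0$, and lift $[h]$ to $h\in\GL(4,\C)$. By~\eqref{eq-def-of-G-action-on-framed} the hypothesis says that $h.F=F$ and that $i$ and $i\circ h^*$ differ by a unit of $\E$; since $i$ is an $\E$-linear isometry, the second condition is equivalent to $h^*=\zeta\cdot\mathrm{id}$ on $\Lambda(T)$ for some unit $\zeta\in\{\pm1,\pm\w,\pm\wbar\}$, where $h^*$ is the automorphism of $\Lambda(T)$ induced by the linear automorphism $\tilde h$ of the cyclic cubic threefold $T$ obtained by extending $h$ to $\C^5$ via~\eqref{eq-GL4-action-on-C5}. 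It suffices to deduce that $h\in D$.

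First I would normalize $\zeta$: since $\sigma^*$ acts on $\Lambda(T)$ as multiplication by $\w^{-1}$, replacing $\tilde h$ by $\psi:=\tilde h\circ\sigma^m$ for suitable $m$ achieves $\psi^*=\pm\mathrm{id}$ on $H^3(T,\Z)$, and $\psi$ is still a linear automorphism of $\cp^4$ preserving $T$ (its matrix on $\C^5$ is block diagonal with blocks $h$ and $\w^m$). Then I would invoke the global Torelli theorem for smooth cubic threefolds (Clemens--Griffiths): an automorphism of $T$ acting trivially on $H^3(T,\Z)$ acts trivially on the intermediate Jacobian, hence on the Fano surface of lines (whose Albanese map embeds it in that Jacobian), hence on every line of $T$, hence on $T$. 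So if $\psi^*=+\mathrm{id}$ then $\psi=\mathrm{id}_T$; as the hypersurface $T$ spans $\cp^4$, $\psi$ is then the identity of $\PGL(5,\C)$, so its matrix is scalar and $h=\w^m I\in D$, as desired.

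The one step needing real work --- the main obstacle --- is excluding the possibility $\psi^*=-\mathrm{id}$. In that case $(\psi^2)^*=\mathrm{id}$, so the Torelli input gives $\psi^2=\mathrm{id}_T$, and (again since $T$ spans $\cp^4$) $\psi$ is an involution of $\cp^4$ in $\PGL(5,\C)$, whose fixed locus in $\cp^4$ is a disjoint union $\cp^p\sqcup\cp^q$ of linear subspaces with $p+q=3$. Being linear $\psi$ fixes the hyperplane class, hence acts trivially on $H^0$, $H^2$, $H^4$ and $H^6$ of $T$; since also $H^1=H^5=0$ and $\psi^*=-\mathrm{id}$ on $H^3\cong\Z^{10}$, the Lefschetz number of $\psi$ is $1+1+10+1+1=14$, so $\chi(\mathrm{Fix}\,\psi)=14$ because $\psi$ has finite order. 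On the other hand $\mathrm{Fix}\,\psi=(T\cap\cp^p)\sqcup(T\cap\cp^q)$, and since a smooth cubic threefold contains no $2$-plane, the support of $T\cap\cp^r$ is at most a point, at most a line or three points, a plane curve of degree $\le3$, or a cubic surface as $r=0,1,2,3$, of Euler characteristic at most $1$, $3$, $4$, $9$ respectively; hence $\chi(\mathrm{Fix}\,\psi)\le10<14$, a contradiction. Therefore $\psi^*=-\mathrm{id}$ does not occur, so $h\in D$ in every case, $G$ acts freely, and $\moduli_0^f$ is a complex manifold as explained at the outset. Everything except this last Lefschetz count is formal once the Torelli theorem is available; that count is elementary but uses the classical facts that smooth cubic threefolds contain no plane and that reduced curves and surfaces of degree $\le3$ have small Euler characteristic.
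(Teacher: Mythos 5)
The paper gives no proof of this statement---it is imported verbatim from \cite[Lemma~3.14]{ACT}---so there is strictly nothing here to compare against.  The argument in [ACT] proceeds, I believe, through the cubic \emph{surface} $S$ rather than the threefold $T$: a stabilizing element induces an automorphism of $S$ whose action on $\Lambda(T)$ is scalar, hence projectively trivial on $V(T)\cong V(S)$ (the mod-$3$ space of \S\ref{subsec-finite-vectorspace}), hence trivial on $H^2(S;\Z)$ via $W(E_6)\cong\PO(V(S))$; the classical faithfulness of $\Aut(S)$ on $H^2(S;\Z)$ then finishes.  Your proof instead stays entirely on $T$, splitting into a Torelli case and a Lefschetz case; the surface route is more elementary in that it needs no global Torelli theorem, while yours is more self-contained inside the threefold picture.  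Your Lefschetz exclusion of $\psi^*=-\mathrm{id}$ is correct, including the Euler characteristic bounds (the relevant maxima are $1+9=10$ for the $(0,3)$ split and $3+4=7$ for the $(1,2)$ split, both less than $14$).

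One step in your $\psi^*=+\mathrm{id}$ branch is compressed to the point of being a gap.  From ``$\psi$ acts trivially on the intermediate Jacobian $J(T)$'' you infer ``hence on the Fano surface of lines,'' but the Abel--Jacobi embedding $a\colon F(T)\hookrightarrow J(T)$ is only equivariant up to a translation: for the induced automorphism $\phi$ of $F(T)$ one has $a\circ\phi=\psi_*\circ a+c$ with $c\in J(T)$ a torsion point.  With $\psi_*=\mathrm{id}$ one must still rule out $c\neq0$, i.e.\ the possibility that $\phi$ acts on $F(T)\subset J(T)$ as a nontrivial translation---which would fix no line at all.  To close this, note that $\phi^*$ is the identity on $H^1(F(T);\Q)\cong H^3(T;\Q)$, hence on $H^2(F(T);\Q)\cong\wedge^2 H^1(F(T);\Q)$, so the Lefschetz number of $\phi$ on $F(T)$ equals $\chi(F(T))=27\neq0$; therefore $\phi$ has a fixed point, forcing $c=0$ and then $\phi=\mathrm{id}$ by injectivity of $a$.  (Alternatively, run Torelli through the tangent cone of $\Theta$ at its unique singular point, where equivariance holds on the nose.)  With that patch your argument is complete.
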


\subsection{The period map}
\label{subsec-def-period-map}

Having described $\framed_0$, we now describe the period map
$\framed_0\to\ch^4$, following \cite[(3.11)]{ACT}.  
Using the Griffiths residue
calculus, one can work out the Hodge numbers of $T$, which turn out to
be $h^{3,0}=h^{0,3}=0$, $h^{2,1}=h^{1,2}=5$.  This calculation
\cite[lemma~2.6]{ACT} also gives the refinement of the Hodge
decomposition by the eigenspace decomposition
$$
H^3(T,\C)=H^3_{\sigma=\w}(T,\C)\oplus H^3_{\sigma=\wbar}(T,\C)
$$
under $\sigma$, namely $h^{2,1}_\wbar=h^{1,2}_\w=1$,
$h^{1,2}_\wbar=h^{2,1}_\w=4$.  

Given a  framing $[i]$, we may obtain a point of $\ch^4$ as follows.
Define $i_*:H^3_{\wbar}(T,\C)\to\C^{4,1}$ as the composition
\begin{equation}
\label{eq-def-of-i-star}
H^3_{\wbar}(T,\C)
\isomorphism
H^3(T,\R)
=\Lambda(T)\tensor_\E\C
\mathop{\longrightarrow}_{i\tensor1}
\Lambda\tensor_\E\C
=
\C^{4,1}.
\end{equation}
Here the leftmost isomorphism is the eigenspace projection
$H^3(T,\R)\to H^3_{\wbar}(T,\C)$, which is an isomorphism of complex
vector spaces.  This statement has meaning because we have defined
$\w$ to act on $H^3(T,\Z)$ as $(\sigma^*)^{-1}$.  The map is
$\C$-linear because the actions of $\wbar\in\C$ and $\sigma^*$ agree
on the domain (by the definition of the $\w$-action as
$(\sigma^*)^{-1}$) and on the target (since it is the
$\wbar$-eigenspace of $\sigma^*$).  Since the eigenspace projection is
obviously $\sigma$-equivariant, it is also $\wbar$-equivariant, i.e.,
$\C$-linear.

Our model for complex hyperbolic space
$\ch^4$ is the set of negative lines in
$\C^{4,1}:=\Lambda\tensor_\E\C$.  
It follows from the Riemann bilinear relations that
$i_*\bigl(H^{2,1}_\wbar(T,\C)\bigr)\in P(\C^{4,1})$ is a
negative-definite line, i.e., a point of $\ch^4$.  See
\cite[lemmas~2.5--2.6]{ACT}, and note that the map called $Z$ there is
the eigenspace projection.  The period map $g:\framed_0\to\ch^4$ is
then defined by
\begin{equation}
\label{eq-def-of-period-map-on-framed-smooth}
g(F,[i])
=
i_*\bigl(H^{2,1}_\wbar(T,\C)\bigr)
\in
\ch^4.
\end{equation}
It is holomorphic since the Hodge filtration
varies holomorphically \cite[(2.16)]{ACT}.  


The period map
$\framed_0\to\ch^4$ factors through $\moduli_0^f$ because $\ch^4$ is a
complex ball and bounded holomorphic functions on $G$ are constant.
So we may also regard $g$ as a map
\begin{equation}
\label{eq-def-of-perdiod-map-on-framed-moduli}
g:\moduli_0^f=G\backslash\framed_0\to\ch^4.
\end{equation}
This is $\PGamma$-equivariant, so it descends to another map
\begin{equation}
\label{eq-def-of-period-map-on-unframed-moduli}
g
:
\moduli_0
=
G\backslash\forms_0
=
(G\times\PGamma)\backslash\framed_0
\to
\PGamma\backslash\moduli_0^f
\to
\PGamma\backslash\ch^4,
\end{equation}
also called ``the period map''.

\subsection{The main theorem of \cite{ACT} in the smooth case}
\label{subsec-main-theorem-smooth-complex-case}
It turns out that the period map $\framed_0\to\ch^4$ is not quite
surjective.  To describe the image, let $\H\sset\ch^4$ be the union of
the orthogonal complements of the norm~$1$ vectors in $\Lambda$.  It
turns out that points of $\H$ represent singular cubic surfaces; see
section~\ref{sec-stable-complex-moduli}.  We will need the following
combinatorial result about $\H$, as well as the smooth case of the
main theorem of \cite{ACT}.

\begin{lemma}[\protect{\cite[(7.29)]{ACT}}]
\label{lem-hyperplanes-are-orthogonal}
Any two components of $\H$ that meet are orthogonal along their intersection. 
\qed
\end{lemma}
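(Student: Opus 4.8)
The plan is to work entirely inside the Hermitian lattice $\Lambda = \E^{4,1}$ and reduce the geometric statement to a fact about pairs of norm-one vectors. A component of $\H$ is the orthogonal complement $r^\perp$ of a norm-one vector $r\in\Lambda$ (well-defined up to multiplication by a unit of $\E$, so up to the action of the sixth roots of unity). Two such hyperplanes $r^\perp$ and $s^\perp$ meet in $\ch^4$ precisely when the span $\langle r,s\rangle$ is a nondegenerate Hermitian sublattice of signature $(1,1)$ (so that its orthogonal complement in $\C^{4,1}$ contains a negative line), and in that case $r^\perp\cap s^\perp$ is the complex hyperbolic subspace cut out by that span. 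The claim ``orthogonal along their intersection'' then translates to: $h(r,s)=0$, i.e. the two hyperplanes meet at a right angle. So the whole lemma comes down to showing that if $r,s\in\Lambda$ both have norm $1$ and $r^\perp\cap s^\perp\neq\emptyset$ in $\ch^4$, then $h(r,s)=0$.

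First I would record the constraint that $r^\perp$ and $s^\perp$ actually meet inside $\ch^4$: this forces the Gram matrix $\begin{pmatrix} 1 & h(r,s)\\ \overline{h(r,s)} & 1\end{pmatrix}$ to have signature $(1,1)$ (it cannot be positive definite, because then $\langle r,s\rangle^\perp$ would be negative definite of the wrong dimension — wait, rather: if $\langle r,s\rangle$ were positive definite of rank $2$, its orthogonal complement in $\C^{4,1}$ would have signature $(1,2)$ and still meet $\ch^4$; the real point is subtler). The correct dichotomy is: meeting in $\ch^4$ forces $\det\begin{pmatrix}1 & h(r,s)\\ \overline{h(r,s)} & 1\end{pmatrix} = 1 - |h(r,s)|^2 < 0$ is impossible since that would make $\langle r,s\rangle$ indefinite but then $r^\perp$, $s^\perp$ would be ultraparallel; whereas $=0$ is the parabolic (meeting at infinity) case; and $>0$, i.e. $|h(r,s)|<1$, is the genuinely-meeting case. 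Since $h$ is $\E$-valued, $|h(r,s)|^2 = N(h(r,s))$ is a nonnegative rational integer (a norm from $\E$), and $|h(r,s)|<1$ forces $h(r,s)=0$. That is the heart of the argument.

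The one gap to fill carefully is the geometric translation: I must verify that ``two components of $\H$ meet in $\ch^4$'' is equivalent to ``$|h(r,s)|<1$'' for the respective norm-one vectors. In one direction, if $h(r,s)=0$ then $r^\perp\cap s^\perp = \langle r,s\rangle^\perp$ and the latter has signature $(1,2)$, hence contains negative lines, so the hyperplanes meet. In the other direction, if they meet, a negative line $\ell$ lies in both $r^\perp$ and $s^\perp$, so $\ell$ is negative and orthogonal to the sublattice $\langle r,s\rangle$; this forces $\langle r,s\rangle$ to be positive semidefinite, i.e. $\det \geq 0$, i.e. $|h(r,s)|\leq 1$, with equality exactly when $\langle r,s\rangle$ is degenerate (parabolic case, meeting only on the boundary $\partial\ch^4$, not in $\ch^4$). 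So genuine intersection in $\ch^4$ gives $|h(r,s)|<1$, and the integrality of $h$ finishes it.

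I expect the main obstacle to be purely expository rather than mathematical: being precise about when two hyperplanes "meet" (inside $\ch^4$ versus on its closure), and about the normalization ambiguity $r\mapsto \zeta r$ for $\zeta$ a unit (which does not affect $N(h(r,s))$, so is harmless). Once those bookkeeping points are nailed down, the argument is the three-line observation that a norm from $\Z[\w]$ that is strictly less than $1$ must vanish. I would also remark that the same computation identifies exactly which pairs of components of $\H$ are ultraparallel or asymptotically parallel, information that is used later when reading off the Coxeter diagrams — so it is worth stating the trichotomy ($N(h(r,s))=0$: orthogonal intersection; $=1$: parallel; $\geq 2$: ultraparallel) even though only the first case is asserted in the lemma.
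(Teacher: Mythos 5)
Your final argument is correct and is essentially the same as the one in \cite{ACT}: the only way to prove this is exactly via the arithmetic observation that $N\bigl(h(r,s)\bigr)=|h(r,s)|^2$ is a nonnegative ordinary integer, and the signature computation that genuine intersection of $r^\perp$ and $s^\perp$ inside $\ch^4$ forces $|h(r,s)|^2<1$, hence $h(r,s)=0$.

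Two small slips in the write-up that you yourself flag and then repair, but which should not survive into a clean version: in the opening paragraph you assert that meeting in $\ch^4$ forces $\langle r,s\rangle$ to have signature $(1,1)$, which is backwards --- a span of signature $(1,1)$ gives a positive definite orthogonal complement, i.e.\ the ultraparallel case. And in the aside after the first displayed Gram matrix, a positive definite rank-$2$ span has orthogonal complement of signature $(3,1)-(2,0)=(\text{three }+,\text{ one }-)$ in $\E^{4,1}$, wait, of signature consisting of two $+$'s and one $-$; either way it contains negative lines, so that case is precisely the meeting case, not a problem to be ruled out. You catch both of these, and your corrected trichotomy ($\det>0$: meet; $\det=0$: asymptotically parallel; $\det<0$: ultraparallel) is right. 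One further minor correction to the closing remark: since $2$ is not a norm from $\Z[\w]$, the ultraparallel case corresponds to $N\bigl(h(r,s)\bigr)\ge 3$, not $\ge 2$.
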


\begin{theorem}[\protect{\cite[Theorem~2.20]{ACT}}]
\label{thm-main-theorem-smooth-complex-case}
The period map $g$ sends $\moduli_0^f=G\backslash\framed_0$
isomorphically to $\ch^4-\H$.  In particular, $g$ has everywhere rank $4$ on $\moduli_0^f$.  Moreover,  the induced map
$\moduli_0\to\PGamma\backslash(\ch^4-\H)$ is an isomorphism of complex
analytic orbifolds.
\qed
\end{theorem}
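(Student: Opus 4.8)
The plan is to deduce the theorem from the stronger statement that the period map extends to an isomorphism from the space $\moduli_s^f$ of framed \emph{stable} (at worst nodal) cubic surfaces — which contains $\moduli_0^f$ as a dense open subset — onto all of $\ch^4$, carrying the locus of nodal surfaces onto $\H$. Granting this, restriction to the smooth locus gives $\moduli_0^f\cong\ch^4-\H$, and the statement about $\moduli_0$ follows at once, since $g$ is $\PGamma$-equivariant by construction \eqref{eq-def-of-PGamma-action-on-framed} and $\moduli_0=\PGamma\backslash\moduli_0^f$.

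The first step is infinitesimal Torelli. Using the Griffiths residue calculus for the cyclic cubic threefold $T$ — the computation already cited above for the Hodge numbers — one identifies the differential of $g$ at $(F,[i])$ with a multiplication map in the Jacobian ring of $F$ and verifies that it is injective. Since $\moduli_0^f$ is a $4$-dimensional complex manifold by Theorem~\ref{thm-G-action-on-framed-is-free} and $\dim_\C\ch^4=4$, this shows $g$ has everywhere rank $4$, hence is a local biholomorphism.

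Next I would extend $g$ over the discriminant. A one-parameter degeneration of smooth cubic surfaces acquiring a single node induces a degeneration of cyclic cubic threefolds, and analysis of the associated limiting mixed Hodge structure shows that the period point converges, and converges to a point lying on exactly one component of $\H$ (the orthogonal complement of the corresponding vanishing cycle, which has norm $1$ in $\Lambda$). This produces a period map $g\colon\moduli_s^f\to\ch^4$ extending the original one; one checks, using the explicit local model at a node, that it is again a local isomorphism and that it sends the nodal locus precisely onto $\H$. It then remains to prove that the extended map is \emph{proper}: a sequence in $\moduli_s^f$ whose periods converge in $\ch^4$ must, after applying $G$, subconverge in $\moduli_s^f$, the point being that within $\forms_s$ the only degenerations are nodal, so no period can escape to the boundary of $\ch^4$. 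Granting properness, the image of $g$ is open (local homeomorphism), closed (proper map to a locally compact Hausdorff space) and nonempty, hence all of the connected space $\ch^4$; so $g$ is a proper surjective local homeomorphism, i.e.\ a covering map, and since $\ch^4$ is simply connected and $\moduli_s^f$ is connected (it contains the connected dense set $\moduli_0^f$; recall $\framed_0$ is connected by Theorem~\ref{thm-smooth-framed-space-connected}), $g$ is a homeomorphism, and then — by the local structure already established — a biholomorphism. Restricting to the smooth locus gives $\moduli_0^f\cong\ch^4-\H$, and descending by $\PGamma$ gives the orbifold isomorphism $\moduli_0\cong\PGamma\backslash(\ch^4-\H)$; Lemma~\ref{lem-hyperplanes-are-orthogonal} is what controls the orbifold structure transverse to $\H$.

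The hard part will be the properness of the extended period map — equivalently, ruling out that the periods of a family of GIT-stable cubic surfaces run off to the boundary of $\ch^4$ while the surfaces stay inside $\forms_s$. This is where one genuinely needs the input that stable cubic surfaces have at worst nodal singularities, together with a careful description of the limiting Hodge structure (and its $\sigma$-action) of a nodal cyclic cubic threefold; everything else is either a standard residue computation or soft point-set topology.
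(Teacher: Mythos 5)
This theorem is quoted in the present paper directly from \cite[Theorem~2.20]{ACT} with no proof given, so there is no argument here to compare against; but your sketch does reconstruct the strategy that \cite{ACT} actually uses, namely: infinitesimal Torelli via Griffiths residues (the differential of $g$ is a multiplication map in the Jacobian ring, injective by Macaulay duality), extension of $g$ across the discriminant to $G\backslash\framed_s\to\ch^4$ with the local behavior at a node given by lemma~\ref{lem-local-model-for-complex-framed-stable-to-stable}, properness of the extended map, and then the standard argument that a proper local biholomorphism to the simply connected $\ch^4$ from a connected source is a biholomorphism. Restricting to the smooth locus and descending by $\PGamma$ gives the stated result, exactly as you say. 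One correction: lemma~\ref{lem-hyperplanes-are-orthogonal} does not ``control the orbifold structure transverse to $\H$'' in the sense needed for the final clause. The orbifold isomorphism $\moduli_0\cong\PGamma\backslash(\ch^4-\H)$ is immediate once you have the $\PGamma$-equivariant biholomorphism $\moduli_0^f\cong\ch^4-\H$, together with the freeness of the $G$-action (theorem~\ref{thm-G-action-on-framed-is-free}) and proper discontinuity of $\PGamma$; no information about the internal geometry of $\H$ is needed, since $\H$ has been removed. The orthogonality of the components of $\H$ is a separate combinatorial fact, recorded here for use in the analysis of the discriminant and the real loci (e.g.\ lemma~\ref{lem-intersections-of-mirrors-andH4s}), not to establish the orbifold structure on the smooth locus.
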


\subsection{Standard model for $H^2(S)$; vector spaces over $\F_3$}
\label{subsec-finite-vectorspace}
This material may be skipped until needed in sections~\ref{sec-five-families}
and~\ref{sec-Segre}. 
We write $L$ for the lattice $\Z^{1,6}$, whose bilinear form is
\begin{equation}
x\cdot y = x_0y_0 - x_1y_1 - \cdots -x_6y_6,
\end{equation}
and write $\eta$ for $(3, -1,\cdots ,-1)\in L$.  It is standard that
$L(S):=H^2(S;\Z)$ is isometric to $L$ by an isometry identifying the
hyperplane class $\eta(S)$ with $\eta$.  (See \cite[Thm~23.8]{manin}; our
expression for $\eta$ corrects a sign error in \cite[(3.2)]{ACT}.)
Also, the isometry group of $(L,\eta)$ is the Weyl group $W(E_6)$,
generated by the reflections in the norm $-2$ vectors of
$L_0:=\eta^\perp$ (see \cite[Theorem~23.9]{manin}).  
We define $L_0(S)$ as the corresponding sublattice
of $L(S)$, namely the primitive cohomology $\eta(S)^\perp\sset L(S)$.  Our
notations $L, \eta, L(S), \eta(S)$ are from \cite[(3.2)]{ACT} and
$L_0, L_0(S)$ are from \cite[(4.8)]{ACT}.

In \cite[(4.10)]{ACT} we found a special relationship between
$\Lambda(T)$ and $L(S)$ that is needed in sections
sections~\ref{sec-five-families} and~\ref{sec-Segre}.  There is no
natural map between them, in either direction.  But there is a natural
isomorphism between certain $\F_3$-vector spaces associated to them.
In our explanation we will identify $H^*$ and $H_*$ by Poincar\'e
duality

Suppose we have a primitive 2-cycle $c$ on $S$.  Since $S\subset T$
and $T$ has no primitive cohomology, there is a 3-chain $d$ in $T$
bounding it.  The chain $\sigma_*(d) -\sigma_*^{-1}(d)$ is a 3-cycle
on $T$, whose reduction modulo $\theta$ depends only on $c$ and whose  homology class  modulo $\theta$ depends only on the homology class of $c$.  So we
have a natural map $L_0(S)\to\Lambda(T)/\theta\Lambda(T)$.  
It turns out that the kernel is $3L_0'(S)$, where the prime denotes
the dual lattice.
The result
is a natural isomorphism from $V(S):=L_0(S)/3L_0'(S)$ to
$V(T):=\Lambda(T)/\theta\Lambda(T)$, both five-dimensional vector
spaces over $\F_3$.   We will write $V$ for $\Lambda/\theta\Lambda$. 

Also, reducing inner products in $L_0'(S)$ modulo $3$ gives a
symmetric bilinear form $q$ on $V(S)$, and similarly, reducing inner
products in $\Lambda(T)$ modulo $\theta$ gives one on $V(T)$.  We have
no special symbol for the latter because it is essentially the same as
$q$: 

\begin{lemma}[\protect{\cite[(4.10)]{ACT}}]
\label{lem-isometry-finite-spaces}
The map $V(S)\to V(T)$ just defined is an isometry.
\qed
\end{lemma}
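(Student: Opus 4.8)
The plan is to trace through the construction of the natural map $V(S)\to V(T)$ from \cite[(4.10)]{ACT} and verify that it carries the reduced inner product $q$ on $V(S)$ to the reduced Hermitian form on $V(T)$, working entirely at the level of the bounding-chain description. First I would recall the geometric setup: for a primitive $2$-cycle $c$ on $S$, one picks a $3$-chain $d$ in $T$ with $\partial d = c$ (possible since $H^3(T)$ has no $\sigma$-invariant part, hence $S$ bounds rationally and in fact integrally after the appropriate normalization), and the cycle $\sigma_*(d)-\sigma_*^{-1}(d)$ represents the image class in $\Lambda(T)$, well-defined modulo $\theta\Lambda(T)$. The point is that the symplectic form $\Omega$ on $H^3(T,\Z)$, when evaluated on two such cycles $\sigma_*(d_1)-\sigma_*^{-1}(d_1)$ and $\sigma_*(d_2)-\sigma_*^{-1}(d_2)$, can be computed as an intersection number in $T$, and this intersection number localizes onto $S$: because $\sigma$ acts freely away from $S$ and the chains $d_i$ meet $S$ in the cycles $c_i$, the intersection pairing of the images reduces to the intersection pairing $c_1\cdot c_2$ on $S$, up to the universal factor relating $\Omega$, $h$, and $\theta$ coming from \eqref{eq-def-of-h}.

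The key computation is therefore: express $h$ on the two image vectors via $\Omega$ using \eqref{eq-def-of-h}, expand $\sigma_*(d_i)-\sigma_*^{-1}(d_i) = (\w^{-1}-\w)d_i = -\theta\cdot(\text{something})$ formally — more precisely, $\sigma_*-\sigma_*^{-1}$ is the action of $\theta$ — so that $h$ of the two images is $\theta\bar\theta = 3$ times a pairing of the $d_i$'s that, by Stokes/localization, equals the intersection number $c_1\cdot c_2$ computed in $S$. Reducing modulo $\theta$ (equivalently modulo $3$ after dividing by the factor of $3$) then identifies the form on $V(T)$ with the mod-$3$ reduction of inner products in $L_0(S)$, which by definition is $q$ on $V(S)$. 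One must be careful that the relevant lattice on the $S$-side is $L_0'(S)$, the dual lattice, rather than $L_0(S)$ itself — this is already built into the target $V(S)=L_0(S)/3L_0'(S)$ and into the statement that the kernel of $L_0(S)\to\Lambda(T)/\theta\Lambda(T)$ is $3L_0'(S)$, so the divided-by-$3$ pairing lands correctly in $\F_3$. Since both $q$ and the form on $V(T)$ are defined by reduction of integer-valued (resp. $\theta$-valued) pairings, and we have matched those pairings before reduction, the map is an isometry on the nose.

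The main obstacle I expect is the bookkeeping of signs and normalizations: the factor of $\frac12$ in \eqref{eq-def-of-h}, the sign change flagged in the excerpt (from taking $\w$ to act as $(\sigma^*)^{-1}$ rather than $\sigma^*$), and the sign error in $\eta$ corrected from \cite[(3.2)]{ACT} all have to be reconciled so that the pairing matches $q$ and not $-q$ — but since we are working over $\F_3$ and both forms are determined up to the global sign by the lattice data, and the isometry type of a nondegenerate quadratic space over $\F_3$ is not changed by negation only if $-1$ is a square mod $3$, which it is not, so the sign genuinely matters and must be pinned down by the localization computation rather than waved away. A secondary technical point is justifying that the intersection pairing of the $3$-cycles on $T$ really does localize to the intersection pairing of the $2$-cycles on $S$: this is a standard Thom–Gysin / tubular-neighborhood argument for the cyclic branched cover $T\to\cp^3$ with branch locus $S$, and I would cite or reproduce the relevant computation from \cite[\S4]{ACT}, where this natural map and the identification of its kernel with $3L_0'(S)$ were originally established.
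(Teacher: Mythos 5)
The paper does not prove this lemma; it is quoted verbatim from \cite[(4.10)]{ACT}, with no argument supplied here---the statement is followed immediately by the proof marker. So there is no in-paper proof to compare yours against, and a full check of your reconstruction would require the text of [ACT].

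With that caveat, your sketch follows the line one would expect. The plan---pair the classes of $\sigma_*(d_i)-\sigma_*^{-1}(d_i)$ using $h$, unwind via \eqref{eq-def-of-h} to the symplectic form $\Omega$, localize the resulting intersection number in the branched cover $T$ onto the branch locus $S$ to recover $c_1\cdot c_2$, and reduce modulo $\theta$ and modulo $3$---is the right one. You also correctly identify the two genuinely delicate points: the normalizations (the $\frac{1}{2}$ in \eqref{eq-def-of-h}, the fact that the target of the map on the $S$-side is $L_0(S)/3L_0'(S)$ rather than $L_0(S)/3L_0(S)$, and the convention that $\w$ acts as $(\sigma^*)^{-1}$) must all be reconciled; and the sign cannot be waved away, since negating a nondegenerate $5$-dimensional quadratic form over $\F_3$ changes its isometry class because $-1$ is a nonsquare in $\F_3$. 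One small imprecision: the bounding $3$-chain $d$ exists not because ``$S$ bounds rationally'' as you phrase it (you appeal to $H^3(T)$ having no $\sigma$-invariants, which is a different fact), but because, as recorded in \S\ref{subsec-finite-vectorspace}, $T$ has no primitive second cohomology (Lefschetz for the cubic threefold $T\subset\cp^4$), so a primitive $2$-cycle of $S$ becomes null-homologous in $T$.
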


A consequence is that the monodromy action of $\pi_1(\forms_0,F)$ on
$V(S)$ is the same as on $V(T)$.  Since $PO(V)\isomorphism W(E_6)$,
the classical monodromy map
$\pi_1(\forms_0,F)\to\Aut\bigl(L(S),\eta(S)\bigr)\isomorphism W(E_6)$ can be
recovered from our monodromy representation 
$\pi_1(\forms_0,F)\to\PGamma\to\PO\bigl(V(T)\bigr)$.

\section{Moduli of smooth real cubic surfaces}
\label{sec-smoothmoduli}

The purpose of this section is to prove those results on moduli of
smooth real cubic surfaces which follow more or less automatically
from the general results on moduli of complex cubic surfaces that we
proved in \cite{ACT} and summarized in section~\ref{sec-complex-moduli}.  

\subsection{Anti-involutions and real structures}
We write $\kappa$ for the standard
complex conjugation map on $\C^4$, and also for the induced map
on $\forms$ given by 
\begin{equation}
\label{eq-complex-conjugation-on-forms}
(\kappa.F)(x)=\overline{F(\kappa^{-1}(x))}=\overline{F(\kappa x)}.
\end{equation}
In coordinates this amounts to replacing the coefficients of $F$ by
their complex conjugates.  This action of $\kappa$ on functions
carries holomorphic functions to holomorphic functions (rather than
anti-holomorphic ones).  Similarly, if $\alpha$ is an anti-holomorphic
map of a complex variety $V_1$ to another $V_2$, then
$\overline{\alpha^*}:H^*(V_2;\C)\to H^*(V_1;\C)$ is defined as the
usual pullback under $V_1\to V_2$ followed by complex conjugation in
$H^*(V_1;\C)$.  If $V_1$ and $V_2$ are compact K\"ahler manifolds,
then $\overline{\alpha^*}$ is an antilinear map that preserves the
Hodge decomposition.  Many different complex conjugation maps appear
in this paper, so we call a self-map of a complex manifold
(resp. complex vector space or $\E$-module) an {\it anti-involution} if it
is anti-holomorphic (resp. anti-linear) and has order~$2$.   We also use the term {\it real structure} for an anti-involution.   By the {\it real locus} of a real structure we mean the fixed point set of the corresponding anti-involution.

\subsection{Notation}
We write $\forms^\R$, $\D^\R$ and $\forms_0^\R$ for the subsets of the
corresponding spaces of \S~\ref{subsec-definitons-forms} whose members
have real coefficients.  Note that it is possible for the zero locus
in $\rp^3$ of $F\in\D^\R$ to be a smooth manifold, for example it
might have two complex-conjugate singularities.  We write $G^\R$ for
the group $\GL(4,\R)$, which is isomorphic to the group of real points
of $G$, and we write $\moduli_0^\R$ for the space
$G^\R\backslash\forms_0^\R$.  This is a real-analytic orbifold in the
sense that it is locally the quotient of a real analytic manifold by a
real analytic action of a finite group.
There is a natural map $\moduli_0^\R\to
\moduli_0$ which is finite-to-one and generically
injective, but not injective (since a cubic surface may have several
inequivalent real structures).  So $\moduli_0^\R$ is not quite the same
as the real locus of $\moduli_0$.  


\subsection{Framed smooth real cubic surfaces; anti-involutions of $\Lambda$}
\label{subsec-framed-smooth-real-cubic-surfaces-anti-involutions}
We write $\framed_0^{\,\R}$ for the preimage of $\forms_0^\R$ in the space 
$\framed_0$ of \S\ref{subsec-definition-framed}.  This is not the real locus of any real structure on
$\framed_0$, but rather the union of the real loci of many different
real structures.  We consider these many different real loci
simultaneously because no one of them is distinguished.  They are all
lifts of $\kappa:\forms_0\to\forms_0$.  To develop this idea, let $\A$
denote the set of anti-involutions of $\Lambda$ and let $P\A$ denote
the set of their projective equivalence classes.  If
$(F,[i])\in\framed_0^{\,\R}$ then $\kappa$ acts on $\Lambda(T)$ as an
anti-involution, so $\x = i\circ\kappa^*\circ i^{-1}$ lies in $\A$.
Because of the ambiguity in the choice of representative $i$ for
$[i]$, $\x$ is not determined by $[i]$; however, its class $[\x]$ in
$P\A$ is well-defined.  Clearly $[\x]$ does not change if $(F,[i])$
varies in a connected component of $\framed_0^{\,\R}$.  
Thus we get a map
$\pi_0(\framed_0^{\,\R})\to P\A$.  

The ``many different real loci'' we referred to are the following
subspaces $\framed_0^{\,\x}$ of $\framed_0$, one for each $[\x]\in P\A$: 
\begin{equation}
\label{eq-def-framed-xi}
\framed_0^{\,\x} = \bigl\{(F,[i])\in\framed_0^{\,\R}:[ i\circ\kappa^*\circ i^{-1}] = [\x]\bigr\}\;.
\end{equation}
Here and in many other places we omit the brackets of $[\x]$ to
simplify the notation.  The various $\framed_0^{\,\x}$ cover
$\framed_0^{\,\R}$, because any $(F,[i])\in\framed_0^{\,\R}$ lies in
$\framed_0^{\,\x}$ with $\x=i\circ\kappa^*\circ i^{-1}$.  We will see
that each $\framed_0^{\,\x}$ is nonempty.
One can check that the lift of $\kappa$ to $\framed_0$ that fixes
$(F,[i])$ has fixed-point set equal to $\framed_0^\x$.  In fact we
give a formula for the action of $\x$ on $\framed_0$ in equation \eqref{eq-PGamma-prime-action-on-framed-smooth-cubics} below.

Similarly, if $\x\in P\A$ then we define $H_{\x}^4$ as its fixed-point
set in $\ch^4\sset P(\Lambda\tensor_\E\C)$.  The notation reflects the
fact that $H^4_{\x}$ is a copy of real hyperbolic $4$-space.  Just as
for $\framed_0$, there is no natural choice of lift $\ch^4\to\ch^4$ of
the action of $\kappa$ on
$\moduli_0\isomorphism\PGamma\backslash\ch^4$.  So we consider all the
$\x\in P\A$ simultaneously, and the various $H^4_\x$'s are the real
loci of the various real structures $\x$ on $\ch^4$.

\subsection{The real period map $g^\R$}
We need the following lemma in order to define the real period map.

\begin{lemma}
\label{lem-gR-takes-values-in-K0}
In the notation of
\S\ref{subsec-framed-smooth-real-cubic-surfaces-anti-involutions},
$g\bigl(\framed_0^{\,\x}\bigr)\subset H_{\x}^4$.
\end{lemma}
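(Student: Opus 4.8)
The plan is to unwind the definitions of $\framed_0^{\,\x}$, of the period map $g$, and of the real structure $\x$ on $\ch^4$, and check that $g$ intertwines the anti-involution $\kappa$ on the framed side with the anti-involution $\x$ on the target. Fix $(F,[i])\in\framed_0^{\,\x}$, so by \eqref{eq-def-framed-xi} we have $[\x]=[i\circ\kappa^*\circ i^{-1}]$ as a class in $P\A$. Since $\kappa\colon\cp^3\to\cp^3$ lifts to an anti-holomorphic map of the cyclic cubic threefold $T=T_F$ to $T_{\kappa F}=T_F$ (as $F$ has real coefficients), it induces an anti-involution $\kappa^*$ of $H^3(T,\Z)$, hence an anti-linear map $\kappa^*$ of $\Lambda(T)=H^3(T,\Z)$ viewed as an $\E$-module (anti-linear because complex conjugation sends $\w$ to $\wbar$ and $\kappa$ commutes with $\sigma$ up to the obvious correction, so it anticommutes appropriately with the $\E$-action). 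The content of the lemma is then that $g(\kappa\cdot(F,[i])) = \x\big(g(F,[i])\big)$, together with the observation that $(F,[i])$ is $\kappa$-fixed in $\framed_0^{\,\x}$, which forces $g(F,[i])$ to be $\x$-fixed, i.e.\ to lie in $H^4_\x$.

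First I would make precise the action of $\kappa$ on $\framed_0$ lifting $\kappa\colon\forms_0\to\forms_0$: namely $(F,[i])\mapsto(\kappa F,[i\circ\kappa^*])$ where $\kappa^*\colon\Lambda(T_{\kappa F})\to\Lambda(T_F)$ is the map induced by the anti-holomorphic $T_F\to T_{\kappa F}$; this is the formula promised at \eqref{eq-PGamma-prime-action-on-framed-smooth-cubics}. One checks this squares to the identity on the subspace $\framed_0^{\,\x}$ and that its fixed locus there is exactly $\framed_0^{\,\x}$, as asserted in the text. Next I would compute $g$ of this lifted point. By \eqref{eq-def-of-period-map-on-framed-smooth}, $g(\kappa F,[i\circ\kappa^*]) = (i\circ\kappa^*)_*\big(H^{2,1}_\wbar(T_{\kappa F},\C)\big)$. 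Now the anti-holomorphic map $T_F\to T_{\kappa F}$ sends the $\sigma$-eigenspace $H^3_{\sigma=\w}$ of one to the $\sigma=\wbar$ eigenspace of the other after conjugation (complex conjugation swaps the two eigenvalues since $\bar\w=\wbar$), and it swaps $H^{2,1}$ with $H^{1,2}$; composing these two swaps, it carries $H^{2,1}_\wbar(T_F,\C)$ to (the conjugate of) $H^{2,1}_\wbar(T_{\kappa F},\C)$ — this is the standard compatibility of a real structure with the refined Hodge decomposition, using $h^{2,1}_\wbar = h^{1,2}_\w$ from Section~\ref{sec-complex-moduli}. Tracking this through the composition $i_*$ of \eqref{eq-def-of-i-star} and through the identity $\x = i\circ\kappa^*\circ i^{-1}$, one gets $g(\kappa\cdot(F,[i])) = \x\big(g(F,[i])\big)$ on the nose in $P(\Lambda\tensor_\E\C)$, where $\x$ acts on $\ch^4$ in the natural projectivized anti-linear way.

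Finally, since $(F,[i])\in\framed_0^{\,\x}$ is by construction fixed by the lift of $\kappa$ whose fixed locus is $\framed_0^{\,\x}$, the previous equality gives $g(F,[i]) = \x\big(g(F,[i])\big)$, i.e.\ $g(F,[i])\in H^4_\x$. As $(F,[i])\in\framed_0^{\,\x}$ was arbitrary, $g(\framed_0^{\,\x})\subset H^4_\x$, as claimed.

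The main obstacle I anticipate is the bookkeeping in the middle step: pinning down exactly how the anti-holomorphic covering map $T_F\to T_{\kappa F}$ interacts with the $\E$-module structures on $\Lambda(T_F)$ and $\Lambda(T_{\kappa F})$ (recalling the convention that $\w$ acts as $(\sigma^*)^{-1}$, which forces a conjugation on scalars) and with the eigenspace projection $H^3(T,\R)\isomorphism H^3_\wbar(T,\C)$ used to define $i_*$. In particular one must verify that the composite genuinely lands in the \emph{same} copy $H^4_\x$ (not some other real form) — i.e.\ that $\kappa^*$ really does conjugate $i_*$ to $\x\circ i_*$ and not to $\bar\x\circ(\text{conjugate of }i_*)$ with a spurious sign or unit. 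All of this is forced once the conventions of \cite{ACT} summarized above are lined up carefully, but it is the part that requires genuine attention rather than formal manipulation; everything else is a direct translation of definitions.
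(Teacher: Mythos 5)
Your proof takes essentially the same route as the paper: the heart of both is the Hodge-theoretic observation that the action induced by $\kappa$ on $H^3(T,\C)$ interacts with both the Hodge decomposition and the $\sigma$-eigenspace decomposition by swaps, and that the composition of the two swaps preserves the period line $H^{2,1}_\wbar(T,\C)$. The paper packages this a little more tightly by introducing $\overline{\kappa^*}$ (pullback by $\kappa$ followed by conjugation), a single antilinear self-map of $H^3(T,\C)$ that preserves both decompositions outright, and then says the lemma ``is a formal consequence of this and the relation $\x=i\circ\kappa^*\circ i^{-1}$.'' You track the two swaps separately and phrase the conclusion as an equivariance statement $g\circ(\text{lift of }\kappa)=\x\circ g$; the content is the same.

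There is, however, a bookkeeping slip you should repair. The map you write down, $(F,[i])\mapsto(\kappa F,[i\circ\kappa^*])$, is \emph{not} the formula \eqref{eq-PGamma-prime-action-on-framed-smooth-cubics} and is not a self-map of $\framed_0$: since $\kappa^*:\Lambda(T_{\kappa F})\to\Lambda(T_F)$ is $\E$-antilinear, the composite $i\circ\kappa^*$ is antilinear and hence is not a framing, so $(\kappa F,[i\circ\kappa^*])$ lives in the other component of $\framed_0'$, not in $\framed_0$. In particular the claim that this map ``squares to the identity on $\framed_0^{\,\x}$ with fixed locus $\framed_0^{\,\x}$'' cannot be checked as stated. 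The lift whose fixed locus is $\framed_0^{\,\x}$ is \eqref{eq-PGamma-prime-action-on-framed-smooth-cubics} with $\gamma=\x$, namely $(F,[i])\mapsto(\kappa F,[\x\circ i\circ\kappa^*])$; for this one the fixed-point computation works because $\x\circ i\circ\kappa^*=i\circ\kappa^*\circ i^{-1}\circ i\circ\kappa^*=i$. Once you insert the missing $\x$ factor (and correspondingly adjust the equivariance identity to $g(\x\cdot(F,[i]))=\x(g(F,[i]))$), your argument goes through and reduces exactly to the Hodge-theoretic compatibility you correctly identify as the substance of the lemma.
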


\begin{proof}
The key is that  $\overline{\kappa^*}$ 
 is an antilinear map of $H^3(T;\C)$ which preserves the Hodge
decomposition  and each eigenspace of $\sigma$.  Therefore it 
preserves the inclusion
$i_*\bigl(H_\wbar^{2,1}(T)\bigr)\to H^{2,1}_\wbar(T)$.
The lemma is a  formal consequence of this and the relation
$\x=i\circ\kappa^*\circ i^{-1}$.
\end{proof}

We define the real period map $g^\R:\framed_0^{\,\R}\to\ch^4\times P\A$ by
\begin{equation}
\label{eq-def-of-real-period-map-on-smooth-real-framed}
g^\R(F,[i])=\bigl(g(F,[i])\,,\,[i\circ\kappa^*\circ i^{-1}]\bigr)\;.
\end{equation}
The previous lemma asserts that $g(F,[i])\in H^4_{\x}$, so 
$g^\R(F,[i])$ is a point of $\ch^4$ together with an anti-involution
fixing it.  Therefore
$g^\R$
can be regarded as a map $\framed_0^{\,\R}\to\coprod_{\x\in
P\A}H^4_{\x}$.  The next lemma shows that $g^\R$ descends to a  map
$G^\R\backslash\framed_0^{\,\R}\to\coprod_{\x\in P\A}H^4_{\x}$.

\begin{lemma}
\label{lem-real-period-map-is-G-R-invariant}
The real period map $g^\R:\framed_0^{\,\R}\to\coprod_{\x\in
  P\A}H^4_{\x}$ is constant on $G^\R$-orbits. 
\end{lemma}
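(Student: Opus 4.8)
The plan is to unwind the definitions and reduce the claim to the corresponding equivariance property of the (already-established) complex period map $g:\framed_0\to\ch^4$. Recall from \eqref{eq-def-of-G-action-on-framed} that an element $h\in G^\R$ acts on $\framed_0^{\,\R}$ by $h.(F,[i])=(h.F,[i\circ h^*])$, where $h^*:\Lambda(T_{hF})\to\Lambda(T_F)$ is the isometry induced by the action of $h$ on cohomology. So I must show that $g^\R$ agrees on $(F,[i])$ and on $(h.F,[i\circ h^*])$. Since $g^\R$ has two components, \eqref{eq-def-of-real-period-map-on-smooth-real-framed}, it suffices to check each separately: first that $g(h.F,[i\circ h^*])=g(F,[i])$ in $\ch^4$, and second that the anti-involutions $[(i\circ h^*)\circ\kappa^*\circ(i\circ h^*)^{-1}]$ and $[i\circ\kappa^*\circ i^{-1}]$ coincide in $P\A$.

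For the first component, I would simply invoke the fact, already recorded in section~\ref{subsec-def-period-map}, that the complex period map $g:\framed_0\to\ch^4$ is $G$-invariant: it factors through $\moduli_0^f=G\backslash\framed_0$ (equation \eqref{eq-def-of-perdiod-map-on-framed-moduli}), because $\ch^4$ is a bounded domain and holomorphic functions on the connected complex group $G$ are constant. In particular $g(h.(F,[i]))=g(F,[i])$ for all $h\in G$, hence for all $h\in G^\R\subset G$. So the $\ch^4$-component of $g^\R$ is automatically $G^\R$-invariant, with no real work.

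For the second component, the key point is that complex conjugation $\kappa$ on $\C^4$ commutes with the real-linear action of $h\in G^\R=\GL(4,\R)$ --- indeed that is exactly what it means for $h$ to have real entries. Concretely, $h$ and $\kappa$ both act on $\C^4$, hence on the threefolds, and since $h\kappa=\kappa h$ as maps of $\C^4$, the induced maps on cohomology satisfy the compatible identity relating $h^*$ (an isometry $\Lambda(T_{hF})\to\Lambda(T_F)$) and $\kappa^*$ (now interpreted on the appropriate $\Lambda(T)$'s); explicitly, $h^*\circ\kappa^*_{T_F} = \kappa^*_{T_{hF}}\circ h^*$ after taking the identification of $\Lambda(T_{hF})$ with $\Lambda(T_{h\kappa F})=\Lambda(T_{\kappa hF})$ into account. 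Substituting this into $(i\circ h^*)\circ\kappa^*\circ(i\circ h^*)^{-1}$ and cancelling the $h^*$ factors yields exactly $i\circ\kappa^*\circ i^{-1}=\x$, so the $P\A$-component is unchanged as well. Thus $g^\R$ is constant on $G^\R$-orbits.

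The only genuinely delicate point is bookkeeping: $\kappa^*$ means different things on $\Lambda(T_F)$ and on $\Lambda(T_{hF})$, and one has to track carefully which threefold's cohomology each map acts on, and verify that the functorial identity $h^*\kappa^* = \kappa^* h^*$ holds with the correct source and target lattices. This is a routine but slightly fiddly diagram chase, using only that $h$ is real (so $h\kappa=\kappa h$ on $\C^4$, hence on $\cp^4$ and on all the threefolds and their cohomology) together with the functoriality of pullback already used to define $h^*$ in \eqref{eq-def-of-G-action-on-framed}. No new geometric input is needed beyond what section~\ref{sec-complex-moduli} supplies.
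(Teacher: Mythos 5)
Your proposal is correct and follows the same two observations the paper uses: the $\ch^4$-component is handled by the $G$-invariance of the complex period map, and the $P\A$-component by the commutation of $h\in G^\R$ with $\kappa$, which gives $h^*\circ\kappa^*\circ(h^*)^{-1}=\kappa^*$ after the appropriate identifications. You spell out the bookkeeping around which $\Lambda(T)$ each $\kappa^*$ acts on a bit more explicitly than the paper does, but the argument is essentially identical.
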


\begin{proof}
We must show for 
$(F,[i])\in\framed_0^{\,\R}$ and $h\in G^\R$ that
$g^\R\bigl(h.(F,[i])\bigr)=g^\R(F,[i])$. 
We have
\begin{align*}
g^\R\bigl(h.(F,[i])\bigr)
&{}=
g^\R\bigl(h.F,[i\circ h^*]\bigr)\\
&{}=
\Bigl(g\bigl(h.F,[i\circ h^*]\bigr),
\bigl[i\circ h^*\circ\kappa^*\circ(h^*)^{-1}\circ i^{-1}\bigr]\Bigr)\\
&{}=
\Bigl(g\bigl(h.(F,[i])\bigr),[i\circ\kappa^*\circ i^{-1}]\Bigr)\\
&{}=
\bigl(g(F,[i]),[i\circ\kappa^*\circ i^{-1}])\\
&{}=
g^\R(F,[i]).
\end{align*}
Here the first line uses the definition \eqref{eq-def-of-G-action-on-framed} of $G$'s action on
$\framed_0$, the second the definition \eqref{eq-def-of-real-period-map-on-smooth-real-framed} of $g^\R$, the third
the fact that $h$ and $\kappa$ commute, and the fourth the fact that
the complex period map $g$ is $G$-invariant.
\end{proof}

\subsection{The main theorem for smooth real surfaces}
We know that $g^\R$ cannot map $\framed_0^{\,\R}$ onto all of
$\coprod_{\x\in P\A}H^4_{\x}$, because $g(\framed_0)$ misses the
hyperplane arrangement $\H$.  Therefore we define $K_0=\coprod_{\x\in
  P\A}\bigl(H^4_{\x}-\H\bigr)$.  Now we can state the main theorem of
this section.

\begin{theorem}
\label{thm-isomorphism-for-smooth}
The real period map $g^\R$ descends to a $\PGamma$-equivariant
real-analytic diffeomorphism $G^\R\backslash\framed_0^{\,\R}\to K_0=\coprod_{\x\in
  P\A}\bigl(H^4_{\x}-\H\bigr)$.  Thus
$\framed_0^{\,\R}$ is a principal $G^\R$-bundle over $K_0$.  Taking the
quotient by $\PGamma$ yields a real-analytic orbifold isomorphism
$$
\moduli_0^\R=(\PGamma\times G^\R)\backslash\framed_0^{\,\R}\to \PGamma\backslash
K_0\;. 
$$
Equivalently, we have an orbifold isomorphism 
$$
\moduli_0^\R\cong
\coprod_{\x} \PGamma_\x^\R\backslash(H_\x - \H),
$$ where $\x$ now
ranges over a set of representatives for the set $C\A$ of 
$P\Gamma$-conjugacy
classes of elements of $P\A$, and $\PGamma_\x^\R$ is the
$\PGamma$-stabilizer of $H^4_\x$.
\end{theorem}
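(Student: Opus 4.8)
The plan is to deduce the whole statement from the complex uniformization theorem~\ref{thm-main-theorem-smooth-complex-case} by restricting the $\PGamma$-equivariant biholomorphism $g\colon\moduli_0^f=G\backslash\framed_0\to\ch^4-\H$ to the real loci involved. By Lemma~\ref{lem-real-period-map-is-G-R-invariant}, $g^\R$ descends to a real-analytic map $\bar g^\R\colon G^\R\backslash\framed_0^{\,\R}\to\ch^4\times P\A$; by Lemma~\ref{lem-gR-takes-values-in-K0} the value of $\bar g^\R$ on a class represented in $\framed_0^{\,\x}$ lies in $H^4_\x$, and since $g$ avoids $\H$ the image lies in $K_0$. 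As each $\framed_0^{\,\x}$ is a union of connected components of $\framed_0^{\,\R}$, the source splits as $\coprod_\x\bigl(G^\R\backslash\framed_0^{\,\x}\bigr)$, matching $K_0=\coprod_\x(H^4_\x-\H)$, so it is enough to show, for each $\x$, that $\bar g^\R$ restricts to a diffeomorphism $G^\R\backslash\framed_0^{\,\x}\to H^4_\x-\H$. Throughout I will invoke the explicit description of the $\x$-action on $\framed_0$ to be given in \eqref{eq-PGamma-prime-action-on-framed-smooth-cubics}; what I need from it is that $\framed_0^{\,\x}$ is the fixed-point set of a single anti-involution $\tilde\x$ of $\framed_0$ which lies over $\kappa$ on $\forms_0$, which satisfies $g\circ\tilde\x=\hat\x\circ g$ for $\hat\x$ the anti-holomorphic involution of $\ch^4$ induced by $\x$, and which intertwines the $G$-action with its complex conjugate, i.e.\ $\tilde\x\bigl(h.(F,[i])\bigr)=\bar h.\tilde\x(F,[i])$ with $\bar h$ the entrywise conjugate of $h\in\GL(4,\C)$.

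\emph{Injectivity.} Suppose $(F,[i]),(F',[i'])\in\framed_0^{\,\x}$ have equal image under $\bar g^\R$; then they have equal period, so by Theorem~\ref{thm-main-theorem-smooth-complex-case} there is a unique $h\in G$ with $(F',[i'])=h.(F,[i])$. Since $\tilde\x$ fixes both points, $h.(F,[i])=(F',[i'])=\tilde\x(F',[i'])=\tilde\x\bigl(h.(F,[i])\bigr)=\bar h.\tilde\x(F,[i])=\bar h.(F,[i])$, and freeness of the $G$-action on $\framed_0$ (Theorem~\ref{thm-G-action-on-framed-is-free}) forces $h=\bar h$ in $G$, i.e.\ $h\in G^\R$; hence the two $G^\R$-classes coincide.

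\emph{Surjectivity.} Given $\ell\in H^4_\x-\H$, Theorem~\ref{thm-main-theorem-smooth-complex-case} provides $(F,[i])\in\framed_0$ with $g(F,[i])=\ell$. Because $\ell$ is fixed by $\hat\x$, the point $\tilde\x(F,[i])$ also has period $\ell$, so $\tilde\x(F,[i])=h_0.(F,[i])$ for a unique $h_0\in G$; applying $\tilde\x$ once more and using $\tilde\x^2=\mathrm{id}$ together with freeness gives $\bar h_0\,h_0=I$ in $G$. A standard argument with the Galois cohomology of $\mathrm{Gal}(\C/\R)$ (Hilbert's Theorem~90, the vanishing of $H^1$ with coefficients in $\GL(4,\C)$, hence also in $G$) lets us write $h_0=\bar h^{-1}h$ for a suitable $h\in G$; then $\tilde\x\bigl(h.(F,[i])\bigr)=\bar h.\tilde\x(F,[i])=\bar h\,h_0.(F,[i])=h.(F,[i])$, so $h.(F,[i])\in\framed_0^{\,\x}$ and its period is $\ell$. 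In particular every $\framed_0^{\,\x}$ is nonempty. As $\bar g^\R$ is now a bijection $G^\R\backslash\framed_0^{\,\x}\to H^4_\x-\H$ obtained by passing to the real loci of the biholomorphism $g$, it is a real-analytic diffeomorphism; together with the freeness and properness of $G^\R$ on $\framed_0^{\,\R}$ this also exhibits $\framed_0^{\,\R}$ as a principal $G^\R$-bundle over $K_0$.

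Finally I pass to moduli. The covering $\framed_0\to\forms_0$ has deck group $\PGamma$ acting fibrewise, so $\forms_0^\R=\PGamma\backslash\framed_0^{\,\R}$, and since the $\PGamma$- and $G^\R$-actions commute, $\moduli_0^\R=G^\R\backslash\forms_0^\R=(\PGamma\times G^\R)\backslash\framed_0^{\,\R}$. Dividing the $\PGamma$-equivariant diffeomorphism $\bar g^\R$ by $\PGamma$ gives $\moduli_0^\R\cong\PGamma\backslash K_0$, and this is an isomorphism of real-analytic orbifolds because $g$ is $\PGamma$-equivariant and matches the finite $\PGamma$-stabilizers on the two sides. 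Lastly, $\PGamma$ permutes the components $H^4_\x-\H$ of $K_0$ through its conjugation action on $P\A$; since a maximal totally real, totally geodesic $H^4\sset\ch^4$ determines the unique anti-holomorphic involution of $\ch^4$ fixing it pointwise, the setwise $\PGamma$-stabilizer of $H^4_\x$ is exactly the normalizer $\PGamma_\x^\R$ of $[\x]$, and choosing one representative $\x$ in each $\PGamma$-orbit on $P\A$, i.e.\ in each class of $C\A$, yields $\moduli_0^\R\cong\coprod_\x\PGamma_\x^\R\backslash(H^4_\x-\H)$. The one genuinely non-formal point, beyond the input from \cite{ACT}, is establishing the properties of the lift $\tilde\x$ claimed above via \eqref{eq-PGamma-prime-action-on-framed-smooth-cubics} together with the descent step that produces a representative in $\framed_0^{\,\x}$; everything else is bookkeeping with Theorem~\ref{thm-main-theorem-smooth-complex-case}.
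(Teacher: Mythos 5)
Your proof is correct, and the overall skeleton matches the paper's: reduce to a component-by-component statement $G^\R\backslash\framed_0^{\,\x}\to H^4_\x-\H$, prove injectivity by freeness of the $G$-action, prove surjectivity by finding a $G$-translate of a preimage $(F,[i])$ that lands in $\framed_0^{\,\x}$, and then pass to $\PGamma$-quotients. Your injectivity argument is, in substance, exactly the paper's Lemma~\ref{lem-general-principle-used-for-injectivity} applied to $\phi=\tilde\x$, just written out rather than quoted. The genuine divergence is in the surjectivity step. The paper introduces the auxiliary spaces $\framed_0'$ and groups $G'$, $\PGamma'$ of holomorphic-and-antiholomorphic transformations, applies the set-theoretic Lemma~\ref{lem-general-principle-used-for-surjectivity} about commuting free actions to produce an anti-involution $\hat\x\in G'$ with $\x\hat\x$ fixing $(F,[i])$, lifts it to $\alpha\in\GL(4,\C)'$, and then uses the fact that any real structure on $\C^4$ is $\GL(4,\C)$-conjugate to $\kappa$. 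You instead stay inside $\framed_0$, read off the $G$-cocycle $h_0$ from $\tilde\x(F,[i])=h_0.(F,[i])$, and trivialize it by Hilbert's Theorem~90. These are two packagings of the same descent idea; yours makes the cohomological content explicit ($H^1(\mathrm{Gal}(\C/\R),G)=1$ is precisely the obstruction-vanishing being used), while the paper's version avoids cocycle bookkeeping at the cost of introducing the enlarged spaces and a general-purpose lemma.

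One small point you should spell out: you write that $H^1(\mathrm{Gal}(\C/\R),-)$ vanishes for $\GL(4,\C)$ ``hence also in $G$,'' but $G=\GL(4,\C)/D$ with $D\cong\mu_3$, so this is not immediate. It does hold, and a quick way to see it is this: lift $h_0$ to $\tilde h_0\in\GL(4,\C)$, so $\bar{\tilde h_0}\tilde h_0=\w^k I\in D$; conjugating gives $\tilde h_0\bar{\tilde h_0}=\wbar^k I$, while centrality of $D$ gives $\tilde h_0\bar{\tilde h_0}=\tilde h_0(\bar{\tilde h_0}\tilde h_0)\tilde h_0^{-1}=\w^k I$, forcing $\w^k=\wbar^k$ and hence $k=0$; so the lift is already a $\GL(4,\C)$-cocycle and ordinary Hilbert~90 applies. (Equivalently, $H^2(\mathrm{Gal}(\C/\R),\mu_3)=0$ with the conjugation action, so the obstruction to lifting cocycles vanishes.) The paper sidesteps this by working with $\GL(4,\C)'$ directly and remarking that all three lifts of $\hat\x$ are anti-involutions. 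With this one justification added, your argument is complete.
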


To prove the theorem we extend some of the
constructions of section~\ref{sec-complex-moduli} to include antiholomorphic
transformations.  These notions will not be needed later in the
paper.  First, let $\GL(4,\C)'$ be the group of all linear and
antilinear automorphisms of $\C^4$.  We regard it as also acting on
$\C^5$, with an element $h$ acting by \eqref{eq-GL4-action-on-C5} if $h$ is linear and by
$$
h(X_0,\dots,X_3,Y)=(h(X_0,\dots,X_3),\bar Y)
$$
if $h$ is antilinear.  If $h$ is linear then it acts on $\forms$ as in
\eqref{eq-GL4-action-on-forms}, and if $h$ is antilinear then we define
$$
(h.F)(X_0,\dots,X_3)=\overline{F(h^{-1}(X_0,\dots,X_3))}.
$$
This is consistent with our definition
\eqref{eq-complex-conjugation-on-forms} of the action of $\kappa$.

We let $\framed_0'$ be the space of all pairs $(F,[i])$ where
$F\in\forms_0$, $i:\Lambda(T)\to \Lambda$ is either a linear or
antilinear isometry, and $[i]$ is its projective equivalence class.
$\framed_0'$ is a disjoint union of two copies of $\framed_0$.  Since
$\framed_0$ is connected
(theorem~\ref{thm-smooth-framed-space-connected}), $\framed_0'$ has 2
components.  Formula 
\eqref{eq-def-of-G-action-on-framed} now defines an action of
$\GL(4,\C)'$ on $\framed_0'$.  We also 
let $\Gamma'$ be the group of all
linear and antilinear isometries of $\Lambda$, and  
observe that \eqref{eq-def-of-PGamma-action-on-framed} defines an
action of it on $\framed_0'$. 
The antilinear elements in each
group exchange the two components of $\framed_0'$.  The subgroup
$D=\{I,\w I,\w^2I\}$ of $\GL(4,\C)'$ acts trivially, inducing an
action of the quotient group, which we call $G'$.  The scalars in
$\Gamma'$ also act trivially, inducing an action of the quotient
group, which we call $\PGamma'$.  Each of $G'$ and $\PGamma'$ acts
freely on $\framed_0'$, because $G$ and $\PGamma$ act freely on
$\framed_0$ (theorem~\ref{thm-G-action-on-framed-is-free}).  

The following two lemmas are generalities about group actions that we
will need in the proof of theorem~\ref{thm-isomorphism-for-smooth}.

\begin{lemma}
\label{lem-general-principle-used-for-surjectivity}
Let $Y$ be a set and suppose $L$ and $M$ are groups with commuting
free actions on it.  Suppose $y\in Y$ has images $m\in L\backslash Y$
and $l\in M\backslash Y$.  For any $\x\in L$ preserving $l$, there
exists a unique $\hat\x\in M$ such that $(\x\hat\x).y=y$.
Furthermore, the map $\x\mapsto\hat\x$ defines an isomorphism from the
stabilizer $L_l$ of $l$ to the stabilizer $M_m$ of $m$.
\qed
\end{lemma}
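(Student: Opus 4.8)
The plan is to prove the two assertions separately: first, existence and uniqueness of $\hat\x\in M$ with $(\x\hat\x).y=y$; second, that $\x\mapsto\hat\x$ is a group isomorphism $L_l\to M_m$. For existence, note that $\x$ preserves $l\in M\backslash Y$ means $M.(\x.y)=M.y$, so $\x.y$ and $y$ lie in the same $M$-orbit; hence there is $\mu\in M$ with $\mu.(\x.y)=y$, and we set $\hat\x=\mu$, noting $M$ acts on the left so $(\x\hat\x)$ should really be read as ``first $\x$, then $\hat\x$,'' i.e. $\hat\x.(\x.y)=y$; since the actions commute this is the same as $(\x\hat\x).y=y$ unambiguously. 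For uniqueness, suppose $\mu,\mu'\in M$ both satisfy $\mu.(\x.y)=\mu'.(\x.y)=y$; then $(\mu'^{-1}\mu).(\x.y)=\x.y$, and since $M$ acts freely on $Y$ we get $\mu=\mu'$. This step is essentially immediate; the only thing to be careful about is bookkeeping with left actions and the order of composition, which is why I would state explicitly at the outset that $\x\hat\x$ denotes the composite and that commutativity makes $\x\hat\x=\hat\x\x$ as transformations of $Y$.

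Next I would check that $\hat\x\in M_m$, i.e. that $\hat\x$ preserves the image $m\in L\backslash Y$. Since $L$ and $M$ commute and $\x\in L$, we have $L.(\hat\x.y)=\hat\x.(L.y)$... wait, more carefully: $\hat\x.y = \hat\x.(\x^{-1}.(\x.y))$; but I already know $\hat\x.(\x.y)=y$, so $\hat\x.y = \x^{-1}.y$ up to commuting the actions — precisely, $\hat\x.y=\x^{-1}.(\hat\x.(\x.y))\cdot$, hmm. Cleaner: from $(\x\hat\x).y=y$ and commutativity, $\hat\x.y=\x^{-1}.y$. Since $\x^{-1}\in L$, the point $\hat\x.y$ lies in the same $L$-orbit as $y$, so $L.(\hat\x.y)=L.y=m$, i.e. $\hat\x$ fixes $m$. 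Thus the assignment $\x\mapsto\hat\x$ really does land in $M_m$.

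Then I would verify the homomorphism property. Given $\x_1,\x_2\in L_l$ with corresponding $\hat\x_1,\hat\x_2\in M_m$, I want $\widehat{\x_1\x_2}=\hat\x_1\hat\x_2$ (with whatever order convention makes it work — this is again a point to pin down; because the actions commute, orders of the $M$-factors among themselves and the $L$-factors among themselves are what matter, and I expect $\widehat{\x_1\x_2}=\hat\x_2\hat\x_1$ or $\hat\x_1\hat\x_2$ depending on the convention, so I'd compute and state accordingly). The computation: $(\x_1\x_2\,\hat\x_1\hat\x_2).y$; using commutativity regroup as $\x_1\hat\x_1$ acting after $\x_2\hat\x_2$ — apply $\x_2\hat\x_2$ first to get $y$ back, then $\x_1\hat\x_1$ gives $y$ again. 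So $(\x_1\x_2)(\hat\x_1\hat\x_2)$ fixes $y$, and by the uniqueness already proved, $\widehat{\x_1\x_2}=\hat\x_1\hat\x_2$. Injectivity: if $\hat\x=1$ then $(\x\cdot 1).y=y$ forces $\x.y=y$, and freeness of the $L$-action gives $\x=1$. Surjectivity: by symmetry of the whole setup in $L\leftrightarrow M$ (interchanging also $l\leftrightarrow m$), every $\mu\in M_m$ arises as $\hat\x$ for a unique $\x\in L_l$ — concretely, given $\mu\in M_m$ there is a unique $\x\in L$ with $(\x\mu).y=y$, and the argument of the previous paragraph (with roles swapped) shows $\x\in L_l$, and clearly $\hat\x=\mu$ by uniqueness.

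The only genuine subtlety — which is really just a matter of care rather than difficulty — is keeping the left/right and composition-order conventions consistent throughout, since $\x\hat\x$ appears as a product of elements from two different commuting groups acting on the same set; everything else is a direct consequence of freeness plus commutativity. There is no serious obstacle: the lemma is a pure piece of group-action formalism, and the proof is the sequence of short verifications above.
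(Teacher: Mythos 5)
Your proof is correct and complete. The paper itself offers no proof of this lemma (it is stated with a \qed, i.e.\ treated as an immediate generality about commuting free group actions), so there is nothing to compare against; the verification you give—existence via $\chi$ preserving the $M$-orbit of $y$, uniqueness and injectivity via freeness, membership in $M_m$ via $\hat\chi.y=\chi^{-1}.y$, the homomorphism property by regrouping commuting factors, and surjectivity by symmetry in $L\leftrightarrow M$—is exactly the routine argument the authors had in mind.
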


\begin{lemma}
\label{lem-general-principle-used-for-injectivity}
If a group $G$ acts freely on a set $X$,  $\phi$
is a transformation of $X$ normalizing $G$, and $Z$ is the centralizer
of $\phi$ in $G$, then the natural map $Z\backslash X^\phi\to
G\backslash X$ is
injective.
\end{lemma}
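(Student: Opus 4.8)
The statement to prove is Lemma~\ref{lem-general-principle-used-for-injectivity}: if $G$ acts freely on a set $X$, $\phi$ is a transformation of $X$ normalizing $G$, and $Z$ is the centralizer of $\phi$ in $G$, then the natural map $Z\backslash X^\phi\to G\backslash X$ is injective. The plan is to unwind both sides directly at the level of points. Note first that since $\phi$ normalizes $G$, the set $X^\phi$ of $\phi$-fixed points is $Z$-invariant (if $g\in Z$ and $\phi(x)=x$ then $\phi(gx)=\phi g\phi^{-1}\phi(x)=g\phi(x)=gx$), so the quotient $Z\backslash X^\phi$ makes sense and the map to $G\backslash X$ sending a $Z$-orbit to the $G$-orbit it spans is well-defined.

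The heart of the matter is this: suppose $x,y\in X^\phi$ lie in the same $G$-orbit, say $y=gx$ for some $g\in G$; I must produce $z\in Z$ with $y=zx$, i.e.\ I must show that $g$ can be taken in the centralizer of $\phi$. Apply $\phi$ to $y=gx$: since $\phi(y)=y$, $\phi(x)=x$, and $\phi g\phi^{-1}\in G$, we get $y=\phi(gx)=(\phi g\phi^{-1})\phi(x)=(\phi g\phi^{-1})x$. Thus $gx=(\phi g\phi^{-1})x$, so $(g^{-1}\phi g\phi^{-1})x=x$. Because $G$ acts \emph{freely}, this forces $g^{-1}\phi g\phi^{-1}=1$ in $G$, that is, $\phi g\phi^{-1}=g$, so $g\in Z$. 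Hence $y=gx$ with $g\in Z$, which says precisely that $x$ and $y$ determine the same point of $Z\backslash X^\phi$. This proves injectivity.

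There is essentially no obstacle here — the only thing to watch is the bookkeeping with ``$\phi$ normalizes $G$'': it is used twice, once to know $X^\phi$ is $Z$-stable so the source of the map is sensible, and once to guarantee $\phi g\phi^{-1}$ lies back in $G$ so that freeness of the $G$-action can be invoked. Freeness of the action is the crucial hypothesis that turns the identity $(g^{-1}\phi g\phi^{-1})x=x$ into the group-theoretic identity $g\in Z$; without it the conclusion would fail. I would write this up in just a few lines exactly as above, perhaps prefacing it with the one-line verification that the map in question is well-defined.
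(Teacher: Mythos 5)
Your proof is correct and follows essentially the same argument as the paper's: given $y=gx$ with $x,y\in X^\phi$, conjugating by $\phi$ shows $\phi g\phi^{-1}$ also carries $x$ to $y$, and freeness forces $\phi g\phi^{-1}=g$, so $g\in Z$. The paper phrases this in one line; your version adds the routine well-definedness check but is otherwise the same reasoning.
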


\begin{proof}
If $h\in G$ carries $x\in X^\phi$ to $y\in X^\phi$
then so does $\phi^{-1}h\phi$, so $\phi^{-1}h\phi=h$ by freeness, so
$h\in Z$.
\end{proof}

\begin{proof}[Proof of theorem~\ref{thm-isomorphism-for-smooth}.]
  First observe that $g^\R: G^\R \backslash\framed_0^{\,\R}\to K_0$ is a
  local diffeomorphism.  This follows immediately from the fact that
  the rank of $g^\R$ is the same as that of $g$, which is $4$
  everywhere in $G\backslash\framed_0$ by
  Theorem~\ref{thm-main-theorem-smooth-complex-case}.

To prove surjectivity, suppose $\x\in P\A$ and $x\in H^4_\x-\H$.  We
must exhibit a point of $\framed_0^{\,\R}$ mapping to $(x,[\x])$ under $g^\R$.
First, by the surjectivity of the complex period map, there exists
$(F,[i])\in\framed_0$ with $g(F,[i])=x$.  Now we apply lemma~\ref{lem-general-principle-used-for-surjectivity}
with $Y=\framed_0'$, $y=(F,[i])$, $L=\PGamma'$ and $M=G'$.  Our choice
of $y$ gives 
$$
l
\,=\,
x
\,\in\,
\ch^4\!-\!\H
\,=\,
G'\backslash\framed_0'
\,=\,
M\backslash Y.
$$
By hypothesis, $\x$ is an anti-involution in $\PGamma'$ fixing $x$.
By lemma~\ref{lem-general-principle-used-for-surjectivity} there exists $\hat\x\in G'$, of order~2, with
$\x\hat\x$ fixing $(F,[i])$.  Since $\x$ swaps the components of
$\framed_0'$, $\hat\x$ does too, so $\hat\x$ is antiholomorphic.  We
have constructed a complex cubic surface $\{F=0\}$ preserved by an anti-involution
$\hat\x\in G'$.  Now we will verify that it (or rather a translate of it in
$\forms_0^\R$) maps to $x$ under $g^\R$.

Since $\GL(4,\C)'\to G'$ has kernel $\Z/3$, there is an anti-involution
$\alpha\in\GL(4,\C)'$ lying over $\hat\x$.  (In fact all 3 elements
lying over $\hat\x$ are anti-involutions.)  
The fact that $\x\hat\x$ fixes $(F,[i])$
implies
$\alpha.F=F$ and $[\x\circ
i\circ\alpha^*]=[i]$, i.e., $[i\circ\alpha^*\circ i^{-1}]=\x^{-1}=\x$.
Because all real structures on a complex vector space are equivalent,
$\alpha$ is conjugate to $\kappa$, that is, there exists $h\in\GL(4,\C)$
with $\alpha=h^{-1}\kappa h$.  We claim $h.(F,[i])$ lies in $\framed_0^{\,\R}$
and maps to $(x,\x)$ under $g^\R$.  That it lies in $\framed_0^{\,\R}$ is
just the claim $h.F\in\forms_0^\R$; here is the verification:
$$
\kappa.hF
=
hh^{-1}\kappa h.F
=
h\alpha.F
=
h.\alpha F
=
hF.
$$
And finally:
\begin{align*}
g^\R\bigl(h.(F,[i])\bigr)
&{}=
g^\R\bigl(h.F,[i\circ h^*]\bigr)\\
&{}=
\Bigl(g(h.F),\bigl[i\circ h^*\circ\kappa^*\circ(h^*)^{-1}\circ i^{-1}
\bigr]\Bigr)\\
&{}=
\Bigl(g(F),\bigl[i\circ(h^{-1}\circ\kappa\circ h)^*\circ
i^{-1}\bigr]\Bigr)\\
&{}=
\bigl(x,[i\circ\alpha^*\circ i^{-1}]\bigr)\\
&{}=
(x,\x).
\end{align*}
This finishes the proof of surjectivity.

To prove injectivity it suffices to show that
$g^\R:G^\R\backslash\framed_0^{\,\x}\to G\backslash\framed_0=\ch^4-\H$
is injective for each $\x\in P\A$.  This also follows from a general
principle, best expressed by regarding $\framed_0^{\,\x}$ as the
fixed-point set of $\x$ in $\framed_0$.  We have formulated an action
of $\PGamma'$ on $\framed_0'$, but we can regard it as acting on
$\framed_0$ by identifying $\framed_0$ with
$\langle\kappa\rangle\backslash\framed_0'$.  The subgroup $\PGamma$ acts by
\eqref{eq-def-of-PGamma-action-on-framed} as before, but an anti-linear
$\gamma\in \PGamma'$ now acts by
\begin{equation}
\label{eq-PGamma-prime-action-on-framed-smooth-cubics}
\gamma.(F,[i])=(\kappa.F,[\gamma\circ i\circ\kappa^*])\;.
\end{equation}
It follows from these definitions that $\framed_0^{\,\x}$ is the
fixed-point set of $\x$.  We apply
lemma~\ref{lem-general-principle-used-for-injectivity} with
$X=\framed_0$, $G=G$ and $\phi=\x$; then $X^\phi=\framed_0^{\,\x}$ and
$Z=G^\R$.  The conclusion is that
$G^\R\backslash\framed_0^{\,\x}\to G\backslash\framed_0=\ch^4-\H$ is injective.  This
concludes the proof of the first statement of the theorem.  The
remaining statements follow.

\end{proof}

\section{The five families of real cubics}
\label{sec-five-families}

Theorem~\ref{thm-isomorphism-for-smooth} described $\moduli_0^\R$ in
terms of the $H^4_{\x}$, where $[\x]$ varies over a complete set of
representatives of $C\A$, the set of $P\Gamma$-conjugacy classes in
the set $P\A$ of projective equivalence classes of anti-involutions of
$\Lambda$.  In this section we find such a set of representatives.
That is, we classify the $\x$ up to conjugacy by
$\Gamma$; there are exactly 10 classes, and we give a recognition
principle which allows one to easily compute which class contains a
given anti-involution. In fact there are only five classes up to sign,
so  $C\A$ has 5 elements, and there are 5 orbits of
$H^4_{\x}$'s under $P\Gamma$.  Unlike in the rest of the paper, in
this section we will be careful to distinguish between an
anti-involution $\x$ of $\Lambda$ and its projective equivalence class
$[\x]$.

\subsection{Classification of anti-involutions of $\Lambda$}
Using the coordinate system
\eqref{eq-standard-form-on-E4,1}, we define the following five
anti-involutions of $\Lambda$:
\renewcommand\-{\phantom{-}}
\begin{equation}
\label{eq-anti-invols-of-Lambda}
\begin{split}
\x_0:(x_0,x_1,x_2,x_3,x_4)&\mapsto
        (\bar{x}_0,\-\bar{x}_1,\-\bar{x}_2,\-\bar{x}_3,\-\bar{x}_4)\\
\x_1:(x_0,x_1,x_2,x_3,x_4)&\mapsto
        (\bar{x}_0,\-\bar{x}_1,\-\bar{x}_2,\-\bar{x}_3,-\bar{x}_4)\\
\x_2:(x_0,x_1,x_2,x_3,x_4)&\mapsto
        (\bar{x}_0,\-\bar{x}_1,\-\bar{x}_2,-\bar{x}_3,-\bar{x}_4)\\
\x_3:(x_0,x_1,x_2,x_3,x_4)&\mapsto
        (\bar{x}_0,\-\bar{x}_1,-\bar{x}_2,-\bar{x}_3,-\bar{x}_4)\\
\x_4:(x_0,x_1,x_2,x_3,x_4)&\mapsto
        (\bar{x}_0,-\bar{x}_1,-\bar{x}_2,-\bar{x}_3,-\bar{x}_4)\;.
\end{split}
\end{equation}
\let\-\originaldash
The subscript indicates how many of the
coordinates are replaced by the negatives of their complex
conjugates rather than just their conjugates.  

In order to distinguish their conjugacy classes we will use the
$5$-dimensional vector space $V = \Lambda/\theta\Lambda$ over the
field $\F_3=\E/\theta\E$,
and its quadratic form $q$, the reduction of the Hermitian form
\eqref{eq-standard-form-on-E4,1}.   These were defined in
\S\ref{subsec-finite-vectorspace}.  The dimensions of
$\x$'s eigenspaces and the determinants of $q$'s restrictions to them
are conjugacy invariants of $\x$ (the determinants lie in
$\F_3^*/(\F_3^*)^2=\{\pm1\}$).  We use the abbreviation {\it negated
  space} for the $(-1)$-eigenspace of $\x$.

\begin{theorem}
\label{thm-classification-of-anti-involutions}
An anti-involution of $\Lambda$ is $\Gamma$-conjugate to
exactly one of the $\pm\x_j$.  Two anti-involutions of $\Lambda$ are conjugate if and only if
the restrictions of $q$ to the two fixed spaces in $V$ (or to the two
negated spaces) have the same dimension and determinant.
\end{theorem}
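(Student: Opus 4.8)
The plan is to prove the two assertions together: first that every anti-involution of $\Lambda$ is $\Gamma$-conjugate to some $\pm\x_j$, and then that the quadratic-form data on $V$ distinguishes the ten classes, which forces the conjugacy to be to \emph{exactly} one of them. I would begin with the elementary structure theory of an anti-involution $\x$ of the Hermitian $\E$-lattice $\Lambda=\E^{4,1}$. Writing $\Lambda_\R$ for the fixed $\Z$-sublattice and $\Lambda_{i\R}$ for the sublattice on which $\x$ acts as $-1$ (the ``negated space'' after tensoring considerations), one checks that $\theta$ interchanges these two up to scaling: if $\x(v)=v$ then $\x(\theta v)=\bar\theta\,\x(v)=-\theta v$. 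Hence $\Lambda_\R$ and $\theta\Lambda_{i\R}$ are commensurable, both are $\Z$-lattices of rank $5$, the Hermitian form restricts to (proportional to) a symmetric $\Z$-bilinear form of signature $(4,1)$ on $\Lambda_\R$, and $\Lambda$ is recovered as $\Lambda_\R\otimes_\Z\E$ up to finite index. The classification of $\x$ up to $\Gamma$-conjugacy thus becomes (close to) a classification of the relevant $\Z$-lattices of signature $(4,1)$ together with the gluing data describing how $\Lambda_\R\oplus\theta\Lambda_{i\R}$ sits inside $\Lambda$.

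Next I would pass to the mod-$\theta$ reduction. On $V=\Lambda/\theta\Lambda\cong\F_3^5$ with its quadratic form $q$, the anti-involution $\x$ induces an $\F_3$-linear involution $\bar\x$ (antilinearity disappears because complex conjugation is trivial mod $\theta$, since $\bar\E\equiv\E$ and $\bar\theta=-\theta\equiv 0$). This involution is an isometry of $(V,q)$, so $V=V_+\oplus V_-$ splits orthogonally into its $(\pm1)$-eigenspaces, each carrying a nondegenerate restriction of $q$ once one knows the splitting is nondegenerate (true over $\F_3$ when $2$ is invertible). An eigenspace of $\F_3^5$ is classified up to isometry by its dimension and the square class of its determinant in $\F_3^\ast/(\F_3^\ast)^2=\{\pm1\}$; and because $\det q$ on all of $V$ is fixed, $\det q|_{V_+}$ and $\det q|_{V_-}$ determine each other, so a single pair $(\dim V_-,\det q|_{V_-})$ is the complete invariant here. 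A direct computation with \eqref{eq-anti-invols-of-Lambda} shows the ten anti-involutions $\pm\x_j$ realize ten distinct such pairs — the negated space of $\x_j$ has dimension $j$, that of $-\x_j$ has dimension $5-j$, and the determinants come out with the appropriate signs — so no two of the $\pm\x_j$ are conjugate, establishing the ``exactly one'' and also reducing the ``only if'' direction of the recognition principle to the ``if'' direction.

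For the hard direction — that any anti-involution is conjugate to one of the $\pm\x_j$, and more precisely that the $V$-invariant determines the conjugacy class — I would argue as follows. Given $\x$, compute its invariant $(d,\varepsilon)=(\dim V_-,\det q|_{V_-})$ and let $\x_j^{\pm}$ be the representative from the list with the same invariant. It suffices to conjugate $\x$ into $\x_j^{\pm}$ by an element of $\Gamma$. Using the commensurability observation above, choose a basis adapted to the decomposition $\Lambda\supset \Lambda_\R\oplus\theta\Lambda_{i\R}$; the two summands are $\Z$-lattices in the real quadratic space of signature $(4,1)$, and I would invoke the strong uniqueness results for indefinite $\Z$-lattices (the classification of indefinite unimodular and near-unimodular lattices, plus the fact that $\O(4,1)$ over $\Z$ acts transitively on primitive vectors of a given norm — the same kind of input used throughout \cite{ACT}) to bring $\Lambda_\R$ and $\Lambda_{i\R}$ simultaneously into the standard coordinate positions that define $\x_j^{\pm}$. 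The gluing data — i.e.\ the finite-index overlattice $\Lambda$ of $\Lambda_\R\oplus\theta\Lambda_{i\R}$ — is then pinned down by the requirement that $\Lambda\cong\E^{4,1}$, and since $\x_j^{\pm}$ was built on exactly the same data the two anti-involutions become equal after the conjugation. I expect this lattice-theoretic step to be the main obstacle: the number-theoretic bookkeeping of the glue group, the discriminant forms, and the realization of each of the five signature patterns as an actual sublattice of $\E^{4,1}$ (rather than merely of the ambient quadratic space) is exactly where the real content lies, and it may be cleaner to do this one $j$ at a time, exhibiting for each candidate invariant an explicit diagonalizing basis and checking it spans $\Lambda$ over $\E$. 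The two reductions ``$\bmod\theta$'' and ``to the real sublattice'' are the organizing devices; everything else is verification against the list \eqref{eq-anti-invols-of-Lambda}.
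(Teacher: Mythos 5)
Your proposal takes a genuinely different route from the paper, and it is worth contrasting the two.

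The paper's proof is a short moduli-theoretic shortcut.  It quotes the classical fact (Schl\"afli, Klein, Segre) that $P\forms_0^\R$ has exactly $5$ connected components; combined with the surjectivity half of theorem~\ref{thm-isomorphism-for-smooth} this gives $|C\A|\leq 5$.  Then one observes that each projective class $[\x]$ contains at most two $\Gamma$-conjugacy classes (conjugating $\x$ by a unit $\lambda$ sends $\x\mapsto\lambda^2\x$, so the six unit multiples of $\x$ collapse into the two classes of $\x$ and $-\x$), giving an \emph{upper bound} of $10$ conjugacy classes.  Exhibiting $10$ classes then finishes both assertions at once, and the exhibition step is exactly the $V=\Lambda/\theta\Lambda$ computation you describe — that part of your write-up matches the paper precisely, including the count $(\dim V_-,\det q|_{V_-})$ of achievable pairs.

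What you call the ``hard direction'' is where the approaches diverge.  You propose to obtain the upper bound (indeed the whole classification) by decomposing $\Lambda$ over $\Z$ into $\Lambda_\R\oplus\Lambda_{i\R}$ up to finite index and then appealing to classification and uniqueness results for indefinite $\Z$-lattices plus an analysis of the glue.  This is a legitimate alternative strategy, in the spirit of Nikulin's treatment of real K3 surfaces, and it has the attraction of not depending on the classical enumeration of components of $P\forms_0^\R$.  But you do not actually carry it out, and the step you flag as ``the main obstacle'' is genuinely nontrivial: the fixed lattices $\Lambda^{\x_j}$ are $3$-elementary of signature $(4,1)$ with discriminant groups of varying orders, not (near-)unimodular, so ``strong uniqueness for indefinite $\Z$-lattices'' needs to be instantiated carefully for $3$-elementary forms and the glue to $\E^{4,1}$ must be shown to be unique up to $\Gamma$.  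The paper's own \emph{Caution} after the theorem — that the obvious analogue fails for $\E^{3,1}$ — is exactly a warning that the mod-$\theta$ invariant is not automatically complete, so whatever lattice-theoretic argument you run must produce, not assume, the completeness of that invariant for $n=4$.  In summary: your approach is sound in conception and distinct from the paper's, the mod-$\theta$ invariant computation is correct, but the substantive classification step is outsourced rather than done, whereas the paper avoids it entirely by leaning on the five-component theorem and the period map.
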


\begin{caution}
  The obvious analogue of the theorem fails for some other $\E^{n,1}$,
  for example $n=3$.
\end{caution}



\begin{proof}
  It is classical that $P\forms_0^\R$ has 5 connected components
  \cite[\S 24] {segre}.  Because $-1$ lies in the identity
  component of $G^\R$, it follows that $\forms_0^\R$ itself has 5
  components, and thence that $\moduli_0^\R$ has at most 5 components.
  The surjectivity part of theorem~\ref{thm-isomorphism-for-smooth}
  implies that for every $\x\in\A$ there exists $F\in\forms_0^\R$ such
  that $\bigl(\Lambda(T),[\kappa^*]\bigr)\isomorphism(\Lambda,[\x])$.
  Therefore the number of components of $\moduli_0^\R$ is at least the
  cardinality of $C\A$, so $|C\A|\leq5$.  Also, the elements of $[\x]$
  are $\x\cdot(-\w)^i$, $i=0,\dots,5$, and these fall into at most two
  conjugacy classes (proof: conjugate by scalars).  Therefore there
  are at most 10 classes of anti-involutions of $\Lambda$.

Now we exhibit 10 distinct classes.
  It is easy to check that $\x_j$ has negated (resp. fixed) space of
  dimension $j$ (resp. $5-j$) and the restriction of $q$ to it has
  determinant $+1$ (resp. $-1$).  For $-\x_j$, the negated and fixed
  spaces are reversed.  Therefore $\pm\x_0,\dots,\pm\x_4$ all lie in
  distinct conjugacy classes.  
Since we have exhibited 10
  classes, they must be a complete set of representatives, justifying
  the first part of the theorem.  In distinguishing them, we also
  proved the second part.
\end{proof}

Let 
 $H^4_j$ be the fixed-point set of $\x_j$ in $\ch^4$, and let 
$\PGamma^\R_j$ be the stabilizer of $H^4 _j$ in $\PGamma$.  
We have  
the following  improvement on theorem~\ref{thm-isomorphism-for-smooth}.  

\begin{corollary}
\label{cor-componentscorollary}
The set $C\A$ has cardinality $5$ and is represented by
$\x_0\dots,\x_4$ of \eqref{eq-anti-invols-of-Lambda}.  We have an
isomorphism $\moduli_0^\R = \coprod_{j=0}^4
\PGamma_j^\R\backslash(H_j^4 - \H)$ of real analytic orbifolds.  
 For each $j$, $\PGamma_j^\R\backslash(H^4_j-\H)$ is connected.

 \qed
\end{corollary}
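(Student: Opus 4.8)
The first two assertions are formal consequences of results already in hand, so the real content of the corollary is the connectedness of each $\PGamma_j^\R\backslash(H^4_j-\H)$. I would begin by unwinding the two relevant theorems. Theorem~\ref{thm-isomorphism-for-smooth} supplies an isomorphism of real analytic orbifolds $\moduli_0^\R\cong\coprod_{\x}\PGamma_\x^\R\backslash(H_\x-\H)$, where $\x$ runs over a set of representatives of $C\A$ and $\PGamma_\x^\R$ is the $\PGamma$-stabilizer of $H^4_\x$. Theorem~\ref{thm-classification-of-anti-involutions} shows the classes $[\x_0],\dots,[\x_4]$ are pairwise distinct and exhaust $C\A$, so in particular $|C\A|=5$. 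Since $H^4_j$ and $\PGamma_j^\R$ were defined to be the fixed-point set of $\x_j$ in $\ch^4$ and its $\PGamma$-stabilizer, feeding the representatives $\x_0,\dots,\x_4$ into Theorem~\ref{thm-isomorphism-for-smooth} yields exactly the stated isomorphism $\moduli_0^\R\cong\coprod_{j=0}^4\PGamma_j^\R\backslash(H^4_j-\H)$.

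For connectedness I would count components. On the one hand, the proof of Theorem~\ref{thm-classification-of-anti-involutions} already records that $\moduli_0^\R$ has at most $5$ connected components: $P\forms_0^\R$ has exactly $5$ by the classical work of Segre and Klein, the element $-I\in G^\R$ has positive determinant and acts on the odd-degree space $\forms$ as $-1$, so $\forms_0^\R$ has the same number of components as $P\forms_0^\R$, and $\moduli_0^\R=G^\R\backslash\forms_0^\R$ is a continuous image of $\forms_0^\R$. On the other hand, each summand $\PGamma_j^\R\backslash(H^4_j-\H)$ is nonempty: the surjectivity clause of Theorem~\ref{thm-isomorphism-for-smooth} (equivalently, surjectivity of the complex period map together with the realization of every anti-involution of $\Lambda$, as used in the proof of Theorem~\ref{thm-classification-of-anti-involutions}) shows $H^4_j-\H$ is nonempty, hence so is its quotient. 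Now a topological disjoint union of $5$ nonempty spaces has at least $5$ connected components, with equality exactly when every summand is connected; combined with the upper bound of $5$, equality must hold, and therefore each $\PGamma_j^\R\backslash(H^4_j-\H)$ is connected.

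There is essentially no obstacle here: once Theorems~\ref{thm-isomorphism-for-smooth} and~\ref{thm-classification-of-anti-involutions} are available, every step is either a direct appeal to them or elementary point-set topology. The one point that rewards a moment's attention is the upper bound of $5$ — the passage from components of the projective hypersurface complement $P\forms_0^\R$ to those of its affine cone $\forms_0^\R$ — which is exactly where the observation that $-I$ lies in the identity component of $G^\R$ and acts by $-1$ on cubic forms is needed.
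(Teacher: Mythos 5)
Your argument is correct and matches the paper's intended reasoning: the corollary carries a \verb|\qed| with no separate proof precisely because the counting argument you describe — at most $5$ components of $\moduli_0^\R$ from the classical count of components of $P\forms_0^\R$ (using that $-I\in G^\R$ is in the identity component and acts as $-1$ on cubic forms), at least $5$ from the nonemptiness of each $\PGamma_j^\R\backslash(H^4_j-\H)$, hence each summand connected — is already carried out verbatim inside the proof of Theorem~\ref{thm-classification-of-anti-involutions}. Your write-up just makes explicit the step that the paper leaves implicit, namely extracting the connectedness conclusion from that count once Theorem~\ref{thm-isomorphism-for-smooth} has produced the five-fold disjoint-union decomposition.
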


\subsection{The classical labeling of the five components}
\label{subsec-classical-parametrization-of-5-components}
The classical labeling of the 5 types of real cubic surface was
in terms of the topology of the real locus of $S$, or the action of
complex conjugation $\kappa$ on the 27 lines of $S$, or the action of
$\kappa$ on $H^2(S)$.  We will develop enough of this to establish the
correspondence between the 5 types of surface and our
$\PGamma_j^\R\backslash(H^4_j-\H)$.

Recall from \S\ref{subsec-finite-vectorspace} the lattice
$L(S)=H^2(S;\Z)$, the hyperplane class $\eta(S)$, the primitive
cohomology $L_0(S)$, and their ``standard models'' $L$, $\eta$, $L_0$.
As stated there, the isometries of $L(S)$ preserving $\eta(S)$ form a
copy of the Weyl group $W=W(E_6)=\Aut(L,\eta)$, which is generated by
the reflections in the roots (norm~$-2$ vectors) of $L_0$.

Since $\kappa$ is antiholomorphic, it negates $\eta(S)$ and hence acts
on $L(S)$ by the product of $-I$ and some element $g$ of
$\Aut(L(S),\eta(S))$ of order~$1$ or~$2$. Therefore, to classify the
possible actions of $\kappa$ on $L(S)$ we will enumerate the
involutions of $W$ up to conjugacy.  According to \cite[p.~27]{ATLAS} or \cite[Table~1]{manin}, 
there are exactly four conjugacy classes of involutions.  Each class
may be constructed as the product of the reflections in $1\leq j\leq
4$ mutually orthogonal roots. To make this explicit we choose four
distinct commuting reflections $R_1,\dots,R_4$ in $W$.

We write $\forms_{0,0}^\R,\dots,\forms_{0,4}^\R$ for the set of those
$F\in\forms_0^\R$ for which $(L(S),\breakok\eta(S),\breakok\kappa^*)$ is equivalent to
$(L,\eta,-g)$ for $g=I$, $R_1$, $R_1R_2$, $R_1R_2R_3$, $R_1R_2R_3R_4$.
The $j$ in $\forms^\R_{0,j}$ is the number of $R$'s involved.  By the
previous paragraph, the $\forms_{0,j}^\R$ are disjoint and cover
$\forms_0^\R$.  We will write $\moduli^\R_{0,j}$ for
$G^\R\backslash\forms^\R_{0,j}$.

Now we relate the $\kappa$-action on $L(S)$ to the configuration of
lines.  In the terminology of \cite[\S23 ]{segre}, a line is called real
if it is preserved by $\kappa$, and a non-real line is said to be of
the first (resp. second) kind if it meets (resp. does not meet) its
complex conjugate.  The terminology becomes a little easier to
remember if one thinks of a real line as being a line of the $0$th
kind.  The lines define 27 elements of $L(S)$, which are exactly the
27 vectors of norm~$-1$ that have inner product~$1$ with $\eta(S)$,
\cite[\S 23 ]{manin}.  Two lines meet (resp. do not meet) if the
corresponding vectors have inner product $1$ (resp. $0$).  So which
lines of $S$ are real or nonreal of the first or second kind can be
determined by studying the action of $\kappa$ on $L(S)$.  The numbers
of lines of the various types depends only on the isometry class of
$(L(S),\eta(S),\kappa^*)$, with the results given in the first five
columns of table~\ref{tab-actions-of-conjugation}.  This allows us to
identify our $\forms_{0,j}^\R$ with the classically defined families.
For example, Segre \cite[\S23]{segre} names the families $F_1,\dots,F_5$;
his $F_{j+1}$ corresponds to our $\forms_{0,j}^\R$.

\begin{table}
\centeroverfull{%
\begin{tabular}{cccccccc}%
&%
{\bf class of}&%
&%
\multicolumn{2}{c}{{\bf non-real of}}&%
{\bf fixed}&%
{\bf fixed}&%
{\bf class of}\\
&%
{\bf action}&%
{\bf real}&%
{\bf 1st}&%
{\bf 2nd}&%
{\bf space}&%
{\bf space}&%
{\bf action}\\
{\bf family}&%
{\bf on $L(S)$}&%
{\bf lines}&%
{\bf kind}&%
{\bf kind}&%
{\bf in $V(S)$}&%
{\bf in $V(T)$}&%
{\bf on $\Lambda(T)$}\\
\noalign{\smallskip}
$\forms_{0,0}^\R$&$-I$&$27$&$0$&$0$&$[\,]$&$[{+}{+}{+}{+}{-}]$&$\x_0$\\
$\forms_{0,1}^\R$&$-R_1$&$15$&$0$&$12$&$[{+}]$&$[{+}{+}{+}{-}]$&$\x_1$\\
$\forms_{0,2}^\R$&$-R_1R_2$&$7$&$4$&$16$&$[{+}{+}]$&$[{+}{+}{-}]$&$\x_2$\\
$\forms_{0,3}^\R$&$-R_1R_2R_3$&$3$&$12$&$12$&$[{+}{+}{+}]$&$[{+}{-}]$&$\x_3$\\
$\forms_{0,4}^\R$&$-R_1R_2R_3R_4$&$3$&$24$&$0$&$[{+}{+}{+}{+}]$&$[{-}]$&$\x_4$%
\end{tabular}%
}
\medskip
\caption{Action of complex conjugation on various objects
associated to $F\in\forms_{0,j}^\R$.  The 6th and 7th columns indicate
diagonalized $\F_3$-quadratic forms with $\pm1$'s on the diagonal.}  
\label{tab-actions-of-conjugation}
\end{table}

\subsection{Relation between our anti-involutions and the classical
  labeling}
From corollary~\ref{cor-componentscorollary} and \S\ref{subsec-classical-parametrization-of-5-components} We now have two labelings for the
components of $\moduli_0^\R$, namely
$$
\coprod_{j=0}^4\moduli_{0,j}^\R
=
\moduli_0^\R
\isomorphism
\coprod_{j=0}^4(G^\R\times\PGamma^\R_j)\backslash\framed_0^{\x_j}
\isomorphism
\coprod_{j=0}^4\PGamma_j^\R\backslash(H_j^4-\H)
.
$$
Our next goal is corollary~\ref{cor-our-vs-classical-components},
which shows that the labelings correspond in the obvious way.  We
defined the spaces $\forms_{0,j}^\R$ and $\moduli_{0,j}^\R$ in terms
of the action of complex conjugation $\kappa$ on $L(S)$, and we
defined the spaces $\framed_0^{\,\x_j}$ in terms of $\kappa$'s action
on $\Lambda(T)$.  To relate them, we consider the action of $\kappa$
on the 5-dimensional quadratic $\F_3$-vector spaces $V(S)$, $V(T)$
defined in terms of $L(S)$ and $\Lambda(T)$ in
\S\ref{subsec-finite-vectorspace}.

\begin{lemma}
\label{lem-actions-on-V(T)-and-V(S)}
Let $F\in\forms_0^\R$  and denote the actions of
$\kappa$ on $V(S)$ and $V(T)$ by $\hat\kappa$.  Then
$(V(S),\hat\kappa)$ and $(V(T),-\hat\kappa)$ are isomorphic as
quadratic spaces equipped with isometries.
\end{lemma}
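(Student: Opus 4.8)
The plan is to trace through the two natural descriptions of the five-dimensional $\F_3$-space and check that the isomorphism of Lemma~\ref{lem-isometry-finite-spaces} intertwines the conjugation actions up to the sign $-1$. Recall from \S\ref{subsec-finite-vectorspace} that $V(S)=L_0(S)/3L_0'(S)$ and $V(T)=\Lambda(T)/\theta\Lambda(T)$, and that the isomorphism $V(S)\isomorphism V(T)$ is constructed geometrically: a primitive $2$-cycle $c$ on $S$ bounds a $3$-chain $d$ in $T$, and $c$ is sent to the class of $\sigma_*(d)-\sigma_*^{-1}(d)$ modulo $\theta$. First I would note that $\kappa$ acts compatibly on all the objects in sight: it acts on $S$, on $T$ (via the lift that fixes the $Y$-coordinate, or more precisely via $\kappa$ on $\C^4$ combined with complex conjugation on $Y$), on $L(S)$, and on $\Lambda(T)$. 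The point is to see exactly how $\kappa$ interacts with the chain-level construction.

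The key computation is the interaction of $\kappa$ with $\sigma$. Since $\kappa$ sends $Y$ to $\bar Y$ while $\sigma$ multiplies $Y$ by $\w$, we get $\kappa\sigma\kappa^{-1}=\sigma^{-1}$ as maps of $T$. Therefore, if $c$ bounds $d$, then $\kappa_*(c)$ bounds $\kappa_*(d)$, and the image of $\kappa_*(c)$ under the map to $V(T)$ is the class of
\[
\sigma_*\kappa_*(d)-\sigma_*^{-1}\kappa_*(d)
=
\kappa_*\bigl(\sigma_*^{-1}(d)-\sigma_*(d)\bigr)
=
-\kappa_*\bigl(\sigma_*(d)-\sigma_*^{-1}(d)\bigr)
\]
modulo $\theta$, using $\kappa\sigma^{\pm1}=\sigma^{\mp1}\kappa$. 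So under the identification $V(S)\isomorphism V(T)$, the action of $\kappa$ on $V(T)$ (which I am denoting $\hat\kappa$, as acting on $\Lambda(T)/\theta\Lambda(T)$) corresponds to $-\hat\kappa$ on $V(S)$; equivalently $(V(S),\hat\kappa)\isomorphism(V(T),-\hat\kappa)$. One has to be a little careful about what ``$\kappa$ acting on $V(T)$'' means as an $\F_3$-linear map: $\kappa$ is antilinear over $\E$, but modulo $\theta$ the ring $\E/\theta\E=\F_3$ is fixed by conjugation (since $\bar\w\equiv\w\equiv1$), so $\hat\kappa$ is genuinely $\F_3$-linear, and the sign above is the whole story. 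The isometry statement — that $\hat\kappa$ preserves $q$ on both sides — is immediate from Lemma~\ref{lem-isometry-finite-spaces}, since $\kappa$ preserves the relevant integral bilinear forms (it preserves the intersection form on $H^2(S)$ up to the overall sign coming from $\dim_\R S=4$, which does not affect the reduction, and similarly preserves $h$ up to conjugation, which is invisible mod $\theta$) and the constructed map $V(S)\to V(T)$ is already known to be an isometry.

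The main obstacle I expect is bookkeeping of signs: one must pin down (i) the sign with which $\kappa$ acts on the intersection pairing of $S$ versus the Hermitian form on $\Lambda(T)$ and hence on their mod-$3$/mod-$\theta$ reductions, and (ii) the sign in $\kappa\sigma\kappa^{-1}=\sigma^{\pm1}$, which depends on the chosen lift of $\kappa$ to $T$ and on the convention (from \cite{ACT}, as corrected in the note added in proof) that $\w$ acts as $(\sigma^*)^{-1}$ rather than $\sigma^*$ on $H^3(T)$. Getting these conventions consistent with \S\ref{subsec-finite-vectorspace} is the delicate part; once the $\kappa\sigma=\sigma^{-1}\kappa$ relation is fixed, the displayed computation above is forced and gives exactly the claimed sign discrepancy. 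Everything else is formal, since the isomorphism $V(S)\isomorphism V(T)$ and its isometry property are quoted from \cite{ACT}.
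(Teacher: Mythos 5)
Your proposal is correct and follows essentially the same route as the paper: you invoke the natural isometry $A\colon V(S)\to V(T)$ of Lemma~\ref{lem-isometry-finite-spaces} built from the chain-level construction, and then use $\kappa\sigma=\sigma^{-1}\kappa$ to show $A$ conjugates $\hat\kappa$ to $-\hat\kappa$, which is exactly the paper's argument. The extra commentary about $\F_3$-linearity of $\hat\kappa$ modulo $\theta$ and the sign conventions is a reasonable sanity check but not a deviation from the paper's approach.
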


\begin{proof}
  By Lemma~\ref{lem-isometry-finite-spaces} there is a natural
  isometry $V(S)\to V(T)$, which we will denote by $A$. 
If $a\in V(S)$ then $A(a)$ is defined by lifting $a$ to some $c\in
L_0(S)$ and then applying the construction in
\S\ref{subsec-finite-vectorspace}.  The result is the reduction modulo $\theta$ of the homology class of  $\sigma_*(d)
-\sigma_*^{-1}(d)$ for some $3$-chain $d$ in $T$.  (The asterisk in the subscript comes from our
identification of homology and cohomology in \S\ref{subsec-finite-vectorspace}.)
  From this  and the fact that $\kappa\sigma = \sigma^{-1}\kappa$
  it follows  that $A\hat\kappa = - \hat\kappa A$.
  Therefore $A$ is an isometry between the pairs
  $(V(S),\hat\kappa)$ and $(V(T),-\hat\kappa)$.
\end{proof}

\begin{lemma}
\label{lem-anti-involutions-of-5-families}
Suppose $F\in\forms_{0,j}^\R$.  Then the isometry classes of the
fixed spaces for $\kappa$ in $V(S)$ and $V(T)$ are given by the 6th
and 7th columns of table~\ref{tab-actions-of-conjugation}, and
$(\Lambda(T),\kappa^*)$ is isometric to $(\Lambda,\x_j)$ as
indicated in the last column.
\end{lemma}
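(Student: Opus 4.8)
I would trace the anti-involution $\kappa^\ast$ along the chain $L(S)\to V(S)\to V(T)\to\Lambda(T)$: the classical labelling of $\forms^\R_{0,j}$ pins down $\kappa^\ast$ on $L(S)$, Lemma~\ref{lem-actions-on-V(T)-and-V(S)} moves the data from $V(S)$ to $V(T)$, and Theorem~\ref{thm-classification-of-anti-involutions} reads off the class of $\kappa^\ast$ on $\Lambda(T)$. All the isometry classes in question are conjugacy invariants, so they can be computed directly from the structure of $-g$.

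\emph{The fixed space in $V(S)$.} By the definition of $\forms^\R_{0,j}$ we may identify $(L(S),\eta(S))$ with $(L,\eta)$ so that, for $F\in\forms^\R_{0,j}$, $\kappa^\ast=-g$ with $g=R_1\cdots R_j$ a product of reflections in mutually orthogonal roots $r_1,\dots,r_j\in L_0(S)$. Each reflection negates its root and fixes the orthogonal complement of the root, so on $L_0(S)\tensor\Q$ the operator $-g$ is $+1$ on $U:=\langle r_1,\dots,r_j\rangle_\Q$ (dimension $j$) and $-1$ on $U^\perp$ (dimension $6-j$). To descend to $V(S)=L_0(S)/3L_0'(S)$, note $3L_0'(S)\sset L_0(S)$, so $V(S)=\bar L/\ell$ where $\bar L:=L_0(S)/3L_0(S)$ carries the reduced form $\bar q$ and $\ell:=3L_0'(S)/3L_0(S)$; since $L_0(S)\isomorphism E_6$ has discriminant $3$, the line $\ell$ is one-dimensional and is precisely the radical of $\bar q$. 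Now $-g$ preserves $\bar q$, hence $\ell$; and because $W$ acts trivially on $L_0'(S)/L_0(S)$, $g$ fixes $\ell$ pointwise, so $-g$ acts on $\ell$ by $-1$. Write $\bar L=\bar L^+\oplus\bar L^-$ for the $\pm1$-eigenspaces of $-g$ (legitimate in characteristic $3\neq2$). The primitive sublattices $L_0(S)\cap U$ and $L_0(S)\cap U^\perp$ meet $3L_0(S)$ in three times themselves (because $U$ and $U^\perp$ are $\Q$-subspaces), so they reduce modulo $3$ to subspaces of $\bar L$ of dimensions $j$ and $6-j$ lying in $\bar L^+$ and $\bar L^-$ respectively; a dimension count forces $\bar L^+$ to be the image of $L_0(S)\cap U$, $\bar L^-$ the image of $L_0(S)\cap U^\perp$, and $\ell\sset\bar L^-$. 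Hence $\bar L^+$ maps isometrically onto the $\kappa$-fixed space of $V(S)$, which therefore has dimension $j$, with form determinant the reduction mod $3$ of a Gram determinant of $L_0(S)\cap U$, namely $(-2)^j/[L_0(S)\cap U:\langle r_1,\dots,r_j\rangle]^2\equiv(-2)^j\equiv1$ in $\F_3^*/(\F_3^*)^2$. This is the sixth column of Table~\ref{tab-actions-of-conjugation}.

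\emph{From $V(S)$ to $\Lambda(T)$.} Lemma~\ref{lem-actions-on-V(T)-and-V(S)} provides an isometry $V(S)\to V(T)$ intertwining $\hat\kappa$ with $-\hat\kappa$, so it carries the $\hat\kappa$-fixed space of $V(S)$ onto the $\hat\kappa$-\emph{negated} space of $V(T)$; the latter therefore has dimension $j$ and determinant $1$. Since $\hat\kappa$ is an involution of the nondegenerate space $V(T)$ in characteristic $\neq2$, $V(T)$ is the orthogonal sum of its fixed and negated spaces, so the fixed space has dimension $5-j$, and its determinant times that of the negated space equals the determinant of $q$ on $V(T)$, namely $\det[-1,1,1,1,1]=-1$ in $\F_3^*/(\F_3^*)^2$; hence the fixed space has determinant $-1$, i.e.\ diagonalizes as $[+\cdots+-]$ with $4-j$ plus signs. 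That is the seventh column. Finally, under an isometry $\Lambda(T)\isomorphism\Lambda$, $\kappa^\ast$ becomes an anti-involution of $\Lambda$ whose fixed space in $V=\Lambda/\theta\Lambda$ has dimension $5-j$ and $q$-determinant $-1$. In the proof of Theorem~\ref{thm-classification-of-anti-involutions} it is shown that the negated space of $\x_k$ in $V$ has dimension $k$ and determinant $+1$, so (splitting orthogonally once more) the fixed space of $\x_k$ has dimension $5-k$ and determinant $-1$, while $-\x_k$ has these interchanged; the ten resulting pairs $(\dim,\det)$ of fixed spaces are pairwise distinct, and the one equal to $(5-j,-1)$ is that of $\x_j$. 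Theorem~\ref{thm-classification-of-anti-involutions} then gives $(\Lambda(T),\kappa^\ast)\isomorphism(\Lambda,\x_j)$, as claimed in the last column.

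\emph{Main obstacle.} The one genuinely delicate point is the descent from $L_0(S)$ to $V(S)$: one must verify that the radical line $\ell$ of $\bar q$ lies on the $(-1)$-side of $-g$ --- which is exactly where the triviality of $W$ on the discriminant group of $L_0(S)$ enters --- so that passing to $V(S)$ does not disturb the fixed space, and one must check that the two integral eigenlattices reduce modulo $3$ to eigenspaces of the expected dimensions $j$ and $6-j$. After that, the sign twist supplied by Lemma~\ref{lem-actions-on-V(T)-and-V(S)}, the orthogonal splittings of $V(T)$ and of $V$, and the value $\det[-1,1,1,1,1]\equiv-1\pmod 3$ are routine bookkeeping.
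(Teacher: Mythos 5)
Your proof is correct and follows the same route as the paper: identify $\kappa^*=-R_1\cdots R_j$ on $L_0(S)$, descend to $V(S)$ to get the sixth column, transfer via Lemma~\ref{lem-actions-on-V(T)-and-V(S)} to $V(T)$ for the seventh, and invoke Theorem~\ref{thm-classification-of-anti-involutions} for the last. You supply considerably more detail than the paper's terse proof — notably the careful handling of the radical line in $\bar q$ when passing to $V(S)$, and the explicit orthogonal-complement and $\det V(T)\equiv -1$ computation for the seventh column — but the underlying argument is the same.
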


\begin{proof}
  Since the conjugacy class of the action of $\kappa$ on $L_0(S)$ is
  known, it is easy to compute the fixed space in $V(S)$.  It is just
  the span of the images of the roots corresponding to
  $R_1,\dots,R_j$.  This space has dimension $j$,  and its determinant
  is $+1$
  because the roots have norm $-2\equiv1\pmod3$.  This justifies the
  6th column.  Lemma~\ref{lem-actions-on-V(T)-and-V(S)} shows that the
  fixed space in $V(T)$ is isometric to the negated space in $V(S)$,
  justifying the 7th column.  The last claim follows from
  theorem~\ref{thm-classification-of-anti-involutions}.
\end{proof}

\begin{corollary}
\label{cor-our-vs-classical-components}
We have 
$\moduli^\R_{0,j}\isomorphism\PGamma^\R_j\backslash(H^4_j-\H)$ 
for  $j=0,\dots,4$.\qed
\end{corollary}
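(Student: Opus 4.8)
The plan is to bootstrap Corollary~\ref{cor-componentscorollary}, which already supplies a real-analytic orbifold isomorphism $\moduli_0^\R\isomorphism\coprod_{j=0}^4\PGamma_j^\R\backslash(H_j^4-\H)$, by matching this decomposition index-for-index with the classical decomposition $\moduli_0^\R=\coprod_{j=0}^4\moduli_{0,j}^\R$ of \S\ref{subsec-classical-parametrization-of-5-components}. Since both sides are genuine disjoint unions of five pieces, it suffices to check that the isomorphism of Corollary~\ref{cor-componentscorollary} carries $\moduli_{0,j}^\R$ into $\PGamma_j^\R\backslash(H_j^4-\H)$ for each $j$; disjointness of the target pieces then forces ``into'' to be ``onto'', and the restriction of the isomorphism to the union of components $\moduli_{0,j}^\R$ is again an isomorphism of real-analytic orbifolds.

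To carry this out I would first recall the explicit form of the isomorphism of Corollary~\ref{cor-componentscorollary} (coming from Theorem~\ref{thm-isomorphism-for-smooth} via the real period map): a class $[F]\in\moduli_0^\R$ is represented by some $(F,[i])\in\framed_0^{\,\R}$, one has $g^\R(F,[i])=\bigl(g(F,[i]),[\x]\bigr)$ with $\x=i\circ\kappa^*\circ i^{-1}$, and the piece of $\coprod_j\PGamma_j^\R\backslash(H_j^4-\H)$ in which $[F]$ lands is the one indexed by the $P\Gamma$-conjugacy class of $[\x]$ in $C\A=\{[\x_0],\dots,[\x_4]\}$. Now suppose $F\in\forms_{0,j}^\R$. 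By Lemma~\ref{lem-anti-involutions-of-5-families}, $(\Lambda(T),\kappa^*)$ is isometric to $(\Lambda,\x_j)$; choosing the framing $[i]$ to be the class of such an isometry gives $(F,[i])\in\framed_0^{\,\x_j}$, so that $\x=\x_j$ on the nose. Hence $g^\R$ sends the preimage of $\forms_{0,j}^\R$ into $H_j^4-\H$, and after passing to the quotient by $\PGamma$ the isomorphism of Corollary~\ref{cor-componentscorollary} carries $\moduli_{0,j}^\R$ into $\PGamma_j^\R\backslash(H_j^4-\H)$, which is all that is needed.

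I do not anticipate a serious obstacle: the substantive work has already been done in Lemmas~\ref{lem-isometry-finite-spaces}, \ref{lem-actions-on-V(T)-and-V(S)} and~\ref{lem-anti-involutions-of-5-families}, which pin down how the action of $\kappa$ on $L(S)$ (the classical invariant defining $\forms_{0,j}^\R$) determines the isometry type of $(\Lambda(T),\kappa^*)$, absorbing in particular the sign twist $A\hat\kappa=-\hat\kappa A$ between $V(S)$ and $V(T)$ and the exchange of fixed and negated spaces. The only points requiring care are bookkeeping: keeping track of framings so that ``$\x=i\circ\kappa^*\circ i^{-1}$ is conjugate to $\x_j$'' becomes ``equal to $\x_j$'' for a suitable choice of $[i]$, and confirming that both families of five pieces are honest disjoint unions --- the $\forms_{0,j}^\R$ partition $\forms_0^\R$ (so the $\moduli_{0,j}^\R$ partition $\moduli_0^\R$ into unions of connected components) by \S\ref{subsec-classical-parametrization-of-5-components} together with the classical count of components, and the $\PGamma_j^\R\backslash(H_j^4-\H)$ are disjoint and connected by Corollary~\ref{cor-componentscorollary} --- so that the counting step is legitimate.
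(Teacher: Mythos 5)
Your proposal is correct and matches the paper's intended argument: the paper proves Corollary~\ref{cor-our-vs-classical-components} by first establishing Lemma~\ref{lem-anti-involutions-of-5-families} (that $F\in\forms^\R_{0,j}$ gives $(\Lambda(T),\kappa^*)\isomorphism(\Lambda,\x_j)$) and then stating the corollary with only a $\qed$, leaving the reader to combine this with Corollary~\ref{cor-componentscorollary} exactly as you do. You have simply spelled out the implicit bookkeeping --- choosing a framing so that $\x=\x_j$ on the nose, and observing that a bijection of two sets each partitioned into five disjoint pieces, carrying each piece into the correspondingly indexed one, must restrict to bijections piece by piece.
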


\section{The stabilizers of the $H^4$'s}
\label{sec-H4-stabilizers}

In this section we continue to make theorem~\ref{thm-isomorphism-for-smooth} more explicit; we
know that $\moduli_0^\R=\coprod_{j=0}^4
\PGamma^\R_j\backslash(H^4_j-\H)$, and now we will describe the
$\PGamma^\R_j$.  We give two descriptions, one arithmetic and one in
the language of Coxeter groups.  The arithmetic description is easy:

\begin{theorem}
\label{thm-arithmetic-description-of-groups}
$\PGamma_j^\R\isomorphism\PO(\Psi_j)$, where $\Psi_j$
is the quadratic form on $\Z^5$ given by 
\begin{align*}
\Psi_0(y_0,\dots,y_4)&=-y_0^2+\3y_1^2+\3y_2^2+\3y_3^2+\3y_4^2\\
\Psi_1(y_0,\dots,y_4)&=-y_0^2+\3y_1^2+\3y_2^2+\3y_3^2+3y_4^2\\
\Psi_2(y_0,\dots,y_4)&=-y_0^2+\3y_1^2+\3y_2^2+3y_3^2+3y_4^2\\
\Psi_3(y_0,\dots,y_4)&=-y_0^2+\3y_1^2+3y_2^2+3y_3^2+3y_4^2\\
\Psi_4(y_0,\dots,y_4)&=-y_0^2+3y_1^2+3y_2^2+3y_3^2+3y_4^2\;.
\end{align*}
\end{theorem}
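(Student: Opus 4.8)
The plan is to realize $\PGamma_j^\R$ --- which by definition is the $\PGamma$-stabilizer of the copy $H^4_j\sset\ch^4$ of real hyperbolic space cut out by the anti-involution $\x_j$ --- as the projectivized orthogonal group of the integral lattice of $\x_j$-fixed vectors in $\Lambda$, and to compute that lattice to have Gram matrix $\Psi_j$. \emph{Step 1 (stabilizer to centralizer).} Suppose $\gamma\in\Aut(\Lambda)$ stabilizes $H^4_j$. Then $\gamma\x_j\gamma^{-1}$ and $\x_j$ are $\E$-antilinear involutions of $\Lambda$ with the same fixed locus in $\ch^4$, and any two such differ by an $\E$-unit scalar: their ratio is $\C$-linear, scales every negative line in the common fixed real form --- hence is scalar --- and preserves $\Lambda$, so the scalar is a unit. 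Thus $\gamma\x_j\gamma^{-1}=u\x_j$ with $u\in\E^\times$. If $u$ were not a cube of a unit, then $u\x_j$ would lie in the $\Gamma$-conjugacy class of $-\x_j$ rather than of $\x_j$ --- by Theorem~\ref{thm-classification-of-anti-involutions}, since the fixed spaces of $\x_j$ and $-\x_j$ in $V$ have dimensions $5-j\neq j$ --- contradicting $u\x_j=\gamma\x_j\gamma^{-1}\sim_\Gamma\x_j$. So $u$ is a cube of a unit, and after multiplying $\gamma$ by a suitable unit we may assume $\gamma\x_j=\x_j\gamma$; since the only $\E$-linear scalars commuting with $\x_j$ are $\pm I$, this gives $\PGamma_j^\R\isomorphism\Gamma^{\x_j}/\{\pm I\}$, where $\Gamma^{\x_j}$ is the group of $\E$-linear isometries of $\Lambda$ commuting with $\x_j$.

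\emph{Step 2 (the fixed lattice).} Let $N\sset\Lambda$ be the set of vectors fixed by $\x_j$; the restriction of $h$ to $N$ is $\Z$-valued and symmetric. In the coordinates of \eqref{eq-standard-form-on-E4,1}, $\x_j$ fixes $e_0,\dots,e_{4-j}$ and negates $e_{4-j+1},\dots,e_4$, so $\sum x_ke_k$ is fixed exactly when $x_k\in\E\cap\R=\Z$ for $k\leq4-j$ and $x_k\in\E\cap\R\theta=\Z\theta$ for $k>4-j$. Hence $N$ has $\Z$-basis $e_0,\dots,e_{4-j},\theta e_{4-j+1},\dots,\theta e_4$, and as $h(\theta e_k,\theta e_k)=\theta\bar\theta\,h(e_k,e_k)=3$ while the other basis vectors have norm $\pm1$, the Gram matrix of $N$ is $\diag(-1,1,\dots,1,3,\dots,3)$ with exactly $j$ entries equal to $3$; that is, $(N,h|_N)\isomorphism(\Z^5,\Psi_j)$. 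Moreover the identification of $N\otimes_\Z\R$ with the $\x_j$-fixed real form in $\C^{4,1}$ carries the hyperbolic space of $\Psi_j$ onto $H^4_j$, so the isomorphism we are building is geometric.

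\emph{Step 3 (the isomorphism).} Restriction $\gamma\mapsto\gamma|_N$ defines a homomorphism $\Gamma^{\x_j}\to\O(N)=\O(\Psi_j)$, injective because $N$ spans $\Lambda\otimes_\E\Q(\sqrt{-3})$ over $\Q(\sqrt{-3})$ (there $\theta$ is a unit). For surjectivity, extend a given $g\in\O(N)$ to the $\Q(\sqrt{-3})$-linear map $\tilde g$ on $\Lambda\otimes_\E\Q(\sqrt{-3})$; it commutes with $\x_j$ and preserves $h$ automatically, so the only issue is $\tilde g(\Lambda)=\Lambda$, and this follows from the identity
\[
\Lambda \;=\; \bigl(N\otimes_\Z\E\bigr)\;+\;\theta\,\bigl(N^\vee\otimes_\Z\E\bigr),
\]
where $N^\vee$ is the dual lattice of $N$: the first summand is $\bigoplus_{k\le4-j}\E e_k\oplus\bigoplus_{k>4-j}\E\,\theta e_k$, and since $N^\vee$ is spanned over $\Z$ by $e_0,\dots,e_{4-j}$ and $\tfrac13\theta e_{4-j+1},\dots,\tfrac13\theta e_4$, the second summand contains $\theta\cdot\tfrac13\theta e_k=\tfrac{\theta^2}{3}e_k=-e_k$ for $k>4-j$, supplying the missing generators. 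As every $g\in\O(N)$ preserves both $N$ and $N^\vee$, the map $\tilde g$ preserves both summands, hence $\Lambda$, so $\tilde g\in\Gamma^{\x_j}$ restricts to $g$. Combining with Steps~1--2, and noting that the scalars in $\O(\Psi_j)$ are $\{\pm I\}$, gives $\PGamma_j^\R\isomorphism\O(\Psi_j)/\{\pm I\}=\PO(\Psi_j)$.

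The step I expect to be the real obstacle is the surjectivity in Step~3: because $N\otimes_\Z\E$ is a proper sublattice of $\Lambda$ of index $3^j$, an isometry of $N$ does not \emph{a priori} extend to the finer lattice $\Lambda$, and what rescues the argument is the arithmetic accident that $3$ is, up to a unit, the square of $\theta$, so that the dual lattice $N^\vee$ --- which any orthogonal transformation respects automatically --- contributes precisely the vectors needed to reconstruct $\Lambda$. (One should also double-check the minor point in Step~1 that an element of the stabilizer can be normalized to commute with $\x_j$ on the nose, for which the input from Theorem~\ref{thm-classification-of-anti-involutions} is essential.)
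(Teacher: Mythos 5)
Your proposal follows essentially the same route as the paper: the paper's entire argument is its Lemma~\ref{lem-real-lattice-isometries-extend} (that every isometry of $\Lambda_j=\Lambda^{\x_j}$ extends to $\Lambda$), and its one-line proof is precisely your Step~3 identity, since $3(\Lambda_j)'=\Lambda_j\cap\theta\Lambda$ gives $\frac{1}{\theta}\cdot 3(\Lambda_j)'=\theta(\Lambda_j)'$ up to sign, and the $\E$-span of $\Lambda_j$ together with this is $\Lambda$. Your Step~2 is the same explicit coordinate computation. The paper leaves your Step~1 (stabilizer of $H^4_j$ reduces to the centralizer $\Gamma^{\x_j}$, up to scalars) implicit, so spelling it out is a useful addition.

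One small slip in Step~1: you invoke ``cubes of units,'' but the cubes in $\E^\times\cong\Z/6$ are $\{\pm1\}$, whereas what the argument actually requires is $u\in\{1,\w,\w^2\}$, the cube \emph{roots} of unity (equivalently, the image of $s\mapsto s\bar s^{-1}$). The correct dichotomy is: if $u\in\{1,\w,\w^2\}$ then $u\x_j$ is conjugate to $\x_j$ by a scalar, so $\gamma$ can be rescaled to commute with $\x_j$; if $u\in\{-1,-\w,-\w^2\}$ then $u\x_j$ is conjugate to $-\x_j$, which Theorem~\ref{thm-classification-of-anti-involutions} rules out. With ``cube of a unit'' read literally as $\{\pm1\}$ the dichotomy is false (e.g.\ $\w\x_j$ is conjugate to $\x_j$, not $-\x_j$), though your final conclusion and the rescaling step are still correct. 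Everything else checks out.
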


\noindent
The mnemonic is that $j$ of the coefficients of $\Psi_j$ are~$3$
rather than~$1$.  To prove the theorem, write
$\Lambda_j:=\Lambda^{\x_j}$ for the $\Z$-lattice of $\x_j$-invariant
vectors in $\Lambda$, so
$\Lambda_j=\Z^{5-j}\oplus\theta\Z^j\sset\E^5$.  The theorem now
follows from this lemma:

\begin{lemma}
\label{lem-real-lattice-isometries-extend}
For each $j$, every isometry of the $\Z$-lattice
$\Lambda_j$ is induced by an isometry of $\Lambda$.
\end{lemma}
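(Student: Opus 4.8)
The plan is to show that every isometry of the real lattice $\Lambda_j=\Z^{5-j}\oplus\theta\Z^j\subset\E^5$ extends to an $\E$-linear isometry of $\Lambda$, by exhibiting the Eisenstein lattice $\Lambda$ as the unique extension of $\Lambda_j$ inside $\Lambda_j\tensor_\Z\E$ that is compatible with the Hermitian form. The starting point is that $\Lambda=\Lambda_j\oplus\theta^{-1}\Lambda_j'$ in a suitable sense: more precisely, every vector of $\Lambda$ can be written as $a+\w b$ with $a,b$ in a $\Z$-span determined by $\Lambda_j$ and the $(-1)$-eigenlattice of $\x_j$, which is $\theta\Z^{5-j}\oplus\Z^j$ up to scaling; the key point is that the $\x_j$-eigenlattice decomposition recovers $\Lambda$ intrinsically from $\Lambda_j$ together with the rational structure. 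So the first step is to write down carefully, for each $j$, how $\Lambda$ sits inside $\Lambda_j\tensor\Q(\sqrt{-3})$: one has $\Lambda=\{x\in\Lambda_j\tensor\Q(\w) : \x_j(x)\in\Lambda_j\tensor\Q(\w) \text{ and } h(x,\Lambda_j)\subset\E, h(x,\theta^{-1}(\text{negated lattice}))\subset\E\}$, or some equally concrete description making $\Lambda$ manifestly the set of $\E$-points of an object attached to $\Lambda_j$.

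Next I would take an arbitrary $g\in\O(\Lambda_j)$ and extend it $\C$-linearly (equivalently $\E$-linearly after tensoring) to $g_\C$ on $\Lambda_j\tensor_\Z\E$. Since $g$ preserves the $\Z$-valued bilinear form on $\Lambda_j$ — which is, up to the scalings by $3$ recorded in $\Psi_j$, the real part of $h$ — the extension $g_\C$ preserves the Hermitian form $h$ on the ambient space, provided one checks it also commutes with multiplication by $\w$; that commutation is automatic for the $\E$-linear extension. So $g_\C$ is an $\E$-linear isometry of $\Lambda_j\tensor_\E\C$ preserving $h$. The remaining step is to verify $g_\C(\Lambda)=\Lambda$: since $\Lambda$ was described intrinsically from $\Lambda_j$ and the Hermitian form in the previous paragraph, and $g_\C$ preserves both $\Lambda_j$ and $h$, it preserves that description, hence preserves $\Lambda$. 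This $g_\C$ is then the desired isometry of $\Lambda$ inducing $g$ on $\Lambda_j=\Lambda^{\x_j}$ (it commutes with $\x_j$ because $g$ was real, i.e.\ it fixed the $\x_j$-invariant lattice and hence by $\E$-linearity intertwines the eigenspace decomposition correctly).

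The main obstacle is the intrinsic characterization of $\Lambda$ inside $\Lambda_j\tensor\Q(\w)$ — getting the dual-lattice bookkeeping exactly right so that the $\Z$-module $\Lambda$ is pinned down by data that an arbitrary element of $\O(\Lambda_j)$ is guaranteed to preserve. Concretely: $\Lambda_j$ together with its $\Z$-valued form does not remember $\Lambda$ unless one also remembers how $\theta$ acts, and one must check that the ``$\theta$-divisibility'' condition (which coordinates of a $\Lambda$-vector lie in $\theta\E$ versus merely in $\E$) is encoded in invariants of $(\Lambda_j, \text{bilinear form})$ alone — e.g.\ via the discriminant group $\Lambda_j'/\Lambda_j$ and the way $\Lambda$ interpolates between $\Lambda_j$ and $\theta^{-1}\Lambda_j$ on the negated part. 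For the diagonal forms $\Psi_j$ this is a short explicit check case by case (the glue vectors between the $1$-blocks and the $3$-blocks are essentially forced), so once the right framework is set up the verification is routine; I would organize it as a single computation valid for all $j$ simultaneously by treating the ``$1$-coordinates'' and ``$3$-coordinates'' uniformly.
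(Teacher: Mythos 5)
Your approach is conceptually the same as the paper's: pin down $\Lambda$ intrinsically from $\Lambda_j$ using dual-lattice data, then observe that any isometry $g\in\O(\Lambda_j)$ automatically preserves that intrinsic description, hence its $\E$-linear extension preserves $\Lambda$. But the single step that does all the work is exactly what you flag as ``the main obstacle'' and then leave undone, and your attempted concrete description of $\Lambda$ is not usable as written. You propose characterizing $\Lambda$ by conditions involving $\x_j$ and the negated lattice $\Lambda^{-\x_j}$; but a priori an isometry of $\Lambda_j$ only preserves $\Lambda_j$ and its $\Z$-bilinear form, not the negated lattice (which lives inside $\Lambda$, the very object whose invariance you are trying to prove). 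The characterization has to be phrased in terms of $(\Lambda_j,\Psi_j)$ alone, and that is where the content of the lemma actually sits.

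The paper's proof supplies the missing formula in one line: $\Lambda_j\cap\theta\Lambda = 3(\Lambda_j)'$, where the prime denotes the $\Z$-dual lattice. (Concretely: in the basis making the form $\Psi_j$, one has $\Lambda_j=\Z^{5-j}\oplus\theta\Z^j$, $(\Lambda_j)'=\Z^{5-j}\oplus\tfrac{\theta}{3}\Z^j$, and $\Lambda_j\cap\theta\Lambda=3\Z^{5-j}\oplus\theta\Z^j=3(\Lambda_j)'$.) Since any $g\in\O(\Lambda_j)$ preserves $(\Lambda_j)'$, it preserves $L:=3(\Lambda_j)'$, and the $\E$-linear extension $g_\C$ then preserves the $\E$-span of $\Lambda_j$ and $\tfrac{1}{\theta}L$ — and one checks directly that this $\E$-span is all of $\Lambda$. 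So the idea you had (``discriminant group / dual lattice bookkeeping'') is the right one, but the lemma is only proved once you write down this identity; calling it routine and deferring it leaves a genuine hole, and the paper's formulation also shows that no case-by-case argument is needed — it is a single uniform statement about $\Lambda_j\cap\theta\Lambda$.
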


\begin{proof}
One can check that the $\Z$-lattice $L:=\Lambda_j\cap\theta
\Lambda$ can be described in terms of $\Lambda_j$ alone as
$L=3(\Lambda_j)'$, where the prime denotes the dual
lattice.  Therefore every isometry of $\Lambda_j$ preserves
the $\E$-span of $\Lambda_j$ and $\frac{1}{\theta}L$, which
in each case is exactly $\Lambda$.
\end{proof}

Now we describe the $\PGamma^\R_j$ more geometrically; this is
interesting in its own right, and also necessary for when we allow our
cubic surfaces to have singularities
(section~\ref{sec-stable-moduli}).  Our description
relies on the good fortune that the subgroup $W_j$ generated by
reflections has index 1 or~2 in each case.  The $W_j$ are Coxeter
groups,  described in figures~\ref{fig-small-Coxeter-diagrams} and~\ref{fig-large-Coxeter-diagrams}
using an extension of the usual conventions for Coxeter diagrams,
which we now explain.  For background on Coxeter groups in this
context, see \cite{vinberg-discrete-linear-reflection-groups}.

Namely, the mirrors (fixed-point sets) of the reflections in $W_j$ chop $H_j^4$
into components, which $W_j$ permutes freely and transitively.
The closure of any one of these components is called a Weyl
chamber; we fix one and call it $C_j$.  Then $W_j$ is generated
by the reflections across the facets of $C_j$, and $C_j$ is a
fundamental domain in the strong sense that any point of $H_j^4$
is $W_j$-equivalent to a unique point of $C_j$.  We describe
$W_j$ by drawing its Coxeter diagram: its vertices (``nodes'') correspond to
the facets of $C_j$, which are joined by edges (``bonds'') that
are decorated according to  how facets meet each
other, using the following scheme:
\begin{equation}
\label{eq-bond-labeling-conventions}
\begin{tabular}{r@{\ \relax}c@{\relax}c@{\ }l@{\relax}}
no bond &
\begin{picture}(60,8)(-4,-3)
\smalldiagrams
\setlength{\unitlength}{1sp}
  \Ax=0pt    \Ay=0pt
  \Bx=\edgelengthD  \By=0pt
  \hollownode\Ax\Ay
  \hollownode\Bx\By
\end{picture}
&$\iff$&they meet orthogonally;\\
a single bond &
\begin{picture}(60,8)(-4,-3)
\smalldiagrams
\setlength{\unitlength}{1sp}
  \Ax=0pt    \Ay=0pt
  \Bx=\edgelengthD  \By=0pt
  \bond\Ax\Ay\Bx\By
  \hollownode\Ax\Ay
  \hollownode\Bx\By
\end{picture}
&$\iff$&their interior angle is $\pi/3$;\\
a double bond &
\begin{picture}(60,8)(-4,-3)
\smalldiagrams
\setlength{\unitlength}{1sp}
  \Ax=0pt    \Ay=0pt
  \Bx=\edgelengthD  \By=0pt
  \doublebond\Ax\Ay\Bx\By{0pt}{1pt}
  \hollownode\Ax\Ay
  \hollownode\Bx\By
\end{picture}
&$\iff$&their interior angle is $\pi/4$;\\
a triple bond &
\begin{picture}(60,8)(-4,-3)
\smalldiagrams
\setlength{\unitlength}{1sp}
  \Ax=0pt    \Ay=0pt
  \Bx=\edgelengthD  \By=0pt
  \triplebond\Ax\Ay\Bx\By{0pt}{1pt}
  \hollownode\Ax\Ay
  \hollownode\Bx\By
\end{picture}
&$\iff$&their interior angle is $\pi/6$;\\
a strong bond &
\begin{picture}(60,8)(-4,-3)
\smalldiagrams
\setlength{\unitlength}{1sp}
  \Ax=0pt    \Ay=0pt
  \Bx=\edgelengthD  \By=0pt
  \heavybond\Ax\Ay\Bx\By
  \hollownode\Ax\Ay
  \hollownode\Bx\By
\end{picture}
&$\iff$&they are parallel;\\
a weak bond &
\begin{picture}(60,8)(-4,-3)
\smalldiagrams
\setlength{\unitlength}{1sp}
  \Ax=0pt    \Ay=0pt
  \Bx=\edgelengthD  \By=0pt
  \dashedbond\Ax\Ay\Bx\By
  \hollownode\Ax\Ay
  \hollownode\Bx\By
\end{picture}
&$\iff$&they are ultraparallel.
\end{tabular}
\end{equation}
Parallel walls are those that do not meet in hyperbolic space
but do meet at the sphere at infinity.  Ultraparallel walls are
those that do not meet even at infinity.  

\setbox\Wzerobox=\hbox{%
\begin{picture}(0,0)%
\largediagrams
\setlength{\unitlength}{1sp}%
  \Ax=0pt    \Ay=0pt
  \Bx=\edgelengthD  \By=0pt
  \Cx=\edgelengthD  \Cy=0pt
  \Dx=\edgelengthD  \Dy=0pt
  \multiply\Cx by 2
  \multiply\Dx by 3
  \Ex=\edgelengthD  \Ey=-\edgelengthD
  \bond\Ax\Ay\Bx\By
  \bond\Bx\By\Cx\Cy
  \bond\Cx\Cy\Dx\Dy
  \doublebond\Bx\By\Ex\Ey{1pt}{0pt}%
  \twonode\Ax\Ay
  \twonode\Bx\By
  \twonode\Cx\Cy
  \twonode\Dx\Dy
  \hollownode\Ex\Ey
  \nearnode\Ax\Ay{0}{100}{0pt}{4pt}{b}{$r_5$}%
  \nearnode\Bx\By{-70}{-70}{-1pt}{0pt}{tr}{$r_3$}%
  \nearnode\Cx\Cy{0}{100}{0pt}{4pt}{b}{$r_2$}%
  \nearnode\Dx\Dy{0}{-100}{0pt}{-8pt}{t}{$r_1$}%
  \nearnode\Ex\Ey{100}{0}{2pt}{0pt}{l}{$r_4$}%
  \nearnode\Ax\Ay{0}{-140}{0pt}{0pt}{t}{$1,-1,-1,-1,0$}%
  \nearnode\Bx\By{0}{120}{0pt}{0pt}{b}{$0,0,0,1,-1$}%
  \nearnode\Cx\Cy{0}{-140}{0pt}{0pt}{t}{$0,0,1,-1,0$}%
  \nearnode\Dx\Dy{0}{120}{0pt}{0pt}{b}{$0,1,-1,0,0$}%
  \nearnode\Ex\Ey{-120}{0}{0pt}{0pt}{r}{$0,0,0,0,1$}%
  \UxD=\edgelengthD \UyD=-\edgelengthD
  \divide\UxD by 2
  \divide\UyD by 2
  \nearnode\UxD\UyD{0}{0}{0pt}{0pt}{c}{$W_0$}%
\end{picture}%
}
\setbox\Wonebox=\hbox{%
\begin{picture}(0,0)%
\largediagrams
\setlength{\unitlength}{1sp}%
%
%
%
%
%
  \Ax=     0pt  \Ay= 1.152pt
  \Bx=  .901pt  \By=  .718pt
  \Cx= 1.123pt  \Cy=- .256pt
  \Dx=  .5  pt  \Dy=-1.038pt
  \Ex=- .5  pt  \Ey=-1.038pt
  \Fx=-1.123pt  \Fy=- .256pt
  \Gx=- .901pt  \Gy=  .718pt
  \multiply\Ax by \edgelengthC
  \multiply\Ay by \edgelengthC
  \multiply\Bx by \edgelengthC
  \multiply\By by \edgelengthC
  \multiply\Cx by \edgelengthC
  \multiply\Cy by \edgelengthC
  \multiply\Dx by \edgelengthC
  \multiply\Dy by \edgelengthC
  \multiply\Ex by \edgelengthC
  \multiply\Ey by \edgelengthC
  \multiply\Fx by \edgelengthC
  \multiply\Fy by \edgelengthC
  \multiply\Gx by \edgelengthC
  \multiply\Gy by \edgelengthC
  \ABperpX=.434pt
  \ABperpY=.901pt
  \doublebond\Ax\Ay\Bx\By\ABperpX\ABperpY
  \doublebond\Ax\Ay\Gx\Gy{-\ABperpX}\ABperpY
  \bond\Bx\By\Cx\Cy
  \bond\Gx\Gy\Fx\Fy
  \heavybond\Dx\Dy\Ex\Ey
  \dashedbond\Cx\Cy\Dx\Dy
  \dashedbond\Ex\Ey\Fx\Fy
  \hollownode\Ax\Ay
  \twonode\Bx\By
  \twonode\Cx\Cy
  \threenode\Dx\Dy
  \threenode\Ex\Ey
  \twonode\Fx\Fy
  \twonode\Gx\Gy
  \nearnode\Ax\Ay{0}{-100}{0pt}{-8pt}{t}{$r_3$}%
  \nearnode\Bx\By{-70}{-70}{0pt}{0pt}{tr}{$r_6$}%
  \nearnode\Cx\Cy{-100}{0}{-2pt}{0pt}{r}{$r_5$}%
  \nearnode\Dx\Dy{-70}{70}{-2pt}{2pt}{br}{$r_4$}%
  \nearnode\Ex\Ey{70}{70}{0pt}{2pt}{bl}{$r_7$}%
  \nearnode\Fx\Fy{100}{0}{3pt}{0pt}{l}{$r_1$}%
  \nearnode\Gx\Gy{70}{-70}{3pt}{0pt}{tl}{$r_2$}%
  \nearnode\Ax\Ay{0}{100}{0pt}{0pt}{b}{$0,0,0,1,0$}%
  \nearnode\Bx\By{100}{0}{2pt}{0pt}{l}{$1,-1,-1,-1,0$}%
  \nearnode\Cx\Cy{100}{0}{2pt}{0pt}{l}{$1,0,0,0,-\theta$}%
  \nearnode\Dx\Dy{70}{-70}{0pt}{0pt}{tl}{$0,0,0,0,\theta$}%
  \nearnode\Ex\Ey{0}{-120}{0pt}{0pt}{t}{$3,-3,0,0,-\theta$}%
  \nearnode\Fx\Fy{-100}{0}{-2pt}{0pt}{r}{$0,1,-1,0,0$}%
  \nearnode\Gx\Gy{-100}{0}{-2pt}{0pt}{r}{$0,0,1,-1,0$}%
  \nearnode{0pt}{0pt}{0}{0}{0pt}{0pt}{c}{$W_1$}%
\end{picture}%
}
\setbox\Wtwobox=\hbox{%
\begin{picture}(0,0)%
\largediagrams
\setlength{\unitlength}{1sp}%
%
%
%
%
%
  \Ax=     0pt  \Ay= 1.152pt
  \Bx=  .901pt  \By=  .718pt
  \Cx= 1.123pt  \Cy=- .256pt
  \Dx=  .5  pt  \Dy=-1.038pt
  \Ex=- .5  pt  \Ey=-1.038pt
  \Fx=-1.123pt  \Fy=- .256pt
  \Gx=- .901pt  \Gy=  .718pt
  \multiply\Ax by \edgelengthC
  \multiply\Ay by \edgelengthC
  \multiply\Bx by \edgelengthC
  \multiply\By by \edgelengthC
  \multiply\Cx by \edgelengthC
  \multiply\Cy by \edgelengthC
  \multiply\Dx by \edgelengthC
  \multiply\Dy by \edgelengthC
  \multiply\Ex by \edgelengthC
  \multiply\Ey by \edgelengthC
  \multiply\Fx by \edgelengthC
  \multiply\Fy by \edgelengthC
  \multiply\Gx by \edgelengthC
  \multiply\Gy by \edgelengthC
  \ABperpX=.434pt
  \ABperpY=.901pt
  \BCperpX=.975pt
  \BCperpY=.223pt
  \CDperpX=.782pt
  \CDperpY=-.623pt
  \doublebond\Ax\Ay\Bx\By\ABperpX\ABperpY
  \doublebond\Ax\Ay\Gx\Gy{-\ABperpX}\ABperpY
  \triplebond\Bx\By\Cx\Cy\BCperpX\BCperpY
  \triplebond\Gx\Gy\Fx\Fy{-\BCperpX}\BCperpY
  \heavybond\Dx\Dy\Ex\Ey
  \doublebond\Cx\Cy\Dx\Dy\CDperpX\CDperpY
  \doublebond\Ex\Ey\Fx\Fy{-\CDperpX}\CDperpY
  \threenode\Ax\Ay
  \sixnode\Bx\By
  \twonode\Cx\Cy
  \hollownode\Dx\Dy
  \hollownode\Ex\Ey
  \twonode\Fx\Fy
  \sixnode\Gx\Gy
  \nearnode\Ax\Ay{0}{-100}{0pt}{-8pt}{t}{$r_4$}%
  \nearnode\Bx\By{-70}{-70}{0pt}{0pt}{tr}{$r_3$}%
  \nearnode\Cx\Cy{-100}{0}{-2pt}{0pt}{r}{$r_5$}%
  \nearnode\Dx\Dy{-70}{70}{-2pt}{2pt}{br}{$r_6$}%
  \nearnode\Ex\Ey{70}{70}{0pt}{2pt}{bl}{$r_2$}%
  \nearnode\Fx\Fy{100}{0}{3pt}{0pt}{l}{$r_1$}%
  \nearnode\Gx\Gy{70}{-70}{3pt}{0pt}{tl}{$r_7$}%
  \nearnode\Ax\Ay{0}{100}{0pt}{0pt}{br}  {$0,0,0,\theta,0$}%
  \nearnode\Bx\By{100}{0}{2pt}{0pt}{l}  {$0,0,0,-\theta,\theta$}%
  \nearnode\Cx\Cy{100}{0}{2pt}{0pt}{l}  {$1,0,0,0,-\theta$}%
  \nearnode\Dx\Dy{0}{-140}{6pt}{0pt}{tr}{$1,-1,-1,0,0$}%
  \nearnode\Ex\Ey{-80}{-40}{-4pt}{0pt}{r}{$0,0,1,0,0$}%
  \nearnode\Fx\Fy{-100}{0}{-2pt}{0pt}{r} {$0,1,-1,0,0$}%
  \nearnode\Gx\Gy{-100}{0}{-2pt}{0pt}{r} {$3,-3,0,-\theta,-\theta$}%
  \nearnode{0pt}{0pt}{0}{0}{0pt}{0pt}{c}{$W_2$}%
\end{picture}%
}
\setbox\Wthreebox=\hbox{%
\begin{picture}(0,0)%
\largediagrams
\setlength{\unitlength}{1sp}%
%
  \Cx=-.851pt  \Cy= 0    pt
  \Dx=-.263pt  \Dy=- .809pt
  \Ex= .688pt  \Ey=- .5pt
  \Fx= .688pt  \Fy=  .5pt
  \Gx=-.263pt  \Gy=  .809pt
  \multiply\Cx by \edgelengthC
  \multiply\Cy by \edgelengthC
  \multiply\Dx by \edgelengthC
  \multiply\Dy by \edgelengthC
  \multiply\Ex by \edgelengthC
  \multiply\Ey by \edgelengthC
  \multiply\Fx by \edgelengthC
  \multiply\Fy by \edgelengthC
  \multiply\Gx by \edgelengthC
  \multiply\Gy by \edgelengthC
  \Bx=\Cx  \By=0pt
  \advance\Bx by -\edgelengthD
  \Ax=\Bx  \Ay=0pt
  \advance\Ax by -\edgelengthD
  \CDperpX=.809pt
  \CDperpY=.588pt
  \triplebond\Ax\Ay\Bx\By{0pt}{1pt}%
  \bond\Bx\By\Cx\Cy
  \doublebond\Cx\Cy\Dx\Dy\CDperpX\CDperpY
  \dashedbond\Dx\Dy\Ex\Ey
  \heavybond\Ex\Ey\Fx\Fy
  \dashedbond\Fx\Fy\Gx\Gy
  \bond\Gx\Gy\Cx\Cy
  \twonode\Ax\Ay
  \sixnode\Bx\By
  \sixnode\Cx\Cy
  \threenode\Dx\Dy
  \hollownode\Ex\Ey
  \hollownode\Fx\Fy
  \sixnode\Gx\Gy
  \nearnode\Ax\Ay{0}{-100}{0pt}{-8pt}{t}{$r_5$}%
  \nearnode\Bx\By{0}{100}{-5pt}{4pt}{b}{$r_2$}%
  \nearnode\Cx\Cy{100}{0}{3pt}{0pt}{l}{$r_3$}%
  \nearnode\Dx\Dy{0}{100}{4pt}{3pt}{b}{$r_4$}%
  \nearnode\Ex\Ey{-70}{70}{-2pt}{0pt}{br}{$r_7$}%
  \nearnode\Fx\Fy{-70}{-70}{0pt}{0pt}{tr}{$r_1$}%
  \nearnode\Gx\Gy{0}{-100}{4pt}{-8pt}{t}{$r_6$}%
  \nearnode\Ax\Ay{0}{140}{0pt}{0pt}{b}{$1,0,0,0,-\theta$}%
  \nearnode\Bx\By{0}{-120}{0pt}{0pt}{t}{$0,0,0,-\theta,\theta$}%
  \nearnode\Cx\Cy{0}{140}{4pt}{-3pt}{br}{$0,0,-\theta,\theta,0$}%
  \nearnode\Dx\Dy{0}{-120}{0pt}{-10pt}{t}{$0,0,\theta,0,0$}%
  \nearnode\Ex\Ey{0}{-120}{-5pt}{-10pt}{tl}{$3,-1,-\theta,-\theta,-\theta$}%
  \nearnode\Fx\Fy{100}{0}{3pt}{0pt}{l}{$0,1,0,0,0$}%
  \nearnode\Gx\Gy{0}{140}{0pt}{-2pt}{bl}{$3,-3,0,-\theta,-\theta$}%
  \nearnode{0pt}{0pt}{0}{0}{0pt}{0pt}{c}{$W_3$}%
\end{picture}%
}
\setbox\Wfourbox=\hbox{%
\begin{picture}(0,0)%
\largediagrams
\setlength{\unitlength}{1sp}%
  \Ax=0pt    \Ay=0pt
  \Bx=\edgelengthD  \By=0pt
  \Cx=\edgelengthD  \Cy=0pt
  \Dx=\edgelengthD  \Dy=0pt
  \Ex=\edgelengthD  \Ey=0pt
  \Fx=\edgelengthD  \Fy=0pt
  \multiply\Cx by 2
  \multiply\Dx by 3
  \multiply\Ex by 4
  \multiply\Fx by 5
  \triplebond\Ax\Ay\Bx\By{0pt}{1pt}%
  \bond\Bx\By\Cx\Cy
  \bond\Cx\Cy\Dx\Dy
  \doublebond\Dx\Dy\Ex\Ey{0pt}{1pt}%
  \heavybond\Ex\Ey\Fx\Fy
  \twonode\Ax\Ay
  \sixnode\Bx\By
  \sixnode\Cx\Cy
  \sixnode\Dx\Dy
  \threenode\Ex\Ey
  \threenode\Fx\Fy
  \nearnode\Ax\Ay{0}{100}{0pt}{4pt}{b}{$r_5$}%
  \nearnode\Bx\By{0}{-100}{0pt}{-8pt}{t}{$r_1$}%
  \nearnode\Cx\Cy{0}{100}{0pt}{4pt}{b}{$r_2$}%
  \nearnode\Dx\Dy{0}{-100}{0pt}{-8pt}{t}{$r_3$}%
  \nearnode\Ex\Ey{0}{100}{0pt}{4pt}{b}{$r_4$}%
  \nearnode\Fx\Fy{0}{-100}{0pt}{-8pt}{t}{$r_6$}%
  \nearnode\Ax\Ay{0}{-140}{0pt}{0pt}{t}{$1,0,0,0,-\theta$}%
  \nearnode\Bx\By{0}{120}{0pt}{0pt}{b}{$0,0,0,-\theta,\theta$}%
  \nearnode\Cx\Cy{0}{-140}{0pt}{0pt}{t}{$0,0,-\theta,\theta,0$}%
  \nearnode\Dx\Dy{0}{120}{0pt}{0pt}{b}{$0,-\theta,\theta,0,0$}%
  \nearnode\Ex\Ey{0}{-140}{0pt}{0pt}{t}{$0,\theta,0,0,0$}%
  \nearnode\Fx\Fy{0}{120}{0pt}{0pt}{b}{$3,-\theta,-\theta,-\theta,-\theta$}%
  \UxD=\edgelengthD
  \divide\UxD by 2
  \nearnode\UxD\UxD{0}{0}{0pt}{0pt}{c}{$W_4$}%
\end{picture}%
}
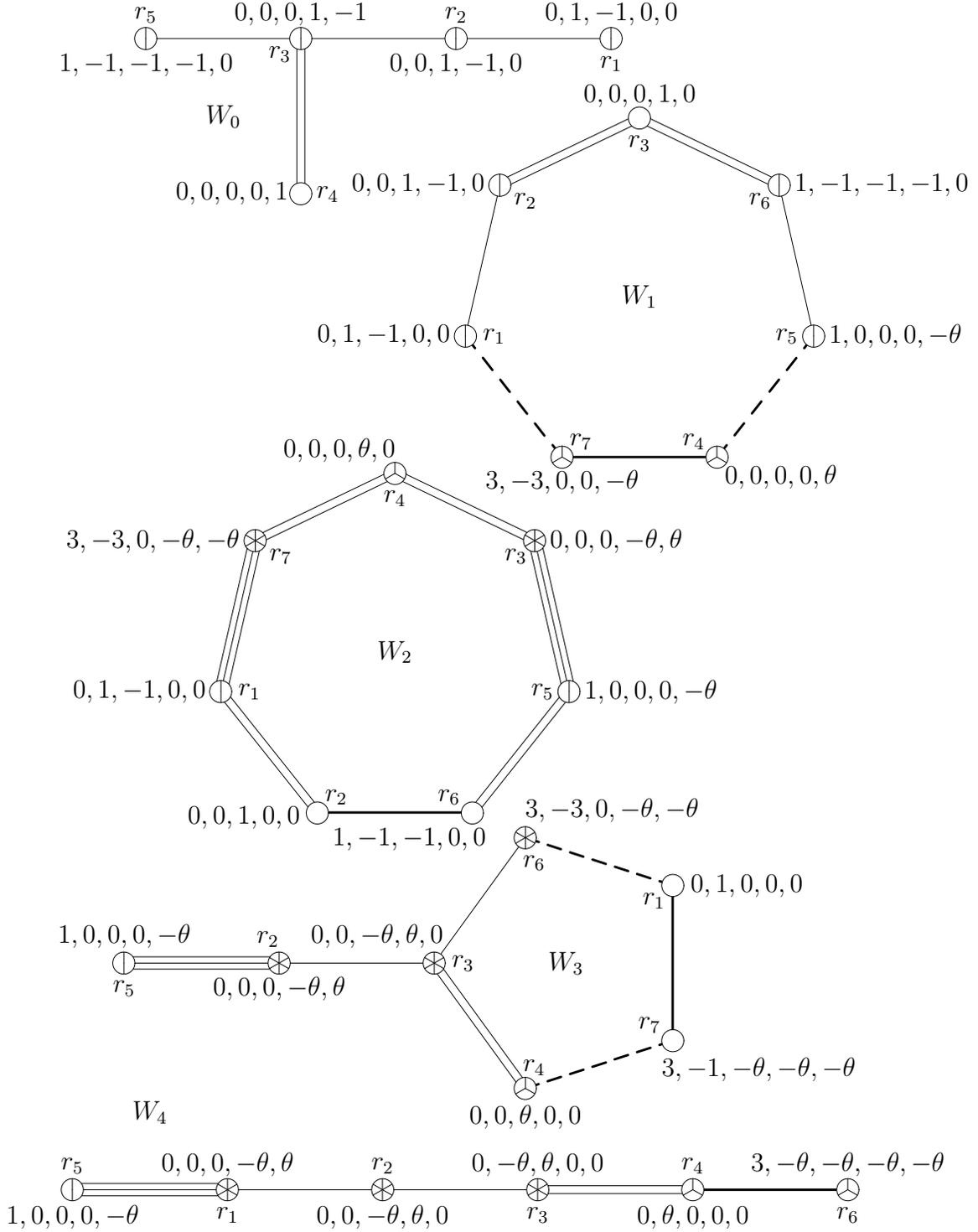
\begin{figure}[p]
\setlength{\unitlength}{1bp}
\def\LLx{0}
\def\LLy{0}
\def\width{359}
\def\height{552}
\begin{picture}(\width,\height)(\LLx,\LLy)
\put(38,536){\unhbox\Wzerobox}
\put(260,420){\unhbox\Wonebox}
\put(150,260){\unhbox\Wtwobox}
\put(227,120){\unhbox\Wthreebox}
\put(5,18){\unhbox\Wfourbox}
\end{picture}
\caption{Simple roots for the $W_j$.}
\label{fig-large-Coxeter-diagrams}
\end{figure}

Note that
the diagram for $W_j$ admits a symmetry for $j=1$ or~$2$; this
represents an isometry of $C_j$.  We now state the main theorem of this section.
\begin{theorem}
\label{thm-H4-stabilizers-in-terms-of-diagrams}
$\PGamma_j^\R$ is the semidirect product of its reflection subgroup
$W_j$, given in figure~\ref{fig-small-Coxeter-diagrams} and in more
detail in figure~\ref{fig-large-Coxeter-diagrams}, by the group of diagram automorphisms, which
is $\Z/2$ if $j=1$ or~$2$ and trivial otherwise.
\end{theorem}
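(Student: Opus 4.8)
The plan is to deduce everything from theorem~\ref{thm-arithmetic-description-of-groups}, which identifies $\PGamma_j^\R$ with the integral orthogonal group $\PO(\Psi_j;\Z)$, together with Vinberg's algorithm. Let $W_j$ be the subgroup of $\PGamma_j^\R$ generated by \emph{all} of its reflections. Since a conjugate $grg^{-1}$ of the reflection $r$ in a hyperplane $H\subset H^4_j$ is the reflection in $g(H)$, the subgroup $W_j$ is normal in $\PGamma_j^\R$. The mirrors of the reflections in $W_j$ partition $H^4_j$ into chambers on which $W_j$ acts simply transitively; fix one such chamber $C_j$. Then $\PGamma_j^\R=W_j\rtimes\mathrm{Stab}(C_j)$, where $\mathrm{Stab}(C_j)$ is the stabilizer of $C_j$ in $\PGamma_j^\R$: for $g\in\PGamma_j^\R$ there is a unique $w\in W_j$ with $wg(C_j)=C_j$, whence $wg\in\mathrm{Stab}(C_j)$, and $W_j\cap\mathrm{Stab}(C_j)=\{1\}$ since $W_j$ acts freely on chambers. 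It thus remains to (i) determine $W_j$ and a chamber $C_j$ explicitly, and (ii) identify $\mathrm{Stab}(C_j)$ with $\Aut$ of the Coxeter diagram.

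For (i) I would run Vinberg's algorithm on each lattice $(\Z^5,\Psi_j)$ from the $\Psi_j$-negative controlling vector $v_0=(1,0,0,0,0)$. Writing $x\cdot y$ for the bilinear form polarizing $\Psi_j$, one enumerates primitive vectors $r$ with $\Psi_j(r)>0$ and $\frac{2\,(r\cdot x)}{\Psi_j(r)}\in\Z$ for all $x\in\Z^5$ (so the reflection in $r^\perp$ preserves $\Z^5$), in order of increasing $(r\cdot v_0)^2/\Psi_j(r)$, retaining $r$ exactly when $r\cdot r_i\le0$ for every root $r_i$ retained so far. The output is the list of simple roots in figure~\ref{fig-large-Coxeter-diagrams}, and their pairwise inner products produce the decorated diagrams of figure~\ref{fig-small-Coxeter-diagrams}.

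The main obstacle is proving the algorithm terminates, i.e.\ that the polyhedron $C_j$ cut out by the hyperplanes just found has finite hyperbolic volume; only then does one know that no further walls occur, that $W_j$ is the full reflection subgroup of $\PGamma_j^\R$, and that $C_j$ is a strict fundamental domain for it. I would dispose of this by Vinberg's finiteness criterion, which reduces the question to a finite check on the diagrams of figure~\ref{fig-small-Coxeter-diagrams}: using the classification of finite (elliptic) and affine Coxeter diagrams, one verifies that every edge of $C_j$ ends, at each of its two ends, in an ordinary vertex or an ideal vertex. Granting this, $C_j$ is a finite-volume Coxeter polyhedron, $W_j$ is its reflection group, and $\PGamma_j^\R=W_j\rtimes\mathrm{Stab}(C_j)$.

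Finally, for (ii): any $g\in\mathrm{Stab}(C_j)$ permutes the facets of $C_j$ preserving dihedral angles, hence induces an automorphism of the Coxeter diagram, and the homomorphism $\mathrm{Stab}(C_j)\to\Aut(\text{diagram})$ is injective, because an element inducing the trivial automorphism fixes every facet of $C_j$, hence every bounding hyperplane and its inward normal, and the facet normals of a finite-volume polyhedron in $H^4$ span $\R^5$, forcing $g=\mathrm{id}$. For $j=0,3,4$, inspection of figure~\ref{fig-small-Coxeter-diagrams} shows the decorated diagram has no nontrivial automorphism, so $\mathrm{Stab}(C_j)=\{1\}$ and $\PGamma_j^\R=W_j$. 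For $j=1,2$ the diagram has an obvious order-two symmetry; I would realize it by an explicit element of $\O(\Psi_j;\Z)$ — a sign change and permutation among the coordinates on which $\Psi_j$ has coefficient $1$, together if necessary with a sign change on one of the coefficient-$3$ coordinates — and check against figure~\ref{fig-large-Coxeter-diagrams} that it permutes the simple roots in accordance with that symmetry. Hence $\mathrm{Stab}(C_j)\cong\Z/2$ for $j=1,2$, completing the proof.
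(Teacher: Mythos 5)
Your proposal follows essentially the same route as the paper: decompose $\PGamma_j^\R$ as $W_j\rtimes\mathrm{Stab}(C_j)$ using normality of the full reflection subgroup, run Vinberg's algorithm from the controlling vector $(1,0,0,0,0)$ and verify termination by the finite-volume criterion, and then identify $\mathrm{Stab}(C_j)$ with the diagram automorphism group. Your explicit injectivity argument (facet normals span $\R^5$) is a nice touch that the paper leaves implicit.

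One concrete detail needs repair. For $j=1,2$ the diagram symmetry is \emph{not} realized by a signed permutation of coordinates. Looking at the $W_1$ roots in figure~\ref{fig-large-Coxeter-diagrams}, the automorphism must send, e.g., $r_6=(1,-1,-1,-1,0)$ to $r_2=(0,0,1,-1,0)$ and $r_4=(0,0,0,0,\theta)$ to $r_7=(3,-3,0,0,-\theta)$, which no monomial matrix does. The cleaner route — and the one the paper takes — is to observe that the simple roots of $W_j$ span $\Lambda_j$ over $\Z$ (easy to check from the explicit coordinates), so the linear map determined by the diagram permutation of simple roots automatically preserves $\Lambda_j$ and its form; lemma~\ref{lem-real-lattice-isometries-extend} then promotes it to an element of $\PGamma$, hence of $\PGamma_j^\R$. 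You can still get an explicit matrix if you want one, but by solving the linear system forced by the action on simple roots rather than guessing a signed permutation.
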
 

The rest of the section is devoted to the proof.  For the most part
the argument is uniform in $j$, so we will write $H$ for
$H_j^4=H_{\x_j}^4$, $W$ for $W_j$, $\x$ for $\x_j$ and $C$ for $C_j$.
We will write $\Lambda^\x$ for $\Lambda_j=\Lambda^{\x_j}$.
We call $r\in\Lambda^\x$ a root of $\Lambda^\x$ if it is primitive,
has positive norm, and the reflection in it, 
$$
x\mapsto x-2\frac{x\cdot r}{r^2}r,
$$ preserves $\Lambda^\x$.  It is easy to say what the roots
are: 

\begin{lemma}
\label{lem-what-the-roots-are}
Suppose $r\in\Lambda^\x$ is primitive in $\Lambda^\x$ and has positive
norm.  Then $r$ is a root of $\Lambda^\x$ if and only if either
$r^2\in\{1,2\}$ or else $r^2\in\{3,6\}$ and $r\in3(\Lambda^\x)'$,
where the prime denotes the dual lattice.
\end{lemma}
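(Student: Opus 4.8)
The plan is to translate the statement ``$r$ is a root of $\Lambda^\x$'' into an arithmetic divisibility condition and then exploit the fact that the discriminant group of $\Lambda^\x=\Lambda_j$ is elementary abelian of exponent $3$. First I would reformulate the root condition: for $r\in\Lambda^\x$ primitive of positive norm, the reflection $R_r(x)=x-2\frac{r\cdot x}{r^2}r$ carries $\Lambda^\x$ into itself if and only if $2\frac{r\cdot x}{r^2}\in\Z$ for every $x\in\Lambda^\x$; here the primitivity of $r$ is exactly what forces the coefficient of $r$ in $R_r(x)$ to be an integer whenever $R_r(x)\in\Lambda^\x$. Let $d>0$ be the generator of the ideal $r\cdot\Lambda^\x\subseteq\Z$ (it is nonzero because the form is nondegenerate and $r\neq0$). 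Then $R_r$ preserves $\Lambda^\x$ precisely when $r^2\mid 2d$. I would also record here that $d=3$ exactly when $r/3\in(\Lambda^\x)'$, i.e. when $r\in3(\Lambda^\x)'$, and $d=1$ otherwise --- \emph{once} one knows that $d\mid3$.

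That bound, $d\mid3$, is the crux. It follows from the inclusion $3(\Lambda^\x)'\subseteq\Lambda^\x$, which is immediate from the explicit diagonal form $\Psi_j$ (each $\langle 1\rangle$ and $\langle -1\rangle$ summand is unimodular, and each $\langle 3\rangle$ summand satisfies $3\langle 3\rangle'=\langle 3\rangle$), or alternatively from the identity $3(\Lambda^\x)'=\Lambda^\x\cap\theta\Lambda$ noted in the proof of Lemma~\ref{lem-real-lattice-isometries-extend}. Since $r/d\in(\Lambda^\x)'$ by definition of $d$, this gives $3r/d\in\Lambda^\x$; writing $g=\gcd(3,d)$ and using that $3/g$ and $d/g$ are coprime together with the primitivity of $r$, one obtains $r\in(d/g)\Lambda^\x$, hence $d/g=1$ and $d=g\mid3$. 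Thus $d\in\{1,3\}$.

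Finally I would assemble the two directions. If $d=1$, the criterion $r^2\mid 2d$ reads $r^2\in\{1,2\}$; if $d=3$, it reads $r^2\in\{1,2,3,6\}$. Hence every root has $r^2\in\{1,2,3,6\}$, and if $r^2\in\{3,6\}$ then $r^2\nmid2$ forces $d\neq1$, so $d=3$, i.e. $r\in3(\Lambda^\x)'$; this is the ``only if''. Conversely, if $r^2\in\{1,2\}$ then $r^2\mid2\mid2d$ whatever $d$ is, and if $r^2\in\{3,6\}$ with $r\in3(\Lambda^\x)'$ then $d=3$ and $r^2\mid6=2d$; in either case $R_r$ preserves $\Lambda^\x$. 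I expect the main obstacle to be precisely the bound $d\le3$ --- equivalently, ruling out roots of norm $4$ or $5$ --- since that is exactly where the $3$-elementary structure of the discriminant group of $\Lambda_j$ is essential, and the analogous statement fails for lattices whose discriminant groups contain elements of larger order.
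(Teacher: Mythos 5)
Your proof is correct and takes essentially the same route as the paper: bound the generator $d$ of $r\cdot\Lambda^\x$ by $3$ using the $3$-elementary property together with primitivity, rewrite the root condition as $r^2\mid 2d$, and check cases. You have simply spelled out the steps that the paper's proof compresses (the Bezout argument for $d\mid 3$, and the final case-check that the paper dismisses as ``easy to see'').
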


\begin{remark}
  Norm~3 and~6 roots are really just norm~1 and~2 roots of $\Lambda$
  in disguise.  They are primitive in $\Lambda^\x$ but divisible by
  $\theta$ in $\Lambda$, and occur when $\x$ negates rather than
  preserves a norm~1 or~2 vector of $\Lambda$.
\end{remark}

\begin{proof}
 Because $\Lambda^\x$ is 3-elementary
(the quotient by its dual lattice is an elementary abelian
3-group), any primitive $r\in\Lambda^\x$ has $3\Z\sset
r\cdot\Lambda^\x$.  If $r$ is also a root then
$r\cdot\Lambda^\x\sset\frac{1}{2}r^2\Z$, so $r^2|6$.  It is obvious
that every norm 1 or~2 vector is a root, and it is easy to see that a
norm 3 or~6 vector is a root if and only if it lies in
$3(\Lambda^\x)'$. 
\end{proof}

Given some roots $r_1,\dots,r_n$ of
$\Lambda^\x$ whose inner products are non-positive, their polyhedron is
defined to be a particular one of the regions bounded by the
hyperplanes $r_i^\perp$, namely the image in $H$ of
$$
\set{v\in \Lambda^\x\tensor\R}{\text{$v^2<0$ and $v\cdot
r_i\leq0$ for $i=1,\dots,n$}}\;.
$$ A set of simple roots for $W$ is a set of roots of $\Lambda^\x$
whose pairwise inner products are non-positive and whose polyhedron is
a Weyl chamber $C$.  Vinberg's algorithm \cite{vinberg-algorithm} seeks  a set of
simple roots for $W$.  We briefly outline how we use this algorithm.

First one chooses a vector $k$ (the ``controlling vector'')
representing a point $p$ of $H$.  We choose $k=(1,0,0,0,0)$, which
conveniently lies in all the $\Lambda^{\x_j}$.  Second, one considers
the finite subgroup of $W$ generated by the reflections in $W$
that fix $p$.  These are the
reflections in the roots of $\Lambda^\x$ that are orthogonal to $k$.

In each case it is easy to enumerate these roots, recognize the finite
Weyl group generated by their reflections, and extract a set of simple
roots for this finite group.  For example, for $j=2$ the roots are
$(0,\pm1,0,0,0)$, $(0,0,\pm1,0,0)$, $(0,0,0,\pm\theta,0)$,
$(0,0,0,0,\pm\theta)$, $(\breakok0,\breakok\pm1,\breakok\pm1,\breakok0,\breakok0)$ and
$(\breakok0,\breakok0,\breakok0,\breakok\pm\theta,\breakok\pm\theta)$, the finite Weyl group has type
$B_2\times B_2$, and a set of simple roots is $(0,1,-1,0,0)$,
$(0,0,1,0,0)$, $(0,0,0,\bar\theta,\theta)$ and $(0,0,0,\theta,0)$.  In
each of the 5 cases we called the simple roots $r_1,\dots,r_4$, and
they can be found in figure~\ref{fig-large-Coxeter-diagrams}.  

%
\newbox\normonebox
\setbox\normonebox=\hbox{\bigtextdot{
\begin{picture}(0,0)
\largediagrams
\setlength{\unitlength}{1sp}
\Ax=0pt
\Ay=0pt
\hollownode\Ax\Ay
\end{picture}
}}
%
\newbox\normtwobox
\setbox\normtwobox=\hbox{\bigtextdot{
\begin{picture}(0,0)
\largediagrams
\setlength{\unitlength}{1sp}
\Ax=0pt
\Ay=0pt
\twonode\Ax\Ay
\end{picture}
}}
%
\newbox\normthreebox
\setbox\normthreebox=\hbox{\bigtextdot{
\begin{picture}(0,0)
\largediagrams
\setlength{\unitlength}{1sp}
\Ax=0pt
\Ay=0pt
\threenode\Ax\Ay
\end{picture}
}}
%
\newbox\normsixbox
\setbox\normsixbox=\hbox{\bigtextdot{
\begin{picture}(0,0)
\largediagrams
\setlength{\unitlength}{1sp}
\Ax=0pt
\Ay=0pt
\sixnode\Ax\Ay
\end{picture}
}}
In that figure, a node
indicated by 
{\unhbox\normonebox} 
(resp. 
{\unhbox\normtwobox},
{\unhbox\normthreebox},
{\unhbox\normsixbox}) 
represents a root of
norm 1 (resp. 2, 3, 6).  The mnemonic is that the norm of the
root is the number of white regions in the symbol. Nodes are
joined according to \eqref{eq-bond-labeling-conventions}.

Next, one orders the mirrors of $W$ that miss $p$ according to their
``priority'', where the priority is any decreasing function of the
distance to $p$.  
The iterative step in Vinberg's algorithm is to consider all roots of
a given priority $p$, and suppose that previous batches have
enumerated all simple roots of higher priority.  Batch $0$ has already
been defined.  We discard those roots of priority $p$ that have
positive inner product with some simple root of a previous batch.
Those that remain are simple roots and form the current batch.  If the
polyhedron $P$ defined by our newly-enlarged set of simple roots has
finite volume then the algorithm terminates.  Otherwise, we proceed to
the next batch.  The finite-volume condition can be checked using a
criterion of Vinberg \cite[p.~22]{vinberg} on the simple roots.  There
is no guarantee that the algorithm will terminate, but if it does then
the roots obtained (the union of all the batches) form a set of simple
roots for $W$.  The algorithm terminates in all cases, with simple
roots given in figure~\ref{fig-large-Coxeter-diagrams}.

Now we can finish the proof of
theorem~\ref{thm-H4-stabilizers-in-terms-of-diagrams}, which describes
$\PGamma_j^\R$ as the semidirect product of $W_j$ by its group of
diagram automorphisms.  $W_j$ is obviously a normal subgroup of
$\PGamma_j^\R$.  It follows that $\PGamma_j^\R$ is the semidirect
product of $W_j$ by the subgroup of $\PGamma_j^\R$ that carries $C_j$
to itself.  In cases $j=0$, $3$ and $4$, $C_j$ has no symmetry, so
$\PGamma_j^\R=W_j$ as claimed.  In the remaining cases all we have to
do is check is that the nontrivial diagram automorphism $\gamma$ lies
in $\PGamma_j^\R$.  In each case, the simple roots span $\Lambda_j$,
and $\gamma$ preserves their norms and inner products.  So $\gamma\in\PGamma_j^\R$ by
lemma~\ref{lem-real-lattice-isometries-extend}.

\section{The discriminant in the real moduli space}
\label{sec-discr-and-integral-H4s}

Theorem~\ref{thm-isomorphism-for-smooth} identifies the moduli space
$G^\R\backslash\framed_0^{\,\R}$ of smooth framed real cubics with the
incomplete hyperbolic manifold $K_0$, which is the disjoint union of
the $H^4_\x-\H$.  Here $\x$ varies over the set $P\A$ of  projective classes of 
anti-involutions of $\Lambda$, as in \S\ref{subsec-framed-smooth-real-cubic-surfaces-anti-involutions},  and $\H$ is the locus in $\ch^4$
representing the singular cubic surfaces, defined in \S\ref{subsec-main-theorem-smooth-complex-case}.  For a
concrete understanding of $K_0$ we need to understand how $\H$ meets
the various $H^4_\x$'s.  Since $\H$ is the union of
the orthogonal complements $r^\perp$ of the norm~1 vectors $r$ of
$\Lambda$, we will study how such an $r^\perp$ can meet one of the $H^4_\x$'s.   We will call a component $r^\perp$ of $\H$ a {\it discriminant mirror}.

If $\x$ is an anti-involution of $\Lambda$, then one way $H^4_\x$ can
meet $r^\perp$ is if $\x(r)=\pm r$; then $H^4_\x\cap r^\perp$ is a
copy of $H^3$.  But a more complicated intersection can occur; to
describe it we need the idea of a $G_2$ root system in $\Lambda^\x$.
As in section~\ref{sec-H4-stabilizers},  a root of $\Lambda^\x$ means a norm~1 or~2
vector of $\Lambda^\x$, or a norm~3 or~6 vector of $\Lambda^\x$ that
is divisible in $\Lambda$ by $\theta$.  By a $G_2$ root system in
$\Lambda^\x$ we mean a set of six roots of norm~2 and six roots of
norm~6, all lying in a two-dimensional sublattice of $\Lambda^\x$.
Such a set of vectors automatically forms a copy of what is commonly
known as the $G_2$ root system.  The reason these root systems are
important is that each $G_2$ root system $R$ in $\Lambda^\x$
determines an isometric copy of $\E^2$ in $\Lambda$, and hence two discriminant mirrors.
The $\E^2$ is just $\Lambda\cap\left(\langle
  R\rangle\tensor_\Z\C\right)$.  To
see this, introduce coordinates on the complex span of $R$, in which
$R$ consists of the vectors obtained by permuting the coordinates of $(1,-1,0)$ and $\pm(2,-1,-1)$ in the space 
$$
\C^2=\{(x,y,z)\in\C^3:x+y+z=0\}\;,
$$
with the usual metric.
Since $\Lambda$ contains $\frac{1}{\theta}$ times the norm~6 roots, it
also contains
\begin{equation}
\label{eq-roots-from-g2}
\begin{split}
r_1&=\frac{1}{\theta}(2,-1,-1)+\w(1,-1,0)=-\frac{1}{\theta}(\w,\wbar,1)
\rlap{\quad and}\\
r_2&=-\frac{1}{\theta}(2,-1,-1)+\wbar(1,-1,0)=\frac{1}{\theta}(\wbar,\w,1)\;.
\end{split}
\end{equation}
These have norm~1 and are orthogonal, so they span a copy of
$\E^2$.  Observe also that $\x$ exchanges the $r_i$, and that
each of the discriminant mirrors $r_i^\perp$ meets $H^4_\x$ in the same $H^2$,
namely $H^4_\x\cap R^\perp$. 

The following lemma asserts that these are the only ways that $H^4_\x$ can meet $\H$.   In terms of cubic surfaces, the first possibility parametrizes surfaces with a  real node, while the second parametrizes surfaces with a complex conjugate pair of nodes.

\begin{lemma}
\label{lem-intersections-of-mirrors-andH4s}
Suppose $\x$ is an  anti-involution of $\Lambda$ and $M$ is a
discriminant mirror with $M\cap H^4_\x\neq\emptyset$.  Then either
\begin{enumerate}
\item
\label{case-H3}
$M\cap H^4_\x$ is a copy of $H^3$, namely $H^4_\x\cap r^\perp$ for
a root $r$ of $\Lambda^\x$ of norm~$1$ or~$3$, or
\item
\label{case-H2}
$M\cap H^4_\x$ is a copy of $H^2$, namely $H^4_\x \cap R^\perp$ for a  $G_2$ root system $R$ in $\Lambda^\x$.
\end{enumerate}
Conversely, if $r$ is a root of norm~$1$ or~$3$ in $\Lambda^\x$ (resp. $R$ is a
$G_2$ root system in $\Lambda^\x$), then $H^4_\x\cap r^\perp$
(resp. $H^4_\x\cap R^\perp$)  is the intersection of $H^4_\x$
with some discriminant mirror.  
\end{lemma}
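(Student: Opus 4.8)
The plan is to dispatch the converse direction quickly — it is essentially contained in the discussion preceding the lemma — and then concentrate on the forward direction, which I would organize as a dichotomy on how the anti-involution $\x$ moves the norm~$1$ vector $r$ defining the discriminant mirror $M=r^\perp$.

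For the converse: if $r$ is a norm~$1$ root of $\Lambda^\x$ it is in particular a norm~$1$ vector of $\Lambda$, so $r^\perp$ is itself a discriminant mirror and $r^\perp\cap H^4_\x$ is manifestly the copy of $H^3$ cut out by $r$ inside the signature $(3,1)$ real form $r^\perp\cap(\Lambda^\x\tensor\R)$; if $r$ is a norm~$3$ root, write $r=\theta s$ with $s$ a norm~$1$ vector of $\Lambda$ and note $r^\perp=s^\perp$; if $R$ is a $G_2$ root system, formula~\eqref{eq-roots-from-g2} already produces norm~$1$ vectors $r_1,r_2$ of $\Lambda$ with $r_i^\perp\cap H^4_\x=R^\perp\cap H^4_\x$, a copy of $H^2$. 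Each of these takes a sentence.

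For the forward direction I would fix a point $\ell\in M\cap H^4_\x$, i.e.\ a $\x$-fixed negative line orthogonal to $r$; applying $\x$ shows $\ell\perp\x r$ as well, so $\ell\perp W:=\langle r,\x r\rangle_\C$, while $W^\perp\sset r^\perp$ gives the reverse inclusion, so $M\cap H^4_\x=H^4_\x\cap W^\perp$ (and hence $W$ is positive definite, since $W^\perp$ contains a negative line). Then split on whether $r$ and $\x r$ are $\C$-proportional. If they are, primitivity of $r$ in $\Lambda$ and a norm comparison force $\x r=ur$ with $u$ a unit of $\E$; rescaling $r$ by a unit $v$ (harmless for $M$) replaces $u$ by $u/v^2$, and since $\{v^2:v\in\E^*\}=\{1,\w,\wbar\}$ one can arrange $\x r=\pm r$. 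In the $+$ case $r$ is a norm~$1$ root of $\Lambda^\x$ and $W=\C r$; in the $-$ case $\theta r\in\Lambda^\x$ (because $\x(\theta r)=\bar\theta\,\x r=\theta r$), it is a norm~$3$ vector visibly divisible by $\theta$ in $\Lambda$, hence a norm~$3$ root, and $W=\C(\theta r)$; either way $M\cap H^4_\x$ is the advertised copy of $H^3$. That is case~(1).

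If $r$ and $\x r$ are not proportional, then $r^\perp$ and $(\x r)^\perp$ are distinct components of $\H$ meeting at $\ell$, so Lemma~\ref{lem-hyperplanes-are-orthogonal} forces $h(r,\x r)=0$. Then $\langle r,\x r\rangle_\E\isomorphism\E^2$ is unimodular, hence an orthogonal summand of $\Lambda$, so $W\cap\Lambda=\E r\oplus\E\,\x r$ and $W\cap\Lambda^\x=\{ar+\bar a\,\x r:a\in\E\}$, a rank-$2$ lattice carrying the quadratic form $a\mapsto 2|a|^2$ — a rescaled hexagonal lattice. I would then observe that its six norm~$2$ vectors (units $a$) together with its six norm~$6$ vectors ($a\in\theta\E^*$, each divisible by $\theta$ in $\Lambda$) constitute a $G_2$ root system $R$ spanning $W\cap\Lambda^\x$, so $W^\perp=R^\perp$ and $M\cap H^4_\x=H^4_\x\cap R^\perp$, a copy of $H^2$. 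That is case~(2). The one genuinely substantive input is the appeal to Lemma~\ref{lem-hyperplanes-are-orthogonal} in the non-proportional case, which rigidifies $W\cap\Lambda$ into a copy of $\E^2$ and thereby delivers the $G_2$ system; the remaining steps (the unit rescaling normalizing $\x r$ to $\pm r$, the $\theta$-divisibility of the norm~$6$ vectors needed for them to count as roots, and the signature counts showing the intersections are honestly $H^3$ or $H^2$ and nonempty) are routine lattice bookkeeping over $\E$.
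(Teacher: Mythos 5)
Your proof is correct and follows essentially the same route as the paper's: both reduce to the dichotomy of whether $\x(r)$ is $\E$-proportional to $r$, invoke Lemma~\ref{lem-hyperplanes-are-orthogonal} to get orthogonality in the non-proportional case, and then extract a norm~$1$/$3$ root or a $G_2$ system respectively. Your presentation via $W=\langle r,\x r\rangle_\C$ and the inline unit-rescaling calculation (in place of the paper's appeal to the classification of anti-involutions of a rank-one $\E$-module) are just more explicit versions of the same steps.
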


\begin{proof}
  As a discriminant mirror, $M=r^\perp$ for some norm~1 vector $r$ of
  $\Lambda$.  Since $M\cap H^4_\x\neq\emptyset$, $M$ contains points
  fixed by $\x$, so that $\x(M)$ meets $M$, which is to say that
  $r^\perp$ meets $\x(r)^\perp$.  By lemma~\ref{lem-hyperplanes-are-orthogonal}, either $r^\perp=\x(r)^\perp$ or
  $r\bot\x(r)$.  In the first case, $\x$ preserves the $\E$-span of
  $r$.  The anti-involutions of a rank one free $\E$-module are easy
  to understand: every one leaves invariant either a generator or
  $\theta$ times a generator.  Therefore $\Lambda^\x$ contains a unit
  multiple of $r$ or $\theta r$.  Then conclusion \ref{case-H3}
  applies.  In the second case, $r_1:=r$ and $r_2:=\x(r)$ span a copy of
  $\E^2$ and $\Lambda^\x$ contains the norm~2 roots $\alpha
  r_1+\bar\alpha r_2$ and norm~6 roots $\alpha\theta
  r_1+\bar\alpha\bar\theta r_2$, where $\alpha$ varies over the units
  of $\E$.  These form a $G_2$ root system $R$ in $\Lambda^\x$, and it
  is easy to see that
$$
M\cap H^4_\x\ =\ M\cap\x(M)\cap\ H^4_\x\ =\ R^\perp\cap H^4_\x
$$
is a copy of $H^2$.  Therefore conclusion \ref{case-H2} applies.  

The
converse is easy:  if $r$ is a root of $\Lambda^\x$ of norm~1
or~3 then we take the discriminant mirror to be $r^\perp$, and if $R$ is
a $G_2$ root system in $\Lambda^\x$ then we take $M$ to be
either $r_1^\perp$ or $r_2^\perp$ for $r_1$ and $r_2$ as in \eqref{eq-roots-from-g2}.
\end{proof}

\begin{corollary}
\label{cor-H4s-meeting-hyperplanes}
For $j=0,\dots,4$, $H^4_j\cap\H$ is the union of the orthogonal
complements of the discriminant roots of $\Lambda_j$ and the $G_2$
root systems in $\Lambda_j$.  \qed
\end{corollary}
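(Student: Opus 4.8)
The plan is to deduce the corollary directly from Lemma~\ref{lem-intersections-of-mirrors-andH4s}, applied to the anti-involution $\x=\x_j$, for which $\Lambda^\x=\Lambda_j$ and $H^4_\x=H^4_j$. Recall that $\H$ is by definition the union of the discriminant mirrors $r^\perp$ as $r$ ranges over the norm~$1$ vectors of $\Lambda$, so that $H^4_j\cap\H=\bigcup_r\bigl(H^4_j\cap r^\perp\bigr)$. The task is thus to identify the nonempty terms of this union and to check that every geodesic subspace listed in the statement occurs among them.

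For the forward inclusion: if $H^4_j\cap r^\perp\neq\emptyset$ for a norm~$1$ vector $r$ of $\Lambda$, then $r^\perp$ is a discriminant mirror meeting $H^4_{\x_j}$, so Lemma~\ref{lem-intersections-of-mirrors-andH4s} identifies $H^4_j\cap r^\perp$ either with $H^4_j\cap s^\perp$ for a root $s$ of $\Lambda_j$ of norm~$1$ or~$3$, or with $H^4_j\cap R^\perp$ for a $G_2$ root system $R$ in $\Lambda_j$. Taking the union over all such $r$ shows $H^4_j\cap\H$ is contained in the union of the $H^4_j\cap s^\perp$ over the discriminant roots $s$ of $\Lambda_j$ (that is, the norm~$1$ and norm~$3$ roots) and of the $H^4_j\cap R^\perp$ over the $G_2$ root systems $R$ in $\Lambda_j$.

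For the reverse inclusion I would invoke the converse half of Lemma~\ref{lem-intersections-of-mirrors-andH4s}: every discriminant root $s$ of $\Lambda_j$ has $H^4_j\cap s^\perp$ equal to $H^4_j$ intersected with some discriminant mirror, hence contained in $H^4_j\cap\H$; and likewise every $G_2$ root system $R$ in $\Lambda_j$ has $H^4_j\cap R^\perp=H^4_j\cap r_1^\perp$ for the norm~$1$ vector $r_1\in\Lambda$ of~\eqref{eq-roots-from-g2}, again inside $\H$. Combining the two inclusions gives the asserted equality.

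There is no genuinely hard step here; the substantive work is all in Lemma~\ref{lem-intersections-of-mirrors-andH4s}. The only point that needs a word of care is the terminology: one should record that a \emph{discriminant root} of $\Lambda_j$ means a root of norm~$1$ or~$3$ --- equivalently, one whose mirror meets a discriminant mirror of $\ch^4$ --- so that the two families of geodesic subspaces in the statement correspond exactly to the two cases of the lemma, the only overlap being the harmless fact that the two mirrors $r_1^\perp,r_2^\perp$ attached to a single $G_2$ system cut $H^4_j$ in the same copy of $H^2$.
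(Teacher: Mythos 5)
Your proof is correct and is precisely the argument the paper intends: the corollary is stated with a \textsc{qed} symbol because it is immediate from both directions of Lemma~\ref{lem-intersections-of-mirrors-andH4s} applied to $\x=\x_j$, exactly as you lay out. Your remark identifying ``discriminant root of $\Lambda_j$'' with ``root of $\Lambda_j$ of norm~$1$ or~$3$'' is the right reading; the paper uses the term here without a prior formal definition, but it matches the notion of discriminant wall introduced after Theorem~\ref{thm-smooth-moduli-orbifolds}.
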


For our applications we need to re-state this result in terms of the
fundamental chamber $C_j$ for $W_j$:

\begin{lemma}
\label{lem-intersection-of-discriminant-and-weyl-chamber}
If $x\in C_j$ then $x\in\H$ if and only if either
\begin{enumerate}
\item
\label{item-norm-1-or-3-perp-implies-discriminant}
$x$ lies in  $r^\perp$ for $r$ a simple root of $W_j$ of
norm~$1$ or~$3$, or
\item
\label{item-G2-implies-discriminant}
$x$ lies in $r^\perp\cap s^\perp$, where $r$ and $s$ are simple
roots of $W_j$ of norms~$2$ and~$6$, whose mirrors meet at angle $\pi/6$.
\end{enumerate}
\end{lemma}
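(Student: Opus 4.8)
The plan is to deduce this from Corollary~\ref{cor-H4s-meeting-hyperplanes}, which already presents $H^4_j\cap\H$ as the union of the mirrors $r^\perp$ of the norm~$1$ and norm~$3$ roots of $\Lambda_j$ together with the codimension-two subspaces $R^\perp$ attached to the $G_2$ root systems $R\subset\Lambda_j$; the only work is to rewrite this, intersected with the chamber $C_j$, in terms of the \emph{simple} roots of $W_j$. First I would observe that every $R^\perp$ lies in the mirror $\alpha^\perp$ of any norm~$2$ root $\alpha\in R$ (a mirror of $W_j$, since $s_\alpha$ preserves $\Lambda_j$ and so lies in $\PGamma^\R_j\supset W_j$), so that $H^4_j\cap\H$ is contained in the union of all mirrors of $W_j$; hence an interior point of $C_j$ is never in $\H$, and one may assume $x\in\partial C_j$. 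Since the components of $\H$ are genuine (non-ideal) hyperbolic hyperplanes, $x$ is a non-ideal point, so the standard local description of a Coxeter chamber applies (see~\cite{vinberg-discrete-linear-reflection-groups}): the stabilizer $W_x:=\mathrm{Stab}_{W_j}(x)$ is a finite reflection group, generated by the reflections in those simple roots $r_i$ of $W_j$ with $x\in r_i^\perp$, and these $r_i$ form a simple system for $W_x$ whose Coxeter diagram is the full subdiagram they span inside the diagram of figure~\ref{fig-large-Coxeter-diagrams}. Because any two mirrors of $W_j$ through $x$ actually meet, no two of these $r_i$ are joined by a heavy or dashed bond, so $W_x$'s diagram carries only labels $2,3,4,6$; consequently each irreducible factor of $W_x$ is of one of the crystallographic types $A_n$, $B_n$, $D_n$, $E_n$, $F_4$, $G_2$.

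Next I would use that the mirrors of $W_j$ through $x$ are exactly the mirrors of the finite group $W_x$, and each of these is a $W_x$-translate of one of the walls $r_i^\perp$; as $W_x\subset\PO(\Psi_j)$ preserves $\Psi_j$ and carries primitive roots of $\Lambda_j$ to primitive roots, such a translate has the same norm as the $r_i$ it comes from. This settles alternative~\ref{item-norm-1-or-3-perp-implies-discriminant}: if $x\in r^\perp$ for a norm~$1$ or~$3$ root $r$ of $\Lambda_j$, then $r^\perp=w(r_i^\perp)$ for some $w\in W_x$ and some simple $r_i$ with $x\in r_i^\perp$, whence $r_i^2=r^2\in\{1,3\}$; equivalently, one may invoke that every reflection in a finite Coxeter group is conjugate to a simple one and that conjugation preserves the norm. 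Conversely, a simple root $r_i$ of norm $1$ or $3$ is a root of $\Lambda_j$, so $r_i^\perp\cap H^4_j$ is the intersection of $H^4_j$ with a discriminant mirror by Lemma~\ref{lem-intersections-of-mirrors-andH4s}, and in particular $x\in\H$.

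For alternative~\ref{item-G2-implies-discriminant}, suppose $x\in R^\perp$ with $R$ a $G_2$ root system in $\Lambda_j$. I would pick a short simple root $\alpha$ (norm~$2$) and a long simple root $\beta$ (norm~$6$) of $R$, so that $R^\perp=\alpha^\perp\cap\beta^\perp$ and the reflections $s_\alpha,s_\beta$ both fix $x$ and lie in $W_x$, generating a copy of $I_2(6)$. A reflection lies in a single irreducible factor of $W_x$, and if $s_\alpha,s_\beta$ lay in different factors they would commute, contradicting that $s_\alpha s_\beta$ has order~$6$; hence $\langle s_\alpha,s_\beta\rangle$ lies in one irreducible factor $W_0$, whose root system then contains two roots at mutual angle~$\pi/6$. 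Among irreducible crystallographic root systems only $G_2$ admits such an angle, so $W_0$ is of type~$G_2$; its two simple roots are then two of the $r_i$ with $x\in r_i^\perp$, joined by a triple bond, and inspection of figure~\ref{fig-large-Coxeter-diagrams}---or the remark that norm~$2$ and norm~$6$ vectors of $\Lambda_j$ are automatically primitive, combined with the short/long dichotomy of~$G_2$---shows their norms are $2$ and~$6$, which is exactly what~\ref{item-G2-implies-discriminant} asks. Conversely, given simple roots $r$ (norm~$2$) and $s$ (norm~$6$) joined by a triple bond with $x\in r^\perp\cap s^\perp$, the product $s_rs_s$ has order~$6$, and a routine check with Lemma~\ref{lem-what-the-roots-are}---the norm~$2$ vectors are roots automatically, and the norm~$6$ ones lie in $3(\Lambda_j)'$ because $s$ does and $3\in\theta\E$---shows that the twelve vectors $\langle s_r,s_s\rangle\cdot\{r,s\}$ form a $G_2$ root system $R$ in $\Lambda_j$ with $R^\perp=r^\perp\cap s^\perp$, so $x\in R^\perp\cap H^4_j\subset\H$ by Lemma~\ref{lem-intersections-of-mirrors-andH4s}.

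The hard part will be the bookkeeping in alternative~\ref{item-G2-implies-discriminant}: one must translate between the paper's arithmetic notion of a $G_2$ root system in $\Lambda_j$ (six norm~$2$ and six norm~$6$ roots spanning a rank-two sublattice) and the purely combinatorial appearance of a $G_2$ factor---a triple bond---in the diagram of the point stabilizer $W_x$, and then verify that the norms of the two simple roots match. The structural input that makes this go through cleanly is that the diagram of $W_x$ is inherited as a subdiagram of the diagram of $W_j$, which has no bond of label $\ge 7$, so an $I_2(6)$ inside $W_x$ can only arise as a genuine $G_2$ parabolic; the other direction and the norm-$1$-or-$3$ case are comparatively routine.
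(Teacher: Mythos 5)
Your proof is correct and reaches the same conclusion, but by a genuinely different route from the paper's. Both directions' ``easy'' parts (conditions~$\Rightarrow$ in $\H$) coincide: manufacture a discriminant mirror from the simple root or from the dihedral orbit of $\{r,s\}$, citing Lemma~\ref{lem-intersections-of-mirrors-andH4s}. The difference lies in the hard direction.

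For $x\in C_j\cap\H$, the paper (using Corollary~\ref{cor-H4s-meeting-hyperplanes} just as you do) picks simple roots $\{r,s\}$ for the $G_2$ system~$R$, observes that because $\pi/6$ is the smallest possible angle between distinct meeting mirrors of $W_j$, the mirrors $r^\perp$, $s^\perp$ bound a common chamber~$C'$, and then translates $C'$ to $C_j$ by an element of $W_j$ that must fix~$x$ (strong fundamental-domain property). You instead invoke the structure theory of point stabilizers: $W_x=\mathrm{Stab}_{W_j}(x)$ is a finite parabolic with simple system $\{r_i:x\in r_i^\perp\}$ inheriting the subdiagram of $W_j$; every mirror through $x$ is a $W_x$-translate of some $r_i^\perp$; norm is a conjugation invariant of $W_x\subset\PO(\Psi_j)$, which disposes of case~\ref{item-norm-1-or-3-perp-implies-discriminant}; and for case~\ref{item-G2-implies-discriminant} the $I_2(6)$ generated by the short/long reflections of~$R$ must land in an irreducible factor of~$W_x$, which can only be of type~$G_2$ because that is the unique crystallographic type with mirrors at angle $\pi/6$. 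The paper's argument is more elementary and direct (a single clever observation about minimal angles); yours is more systematic and leans on general Coxeter theory, which would remain applicable even if $\pi/6$ were not the minimal dihedral angle. One small imprecision: you write that the root system of $W_0$ ``contains two roots at mutual angle $\pi/6$'', but $\pi/6$ is the angle between their \emph{mirrors}; the roots themselves make $\pi/6$ or $5\pi/6$. This does not affect the conclusion ($G_2$ is still the only option).
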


\begin{proof}
If \ref{item-norm-1-or-3-perp-implies-discriminant} holds then
$x$ obviously lies in $\H$.  If \ref{item-G2-implies-discriminant}
holds then  the reflections in $r$ and $s$
generate a dihedral group of order~$12$, and the images of $r$
and $s$ under this group form a $G_2$ root system $R$ in
$\Lambda_j$.  Then $x\in\H$ by
lemma~\ref{lem-intersections-of-mirrors-andH4s}.  

To prove the
converse, suppose $x\in C_j\cap\H$.  By lemma~\ref{lem-intersections-of-mirrors-andH4s}, either
$x\in r^\perp$ for a root $r$ of $\Lambda_j$ of norm~$1$
or~$3$, or else $x\in R^\perp$ for a $G_2$ root system $R$ in
$\Lambda_j$.  We treat only the second case because the
first is similar but simpler.  We choose a set $\{r,s\}$ of
simple roots for $R$, which necessarily have norms~$2$ and~$6$
and whose mirrors necessarily meet at angle $\pi/6$.  Then
$r^\perp$ and $s^\perp$ are two of the walls for some Weyl
chamber $C'$ of $W_j$.  This uses the fact that no two distinct mirrors of
$W_j$ can meet, yet make an angle less than $\pi/6$.  (If there
were such a pair of mirrors then there would be such a pair
among the simple roots of $W_j$.)  We apply
the element of $W_j$ carrying $C'$ to $C_j$; since $C_j$ is a
fundamental domain for $W_j$ in the strong sense, this
transformation fixes $x$.  Then the images of $r$ and $s$ are
simple roots of $W_j$ and the facets of $C_j$ they define both
contain $x$.
\end{proof}

We remark that all the triple bonds in
figure~\ref{fig-large-Coxeter-diagrams} come from $G_2$ root systems,
so the condition on the norms of $r$ and $s$ in part
\ref{item-G2-implies-discriminant} of the lemma may be dropped.  This
leads to our final description of the moduli space of smooth real
cubic surfaces:

\begin{theorem}
\label{thm-smooth-moduli-orbifolds}
The moduli space $\moduli_0^\R$ falls into five components
$\moduli_{0,j}^\R$, $j=0,\dots,4$.  As a real analytic orbifold, $\moduli_{0,j}^\R$ is
isomorphic to an open sub-orbifold of $\PGamma_j^\R\backslash
H_j^4$, namely the open subset obtained by deleting the images
in $\PGamma_j^\R\backslash H_4^j$ of the faces of $C_j$
corresponding to the blackened nodes 
\hbox{\smalltextdot{%
\begin{picture}(0,0)%
\smalldiagrams
\setlength{\unitlength}{1sp}%
\solidnode{0}{0}%
\end{picture}%
}}
and triple bonds 
\hbox{\smallcoxintext{%
\begin{picture}(0,0)%
\smalldiagrams
\setlength{\unitlength}{1sp}%
  \Ax=0pt    \Ay=0pt
  \Bx=\edgelengthD  \By=0pt
  \triplebond\Ax\Ay\Bx\By{0pt}{1pt}%
  \hollownode\Ax\Ay
  \hollownode\Bx\By
\end{picture}%
}}
of figure~\ref{fig-small-Coxeter-diagrams}.
\qed
\end{theorem}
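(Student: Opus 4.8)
\emph{Proof proposal.}
The plan is to derive the theorem by combining three results already in hand: the identification $\moduli^\R_{0,j}\isomorphism\PGamma^\R_j\backslash(H^4_j-\H)$ of Corollary~\ref{cor-our-vs-classical-components}; Theorem~\ref{thm-H4-stabilizers-in-terms-of-diagrams}, which exhibits $\PGamma^\R_j$ as $W_j\semidirect A_j$ with $A_j$ the group of diagram automorphisms and makes $C_j$ a strict fundamental domain for $W_j$; and Lemma~\ref{lem-intersection-of-discriminant-and-weyl-chamber} together with the remark following it, which pins down $\H\cap C_j$.

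First I would note that $\H$ is $\PGamma^\R_j$-invariant, since $\PGamma^\R_j\sset\PGamma$ and $\H$ is the $\PGamma$-invariant union of the mirrors of the norm~$1$ vectors of $\Lambda$. Hence $\H\cap H^4_j$ is a union of $\PGamma^\R_j$-orbits, so deletion commutes with passing to the quotient:
$$
\moduli^\R_{0,j}\isomorphism\PGamma^\R_j\backslash(H^4_j-\H)=\bigl(\PGamma^\R_j\backslash H^4_j\bigr)-q_j\bigl(\H\cap H^4_j\bigr),
$$
where $q_j\colon H^4_j\to\PGamma^\R_j\backslash H^4_j$ is the quotient map. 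Because $C_j$ is a strict fundamental domain for $W_j$ and $\PGamma^\R_j=W_j\semidirect A_j$, one may identify $\PGamma^\R_j\backslash H^4_j$ with $A_j\backslash C_j$; under this identification $q_j\bigl(\H\cap H^4_j\bigr)$ is the image of $\H\cap C_j$, which is $A_j$-invariant since $A_j$ preserves the norms and inner products of roots and so permutes the faces of $C_j$ involved.

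It then remains to read off $\H\cap C_j$, which is exactly Lemma~\ref{lem-intersection-of-discriminant-and-weyl-chamber}: a point of $C_j$ lies in $\H$ precisely when it lies on the facet $r^\perp\cap C_j$ for a simple root $r$ of norm~$1$ or~$3$, or on the codimension-two face $r^\perp\cap s^\perp\cap C_j$ for a pair $\{r,s\}$ of simple roots whose mirrors meet at angle $\pi/6$; and by the remark after that lemma every such pair in figure~\ref{fig-large-Coxeter-diagrams} has norms~$2$ and~$6$ and spans a $G_2$ system, so these pairs are exactly the triple-bonded ones. Comparing figures~\ref{fig-small-Coxeter-diagrams} and~\ref{fig-large-Coxeter-diagrams} with the norm conventions of section~\ref{sec-H4-stabilizers} (the blackened nodes are the simple roots of norm~$1$ or~$3$), one sees that $\H\cap C_j$ is precisely the union of the faces of $C_j$ corresponding to the blackened nodes and the triple bonds. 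Since $H^4_j-\H$ is an open $\PGamma^\R_j$-invariant subset, $\PGamma^\R_j\backslash(H^4_j-\H)$ is an open sub-orbifold of $\PGamma^\R_j\backslash H^4_j$, and by the above it is the one obtained by deleting the images of exactly those faces; that image is closed because $\H\cap H^4_j$ is a locally finite union of geodesic subspaces and $\PGamma^\R_j$ is a lattice.

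I do not expect a substantive obstacle: the statement is essentially a repackaging of the earlier results. The only place demanding care is the last step's bookkeeping --- verifying, case by case from figure~\ref{fig-large-Coxeter-diagrams}, that the decorations ``blackened node'' and ``triple bond'' in the small diagrams correspond exactly to the norm~$1$/norm~$3$ simple roots and to the $G_2$ subsystems among the simple roots, and checking that the diagram automorphism (for $j=1,2$) carries blackened nodes to blackened nodes and triple bonds to triple bonds, so that the deleted locus is correctly described after passing to $A_j\backslash C_j$.
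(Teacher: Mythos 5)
Your proposal is correct and takes essentially the same route as the paper, which treats theorem~\ref{thm-smooth-moduli-orbifolds} as an immediate consequence of corollary~\ref{cor-our-vs-classical-components}, theorem~\ref{thm-H4-stabilizers-in-terms-of-diagrams}, lemma~\ref{lem-intersection-of-discriminant-and-weyl-chamber}, and the remark following the lemma (hence the bare \qed). You have simply spelled out the bookkeeping --- $\PGamma^\R_j$-invariance of $\H$, the identification $\PGamma^\R_j\backslash H^4_j\isomorphism A_j\backslash C_j$, and the reading-off of $\H\cap C_j$ from the decorations --- that the paper leaves implicit.
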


The two kinds of walls of the $C_j$ play such different roles that we
will use the following language.  In light of the theorem, a wall
corresponding to a blackened node in
figure~\ref{fig-small-Coxeter-diagrams} will be called a {\it
  discriminant wall}.  The other walls will be called {\it Eckardt
  walls}, because the corresponding real cubic surfaces are exactly
those that have real Eckardt points.  (Eckardt points are not
important in this paper; they just provide a convenient name for these
walls.  They are points through which pass three lines of the surface.
The reader interested in more background, in particular the relation between Eckardt points and existence of  automorphisms of order two, may consult \cite[\S\S 98, 100  and
101]{segre}.)

\section{Topology of the moduli space of smooth surfaces}
\label{sec-topologysmoothmoduli}

This section and the next two are applications of the theory developed
so far.  The theoretical development continues in section~\ref{sec-stable-moduli}.

The  description of $\moduli_0^\R$ in theorem~\ref{thm-smooth-moduli-orbifolds} is so explicit that many facts
about real cubic surfaces and their moduli can be read off the
diagrams.  In this section we give
presentations of the orbifold fundamental groups
$\orbpi_1(\moduli_{0,j}^\R)$ of the components of $\moduli_0^\R$
and  prove that the $\moduli_{0,j}^\R$ have contractible
(orbifold) universal covers.

\begin{theorem}
\label{thm-orbifold-fundamental-groups}
The orbifold fundamental groups of the components of $\moduli_0^\R$
are:
\begin{align*}
\orbpi_1(\moduli_{0,0}^\R)&\isomorphism S_5\\
\orbpi_1(\moduli_{0,1}^\R)&\isomorphism (S_3\times S_3)\semidirect\Z/2\\
\orbpi_1(\moduli_{0,2}^\R)&\isomorphism (D_\infty\times D_\infty)\semidirect\Z/2\\
\orbpi_1(\moduli_{0,3}^\R)\isomorphism
\orbpi_1(\moduli_{0,4}^\R)&\isomorphism
\ \begin{picture}(0,0)%
\smalldiagrams
\setlength{\unitlength}{1sp}%
%
%
%
\Ax=0pt    \Ay=0pt
\Bx=\edgelengthD  \By=0pt
\Cx=\edgelengthD  \Cy=0pt
\Dx=\edgelengthD  \Dy=0pt
\multiply\Cx by 2
\multiply\Dx by 3
\bond\Ax\Ay\Bx\By
\bond\Bx\By\Cx\Cy
\bond\Cx\Cy\Dx\Dy
\hollownode\Ax\Ay
\hollownode\Bx\By
\hollownode\Cx\Cy
\hollownode\Dx\Dy
\divide\Bx by 2
\myput\Bx\By{\makebox(0,0)[b]{\raise 5pt\hbox{$\infty$}}}%
\end{picture}
\kern151pt
\end{align*}
where the $\Z/2$ in each semidirect product exchanges
the displayed factors of the normal subgroup.
\end{theorem}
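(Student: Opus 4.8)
The plan is to combine the explicit orbifold description of Theorem~\ref{thm-smooth-moduli-orbifolds}, which realizes $\moduli_{0,j}^\R$ as the open sub‑orbifold of $\PGamma_j^\R\backslash H^4_j$ obtained by deleting the images of the blackened faces and triple bonds, with the structure $\PGamma_j^\R=W_j\semidirect A_j$ of Theorem~\ref{thm-H4-stabilizers-in-terms-of-diagrams} ($A_j$ the diagram‑automorphism group, trivial for $j=0,3,4$ and $\Z/2$ for $j=1,2$), and to compute $\orbpi_1$ by a van Kampen argument on the Weyl chamber $C_j$. The first step is to isolate the following general principle for reflection orbifolds: if $W$ is a Coxeter group acting on a contractible manifold $X$ with Weyl chamber $C$ and wall reflections $r_1,\dots,r_n$, then $\orbpi_1(W\backslash X)=W$ with its Coxeter presentation; and if one deletes from $X$ the $W$‑orbit of a wall $r_i^\perp$ and the $W$‑orbit of a codimension‑two face $r_a^\perp\cap r_b^\perp$ (along which two walls meet at angle $\pi/m$), then the orbifold fundamental group of the complement is obtained from the Coxeter presentation by \emph{(i)} imposing $r_i=1$ for each deleted wall and \emph{(ii)} deleting the relation $(r_ar_b)^m=1$ for each deleted codimension‑two face.

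To prove this principle I would argue locally and then glue. Near a generic point of a deleted wall the orbifold is $(\Z/2)\backslash\R^n$ with the fixed hyperplane removed, which is a contractible manifold, so the corresponding reflection is killed in $\orbpi_1$; near a generic point of a deleted $\pi/m$‑face the orbifold is $\bigl(D_m\backslash(\R^2-0)\bigr)\times\R^{n-2}$, an interval with two mirror endpoints times $\R^{n-2}$, whose orbifold fundamental group is $\Z/2*\Z/2$, so the braid relation $(r_ar_b)^m=1$ is simply removed; and away from the deleted strata nothing changes. One then needs to read off figure~\ref{fig-large-Coxeter-diagrams} that in each chamber $C_j$ the deleted strata are pairwise disjoint — the triple bonds join Eckardt walls, and no blackened node is an endpoint of a triple bond — so that the local modifications are independent and van Kampen assembles them into exactly the stated presentation.

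Applying this: by Corollary~\ref{cor-H4s-meeting-hyperplanes}, Lemma~\ref{lem-intersection-of-discriminant-and-weyl-chamber} and the remark after it, $\H\cap H^4_j$ is precisely the union of the mirrors of the norm‑$1$ and norm‑$3$ simple roots (the blackened nodes) together with the codimension‑two faces cut out by the triple bonds. Hence $\orbpi_1\bigl(W_j\backslash(H^4_j-\H)\bigr)$ is read off figure~\ref{fig-small-Coxeter-diagrams} by erasing each blackened node and turning each triple bond into an $\infty$ (no relation). For $j=0$ the diagram of $W_0$ collapses to the linear $A_4$ diagram, giving $S_5$; for $j=1$ to two disjoint $A_2$ diagrams, giving $S_3\times S_3$; for $j=2$ to two disjoint $\infty$‑bonds $r_3\!-\!r_5$ and $r_1\!-\!r_7$ with the two halves mutually orthogonal, giving $D_\infty\times D_\infty$; and for both $j=3$ and $j=4$ to the four‑node linear diagram whose first bond is labelled $\infty$, so these two groups are isomorphic and equal the group pictured in the statement. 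To finish, write $\moduli_{0,j}^\R=A_j\backslash Q_j$ with $Q_j=W_j\backslash(H^4_j-\H)$: for $j=0,3,4$ we are already done, and for $j=1,2$ the nontrivial diagram automorphism $\gamma$ fixes the set of blackened nodes and fixes a nonempty geodesic subspace of $C_j$ not contained in $\H$, so $\orbpi_1(A_j\backslash Q_j)$ is the split extension $\orbpi_1(Q_j)\semidirect\Z/2$; tracing $\gamma$ through the collapse shows it exchanges the two displayed factors, producing $(S_3\times S_3)\semidirect\Z/2$ and $(D_\infty\times D_\infty)\semidirect\Z/2$.

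I expect the main obstacle to be the general principle in the second paragraph — in particular, making the van Kampen gluing rigorous and checking from the figures that the deleted walls and deleted $\pi/m$‑faces are pairwise disjoint in every $C_j$, so that the modification of the Coxeter presentation really is relation‑by‑relation — together with the bookkeeping of the symmetry $\gamma$ through the diagram collapse in the cases $j=1,2$, which is what identifies the action of $\Z/2$ on the normal subgroup as the factor exchange.
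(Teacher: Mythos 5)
Your approach is essentially the paper's: read off the Coxeter presentation of $W_j$ from figure~\ref{fig-small-Coxeter-diagrams}, delete the discriminant generators and the triple-bond relations, and for $j=1,2$ take the semidirect product by the diagram automorphism. Two cautions on the general principle you formulate. First, your operation ``impose $r_i=1$'' is not \emph{a priori} the same as the paper's ``delete $r_i$ together with every relation mentioning it'': setting $r_i=1$ in $(r_ir_j)^m=1$ leaves $r_j^m=1$, which is a consequence of $r_j^2=1$ only when $m$ is even or infinite. The two rules coincide here because every bond incident to a blackened node is a double, strong, or weak bond (and for odd $m$ the mirrors of $r_i$ and $r_j$ would be $W_j$-conjugate, hence both lie in $\H$ and both get deleted, so nothing goes wrong). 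Second, the check you offer in parentheses does not verify pairwise disjointness of the deleted strata, and that disjointness is in fact false: in $W_2$ the four Eckardt nodes form a $G_2\times G_2$ subdiagram, so the two deleted codimension-two faces meet at a vertex of $C_2$, and in $W_1$ two of the three blackened walls are orthogonal to each other and therefore intersect. The conclusion is unaffected, because near such an intersection the local orbifold factors as a product of lower-dimensional models (compare the description of $D_2$ in the proof of theorem~\ref{thm-aspherical}), so van Kampen contributes no further relation---but this is precisely the kind of case your stated principle does not cover, and it is where your outline defers to ``checking from the figures,'' so you should be aware the naive disjointness you hoped for does not hold.
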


Here $S_n$ is the symmetric group, $D_\infty$ is the infinite
dihedral group, and the last group is a Coxeter group with the
given diagram.  We have labeled the leftmost bond ``$\infty$'',
indicating the absence of a relation between two generators,
rather than a strong or weak bond, because we are describing
the fundamental group as an abstract group, not as a concrete
reflection group.  We remark that $\orbpi_1(\moduli^\R_{0,2})$ is
isomorphic to the Coxeter group of the Euclidean $(2,4,4)$ triangle. 

\begin{proof}[Proof of theorem~\ref{thm-orbifold-fundamental-groups}]
The general theory of Coxeter groups (see for example
\cite{de-la-Harpe}) allows us to write down a presentation for
$W_j$.    The standard
generators for $W_j$ are the reflections across the facets of
$C_j$.  Two of these reflections $\rho$ and $\rho'$ satisfy
$(\rho\rho')^n=1$ for $n=2$ (resp. $3$, $4$, or $6$) if the
corresponding nodes are joined by no bond
(resp. a single bond, double bond, or triple bond).  These
relations and the relations that the generators are involutions
suffice to define $W_j$.

We get a presentation of $\orbpi_1\bigl(W_j\backslash(H_j^4-\H)\bigr)$ from
the presentation of $W_j$ by omitting some of the generators and
relations.  Since the generators of $W_j$ correspond to the
walls of $C_j$, and removing $\H$ from $C_j$ removes the discriminant
walls, we leave out those generators.  Since removing these
walls also removes all the codimension two faces which are their
intersections with other walls, we also leave out all the
relations involving the omitted generators.  Finally, we leave
out the relations coming from triple bonds, because removing
$\H$ from $C_j$ removes the codimension two faces corresponding
to these bonds.  For $j=0$, $4$ or $5$, $\PGamma_j^\R=W_j$ and
we can read off $\orbpi_1(\moduli_{0,j}^\R)$ from the diagram, with
the results given in the statement of the theorem.  For $j=1$ or
$2$ the same computation shows that
$\orbpi_1\bigl(W_j\backslash(H_j^4-\H)\bigr)$ is $S_3\times S_3$ or
$D_\infty\times D_\infty$.  To describe
$\orbpi_1(\moduli_{0,j}^\R)$ one must take the semidirect product by
the diagram automorphism.  This action can also be read from
figure~\ref{fig-small-Coxeter-diagrams}.
\end{proof}

In the proof of the following theorem, the subgroup of $W_j$ generated
by the reflections across the Eckardt walls of $C_j$ (the walls
represented by hollow nodes in
figure~\ref{fig-small-Coxeter-diagrams}) plays a major role.  We call
it $\WEck_j$.  It has index~1 or~2 in a group $T_j$ that plays a major
role in the next section.

\begin{theorem}
\label{thm-aspherical}
The $\moduli_{0,j}^\R$ are aspherical orbifolds, in the sense
that their orbifold universal covers are contractible manifolds.
\end{theorem}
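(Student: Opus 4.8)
The plan is to realize the orbifold universal cover of $\moduli_{0,j}^\R$ as the ordinary universal cover of an explicit incomplete hyperbolic manifold, and to prove that this manifold is aspherical by equipping it with a complete nonpositively curved metric. By theorem~\ref{thm-smooth-moduli-orbifolds} (see also corollary~\ref{cor-our-vs-classical-components}), $\moduli_{0,j}^\R\isomorphism\PGamma_j^\R\backslash(H_j^4-\H)$, and $\PGamma_j^\R$ acts properly discontinuously on the manifold $H_j^4-\H$; so $\moduli_{0,j}^\R$ is a global quotient orbifold and its orbifold universal cover is the ordinary universal cover of any one connected component $Z$ of $H_j^4-\H$. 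Since $\PGamma_j^\R\backslash(H_j^4-\H)$ is connected (corollary~\ref{cor-componentscorollary}), $\PGamma_j^\R$ permutes these components transitively, so the choice of $Z$ is immaterial. It therefore suffices to show that $Z$ is aspherical.

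The next step is to describe $Z$ geometrically. By corollary~\ref{cor-H4s-meeting-hyperplanes} and lemma~\ref{lem-intersection-of-discriminant-and-weyl-chamber}, $\H\cap H_j^4$ is the union of a codimension-one part (the $W_j$-translates of the discriminant walls of $C_j$, i.e. the mirrors of the norm~$1$ and norm~$3$ roots) and a codimension-two part $\H^{(2)}$ (the $W_j$-translates of the triple-bond faces, each a totally geodesic copy of $H^2$ of the form $R^\perp\cap H_j^4$ for a $G_2$ root system $R$). Removing the codimension-one part cuts $H_j^4$ into open convex regions, each an intersection of half-spaces bounded by discriminant mirrors and hence contractible; one such region, $U$, is a union of $W_j$-chambers glued along their Eckardt walls, and it is preserved by the Eckardt reflection group $\WEck_j$, which acts on it with fundamental domain $C_j$ --- this is where $\WEck_j$ enters the argument. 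As $\H^{(2)}$ is of codimension two it disconnects nothing, so $Z=U-(U\cap\H^{(2)})$, where $U\cap\H^{(2)}$ is a locally finite family of totally geodesic copies of $H^2$ lying in the interior of $U$, any two of which, where they meet, meet orthogonally by lemma~\ref{lem-hyperplanes-are-orthogonal}.

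The crux is to show that deleting such an orthogonal codimension-two family from the convex set $U$ leaves an aspherical manifold. The plan is to replace the incomplete hyperbolic metric on $Z$ by a complete one: in a tubular neighborhood of each deleted $H^2$, say $P$, one modifies the warped-product expression $dr^2+\sinh^2(r)\,d\theta^2+\cosh^2(r)\,g_P$ of the hyperbolic metric, pushing the zero-radius locus $P$ off to infinity so that it becomes a totally geodesic cuspidal end (just as a punctured hyperbolic disk unwinds to a half-plane cusp), interpolating back to the hyperbolic metric outside. The orthogonality furnished by lemma~\ref{lem-hyperplanes-are-orthogonal} guarantees that near a point lying on several of the deleted $H^2$'s the modified metric is locally a Riemannian product of such cuspidal factors with a hyperbolic factor, hence of nonpositive curvature; since the hyperbolic metric on the convex set $U$ is already $\mathrm{CAT}(-1)$, the metric so obtained on $Z$ is complete and locally $\mathrm{CAT}(0)$. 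The Cartan--Hadamard theorem for $\mathrm{CAT}(0)$ spaces then shows that the universal cover of $Z$ is $\mathrm{CAT}(0)$, hence contractible, which is the desired conclusion. The hard part is precisely this last paragraph --- performing the metric modification coherently along all of $\H^{(2)}$ and verifying the local $\mathrm{CAT}(0)$ condition at the mutual intersections of the $H^2$'s; everything before it is bookkeeping with the chamber $C_j$ and the groups $W_j$ and $\WEck_j$ already produced by theorem~\ref{thm-H4-stabilizers-in-terms-of-diagrams}.
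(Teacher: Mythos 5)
Your reduction to proving that a connected component $Z$ of $H_j^4-\H$ (the paper's $D_j$) is aspherical agrees with the paper's first step, and your decomposition $Z=U-\H^{(2)}$ with $U$ the open convex region cut out by the discriminant mirrors is accurate. But the metric-deformation argument you build on this has a genuine gap. You assert that after cuspidalizing the $H^2$'s of $\H^{(2)}$ the resulting metric on $Z$ is \emph{complete}; this is false, because $U$ is a proper open convex subset of $H_j^4$ (every $C_j$ has at least one discriminant wall), so its hyperbolic metric is incomplete near $\partial U$, and the cuspidalization does nothing there. The Cartan--Hadamard theorem for $\mathrm{CAT}(0)$ spaces requires completeness, so the appeal to it does not go through. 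The situation is actually worse than a mere side condition: the pieces of $\H^{(2)}$ inside $U$ are not whole $H^2$'s but incomplete half-planes that exit through $\partial U$. For $j=2$ the two $H^2$'s meet at a finite vertex and run out through the walls of the finite-volume polyhedron $\bar U$; for $j=3,4$ the paper's proof explicitly shows that the $H^2$'s meet the distinguished discriminant facet $H$ of $\partial U$ orthogonally. Any cusp deformation must therefore be carried coherently up to $\partial U$, and you have not said how to do that or what happens to completeness there.

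Separately, the step you yourself flag as ``the hard part''---verifying local $\mathrm{CAT}(0)$ for the modified metric at mutual intersections of the $H^2$'s---is not a detail but the crux, and it is left unproven; a doubly (or multiply) warped product does not automatically retain nonpositive curvature after one modifies only the angular warping functions, and one must actually check the sign conditions and the compatibility of the several modifications at a common intersection. By contrast the paper avoids the metric deformation entirely and analyzes each $D_j$ directly: $D_0,D_1$ are convex hence contractible; $D_2$ is a product of two punctured open disks; and $D_3,D_4$ are each an open interval times $(H^3 - (\hbox{disjoint geodesics}))$, whose homotopy type is computed with stratified Morse theory. Your approach, if completed, would be more uniform and conceptually appealing, but as written it does not establish the theorem.
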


\begin{proof}
We write $D_j$ for the component of $H_j^4-\H$ containing
$C_j-\H$, and think of
$\moduli_{0,j}^\R$ as 
$$
\text{(the stabilizer of $D_j$ in $\PGamma_j^\R$)}\bigm\backslash D_j\;.
$$
Since $D_j$ is an orbifold cover of $\moduli_{0,j}^\R$, it suffices to
show that $D_j$ is aspherical.  One way to understand $D_j$ is as the
union of the translates of $C_j-\H$ under $\WEck_j$.
Alternately, $\WEck_j$ is the stabilizer
of $D_j$ in $W_j$.  Now we look at the $D_j$ individually.  $\WEck_0$ is
the finite group $S_5$, and the four Eckardt walls are the walls
containing a vertex $P$ of $C_0$.  (Vertices in $H^n$ of an
$n$-dimensional Coxeter polyhedron correspond bijectively to $n$-node
subdiagrams of the Coxeter diagram which generate finite Coxeter
groups.)  Therefore $D_0$ is the interior of a finite-volume
polyhedron centered at $P$, so $D_0$ is not just aspherical but even
contractible.  The same argument works for $j=1$, with $S_3\times S_3$
in place of $S_5$.

The case $j=2$ is more complicated, even though $\WEck_2$
is still finite (the 
Coxeter group $G_2\times G_2$) and the Eckardt walls are still the walls
meeting at a vertex $P$ of $C_2$.  The complication is that the
fixed-point set of each $G_2$ factor lies in $\H$.  The
result is that $D_2$ is the interior of a finite-volume
polyhedron centered at $P$, minus its intersection with two
mutually orthogonal $H^2$'s that meet transversely at $P$.
Therefore $D_2$ is homeomorphic to a product of two punctured
open disks, so it is aspherical.

Now we will treat $j=3$; the case $j=4$ is just the same.  What
is new is that $\WEck_3$ is infinite.  However, one of the discriminant
walls (the lower of the rightmost two in
figure~\ref{fig-small-Coxeter-diagrams}) is orthogonal to all of
the Eckardt walls.  Therefore  $\WEck_3$ preserves the
hyperplane $H$ containing this discriminant wall.  Furthermore, $\WEck_3$ is
the Coxeter group
\begin{equation}
\label{eq-633-Coxeter-diagram}
\begin{picture}(0,0)
\setlength{\unitlength}{1sp}
\smalldiagrams
\Ax=0pt    \Ay=0pt
\Bx=\edgelengthD  \By=0pt
\Cx=\edgelengthD  \Cy=0pt
\Dx=\edgelengthD  \Dy=0pt
\multiply\Cx by 2
\multiply\Dx by 3
\triplebond\Ax\Ay\Bx\By{0pt}{1pt}
\bond\Bx\By\Cx\Cy
\bond\Cx\Cy\Dx\Dy
\hollownode\Ax\Ay
\hollownode\Bx\By
\hollownode\Cx\Cy
\hollownode\Dx\Dy
\end{picture}
\kern151pt
\end{equation}
which is a nonuniform lattice in $\PO(3,1)$, acting on $H$ in the
natural way.  In particular, $H$ is a component of the boundary of
$D_3$, and every $\WEck_3$-translate of $C_3$ has one of its facets
lying in $H$.  Finally, $H$ is orthogonal to the codimension two face
of $C_3$ associated to the triple bond in
figure~\ref{fig-small-Coxeter-diagrams}, and therefore orthogonal to
all of its $\WEck_3$-translates.  We summarize: $D_3$ is the interior of
an infinite-volume convex polyhedron in $H^4_3$, minus the union of a
family of $H^2$'s, each orthogonal to the distinguished facet $H$.
Therefore $D_3$ is homeomorphic to the product of an open interval
with $H-Z$, where $Z$ is the intersection of $H$ with the union of
these $H^2$'s.  

$H-Z$ can be understood in terms of $\WEck_3$'s action on
it.  A fundamental domain for $\WEck_3$ is a simplex with shape described in
\eqref{eq-633-Coxeter-diagram}, and the edge corresponding to the
triple bond lies in $Z$.  Indeed, $Z$ is the union of the
$\WEck_3$-translates of this edge.  Direct visualization in hyperbolic 3-space shows
that $Z$ is the union of countably many disjoint geodesics.  Therefore
$H^3-Z$ has the homotopy type of countably many circles, all identified
at a point.  This follows from stratified Morse theory; see
Theorem 10.8 of \cite{GoreskyMacPherson}.
\end{proof}

\section{Relation with the work of B. Segre}
\label{sec-Segre}

Classical knowledge about the topology of each connected component of
the space of real smooth cubic forms was restricted to Segre's
computation \cite[\S\S34--54]{segre} of the monodromy of the
fundamental group of each component on the configuration of lines
(real and complex) of a surface of that type.  Our methods give a
transparent calculation of this monodromy group $M_j$ over each
component $\forms^\R_{0,j}$, because the fundamental groups of these
components are almost the same as the groups
$\orbpi_1(\moduli_{0,j}^\R)$ computed in the last section.  In
particular we show that four of Segre's computations are correct and
correct an error in the remaining one.

\begin{lemma}
\label{lem-exact-homotopy-sequence-segre}
For each $j=0,\dots,4$, there is an exact sequence
\begin{equation}
\label{eq-exact-homotopy-sequence-segre}
1\to\Z/2\to\pi_1(\forms_{0,j}^\R)\to\orbpi_1(\moduli_{0,j}^\R)\to\Z/2\to1.
\end{equation}
Here, the image of the middle map is the orientation-preserving
subgroup of $\orbpi_1(\moduli_{0,j}^\R)$ and the kernel is
$\pi_1(G^\R)$.
\end{lemma}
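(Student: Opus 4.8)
The plan is to regard $G^\R\to\forms_{0,j}^\R\to\moduli_{0,j}^\R$ as an ``orbifold fibre bundle'' with fibre $G^\R=\GL(4,\R)$ and to read \eqref{eq-exact-homotopy-sequence-segre} off its long exact homotopy sequence, using Theorem~\ref{thm-aspherical} to kill everything above degree~$1$ on the base. Concretely, let $B:=EG^\R\times_{G^\R}\forms_{0,j}^\R$ be the Borel construction of the (proper) $G^\R$-action. The fibre inclusion of the fibration $\forms_{0,j}^\R\to B\to BG^\R$ has homotopy fibre $\Omega BG^\R\simeq G^\R$, and under this identification $G^\R\to\forms_{0,j}^\R$ is homotopic to an orbit map $g\mapsto g\cdot F_0$ at a point $F_0$ of trivial $G^\R$-stabiliser; so there is a fibration sequence $G^\R\to\forms_{0,j}^\R\to B$, with long exact sequence
\[
\pi_2(B)\to\pi_1(G^\R)\to\pi_1(\forms_{0,j}^\R)\to\pi_1(B)\to\pi_0(G^\R)\to\pi_0(\forms_{0,j}^\R).
\]

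The substantive step is to identify $B$ with a $K\bigl(\orbpi_1(\moduli_{0,j}^\R),1\bigr)$. For this I would use the space $\framed_0^{\,\x_j}$ of Section~\ref{sec-smoothmoduli}, which by Theorem~\ref{thm-isomorphism-for-smooth} is a principal $G^\R$-bundle over $H_j^4-\H$ and is at the same time a regular covering of $\forms_{0,j}^\R$ with deck group $\PGamma_j^\R$, the two actions commuting. Thus $B=\PGamma_j^\R\backslash Y$ with $Y:=EG^\R\times_{G^\R}\framed_0^{\,\x_j}$; since $\framed_0^{\,\x_j}\to H_j^4-\H$ is a principal bundle, the natural $\PGamma_j^\R$-equivariant map $Y\to G^\R\backslash\framed_0^{\,\x_j}=H_j^4-\H$ is a homotopy equivalence (contractible fibres $EG^\R$), and $\PGamma_j^\R$ acts freely and properly discontinuously on $Y$ — the bundle structure unwinds the orbifold points of $\moduli_{0,j}^\R$ even though $\PGamma_j^\R$ is not free on $H_j^4-\H$. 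Hence the covering $Y\to B$ gives $\pi_n(B)=\pi_n(Y)=\pi_n(H_j^4-\H)$ for $n\ge2$, which vanishes because the universal cover $\widehat X$ of $H_j^4-\H$ is the orbifold universal cover of $\moduli_{0,j}^\R\cong\PGamma_j^\R\backslash(H_j^4-\H)$ and is contractible by Theorem~\ref{thm-aspherical}; and comparing the group extensions over $\PGamma_j^\R$ gives $\pi_1(B)\cong\orbpi_1(\moduli_{0,j}^\R)$. (If $H_j^4-\H$ is disconnected one argues componentwise, replacing it by $D_j$ and $\PGamma_j^\R$ by the stabiliser of $D_j$, exactly as in the proof of Theorem~\ref{thm-aspherical}.) Now $G^\R=\GL(4,\R)$ deformation retracts to $\O(4)$, so $\pi_0(G^\R)=\pi_1(G^\R)=\Z/2$ (with $\pi_1(G^\R)=\pi_1(\SO(4))$) and $\pi_2(G^\R)=0$; since $\forms_{0,j}^\R$ is connected, $\pi_0(\forms_{0,j}^\R)=\ast$. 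Substituting into the displayed sequence gives precisely \eqref{eq-exact-homotopy-sequence-segre}, with the left-hand $\Z/2$ being $\pi_1(G^\R)$ mapped in by the orbit map, i.e.\ $\pi_1$ of a $G^\R$-orbit.

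It remains to identify the map $\orbpi_1(\moduli_{0,j}^\R)\to\pi_0(G^\R)=\Z/2$ with the orientation character of the orbifold, equivalently to see that the image of $\pi_1(\forms_{0,j}^\R)$ is the orientation-preserving subgroup. For this I would note that $\GL(4,\R)$ acts linearly on $\forms^\R\cong\R^{20}$ through a representation whose determinant is an odd power of $\det$ (one checks on $\diag(-1,1,1,1)$, which negates $7$ of the $20$ monomials), so $g$ preserves the orientation of $\forms^\R$, and of its open subset $\forms_{0,j}^\R$, exactly when $\det g>0$, i.e.\ when $g$ lies in the identity component $G^{\R,+}$. Hence the connecting map records whether a loop preserves or reverses the local orientation of $\moduli_{0,j}^\R$; its kernel is the orientation-preserving subgroup, it equals the image of $\pi_1(\forms_{0,j}^\R)$, and it coincides with $\orbpi_1$ of the orientation double cover $G^{\R,+}\backslash\forms_{0,j}^\R$. (Equivalently: run the whole argument with $G^{\R,+}$ in place of $G^\R$; the $\pi_0$ term disappears, one gets $1\to\pi_1(G^{\R,+})\to\pi_1(\forms_{0,j}^\R)\to\orbpi_1(G^{\R,+}\backslash\forms_{0,j}^\R)\to1$, and $\pi_1(G^{\R,+})=\pi_1(\SO(4))=\Z/2=\pi_1(G^\R)$.)

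The main obstacle is the middle step: transporting Theorem~\ref{thm-aspherical} from the hyperbolic side ($H_j^4-\H$, really $D_j$) to the $\forms$-side through the covering-and-bundle picture of $\framed_0^{\,\x_j}$, and carrying out the componentwise bookkeeping when $H_j^4-\H$ and $\framed_0^{\,\x_j}$ are disconnected. The rest is the formal long exact sequence of a fibration together with the standard facts $\pi_{\le1}(\O(4))=\Z/2$ and $\pi_2(\O(4))=0$.
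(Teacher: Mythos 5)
Your argument is correct and is essentially the approach the paper sketches: the paper invokes the long exact homotopy sequence of the principal $G^\R$-bundle $Y_j\to D_j$ together with the $T_j$-action on both, which is exactly what your Borel construction $B=EG^\R\times_{G^\R}\forms_{0,j}^\R\simeq E\PGamma_j^\R\times_{\PGamma_j^\R}(H_j^4-\H)$ packages, with Theorem~\ref{thm-aspherical} killing $\pi_2$ in both versions. The only place you are slightly terse is in passing from ``$g$ preserves the orientation of $\forms^\R$ iff $\det g>0$'' to the orientation character of the $4$-dimensional orbifold quotient: one should also note that $\Ad(g)$ has determinant $+1$ on $\mathfrak{gl}_4(\R)$ for every $g$, so the orbit directions never contribute a sign and the two notions of orientation-reversal agree.
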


For use in the proof and elsewhere in this section, we write $D_j$ for
a component of $H^4_j-\H$ and $T_j$ for its stabilizer in
$\PGamma_j^\R$.  This group is generated by the subgroup $\WEck_j$ of
$W_j$ introduced in section~\ref{sec-topologysmoothmoduli}, together
with the diagram automorphism if one is present.

\begin{proof}[Proof sketch.]
  There are two ingredients.  One is the exact homotopy sequence of
  the fibration $G^\R\to Y_j\to D_j$, where
  $Y_j\sset\framed_0^{\,\R}$ is the $g^\R$-preimage of
  $D_j$.  The other ingredient is the interaction of this sequence
  with the $T_j$-action on $Y_j$ and $D_j$.
  We omit the details.  We remark that it would be more classical to
  consider $\pi_1(P\forms_{0,j}^\R)$ instead.  This would change the
  $\Z/2=\pi_1(G^\R)$ on the left into $(\Z/2)^2=\pi_1(PG^\R)$, but not
  affect our other considerations.
\end{proof}

The monodromy of $\pi_1(\forms_0,F)$ on lines is the classical map to
the Weyl group $W(E_6)\isomorphism\Aut\bigl(L(S),\eta(S)\bigr)$, as in
\S\ref{subsec-finite-vectorspace}.  As explained there, this is the
same as the reduction modulo $\theta$ of the monodromy representation
$\pi_1(\forms_0)\to\PGamma$.    Therefore the monodromy of
$\pi_1(\forms_{0,j}^\R)$ on lines can be computed by taking the image
of $\pi_1(\forms_{0,j}^\R)$ in $PO(V)$.  

By lemma~\ref{lem-exact-homotopy-sequence-segre},
$\pi_1(\forms_{0,j}^\R)$ acts by the orientation-preserving subgroup of
$T_j$,  so it will suffice
to compute the map $T_j\to PO(V)$ and then pass to the image of the
subgroup.  Computing this map is very easy: one lifts each generator
of $T_j$ to an element of $\Gamma$, reduces modulo $\theta$ to get an
element of $O(V)$, and then passes to $PO(V)$.  The ambiguity in the
lift is unimportant because of the passage to $PO(V)$.  One can work
out the details in each case (see below for $j=2$), with the
following result:
 
\begin{theorem}
\label{thm-monodromy-images}
Let $M_j$ denote the image of the monodromy representation
$\pi_1(\forms_{0,j})\to W(E_6)$.
Then
\par
$\begin{array}{l}
\phantom{M_3\isomorphism{}}M_0 \isomorphism A_5 \\
\phantom{M_3\isomorphism{}}M_1 \isomorphism S_3 \times S_3 \\
\phantom{M_3\isomorphism{}}M_2 \isomorphism  (\Z/2)^3\semidirect \Z/2 \\
\phantom{M_3\isomorphism{}}{M_3\isomorphism{}}M_4 \isomorphism S_4
\end{array}$

\noindent In $M_2$,  $\Z/2$ has fixed-point set $(\Z/2)^2$ in
$(\Z/2)^3$, and this characterizes the group structure.
\qed
\end{theorem}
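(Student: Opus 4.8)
The plan is to carry out, case by case, the reduction sketched in the paragraph preceding Theorem~\ref{thm-monodromy-images}. By that discussion (which rests on Lemma~\ref{lem-exact-homotopy-sequence-segre} and the period‑map identification of $\moduli_{0,j}^\R$ with $T_j\backslash D_j$), the monodromy group $M_j$ is the image in $\PO(V)\isomorphism W(E_6)$ of the orientation‑preserving subgroup $T_j^+$ of $T_j$, and $T_j$ is generated—by Theorem~\ref{thm-H4-stabilizers-in-terms-of-diagrams} and the description of section~\ref{sec-H4-stabilizers}—by the reflections in the ``Eckardt'' simple roots of $W_j$ (the hollow nodes of figure~\ref{fig-small-Coxeter-diagrams}, with explicit coordinates in figure~\ref{fig-large-Coxeter-diagrams}) together with the diagram automorphism $\gamma$ when $j=1,2$. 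So the whole proof reduces to reducing these generators modulo $\theta$, computing the finite subgroup of $\PO(V)$ they generate, and then passing to the image of $T_j^+$.

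The reduction recipe is the obvious one: a reflection in a root $r$ of $\Lambda^{\x_j}$ of norm $1$ or $2$ reduces modulo $\theta$ to the reflection in the nonzero image $\bar r\in V$; a reflection in a root $r=\theta s$ of norm $3$ or $6$ (with $s\in\Lambda$ of norm $1$ or $2$, as in the remark following Lemma~\ref{lem-what-the-roots-are}) equals, as an isometry of $\Lambda$, the reflection in $s$, hence reduces to the reflection in $\bar s\in V$; and $\gamma$, being an isometry of $\Lambda_j$, extends to $\Lambda$ by Lemma~\ref{lem-real-lattice-isometries-extend}, so it too has a well‑defined image in $\PO(V)$. Inserting the vectors of figure~\ref{fig-large-Coxeter-diagrams} and reading off Gram matrices modulo $3$ (via the form $q$ of \S\ref{subsec-finite-vectorspace}, using Lemma~\ref{lem-isometry-finite-spaces} to work on the $T$-side) is then a finite mechanical computation.

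The outcomes I expect: for $j=0$ the four Eckardt roots reduce to a system whose Gram matrix mod $3$ is the $A_4$ Cartan matrix, of determinant $5\not\equiv0$, so the image of $T_0=W(A_4)\isomorphism S_5$ is a faithful $S_5$ and $M_0=$ image of the even part $\isomorphism A_5$. For $j=1$ they reduce to two mutually orthogonal pairs, each pair having diagonal Gram entries $2$ and off‑diagonal $-1$; although this form is degenerate mod $3$, the product of the two reflections still has order $3$, so each pair generates an $S_3$, the two $S_3$'s commute, and $\gamma$ interchanges them, giving $M_1\isomorphism S_3\times S_3$. For $j=2$ each triple bond (a $G_2$ pair) reduces to an \emph{orthogonal} pair of vectors, so the order‑$6$ rotation in each $W(G_2)$ collapses and each $W(G_2)\isomorphism D_{12}$ maps onto a Klein four‑group; the two, together with $\gamma$, generate $(\Z/2)^4\semidirect\Z/2$ of order $32$, and $T_2^+$ maps onto the $\gamma$‑invariant index‑two subgroup $(\Z/2)^3\semidirect\Z/2$ in which $\Z/2$ has fixed space $(\Z/2)^2$, exactly the stated structure. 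For $j=3,4$ the unique norm‑$2$ Eckardt root reduces to a vector orthogonal to the reductions of the three norm‑$6$ Eckardt roots, whose Gram matrix mod $3$ is the $A_3$ Cartan matrix; hence $\WEck_j$ ($=T_j$, since there is no diagram automorphism) maps onto $\Z/2\times W(A_3)=\Z/2\times S_4$, and a length‑parity count identifies the image of $T_j^+$ with the ``graph'' index‑two subgroup $\{(\varepsilon,\sigma):\varepsilon\equiv\operatorname{sgn}\sigma\}$, which projects isomorphically onto $S_4$.

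The bookkeeping is routine; the genuinely non‑obvious point is the cases $j=3,4$ (and the analogous collapse of the two $G_2$'s for $j=2$): there $\WEck_j$ is an \emph{infinite} group—a nonuniform lattice in $\PO(3,1)$, cf. the proof of Theorem~\ref{thm-orbifold-fundamental-groups}—yet its image is the finite group $\Z/2\times S_4$, and this becomes visible only once one checks that reduction mod $\theta$ forces the triple bond (the order‑$6$ rotation) and one of the single bonds to degenerate, leaving precisely $A_3\perp A_1$. The other subtlety—why $\pi_1(\forms^\R_{0,j})$ acts through $T_j^+$ and not all of $T_j$—is already disposed of before the theorem: the missing reflections realize real Eckardt involutions, which come from determinant $-1$ elements of $G^\R$ and so sit in the orbifold structure of $\moduli^\R_{0,j}$ rather than in $\pi_1(\forms^\R_{0,j})$.
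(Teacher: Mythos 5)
Your proposal follows exactly the method the paper outlines just before the theorem: invoke Lemma~\ref{lem-exact-homotopy-sequence-segre} so that $M_j$ is the image of the orientation-preserving part $T_j^+$ of $T_j$ under the reduction-mod-$\theta$ map $T_j\to PO(V)$, then compute from the explicit simple roots of figure~\ref{fig-large-Coxeter-diagrams}; the paper itself only spells out $j=2$, and your computations supply the other cases. The final answers all agree. Two details in the write-up are off, however.

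First, in the $j=3,4$ paragraph you assert that reduction mod $\theta$ ``forces the triple bond \ldots and one of the single bonds to degenerate, leaving precisely $A_3\perp A_1$.'' In fact only the triple bond degenerates. For $j=3$ the Eckardt roots $r_5,r_2,r_3,r_6$ reduce to $(1,0,0,0,0)$, $(0,0,0,-1,1)$, $(0,0,-1,1,0)$, $(0,0,0,-1,-1)$ in $V$; the first is orthogonal to the rest (the triple bond collapses), but $\overline{r_2/\theta}\cdot\overline{r_3/\theta}=-1$ and $\overline{r_3/\theta}\cdot\overline{r_6/\theta}=-1$ survive, and their $3\times3$ Gram matrix is nondegenerate mod $3$. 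If a single bond had also collapsed you would get $A_1\perp A_1\perp A_2$, whose even reflection subgroup has order $12$, not the $S_4$ you need; so the parenthetical clause contradicts your own (correct) conclusion $A_3\perp A_1$.

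Second, for $j=1$ you describe the image of all of $T_1$, namely $(S_3\times S_3)\rtimes\Z/2$ of order $72$, and then simply assert ``giving $M_1\cong S_3\times S_3$.'' You handle the passage to $T_j^+$ explicitly for $j=0,2,3,4$ but not here, and this is exactly where the subtlety lies: the paper's Caution immediately after the theorem warns that $(S_3\times S_3)\rtimes\Z/2$ contains two index-two subgroups abstractly isomorphic to $S_3\times S_3$, and $M_1$ is the non-obvious one. The length-parity argument you use for $j=3,4$ does identify it, but one must also determine whether the diagram automorphism is orientation-preserving; that step should be made explicit rather than elided.
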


\begin{caution}
  It turns out that $\orbpi_1(\moduli_{0,1}^\R)\isomorphism (S_3\times
  S_3)\semidirect\Z/2\isomorphism(\Z/3)^2\semidirect D_8$ has two subgroups isomorphic to $S_3\times
  S_3$.  The one which is the image of $\pi_1(\forms_{0,1}^\R)$, here
  manifesting as $M_1$, is not the obvious one but the other one.
\end{caution}

\begin{remark}
In the two cases where $\pi_1(\forms_{0,j}^\R)$ is
finite, namely $j=0$ or $1$, its representation in $W$ is almost
faithful.  The kernel is precisely the central $\Z/2=\pi_1(G^\R)$.
\end{remark}

Our results confirm Segre's computation of $M_0,\dots,M_4$, except for
$M_2$, which he gives as $(\Z/2)^2$ at the end of \S46 (page
72).  Our $M_0,\dots,\breakok M_4$ are his $\Gamma_1,\dots,\Gamma_5$,
introduced in  \S 34 and
computed in \S 35 to \S 54.   In each case, he also gave a very
detailed  description of the action
on various configurations of lines and tritangent planes of a surface
in the appropriate component.  We will show how to obtain this more
detailed information from our perspective, in the case $j=2$.  

By definition, $T_2=\WEck_2\semidirect\Z/2$ is
generated by the reflections in $r_1$, $r_3$, $r_5$ and $r_7$ from the
middle diagram in figure~\ref{fig-large-Coxeter-diagrams}, together
with the diagram automorphism.  By the choice of roots, we already
have lifts of the four reflections to $\Gamma$.  For $r_1$ and $r_5$,
reduction modulo $\theta$ gives the reflections of $V$ in the images
of these two roots.  The same applies to $r_3$ and $r_7$, except that
one must divide them by $\theta$ before reducing modulo $\theta$.  The
point is that reflection in $r_3$ is the same as reflection in
$r_3/\theta$, a primitive element of $\Lambda$.  Therefore it acts on
$V$ as the reflection in the image of $r_3/\theta$.  We lift the
diagram automorphism in the obvious way, to the isometry of
$\Lambda$ that exchanges $r_1\leftrightarrow r_5$, $r_3\leftrightarrow
r_7$ and fixes $r_4$.

So $M_2$ is the subgroup of $PO(V)$ generated by the diagram
automorphism and the products of any evenly many reflections in the
vectors $(0,1,-1,0,0)$, $(0,0,0,1,-1)$, $(1,0,0,0,0)$ and
$(0,0,0,1,1)$, which are the reductions modulo $\theta$ of $r_1$,
$r_3/\theta$, $r_5$ and $r_7/\theta$, respectively.  These are
mutually orthogonal, so $M_2\isomorphism(\Z/2)^3\semidirect(\Z/2)$.  The
action of the diagram automorphism on $(\Z/2)^3$ is easy to work out,
with the result stated in theorem~\ref{thm-monodromy-images}.

Now we work out the action on lines.  A key ingredient is the
dictionary on p. 26 of \cite{ATLAS} between the lines and tritangent
planes of a cubic surface and certain objects in $V$.  Namely, the
tritangent planes of $S$ correspond to the ``plus-points'' of
$PV$;   with our choice of $q $, these are
the lines $\langle v\rangle$ in $V$ with $q(v) = -1$ (see p.~xii of
\cite{ATLAS}).  And a line of $S$ corresponds to a ``base'', which
with our choice of $q$ means a collection of five mutually orthogonal
lines in $V$, each spanned by a vector $v$ with $q(v) = -1$.  The fact
that a base contains five plus-points corresponds to the fact that
each line on $S$ is contained in $5$ tritangent planes of $S$.  One
can check that each plus-point is contained in exactly $3$ bases,
corresponding to the fact that each tritangent plane contains $3$
lines.

The anti-involution $\x_2$ that defines the component $\forms_{0,2}^\R$  
acts on
$V$ by
$$
\x_2(x_0,\dots,x_4) = (x_0,x_1,x_2,-x_3,-x_4).
$$
This lets one work out which lines and tritangent planes are real.
Together with our explicit generators for $M_2$, one can obtain
extremely detailed results, for example:

\begin{theorem}
  A real cubic surface of type $j=2$ has exactly five real tritangent
  planes.  These have exactly one line $\ell$ in common, necessarily
  preserved by the monodromy group $M_2$.  This group preserves
  exactly one of these five planes, and also each of the lines in it,
  which are real.  Of the remaining four, two (say $t_1,t_2$) contain
  non-real lines and two (say $t_3,t_4$) contain only real lines.
  $M_2$ acts on these planes by $\Z/2$, and contains an involution
  acting by $t_1\leftrightarrow t_2$, $t_3\leftrightarrow t_4$.  The
  subgroup of $M_2$ that preserves each of $t_1,\dots,t_4$ acts on the
  lines they contain as follows.  It consists of every permutation of
  the form: for any evenly many of $t_1,\dots,t_4$, in each of them
  exchange the two lines of $S$ other than $\ell$.
  The action of $M_2\isomorphism(\Z/2)^3\semidirect(\Z/2)$ on these eight
  lines is faithful.  \qed
\end{theorem}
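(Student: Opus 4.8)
The plan is to carry out the finite $\F_3$-linear algebra computation prepared in the paragraphs above. Recall that $V=\Lambda/\theta\Lambda\cong\F_3^5$ carries the quadratic form $q(v)=-v_0^2+v_1^2+v_2^2+v_3^2+v_4^2$; the tritangent planes of $S$ are the plus-points $\langle v\rangle$ with $q(v)=-1$, the lines of $S$ are the bases (unordered sets of five mutually orthogonal plus-points), $\x_2$ acts on $V$ as $\diag(1,1,1,-1,-1)$, and $M_2\subset\PO(V)$ is generated by the diagram automorphism $\alpha$ together with the products of evenly many of the reflections in the four pairwise-orthogonal norm-$(-1)$ vectors $a_1=(0,1,-1,0,0)$, $a_2=(0,0,0,1,-1)$, $a_3=(1,0,0,0,0)$, $a_4=(0,0,0,1,1)$.

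First I would find the real tritangent planes. A plus-point $\langle v\rangle$ is $\x_2$-stable precisely when $v$ lies in the $(+1)$-eigenspace $\{x_3=x_4=0\}$ or the $(-1)$-eigenspace $\{x_0=x_1=x_2=0\}$ of $\x_2$; solving $q(v)=-1$ in each eigenspace is a one-line count over $\F_3$ and yields exactly five plus-points, namely $\langle a_1\rangle,\langle a_3\rangle,\langle(0,1,1,0,0)\rangle$ from the first eigenspace and $\langle a_2\rangle,\langle a_4\rangle$ from the second. Set $t_0=\langle(0,1,1,0,0)\rangle$ and observe that $t_0$ spans $\langle a_1,a_2,a_3,a_4\rangle^\perp$; hence each of the four reflections in the $a_i$ fixes $t_0$, so every even product of them does, and, up to the harmless factor $-I$, so does $\alpha$ (which permutes the $a_i$, hence preserves their span and the complementary line). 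Thus $M_2$ fixes $t_0$ and permutes the other four planes, with $\alpha$ exchanging $\langle a_1\rangle\leftrightarrow\langle a_3\rangle$ and $\langle a_2\rangle\leftrightarrow\langle a_4\rangle$. The five plus-points are pairwise orthogonal and each of norm $-1$, so together they form a base, which is the line $\ell$; it is the only base containing all five (a base is a five-element set of plus-points, so it is determined once it contains them), and $M_2$ preserves $\ell$ since it permutes the five planes.

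Next I would identify, for each of the five real planes $p$, the three lines through it: these are the orthogonal frames of norm-$(-1)$ vectors completing a generator of $p$, equivalently the three frames of the induced rank-$4$ form on $p^\perp\cong\F_3^4$, and one of them is always $\ell$. A direct check — using that $\x_2$ induces an explicit involution of each $p^\perp$, and that for any two of the five real plus-points the only common base is $\ell$ (so all the non-$\ell$ lines in question are distinct) — then shows: all three lines through $t_0$ are real; for one pair among the remaining four planes, which we label $t_1,t_2$, the two non-$\ell$ lines form a complex-conjugate pair; for the other pair, $t_3,t_4$, both non-$\ell$ lines are real. This gives the stated picture, the labelling $t_1\leftrightarrow t_2$, $t_3\leftrightarrow t_4$ under $\alpha$, and, by counting ($\ell$, the two lines in $t_0$, and the four in $t_3\cup t_4$), the seven real lines of a type-$2$ surface recorded in Table~\ref{tab-volumes-table}. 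That $M_2$ fixes each of the three lines of $t_0$ individually, not merely the plane, I would get by applying the generators of $M_2$ to those three bases.

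Finally, for the action on the eight lines in $t_1,\dots,t_4$ other than $\ell$: each generating reflection negates its vector $a_i$ and fixes $a_i^\perp$, hence fixes $\ell$ and, in each plane $t_m$ not having $a_i$ among its five plus-points, either fixes or swaps the two non-$\ell$ bases through $t_m$; tallying which planes get swapped exhibits that reflection as the flip of the two non-$\ell$ lines over an even-size subset of $\{t_1,\dots,t_4\}$, and the even products of the reflections sweep out exactly the group $(\Z/2)^3$ of all such even-size flips. Adjoining $\alpha$, which swaps $t_1\leftrightarrow t_2$ and $t_3\leftrightarrow t_4$ and hence exchanges the two corresponding pairs of lines, gives the semidirect-product description; faithfulness on the eight lines follows at once, since a nontrivial even product flips lines in at least one plane while an element $\alpha\cdot(\text{even product})$ carries the two lines of $t_1$ to those of $t_2$. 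The only real difficulty here is organizational rather than conceptual: one must enumerate the three bases through each of the five real plus-points and keep careful track, consistently with the ATLAS dictionary and the normalization of $q$ on p.~xii of \cite{ATLAS}, of which non-$\ell$ lines are real, which are conjugate pairs, and how the explicit generators of $M_2$ permute everything.
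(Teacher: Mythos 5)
Your proof takes exactly the approach the paper intends: the paper gives no argument for this theorem beyond the setup in the preceding paragraphs (the ATLAS dictionary translating tritangent planes to plus-points of $(V,q)$, lines to bases, the formula for $\x_2$ on $V$, and the explicit generators of $M_2$), and then simply states the result with \qed; you carry out that same finite $\F_3$-linear-algebra computation. Your count of real plus-points in the two eigenspaces, the identification of $t_0=\langle(0,1,1,0,0)\rangle$ as the unique plus-point fixed by $M_2$ (being the line orthogonal to all four $a_i$), and the observation that the five real plus-points are pairwise orthogonal of norm $-1$ and hence constitute the unique common base $\ell$ are all correct and match the paper's conventions; the remaining assertions (which of $t_1,\dots,t_4$ carry conjugate-pair non-$\ell$ lines, the even-flip description of the pointwise stabilizer, and faithfulness) are, as you note, a matter of enumerating the three bases through each real plus-point and tracking the generators, which is the same bookkeeping the paper leaves to the reader.
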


\begin{remark}
  A careful reading of \S 46 of \cite{segre} shows that Segre
  discusses actions of subgroups of $M_2$  on the set of four lines in
  $t_1$ and $t_2$, and on the set of four lines in $t_3$ and $t_4$,
  but does not seem to discuss the whole group.  It is not clear how
  he reaches his conclusion that $M_2 \cong (\Z/2)^2$.
\end{remark}

\section{Volumes}
\label{sec-volume}

In this section we compute the volume of each $\PGamma_j^\R\backslash H^4_j$
by computing its orbifold Euler characteristic and using the general relation
\[
  \hbox{vol}(M) = \frac{\hbox{vol}(S^n)}{\chi(S^n)} | \chi(M) |
=\frac{2^n\pi^{n/2}(n/2)!}{n!}|\chi(M)|
\]
for a hyperbolic orbifold $M$ with $n=\dimension M$ even.  For the
Euler characteristic, consider the subgroup $W_j$ generated by
reflections, and its
fundamental polyhedron $C_j$, described by the its Coxeter diagram in
figure~\ref{fig-small-Coxeter-diagrams}.   $W_j$ has index $\delta$ in $\PGamma_j^\R$ with
$\delta = 1$ or $2$.  The latter case occurs when the diagram has an
automorphism of order two.  Consider therefore the orbifold
$W_j\backslash H^4_j$.  Since
\[
\delta\cdot \chi\bigl(\PGamma_j^\R\backslash H^4_j)=
\chi\bigl(W_j\backslash H^4_j\bigr), 
\]
it suffices to compute the right-hand side. To this end, consider a
face $F$ of $C_j$ and its stabilizer $W_j(F)$ in
$W_j$.  If $\Phi$ stands for the set of proper faces of $C_j$, then
\[
   \chi\bigl(W_j\backslash H^4_j\bigr) 
     =1 + \sum_{F \in \Phi} 
       \frac{ (-1)^{\mathop{\hbox{\scriptsize dim}} F} \chi(F)}{ | W_j(F) | } 
     = 1 + \sum_{F \in \Phi} 
       \frac{ (-1)^{\mathop{\hbox{\scriptsize dim}} F}}{ | W_j(F) | } .
\]
Let $\Delta$ be a Coxeter diagram, let $\Sigma(\Delta)$ be the set of
nonempty subdiagrams describing finite Coxeter groups, and for $E$ in
$\Sigma$, let $|E|$ be the number of its nodes and $W(E)$ be the
associated Coxeter group.  The face of $C_j$ corresponding to $E$ has
codimension $|E|$ in an even-dimensional space, so the previous
equation can be written as
\[
 \chi\bigl(W_j\backslash H^4_j\bigr) 
     = 1 + \sum_{E \in \Sigma} 
       \frac{ (-1)^{|E|}}{ | W(E) | }.
\] 
 
For the Coxeter diagrams that occur in this paper, the enumeration of
subdiagrams is lengthy but easy.  We did the computations by hand and
then checked them with a computer.  Consider, for instance, the case
of $W_0$.  Every proper subdiagram describes a finite Coxeter group
except the one got by omitting the rightmost node; for example, the
other four-node subdiagrams (which describe vertices of $C_0$) have
types $B_4$, $A_1^2\times A_2$, $A_1\times B_3$ and $A_4$.  The
resulting contribution to the Euler characteristic is
\[
(-1)^4\Bigl(  \frac{ 1 }{ 2^4\cdot 4! } + \frac{1 }{ 2^2\cdot 3!} + 
  \frac{ 1 }{ 2^4\cdot3!} + \frac{1 }{ 5!}\Bigr)  = \frac{121 }{ 1920}.
\] 
Carrying out the full enumeration and computing the orders of the corresponding Weyl groups, one finds that
\[
   \chi(\PGamma^\R_0\backslash H^4) = 1 - \frac{ 5 }{ 2} + \frac{ 17 }{ 8 } - \frac{ 11 }{ 16 } + \frac{121 }{ 1920} = \frac{ 1 }{ 1920 }.
\]
This gives the first entry in table~\ref{tab-volumes-table}.  The other
calculations are similar.

\section{Moduli of Stable Complex Cubic Surfaces}
\label{sec-stable-complex-moduli}

All of our discussions have been restricted to smooth cubic surfaces.
However, one can still discuss moduli of singular surfaces, when the
singularities are mild.  In this section we recall from \cite{ACT} the
material necessary for our treatment in the next section of the moduli
space of stable real cubic surfaces.  Here stable means stable in the
sense of geometric invariant theory (GIT).  For cubic surfaces it is
classical that this is simply the condition that the singularities be
no worse than nodes (ordinary double points).  See \cite[(3.1)]{ACT}.
It is also classical that a cubic surface can have at most 4 nodes;
see \cite{bruce-wall} for a modern treatment.

We write $\forms_s$ for the space of all complex cubic forms defining
stable surfaces.  It contains $\forms_0$, and a standard result from
GIT is that $G$ acts properly on $\forms_s$.  Therefore
$\moduli_s:=G\backslash\forms_s$ is a complex-analytic manifold as well
as a quasi-projective variety.  The main result of \cite{ACT}
is an isomorphism
$\moduli_s\isomorphism\PGamma\backslash\ch^4$, extending the
isomorphism $\moduli_0\isomorphism\PGamma\backslash(\ch^4-\H)$ of
theorem~\ref{thm-isomorphism-for-smooth}.  For $\moduli_s$, this is an isomorphism of algebraic
varieties, but not of orbifolds (see below).  

Here are the main ideas
behind this isomorphism; theorem~\ref{thm-main-theorem-stable-complex-case} is the precise statement.  As
explained in \cite[(3.10) and (3.3)]{ACT}, the covering space
$\framed_0\to\forms_0$ extends to a ramified covering space
$\framed_s\to\forms_s$.  Here $\framed_s$ is the Fox completion (or
normalization) of $\framed_0\to\forms_0$ over $\forms_s$, and we call
its elements framed stable cubic forms.  The naturality of this
construction implies that the $G$- and $\PGamma$-actions on
$\framed_0$ extend to $\framed_s$.  The key facts about $\framed_s$
are the following:

\begin{lemma}[\protect{\cite[(3.14)]{ACT}}]
\label{lem-G-acts-freely-on-stable-forms}
$G$ acts freely on $\framed_s$, so $G\backslash\framed_s$ is a complex
manifold.  \qed
\end{lemma}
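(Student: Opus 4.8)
Since $\framed_s$ is a complex manifold and the $G$-action on it is proper (it is proper on the stable locus $\forms_s$ by standard geometric invariant theory, as recalled in \cite{ACT}), and a free proper holomorphic action has complex-manifold quotient, the content of the lemma is that the action is \emph{free}. The plan is therefore to show that the $G$-stabilizer of an arbitrary framed stable form $(F,[i])$ is trivial. Such a stabilizer is contained in the stabilizer $G_F$ of $F$ in $G$, which is finite because $F$ is stable. If $F\in\forms_0$ we are done by theorem~\ref{thm-G-action-on-framed-is-free}, so assume the surface $S=\{F=0\}$ has a node. By the formula \eqref{eq-def-of-G-action-on-framed}, an element $g\in G_F$ sends $(F,[i])$ to $(F,[i\circ g^*])$, where $g^*$ is the isometry of $\Lambda(T)$ induced by $g$ acting on the cyclic cubic threefold $T$; hence ``$g$ fixes $[i]$'' says precisely that $g^*$ acts on $\Lambda(T)$ as multiplication by a unit of $\E$, i.e.\ by a sixth root of unity.

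The decisive step is then a Griffiths-residue computation. Lift $g$ to $\GL(4,\C)$; it still fixes $F$ and acts on $T$ by $(X_0,\dots,X_3,Y)\mapsto(gX_0,\dots,gX_3,Y)$, so it commutes with $\sigma$ and preserves each of $H^3_{\sigma=\w}(T,\C)$ and $H^3_{\sigma=\wbar}(T,\C)$. Since $g^*$ is a scalar on $\Lambda(T)\tensor_\E\C\isomorphism H^3_{\sigma=\wbar}(T,\C)$ and is a real operator, it is also a scalar on the complex-conjugate eigenspace $H^3_{\sigma=\w}(T,\C)$, in particular on the four-dimensional piece $H^{2,1}_{\sigma=\w}(T)$. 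Griffiths' residue calculus identifies $H^{2,1}_{\sigma=\w}(T)$ with the degree-one part of the Jacobian ring of $F$, that is with $\langle X_0,\dots,X_3\rangle$ (there being no linear relations among the partials of $F$, even when $S$ is nodal), and under this identification $g$ acts --- up to the overall scalar $\det g$ coming from the Euler form --- through its linear action on $\langle X_0,\dots,X_3\rangle$. Scalarity thus forces $g$ to be a scalar matrix; a scalar matrix fixing $F$ is a cube root of $1$ times the identity, hence lies in the kernel $D$ of $\GL(4,\C)\to G$, so $g=1$ in $G$. For smooth $S$ this completes the argument; for nodal $S$, where $T$ is singular and $\dim H^{2,1}_{\sigma=\w}(T)$ drops, I would run the identical computation after pulling back to a $G_F$-equivariant resolution $\widetilde T\to T$, using that the residue classes above still account for the relevant part of $H^3(\widetilde T)$ --- which is just another face of the extension of the period data to $\framed_s$ carried out in \cite[\S3]{ACT}.

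I expect the main obstacle to be exactly this last point: justifying the period-theoretic bookkeeping at the singular threefold $T$, namely constructing $\Lambda(T)$ and the $g$-action on it and checking that ``$g$ fixes the framing'' translates into ``$g^*$ acts on the relevant cohomology by a root of unity.'' This is the Fox-completion analysis of \cite[(3.3) and (3.10)]{ACT}; once it is in hand the residue computation above is routine. It seems worth noting that one cannot replace this Hodge-theoretic input by a soft degeneration argument: approaching $F$ by smooth forms $F_t$ shows only that $(F,[i])$ is a limit of points with trivial stabilizer, and a limit of free actions can acquire fixed points.
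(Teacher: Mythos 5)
The paper gives no proof of this lemma---it is stated as a citation of \cite[(3.14)]{ACT} followed immediately by \qed---so there is no in-text argument to compare with. Judging your proposal on its own terms:

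The overall reduction is sound and the Griffiths-residue computation is correct and clean in the smooth case: the degree-one graded piece of the Jacobian ring of $Y^3-F$ splits under $\sigma$ as $\langle X_0,\dots,X_3\rangle\oplus\langle Y\rangle$, the residue map identifies $\langle X_0,\dots,X_3\rangle$ with $H^{2,1}_{\sigma=\w}(T)$, and $g$ acting through its linear representation on $\langle X_0,\dots,X_3\rangle$ can only be scalar if $g$ itself is scalar. That reproves theorem~\ref{thm-G-action-on-framed-is-free}, which you are allowed to assume anyway; so the entire content of the lemma is the nodal case, and that is exactly where the proposal is incomplete.

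There are two concrete problems in the nodal case, and they are more serious than bookkeeping. First, the statement ``$g$ fixes $[i]$ iff $g^*$ acts on $\Lambda(T)$ by a unit'' has no meaning at a Fox-completion boundary point: $\Lambda(T)$ is not defined when $T$ is singular, and what $f\in\framed_s\setminus\framed_0$ actually carries is a compatible system of framings of the nearby smooth fibers (a germ of a sheet of $\framed_0\to\forms_0$ at $F$), not an isometry out of any cohomology group of $T$ itself. Second, and decisively, pulling back to a resolution $\widetilde T$ is not a fix: for a small (or big) resolution of $T$ with $k$ nodes, $H^3(\widetilde T;\Z)$ has $\E$-rank $5-k$, so it is strictly smaller than $\Lambda=\E^{4,1}$; the vanishing cycles that the framing remembers simply do not live in $H^3(\widetilde T)$. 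Consequently the residue classes on $\widetilde T$ cannot control whether $g$ fixes the framing, and the sentence ``the residue classes above still account for the relevant part of $H^3(\widetilde T)$'' is not a step that can be made precise in the form you intend. You flag this as the ``main obstacle,'' and you are right that it is, but as written it is a genuine gap rather than a routine verification: the correct object to analyze is the limit behaviour of framings along nearby smooth fibers under the $G_F$-action (equivalently the local $(\Z/6)^k$-branched structure of lemma~\ref{lem-local-model-for-complex-framed-stable-to-stable}), and that analysis does not reduce to a residue computation on a resolution. Your closing remark, that a soft limit argument cannot work because a limit of free actions can acquire fixed points, is exactly why the lemma is not a formality---and exactly why the unresolved step cannot be waved through.
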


\begin{theorem}[\protect{\cite[(3.17--19)]{ACT}}]
\label{thm-main-theorem-stable-complex-case}
The period map $g:\framed_0\to\ch^4$ extends to $\framed_s$, factors
through $G\backslash\framed_s$, and induces a $\PGamma$-equivariant
diffeomorphism $G\backslash\framed_s\isomorphism\ch^4$.  It sends the
$k$-nodal cubic surfaces to the locus in $\ch^4$ where exactly $k$ of
the hyperplanes of $\H$ meet.  Furthermore, the induced map
$$
\moduli_s
=
G\backslash\forms_s
=
(G\times\PGamma)\backslash\framed_s
\to
\PGamma\backslash\ch^4
$$
is an isomorphism of analytic spaces, but not of complex orbifolds. \qed
\end{theorem}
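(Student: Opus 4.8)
The plan is to follow the proof in \cite{ACT}; here is its shape. First I would extend the period map across the discriminant. Fix a point of $\framed_s$ over a form $F\in\forms_s$ whose surface has a single node, so the associated cyclic threefold $T$ acquires one ordinary double point. A local Picard--Lefschetz computation for a family of smooth threefolds degenerating to this one shows that the vanishing cohomology is one-dimensional over $\E$ and that the local monodromy acts on $\Lambda(T)=H^3(T;\Z)$, with its $\E$-structure coming from $\sigma$, as a complex reflection of finite order: multiplication by a unit of $\E$ on the vanishing class $r$, and the identity on $r^\perp$. Because this monodromy is finite, the multivalued period map (equivalently, the underlying variation of Hodge structure and hence the Hodge filtration) extends holomorphically across the discriminant stratum; iterating over the deeper strata, where up to four nodes collide and one gets commuting reflections in mutually orthogonal norm-$1$ classes, shows that $g$ extends to a holomorphic map $\framed_s\to\ch^4$. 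As in the smooth case $g$ factors through $G\backslash\framed_s$, since $\ch^4$ is a bounded domain and $G$ carries no nonconstant bounded holomorphic functions; by Lemma~\ref{lem-G-acts-freely-on-stable-forms} the quotient $G\backslash\framed_s$ is a complex manifold.

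Next I would prove that $g\colon G\backslash\framed_s\to\ch^4$ is a biholomorphism. Over $\ch^4-\H$ this is exactly Theorem~\ref{thm-main-theorem-smooth-complex-case}. Over $\H$ the assertion is local: near a $k$-nodal point, a miniversal deformation of the $k$ nodes gives local coordinates on $G\backslash\framed_s$, and the local form of the extended period map, read off from the Picard--Lefschetz description above, identifies $g$ with the inclusion of a polydisc neighborhood in which the $k$-nodal locus is the intersection of the $k$ hyperplanes of $\H$ through the central point. Since both source and target are manifolds, $g$ is a local biholomorphism everywhere; being also proper (properness of the $G$-action on $\forms_s$ is a standard GIT fact) with simply connected target $\ch^4$ and injective over $\ch^4-\H$, it is a biholomorphism, and this simultaneously shows that the $k$-nodal surfaces map to the depth-$k$ stratum of $\H$. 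The map $g$ is $\PGamma$-equivariant, so it descends to a continuous bijection $\moduli_s=(G\times\PGamma)\backslash\framed_s\to\PGamma\backslash\ch^4$, which is an isomorphism of analytic spaces because it is the quotient of a $\PGamma$-equivariant biholomorphism.

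Finally, to see that the descended map is \emph{not} an isomorphism of complex orbifolds, I would compare isotropy groups at a $1$-nodal surface $S$ with no further automorphisms, a generic point of the discriminant. On the $\moduli_s$ side the orbifold isotropy at $[S]$ is $\Aut(S)$, which is trivial, so $\moduli_s$ is a manifold near $[S]$. On the $\PGamma\backslash\ch^4$ side the image lies on a single component $r^\perp$ of $\H$, with $h(r,r)=1$, and the pointwise stabilizer of $r^\perp$ in $\PGamma$ is the cyclic group of order $6$ generated by the complex reflection $x\mapsto x-(1-\zeta)\,h(x,r)\,r$ with $\zeta$ a primitive sixth root of unity; indeed the units of $\E$ are exactly $\mu_6$, and these reflections meet the scalars of $\Aut\Lambda$ only in the identity. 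So $\PGamma\backslash\ch^4$ is the orbifold $[\,\C^3\times(\C/(\Z/6))\,]$ near that point, whose underlying analytic space is smooth (via $z\mapsto z^6$) but whose orbifold structure is nontrivial. A homeomorphism cannot match a trivial orbifold chart with a nontrivial one, so the map is not an orbifold isomorphism.

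I expect the main obstacle to be the local analysis at the discriminant strata: proving that the extended period map is a local biholomorphism there and that it carries the $k$-nodal loci exactly to the depth-$k$ strata of $\H$. This is the step requiring genuinely new input beyond the smooth case, and it rests on the full degeneration and Picard--Lefschetz computations for cyclic cubic threefolds carried out in \cite{ACT}.
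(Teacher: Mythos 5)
The paper does not prove this theorem: it is quoted from \cite[(3.17--19)]{ACT} and closed with a $\qed$, so there is no in-paper argument to compare against. Your reconstruction of the \cite{ACT} argument has the right shape: extend $g$ to the Fox completion $\framed_s$ using finiteness of the local monodromy around the discriminant (a complex reflection of order $6$ on the vanishing $\E$-line, coming from the threefold singularity $y^3=x_1^2+x_2^2+x_3^2$ with monodromy eigenvalues $-\omega,-\bar\omega$); identify the extended map near a $k$-nodal point with a polydisc chart in which the $k$-nodal locus is the depth-$k$ stratum of $\H$ (this is precisely the content of Lemma~\ref{lem-local-model-for-complex-framed-stable-to-stable}, quoted from \cite[(3.10)]{ACT}); and compare isotropy groups (trivial $\Aut(S)$ versus the order-$6$ reflection stabilizing a generic point of $\H$) to see the map fails to be an orbifold isomorphism.

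The one step you gloss over is the passage from ``local biholomorphism'' to ``biholomorphism.'' You cite properness of $g:G\backslash\framed_s\to\ch^4$ and attribute it to properness of the $G$-action on $\forms_s$. Properness of the $G$-action gives that $\moduli_s$ is Hausdorff, but it does not by itself forbid a sequence $[F_n]\in\moduli_s$ from escaping to the boundary of moduli (toward surfaces with worse-than-nodal singularities) while $g(F_n)$ stays in a compact subset of $\ch^4$. Ruling this out is real work: one must show that as a stable cubic form degenerates toward the non-stable locus, the period point tends to the boundary of the ball (in \cite{ACT} this is tied to the unique strictly semistable orbit and the cusp of $\PGamma\backslash\ch^4$). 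Without that, a proper local biholomorphism argument only yields an open embedding onto an open set containing $\ch^4-\H$, not surjectivity onto all of $\ch^4$. Your injectivity argument (local biholomorphism plus injectivity on the dense open $G\backslash\framed_0$) and your isotropy-group comparison for the final orbifold statement are sound.
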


We will need the following local description of $\framed_s\to\forms_s$
in the next section, and give a refinement of it in lemma~\ref{lem-local-model-for-Fs-to-Cs-real-case}.

\begin{lemma}[\protect{\cite[(3.10)]{ACT}}]
\label{lem-local-model-for-complex-framed-stable-to-stable}
Suppose $f\in\framed_s$ lies over $F\in\forms_s$, the cubic surface
$S$ defined by $F$ has $k$ nodes, and $x:=g(f)\in\ch^4$.  

Then there
exist coordinates $t_1,\dots,t_4$ on $\ch^4$ which identify it with
the unit ball in $\C^4$ and $x$ with the origin, whose pullbacks
to $\framed_s$ can be extended to local coordinates $t_1,\dots,t_{20}$
around $f$, such that
\begin{enumerate}
\item
\label{item-ti=0-gives-hyperplanes-thru-x}
The components of $\H$ passing through $x$ are defined by
$t_1=0,\dots,t_k=0$.
\item
\label{item-action-of-stabilizer-on-ts}
The stabilizer $\PGamma_{\!f}$ of $f$ is $(\Z/6)^k$, acting on $\framed_s$
 by multiplying $t_1,\dots,t_k$ by sixth roots of unity and
leaving $t_{k+1},\dots,t_{20}$ invariant.
\item
\label{item-ui-in-terms-of-ti}
The functions
$u_1=t_1^6,\dots,u_k=t_k^6,u_{k+1}=t_{k+1},\dots,u_{20}=t_{20}$ are
local coordinates around $F\in\forms_s$.
\item
\label{item-ui=0-gives-discriminant-thru-F}
The discriminant $\D\sset\forms_s$ near $F$ is the union of the
hypersurfaces $u_1=0,\dots,u_k=0$.
\end{enumerate}
In these coordinates, the period map $g:\framed_s\to\ch^4$ is given
near $f$ by forgetting $t_5,\dots,t_{20}$.
\qed
\end{lemma}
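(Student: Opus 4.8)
The plan is to reduce the statement, by slicing transversally to the $G$-orbit through $f$, to a local problem about a branched cover of a ball in $\moduli_s\isomorphism\PGamma\backslash\ch^4$: identify the discriminant downstairs as a normal crossing divisor with $k$ branches, compute the monodromy of $\framed_0\to\forms_0$ around those branches, and then recognise $\framed_s\to\forms_s$ near $f$ as the associated Kummer (Fox) cover. The $G$-direction is reinstated at the end.

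\emph{Reductions.} By Lemma~\ref{lem-G-acts-freely-on-stable-forms} together with properness of the $G$-action, $\framed_s\to G\backslash\framed_s$ is a principal $G$-bundle, so $\framed_s$ is a $20$-dimensional complex manifold; by Theorem~\ref{thm-main-theorem-stable-complex-case} the period map identifies $G\backslash\framed_s$ with $\ch^4$, hence $g$ is a submersion whose fibres are the $16$-dimensional $G$-orbits. Pick a germ of holomorphic slice $V_f\ni f$ transverse to the $G$-orbit; then $g$ maps $V_f$ biholomorphically onto a ball in $\ch^4$, and, via the ($G$-equivariant) covering $\framed_s\to\forms_s$, $V_f$ maps onto a germ of transverse slice $V_F\ni F$ identified by $g$ with a chart of $\moduli_s$. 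It therefore suffices to find coordinates on $V_f$ and $V_F$ of the required form and transport them off the slices by the $G$-action, taking $t_{k+1},\dots,t_{20}$ and $u_{k+1},\dots,u_{20}$ to agree with coordinates on $G$ near the identity: then $G$-equivariance makes the covering $t_i\mapsto u_i=t_i^{6}$ for $i\le k$ and the identity otherwise, and $G$-invariance of $g$ makes $g$ equal ``forget $t_5,\dots,t_{20}$''.

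\emph{The discriminant and the monodromy --- the main obstacle.} Since $S$ has exactly $k$ nodes and $k\le4$ (\cite{bruce-wall}), the singularity germ of $S$ is a disjoint union of $k$ ordinary double points, its semiuniversal deformation space is a product of $k$ lines with smooth discriminants, and (again because $k\le4$) the germ $(\forms_s,F)$ maps submersively, through the slice $V_F$, onto that base; hence $\D\cap V_F$ is a union of $k$ smooth branches crossing normally, and we may choose coordinates $u_1,\dots,u_4$ on $V_F$ with $\D\cap V_F=\{u_1\cdots u_k=0\}$. Now restrict $\framed_0\to\forms_0$ to $V_F^{*}:=V_F\setminus\D$, a product of $k$ punctured discs and a polydisc, and consider its monodromy $\pi_1(V_F^{*})=\Z^{k}\to\PGamma$. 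The real work is to show that the loop around $u_i=0$ acts on $H^3(T,\Z)$ by the Picard--Lefschetz transformation of the vanishing cohomology created as the $i$-th node of $S$ is smoothed, and that on $\Lambda(T)\isomorphism\Lambda$ this transformation is an order-$6$ complex reflection $R_i$ in a norm-$1$ vector $r_i$ whose orthogonal complement is the corresponding component of $\H$ through $x$; and that $r_1,\dots,r_k$ are pairwise orthogonal (compatibly with Lemma~\ref{lem-hyperplanes-are-orthogonal}), so the $R_i$ commute and generate $(\Z/6)^{k}\subset\PGamma$. This degeneration analysis of the cyclic cubic threefold $Y^{3}=F$ near a node of $S$ is exactly what makes $\H$ the union of the $r^{\perp}$ over norm-$1$ roots $r$ in \cite{ACT}; everything that follows is formal.

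\emph{Conclusion.} Granting the monodromy computation, the restriction of $\framed_s\to\forms_s$ over $V_F$ is the Fox completion of $\framed_0\to\forms_0$ over $V_F^{*}$, and since the monodromy is $(\Z/6)^{k}$ acting by independent rotations of the first $k$ coordinates, this completion is, near $f$, the Kummer cover $(t_1,\dots,t_4)\mapsto(t_1^{6},\dots,t_k^{6},t_{k+1},\dots,t_4)=(u_1,\dots,u_4)$, which gives (iii); its deck group on this neighbourhood is $(\Z/6)^{k}$ multiplying $t_1,\dots,t_k$ by sixth roots of unity and fixing the rest. That deck group is the full stabiliser $\PGamma_{\!f}$, because $g$ is injective on $G\backslash\framed_s$ (Theorem~\ref{thm-main-theorem-stable-complex-case}) and separates the deck orbit away from its ramification, which is (ii); part (i) holds since $\{t_i=0\}$ maps under $g$ into the $i$-th component $r_i^{\perp}$ of $\H$ through $x$, and (iv) is the description of $\D\cap V_F$ above. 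Transporting the coordinates off the slices by the $G$-action as in the ``Reductions'' step, and checking that $g$ becomes ``forget $t_5,\dots,t_{20}$'', then finishes the proof.
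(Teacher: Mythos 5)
The paper gives no proof of this lemma---it is quoted verbatim from \cite[(3.10)]{ACT} with a \texttt{\textbackslash qed} appended---so there is no in-text argument to compare against.  Assessed on its own terms, your proposal identifies the right ingredients (slice off the $G$-direction, recognize $\framed_s\to\forms_s$ locally as a Fox/Kummer cover determined by the local monodromy, use orthogonality of the hyperplanes through $x$), and these do mirror what the paper's surrounding discussion says \cite{ACT} does.  But two points keep the proposal from being a proof.

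First, there is a circularity.  Your ``Reductions'' step and the final paragraph invoke Theorem~\ref{thm-main-theorem-stable-complex-case} (cited from \cite[(3.17--19)]{ACT}) to get that $\framed_s$ is a manifold, that $g$ extends and is a submersion, and that $G\backslash\framed_s\to\ch^4$ is a bijection.  But (3.17--19) of \cite{ACT} come \emph{after} (3.10), and the paper explicitly says the very existence of the ramified cover $\framed_s\to\forms_s$ ``is explained in [(3.10) and (3.3)][ACT]''; in other words, the smoothness of $\framed_s$, the rank of $g$ on it, and the bijectivity of the period map are all downstream of the local model you are trying to prove.  As written the argument uses the conclusion to reach the conclusion.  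To repair it one must work purely locally: take a small transversal $V_F$ to the $G$-orbit in $\forms_s$, prove directly that $\D\cap V_F$ is a normal crossings divisor with $k$ branches, compute the monodromy of $\framed_0\to\forms_0$ around each branch, observe that the normalization of a cover with $(\Z/6)^k$ monodromy over normal crossings is the smooth Kummer cover, and only \emph{then} conclude that $\framed_s$ is smooth near $f$; the extension of $g$ to $\framed_s$ then follows from boundedness and normality, not from the global isomorphism.

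Second---and you flag this yourself---the heart of the lemma is the assertion that the local monodromy of the cyclic cubic threefold around a nodal degeneration of $S$ acts on $\Lambda(T)$ as an order-$6$ complex reflection in a norm-$1$ vector, with the $k$ vectors mutually orthogonal.  Everything after that is indeed formal, but that computation is precisely the content of \cite[(3.10)]{ACT}, not something you can defer to ``the degeneration analysis... in \cite{ACT}'' while claiming to reprove (3.10).  A genuine proof must carry out this Picard--Lefschetz-plus-$\Z/3$-action calculation, showing the vanishing $\E$-line is spanned by a norm-$1$ vector and that the monodromy is multiplication by $-\w$ on it.  A smaller issue: in the passage ``taking $t_{k+1},\dots,t_{20}$ and $u_{k+1},\dots,u_{20}$ to agree with coordinates on $G$'' you conflate the $4-k$ transverse slice coordinates with the $16$ $G$-coordinates, and one must also choose the slice $\PGamma_{\!f}$-invariantly so that the $(\Z/6)^k$-action genuinely fixes $t_5,\dots,t_{20}$ rather than merely preserving the $G$-orbit.
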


The failure of the variety isomorphism
$\moduli_s\isomorphism\PGamma\backslash\ch^4$ to be an orbifold
isomorphism arises because of the presence of the $(\Z/6)^k$ ramification
of $\framed_s\to\forms_s$, described in parts
\ref{item-action-of-stabilizer-on-ts} and \ref{item-ui-in-terms-of-ti} of this lemma.
This is explained in more detail  in
\cite[(3.18)]{ACT}.  We showed in \cite[(3.19--20)]{ACT} how to modify
the orbifold structure of $\PGamma\backslash\ch^4$ so that its
identification with $\moduli_s$ becomes an orbifold isomorphism.  The
ramification of $\framed_s\to\forms_s$ will be the main issue in our
treatment of stable real surfaces.  Although we will not strictly need
the results of \cite[(3.18--20)]{ACT}, the ideas they embody will play
a major role in our analysis.

\section{Moduli of Stable Real Cubic Surfaces}
\label{sec-stable-moduli}

The goal of this section is to understand the moduli space
$\moduli_s^\R$ of stable real cubic surfaces as a quotient of real
hyperbolic space $H^4$.
In the previous section we defined $\forms_s$ as the space of forms
defining GIT-stable cubic surfaces, and recalled that $G$ acts
properly on it.  Therefore $G^\R$ acts properly on
$\forms_s^\R:=\forms_s\cap\forms^\R$.  We denote the quotient by
$\moduli_s^\R$, which is a real-analytic orbifold in a natural way.
In the smooth case we were able to pass from the complex orbifold
isomorphism $\moduli_0\isomorphism\PGamma\backslash(\ch^4-\H)$ to the
real orbifold isomorphisms
$\moduli_{0,j}^\R\isomorphism\PGamma_j^\R\backslash(H^4_j-\H)$ fairly
easily.  A very substantial complication in the stable case is that
the isomorphism $\moduli_s\isomorphism\PGamma\backslash\ch^4$ is not
an orbifold isomorphism (see the end of section~\ref{sec-stable-complex-moduli}).  Nevertheless
we will find a real-hyperbolic orbifold structure on
$\moduli_s^\R$ by identifying it with $\PGamma^\R\backslash H^4$ for a
suitable lattice $\PGamma^\R$ in $\PO(4,1)$.  It will be obvious that
this structure agrees with the moduli-space orbifold structure on
$\moduli_0^\R$.  

It is possible  to skip the theory of this section and
construct  $\PGamma^\R$ by gluing together the 5 orbifolds
$\PGamma^\R_j\backslash H_j^4$ along their discriminant walls.  There is
an essentially unique way to do this that makes sense (the one in
section~\ref{sec-gluing}), and one obtains the orbifold
$\PGamma^\R\backslash H^4$.  This is what we did at first, but this did not give a proof that the resulting space is homeomorphic to  $\moduli_s^\R$.
The essential content of this section is to give an intrinsic
definition of the hyperbolic structure on $\moduli_s^\R$.  Then section~\ref{sec-gluing} plays the
role of computing an orbifold structure already  known to exist, rather than
constructing it.
We begin the detailed analysis.  

\subsection{The space of framed stable real
  surfaces; their moduli space $K$}
We define $\framed_s^{\,\R}$ as
the preimage of $\forms_s^\R$ in $\framed_s$.  We will see that it is
not a manifold, because of the ramification of $\framed_s\to\forms_s$,
but it is a union of embedded submanifolds.  We define $K$ to be
$G^\R\backslash\framed_s^{\,\R}$, which is not a manifold either.  At this
point it is merely a topological space; below, we will equip it with a
metric structure.  Essentially by definition, $\moduli_s^\R$ coincides
with $\PGamma\backslash K$.  If $K$ were a manifold then this would
define an orbifold structure on $\moduli_s^\R$.  But it is not, so we
must take a different approach.  First we will give a local
description of $\framed_s^{\,\R}\sset\framed_s$, and then show that
$g:\framed_s\to\ch^4$ induces a local embedding $K\to\ch^4$.  This
makes $K$ into a metric space, using the path metric obtained by
pulling back the metric on $\ch^4$.  Finally, we will study the action
of $\PGamma$ on $K$ to deduce that $\PGamma\backslash K$, as a metric
space, is locally modeled on quotients of $H^4$ by finite groups.
Such a metric space has a unique hyperbolic orbifold structure.  The
completeness of this structure on $\moduli_s^\R$ then follows from the
completeness of $\PGamma\backslash\ch^4$, and orbifold uniformization
then implies the existence of a discrete group $\PGamma^\R$ acting on
$H^4$ with $\moduli_s^\R\isomorphism \PGamma^\R\backslash H^4$.  See
section~\ref{sec-gluing} for a concrete description of $\PGamma^\R$
and section~\ref{sec-nonarithmeticity} for a proof that it is not
arithmetic.

We begin with a local description of $\framed_s^{\,\R}$.  This
requires a refinement of the local description of $\framed_s$ given in
lemma~\ref{lem-local-model-for-complex-framed-stable-to-stable}.

\begin{lemma}
\label{lem-local-model-for-Fs-to-Cs-real-case}
Under the assumptions of
lemma~\ref{lem-local-model-for-complex-framed-stable-to-stable},
suppose $F$ lies in $\forms_s^\R$ and
defines a surface with $2a$ non-real and $b$ real nodes.  Then the
local coordinates of
lemma~\ref{lem-local-model-for-complex-framed-stable-to-stable} on
$\ch^4$, $\framed_s$ and $\forms_s$ may be chosen to also satisfy the
following:  near $F$, complex conjugation $\kappa:\forms_s\to\forms_s$
acts by 
\begin{equation}
\label{eq-local-coordinates-complex-conjugation-forms}
u_i\mapsto
\begin{cases}
\bar u_{i+1}&\hbox{for $i$ odd and $i\leq2a$}\\
\bar u_{i-1}&\hbox{for $i$ even and $i\leq2a$}\\
\bar u_i&\hbox{for $i>2a$.}
\end{cases}
\end{equation}
\end{lemma}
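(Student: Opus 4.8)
\emph{Plan of proof.} It suffices to adjust the coordinates furnished by Lemma~\ref{lem-local-model-for-complex-framed-stable-to-stable}, staying within the class described there, so that the lift of $\kappa$ to $\framed_s$ fixing $f$ acts in the $t$-coordinates by the model map $t_i\mapsto\bar t_{\tau(i)}$, where $\tau$ is the involution interchanging $2i{-}1\leftrightarrow2i$ for $1\le i\le a$ and fixing $2a{+}1,\dots,k$. Indeed, $u_i=t_i^6$ for $i\le k$ and $u_i=t_i$ for $i>k$, and this lift covers $\kappa$ on $\forms_s$, so raising to the sixth power turns $t_i\mapsto\bar t_{\tau(i)}$ into $u_i\mapsto\bar u_{\tau(i)}$, which is \eqref{eq-local-coordinates-complex-conjugation-forms}. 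Throughout, $\kappa$ acts compatibly on $\forms_s$, $\framed_s$ and $\ch^4$, is equivariant for the covering $\framed_s\to\forms_s$ and for the period map $\framed_s\to\ch^4$, and normalizes the $\PGamma_{\!f}=(\Z/6)^k$-action, carrying its $i$-th factor to the $\tau(i)$-th one and conjugating the scalar.

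First I would pin down the permutation $\tau$. Complex conjugation preserves $\D$ and hence permutes, as an involution, its $k$ local branches $\{u_1=0\},\dots,\{u_k=0\}$ at $F$ (equivalently the branches $\{t_1=0\},\dots,\{t_k=0\}$ of $\H$, since these correspond under the period map). These $k$ branches are in bijection with the $k$ nodes of $S$: the branch along which a given node persists under a one-parameter deformation meeting it transversally. Since $\kappa$ carries a deformation acquiring a node near $p$ to one acquiring a node near $\bar p$, this bijection conjugates the branch permutation to the action of $\kappa$ on the nodes, which fixes the $b$ real nodes and interchanges the $a$ conjugate pairs. After relabeling the first $k$ coordinates I may assume the branch permutation is the $\tau$ above.

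Next I would straighten $\kappa$ by the standard linearization of an anti-holomorphic involution near a fixed point: if $\phi$ has linear part $L=d\phi_0$ (an anti-linear involution, so $L^2=\mathrm{id}$), then $h=\tfrac12(\mathrm{id}+L\circ\phi)$ is a holomorphic local diffeomorphism with $h\circ\phi=L\circ h$, so in the $h$-coordinates $\phi$ becomes the \emph{linear} map $L$. I would apply this first on $\ch^4$, in the coordinates $t_1,\dots,t_4$, and then on $\framed_s$. Three points need checking. The change $h$ preserves each hyperplane $\{t_i=0\}$, $i\le k$, because $L$ permutes the tangent hyperplanes $T\{t_i=0\}$ exactly as $\phi$ permutes the branches. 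The change $h$ commutes with $\PGamma_{\!f}$, since $\phi$ normalizes it forces $L\circ(\text{scaling }\gamma)=(\text{scaling }\bar\gamma)\circ L$. And the second application, on $\framed_s$, restricts to the identity on $t_1,\dots,t_4$: by period-map equivariance the first four components of $\kappa(z)$ depend only on $z_1,\dots,z_4$, where they already equal the now-linear $\kappa|_{\ch^4}$, so $L$ is block triangular with the $\ch^4$-model in the corner and $h=\mathrm{id}$ there. After these steps $\kappa$ is a linear anti-involution of $\C^{20}$ preserving $\{t_i=0\}_{i\le k}$, equal on $(t_1,\dots,t_4)$ to the standard swap-and-conjugate map modulo the remaining coordinates.

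Finally I would apply a $\C$-linear change of coordinates, fixing $t_1,\dots,t_4$, commuting with $\PGamma_{\!f}$, and preserving the arrangement $\{t_i=0\}_{i\le k}$, that conjugates $\kappa$ to $t_i\mapsto\bar t_{\tau(i)}$ on all $20$ coordinates. Along the first $2a$ (paired) and next $b$ (fixed) axes this amounts to choosing a $\kappa$-eigenvector along each axis --- a basis vector $e$ with its image $Le$ for a swapped pair, and a $\kappa$-fixed vector transverse to the remaining $\{t_i=0\}$ for a fixed branch; the $\PGamma_{\!f}$-weight decomposition forces these to lie on the coordinate axes, and their existence is a one-dimensional computation (resp.\ a dimension count on fixed real forms). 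On the last $20-k$ axes $\PGamma_{\!f}$ acts trivially and one uses only that any two anti-linear involutions of $\C^{20-k}$ are conjugate, while the mixed terms coupling these to the first four coordinates are killed by solving a linear equation whose solvability is precisely the relation $\kappa^2=\mathrm{id}$. Setting $u_i=(t_i)^6$ for $i\le k$ and $u_i=t_i$ for $i>k$ then yields coordinates of the type of Lemma~\ref{lem-local-model-for-complex-framed-stable-to-stable} in which $\kappa$ acts on $\forms_s$ by \eqref{eq-local-coordinates-complex-conjugation-forms}. I expect the main obstacle to be exactly this bookkeeping: producing coordinate changes that normalize $\kappa$ while simultaneously respecting the normal-crossing divisor cut out by $\D$ and $\H$, the relation $u_i=t_i^6$ (equivalently, $\PGamma_{\!f}$-equivariance), and the tower $\ch^4\leftarrow\framed_s\to\forms_s$; the linearization trick supplies the analysis, but one must take care not to disturb the structure already provided by Lemma~\ref{lem-local-model-for-complex-framed-stable-to-stable}.
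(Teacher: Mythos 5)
Your proposal is correct in substance and reaches the same conclusion as the paper, but it is organized around a slightly different normalization device. Both arguments begin identically: $\kappa$ permutes the $k$ local branches of the discriminant at $F$ just as it permutes the nodes of $S$, interchanging the $a$ conjugate pairs and fixing the $b$ real ones, so the branch permutation is $\tau$. Where you diverge is in how the coordinates are then adjusted. The paper first argues that the coordinates $t_1,\dots,t_k$ (hence $u_1,\dots,u_k$) are pinned down, up to a unimodular scalar, by the conditions of Lemma~\ref{lem-local-model-for-complex-framed-stable-to-stable}; this makes the normalization of the first $k$ coordinates a short rescaling argument, and only for the remaining $u_{k+1},\dots,u_{20}$ does the paper invoke the fact that an anti-involution near a fixed point is modeled on complex conjugation. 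You instead use the Bochner-type linearization $h=\tfrac12(\mathrm{id}+L\circ\kappa)$ uniformly on all coordinates, and then pay the price of verifying that $h$ preserves the coordinate hyperplanes $\{t_i=0\}$, commutes with the $\PGamma_{\!f}$-action, and is compatible with the period map and the relation $u_i=t_i^6$, followed by a linear change of basis dictated by the $\PGamma_{\!f}$-weight decomposition. The uniform-linearization route buys self-containedness and makes the paper's terse final sentence explicit, at the cost of more bookkeeping; the paper's route is shorter but leans on the uniqueness-up-to-unit-scalar claim, which is itself a fact worth pausing over. Neither approach is fully spelled out, but yours has the right ingredients and no genuine gap.
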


\begin{proof}
The coordinates $t_1,\dots t_k$ of Lemma~\ref{lem-local-model-for-complex-framed-stable-to-stable} are in  one to one correspondence with the nodes of $S$ and, once a correspondence is fixed, each $t_i$ is unique up to multiplication by a complex number of absolute value one.  The same is therefore true of the coordinates $u_1,\dots u_k$.   Since $\kappa$ permutes the sets $u_i = 0$ in the same way that it permutes the nodes of $S$, namely interchanges complex conjugate nodes and preserves real ones, it is clear that the $t_i$ and hence the $u_i$ can be chosen so that $\kappa$ acts on $u_1,\dots u_k$ as in \eqref{eq-local-coordinates-complex-conjugation-forms}. That the
  coordinates may be chosen so that $\kappa$ also acts this way on
  $u_{k+1},\dots,u_{20}$ can be derived from the fact that any
  anti-involution of a complex manifold in a neighborhood of any fixed
  point is modeled on complex conjugation of $\C^n$.

\end{proof}

In these local coordinates, $\forms_s^\R$ is the fixed-point set of
$\kappa$.  To describe $\framed_s^{\,\R}$ near $f$, we simply compute the
preimage of $\forms_s^\R$.  The most important
cases are first, a single real node ($a=0$, $b=1$), and second, a
single pair of conjugate nodes ($a=1$, $b=0$).

In the case of a single real node, $\framed_s^{\,\R}$ near $f$ is modeled on
a neighborhood of the origin in
\begin{equation}
\label{eq-local-descrip-FsR-when-1-real-node}
\{(t_1,\dots,t_{20})\in \C^{20} : t_1^6, t_2,\dots,t_{20}\in\R\}\;.
\end{equation}
That is, a neighborhood of $f$ is modeled on six copies of $\R^{20}$, glued
together along a  common $\R^{19}$.    

In the case of two complex conjugate nodes, $\framed_s^{\,\R}$ near $f$ is
modeled on a neighborhood of the origin in
\begin{equation}
\label{eq-local-descrip-FsR-when-2-conjugate-nodes}
\{(t_1,\dots,t_{20})\in \C^{20} : t_2^6 = \bar t_1^{\,6}\hbox{ and }
t_3,\dots t_{20}\in \R\}\;.
\end{equation} 
That is,  on the union of six copies of
$\R^{20}$, glued together along a common $\R^{18}$.  The $\R^{18}$ is
given by $t_1=t_2=0$ and 
maps diffeomorphically to $\D\cap\forms_s^\R$, and each component of the complement is
a six-fold cover of the part of $\forms_s^\R-\D$ near $F$.

\subsection{Many different real structures}
We have defined $\framed_s^{\,\R}$ as the preimage of $\forms_s^\R$ in
$\framed_s$, but it is helpful to think of it as the union of the real
loci of many different real structures.  Namely, if $f\in\framed_0^{\,\R}$
lies over $F\in\forms_0^\R$ then there is a unique lift of the complex
conjugation $\kappa$ of $\forms_s$ to an anti-involution $\x$ of
$\framed_0$ that fixes $f$.  
We saw this construction in \S\ref{subsec-framed-smooth-real-cubic-surfaces-anti-involutions}, and wrote $\framed_0^{\,\x}$ for
$\x$'s fixed points.  (See also the last part of the proof of
theorem~\ref{thm-isomorphism-for-smooth}.) 

The naturality of the Fox completion
implies that $\x$ extends to $\framed_s$.  
Then the set $\framed_s^{\,\x}$
of $\x$'s fixed points is the real locus of one real structure, namely
$\x$.  It is clear that $\framed_s^{\,\R}$ is the union of the
$\framed_s^{\,\x}$ as $\x$ varies over the anti-involutions of $\framed_s$
lying over $\kappa$.  We have already seen this set of
anti-involutions: it is $P\A$, the set of projective classes of
anti-involutions of $\Lambda$ defined in \S\ref{subsec-framed-smooth-real-cubic-surfaces-anti-involutions}. 

\subsection{The family of isomorphisms
  $G^\R\backslash\framed_s^{\,\x}\to H^4_\x$}
Recall from section~\ref{subsec-def-period-map} that $g:\framed_s\to\ch^4$ is the complex period map.
Lemma~\ref{lem-diffeomorphism-for-each-antiinvolution-stable} below is
the extension of the diffeomorphism
$G^\R\backslash\framed_0^{\,\x}\isomorphism H^4_\x-\H$ of
theorem~\ref{thm-isomorphism-for-smooth} to
$G^\R\backslash\framed_s^{\,\x}\isomorphism H^4_\x$; to prove it we
need the following general principle.

\begin{lemma}
\label{lem-real-moduli-to-complex-moduli-is-proper}
Let $G$ be a Lie group acting properly and with finite stabilizers on
a smooth manifold $X$, let $F$ be a finite group of diffeomorphisms of
$X$ normalizing $G$, let $X^F$ be its fixed-point set, and let $G^F$
be its centralizer in $G$.  Then the natural map $G^F\backslash X^F\to
G\backslash X$ is
proper. 
\end{lemma}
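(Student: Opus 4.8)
The plan is to reduce the properness of $G^F\backslash X^F\to G\backslash X$ to the properness of the $G$-action on $X$ together with the finiteness of $F$. First I would recall what must be checked: a continuous map between locally compact Hausdorff spaces is proper if and only if the preimage of every compact set is compact, and for quotient maps it suffices to show that if a sequence $[x_n]$ in $G^F\backslash X^F$ has image $[x_n]$ converging in $G\backslash X$, then $[x_n]$ has a convergent subsequence in $G^F\backslash X^F$. So suppose $x_n\in X^F$ and there are $g_n\in G$ with $g_n x_n\to y$ in $X$. Since $G$ acts properly on $X$, after passing to a subsequence we may assume $g_n\to g$ in $G$ (this is the standard consequence: properness of the action means the map $G\times X\to X\times X$, $(g,x)\mapsto(gx,x)$, is proper, and here $x_n=g_n^{-1}(g_n x_n)$ stays in a set whose image, paired with $g_n x_n\to y$, lies in a compact set, forcing $g_n$ into a compact subset of $G$). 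Hence $x_n=g_n^{-1}(g_nx_n)\to g^{-1}y=:x\in X$, and since each $x_n\in X^F$ and $X^F$ is closed, $x\in X^F$.

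The remaining point is to promote this convergence in $X$ to convergence of the classes $[x_n]$ in $G^F\backslash X^F$, i.e. to correct $g$ so that it lies in $G^F$. For each element $\phi\in F$ we have $\phi x_n=x_n$, so $\phi g_n^{-1}(g_n x_n)=g_n^{-1}(g_n x_n)$, and passing to the limit gives $\phi x = x$, which we already knew; what we actually need is a relation forcing $g$ (or a nearby element) to centralize $F$. Here I would use that $F$ normalizes $G$: write $g_n x_n\to y$ and apply $\phi\in F$ to get $\phi g_n x_n=\phi g_n\phi^{-1}\cdot\phi x_n=(\phi g_n\phi^{-1})x_n$, and since $\phi g_n x_n\to\phi y$ while $\phi g_n\phi^{-1}\to\phi g\phi^{-1}$ and $x_n\to x$, we get $(\phi g\phi^{-1})x=\phi y=\phi g x$. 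Thus $g^{-1}(\phi g\phi^{-1})$ fixes $x$, so it lies in the (finite) stabilizer $G_x$ for every $\phi\in F$. This shows $\phi\mapsto g^{-1}(\phi g\phi^{-1})$ is a map from $F$ into the finite group $G_x$; it is in fact a cocycle, and one checks directly that it is a coboundary because $G_x$ is finite and we can average, or more elementarily: replacing $g$ within its double coset by $g\cdot h$ for a suitable $h\in G_x$ we may arrange $\phi g\phi^{-1}=g$ for all $\phi\in F$, i.e. $g\in G^F$. (This last adjustment is where properness of the $G$-action is used again, via finiteness of the stabilizer.)

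With $g\in G^F$ in hand, $g x_n\to gx$ in $X^F$ with $g x_n\in X^F$, so $[x_n]\to[gx]$ in $G^F\backslash X^F$, which is exactly the convergence needed; this proves properness. The step I expect to be the main obstacle is the cohomological correction of $g$ into $G^F$: one must be careful that the element $h\in G_x$ used to replace $g$ is compatible across all $\phi\in F$ simultaneously, and that the sequence $x_n$ (not just the limit $x$) is handled uniformly — in practice one passes to the limit first, corrects $g$, and then notes $g x_n$ still converges and still lies in $X^F$, so no uniformity in $n$ is actually required. The finiteness of $F$ is essential here, as is the finiteness of stabilizers; without the latter $G_x$ could fail to absorb the cocycle. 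Everything else is the formal interplay between properness of a group action and the closedness of fixed-point sets.
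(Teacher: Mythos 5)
Your proof breaks down at the very first reduction. After lifting the convergence in $G\backslash X$ to a sequence with $g_n x_n\to y$ and $x_n\in X^F$, you assert that properness of the $G$-action lets you pass to a subsequence with $g_n\to g$. That does not follow: properness of the map $G\times X\to X\times X$, $(g,x)\mapsto(gx,x)$, gives compactness of $\{g_n\}$ only when \emph{both} $\{g_n x_n\}$ and $\{x_n\}$ are bounded, and boundedness of $\{x_n\}$ is precisely what you are trying to prove --- your parenthetical justification (``$x_n$ stays in a set whose image $\ldots$ lies in a compact set'') is circular, since having bounded image in $G\backslash X$ does not force $x_n$ to stay bounded in $X$. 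Indeed $g_n$ can genuinely escape to infinity: take $X=\R^2$, $G=\R^2$ by translation, $F=\Z/2$ negating the first coordinate, $x_n=(0,n)$ and $g_n=(1,-n)$; then $g_n x_n=(1,0)$ is constant while $g_n\to\infty$, even though the map $G^F\backslash X^F\to G\backslash X$ is (trivially) proper here. The missing idea is that one must replace $g_n$ not by a convergent sequence in $G$ but by a sequence $h_n\in G^F$ making $h_n x_n$ bounded, and the $F$-invariance of $x_n$ must enter essentially at this stage. The paper does this by fixing a complete Riemannian metric on $X$ invariant under the group generated by $G$ and $F$, showing that the nearest point of $Gy$ to $x_n$ is itself $F$-fixed (using a separation constant $\e$ between the $G$-orbits in $Gy\cdot F$), and invoking the finiteness of the number of $G^F$-orbits of $F$-fixed points inside a single $G$-orbit, which is controlled by conjugacy classes of splittings of $1\to G_{\tilde x}\to G_{\tilde x}\rtimes F\to F\to1$. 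A sequential rendering of that argument would repair your proof; the step you wrote does not.

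The ``cohomological correction'' in your second half is also not valid, though it becomes moot once the first step is fixed. The cocycle $\phi\mapsto g^{-1}(\phi g\phi^{-1})$ takes values in the finite, possibly nonabelian group $G_x$, and a $1$-cocycle of a finite group with finite coefficients need not be a coboundary (already $H^1(\Z/2,(\Z/2)^2)\neq 0$ for the trivial action), nor is ``averaging'' available outside the abelian case; what you need is that the coset $gG_x$ meets $G^F$, and nothing in the hypotheses forces this. In any case it is unnecessary: once one has $h_n\in G^F$ with $h_n x_n\to x'\in X^F$, continuity of the quotient map $X^F\to G^F\backslash X^F$ gives $[x_n]=[h_n x_n]\to [x']$ directly, with no further adjustment of any single group element.
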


\begin{proof}
  We write $\pi$ and $\pi^F$ for the maps $X\to G\backslash X$ and
  $X^F\to G^F\backslash X^F$, and $f$ for the natural map
  $G^F\backslash X^F\to G\backslash X$.  We prove the theorem under
  the additional hypothesis that $F$ and $G$ meet trivially; this is
  all we need and the proof in the general case is similar.  This
  hypothesis implies that the group $H$ generated by $G$ and $F$ is
  $G\rtimes F$.  Begin by choosing a complete $H$-invariant Riemannian
  metric on $X$.

To prove $f$ proper it suffices to exhibit for any
$G$-orbit $\mathcal{O}\sset X$ a $G$-invariant neighborhood $U\sset X$ with
$f^{-1}(\pi(U))$ precompact.
Since  $G$ has finite index in $H$,  $\mathcal{O}.H\sset
X$ is the union of finitely many $G$-orbits.  Using properness and
Riemannian geometry one finds $\e>0$ such that (1) distinct
$G$-orbits in $\mathcal{O}.H$ lie at distance~$>\e$, and (2) any point of $X$
at distance~$<\e$ from $\mathcal{O}$ has a unique nearest point in $\mathcal{O}$.  We
take $U$ to be the open $\e/2$-neighborhood of $\mathcal{O}$.

To show that $f^{-1}(\pi(U))$ is precompact we will exhibit a compact set
$K\sset X^F$ with $\pi^F(K)$ containing $f^{-1}(\pi(U))$.  We claim
that there are finitely many $G^F$-orbits in $\mathcal{O}\cap X^F$, so we can
choose orbit representatives $\tilde x_1,\dots,\tilde x_n$.  If
$\mathcal{O}\cap X^F$ is empty then this is trivial.  If $\mathcal{O}\cap X^F$
is nonempty, say containing $\tilde x$, then the $G^F$-orbits in
$\mathcal{O}\cap X^F$ are in bijection with the conjugacy classes of splittings
of
$$
1\to G_{\tilde x}\to G_{\tilde x}\rtimes F\to F\to 1\;,
$$
where $G_{\tilde x}$ is the $G$-stabilizer of $\tilde x$.  Since
$G_{\tilde x}$ is finite, there are finitely many splittings, hence
finitely many orbits.  We take $K$ to be the union of the closed
$\e/2$-balls around $\tilde x_1,\dots,\tilde x_n$, intersected with
$X^F$.  (In particular, $K$ is empty if $\mathcal{O}\cap X^F$ is.)

$K$ is obviously compact, so all that remains is to prove $f^{-1}(\pi(U))\sset
\pi^F(K)$.  If $f^{-1}(\pi(U))$ is empty then we are done.  Otherwise,
suppose $y\in f^{-1}(\pi(U))\sset G^F\backslash X^F$ and let $\tilde y\in X^F$ lie
over it.  Now, $\tilde y$ is $F$-invariant and $F$ permutes the
$G$-orbits in $\mathcal{O}.H$.  Since $\tilde y$ lies within $\e/2$ of $\mathcal{O}$, it
lies at distance~$>\e/2$ of every other $G$-orbit in $\mathcal{O}.H$, so $F$
preserves $\mathcal{O}$.  Therefore $F$ preserves the unique point $\tilde x$
of $\mathcal{O}$ closest to $\tilde y$, so $\tilde x\in X^F$.  We choose
$g\in G^F$ with $\tilde x.g$ equal to one of the $\tilde x_i$.  Then
$\tilde y.g$ lies within $\e/2$ of $\tilde x.g=\tilde x_i$, hence lies
in $K$, and $\pi^F(\tilde y.g)=y$, proving $f^{-1}(\pi(U))\sset\pi^F(K)$. 
\end{proof}

\begin{lemma}
\label{lem-diffeomorphism-for-each-antiinvolution-stable}
For every $\x\in P\A$, the restriction of the period map
$g:\framed_s\to\ch^4$ to $\framed_s^{\,\x}$ defines an isomorphism
$G^\R\backslash\framed_s^{\,\x}\isomorphism H^4_{\x}$ of real-analytic
manifolds.
\end{lemma}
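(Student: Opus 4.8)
The plan is to show that the period map, restricted to $\framed_s^{\,\x}$ and pushed down to the $G^\R$-quotient, is a proper injective real-analytic map onto $H^4_\x$ which is a local diffeomorphism, hence the asserted isomorphism of real-analytic manifolds.

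First I would set up the underlying smooth structure. By lemma~\ref{lem-G-acts-freely-on-stable-forms} the group $G$ acts freely and properly on the complex manifold $\framed_s$, and the anti-holomorphic involution $\x$ of $\framed_0$ extends, by naturality of the Fox completion and by continuity since $\framed_0$ is dense, to an anti-holomorphic involution of $\framed_s$. Hence its fixed locus $\framed_s^{\,\x}$ is a totally real, real-analytic submanifold of real dimension~$20$, and $G^\R$ — being precisely the centralizer of $\x$ in $G$ (conjugation by $\x$ acts on $G$ as complex conjugation) — acts freely and properly on it, so that $G^\R\backslash\framed_s^{\,\x}$ is a real-analytic $4$-manifold. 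Next, $g(\framed_s^{\,\x})\subseteq H^4_\x$: this holds on the open subset $\framed_0^{\,\x}$ by lemma~\ref{lem-gR-takes-values-in-K0}, the local models of lemmas~\ref{lem-local-model-for-complex-framed-stable-to-stable} and~\ref{lem-local-model-for-Fs-to-Cs-real-case} show that $\framed_0^{\,\x}$ is dense in $\framed_s^{\,\x}$ (its complement lies in a proper real-analytic subset), and $H^4_\x$ is closed, so the inclusion follows by continuity. Since $g$ is $G$-invariant, it descends to a real-analytic map $\bar g\colon G^\R\backslash\framed_s^{\,\x}\to H^4_\x$.

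Then I would prove $\bar g$ is a homeomorphism. Injectivity of the composite $G^\R\backslash\framed_s^{\,\x}\to G\backslash\framed_s=\ch^4$ is lemma~\ref{lem-general-principle-used-for-injectivity} applied with $X=\framed_s$, the free $G$-action, and $\phi=\x$, whose centralizer is $G^\R$. Properness of the same composite is lemma~\ref{lem-real-moduli-to-complex-moduli-is-proper} applied with $F=\langle\x\rangle$. A proper map to a locally compact Hausdorff space is closed, so $\bar g$ has closed image; that image lies in $H^4_\x$ and contains the dense subset $H^4_\x-\H=g(\framed_0^{\,\x})$ furnished by theorem~\ref{thm-isomorphism-for-smooth}, hence equals $H^4_\x$. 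Thus $\bar g$ is a continuous proper bijection onto a locally compact Hausdorff space, i.e., a homeomorphism.

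Finally I would promote this to an isomorphism of real-analytic manifolds. Over $H^4_\x-\H$ the map $\bar g$ is already the diffeomorphism $G^\R\backslash\framed_0^{\,\x}\cong H^4_\x-\H$ of theorem~\ref{thm-isomorphism-for-smooth}, so only the points lying over nodal surfaces remain. There I would use the coordinates of lemmas~\ref{lem-local-model-for-complex-framed-stable-to-stable} and~\ref{lem-local-model-for-Fs-to-Cs-real-case}: after linearizing $\x$ near a fixed point so that it becomes complex conjugation — which simultaneously presents $H^4_\x$ and $\framed_s^{\,\x}$ as the real slices of the respective coordinate charts — the period map $g$ is the projection forgetting $t_5,\dots,t_{20}$ and the $G^\R$-orbits are the slices $t_1=\dots=t_4=0$, so $\bar g$ is literally the identity of a copy of $\R^4$. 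Hence $\bar g$ is everywhere a local real-analytic diffeomorphism, and with the previous paragraph this makes it a real-analytic isomorphism. I expect this last step to be the main obstacle: $\framed_s^{\,\R}$ itself is not a manifold near a point over a nodal surface — it is six copies of $\R^{20}$ glued along a common subspace, coming from the $(\Z/6)^k$ ramification of $\framed_s\to\forms_s$ — and the substance of the argument is that a single sheet $\framed_s^{\,\x}$, after dividing by $G^\R$, nevertheless maps isomorphically, not merely homeomorphically, onto the smooth manifold $H^4_\x$; the explicit local models are exactly what make this transparent.
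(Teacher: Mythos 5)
Your proposal is correct and follows the same three-part structure as the paper's proof: injectivity via lemma~\ref{lem-general-principle-used-for-injectivity} and the freeness of the $G$-action (lemma~\ref{lem-G-acts-freely-on-stable-forms}); surjectivity by combining the properness result of lemma~\ref{lem-real-moduli-to-complex-moduli-is-proper} (so the image is closed) with the known dense image $H^4_\x-\H$ from theorem~\ref{thm-isomorphism-for-smooth}; and the local-diffeomorphism property, which the paper obtains in one line from the everywhere-rank-$4$ fact in theorem~\ref{thm-main-theorem-stable-complex-case}, while you unfold it through the explicit local coordinates of lemmas~\ref{lem-local-model-for-complex-framed-stable-to-stable}--\ref{lem-local-model-for-Fs-to-Cs-real-case}. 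The extra coordinate computation in your last step is not a different route but an unpacking of what theorem~\ref{thm-main-theorem-stable-complex-case} already encodes; one small slip is the phrase that ``the $G^\R$-orbits are the slices $t_1=\dots=t_4=0$'' — you mean the $G^\R$-orbit through $f$ is locally that slice, or equivalently that the fibers of $g$ restricted to $\framed_s^{\,\x}$ are the slices with $t_1,\dots,t_4$ held constant.
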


\begin{proof}
  It is a local diffeomorphism because its rank is everywhere~$4$ by
  theorem~\ref{thm-main-theorem-stable-complex-case}.  Injectivity
  follows from the argument used for
  theorem~\ref{thm-isomorphism-for-smooth}; this uses the freeness of the 
  $G$ action on $\framed_s$, see 
  lemma~\ref{lem-G-acts-freely-on-stable-forms}.  To see surjectivity,
  we apply the previous lemma with $G=G$, $X=\framed_s$, $F=\{1,\x\}$,
  $X^F=\framed_s^{\,\x}$ and $G^F=G^\R$.  Therefore the map
  $G^\R\backslash\framed_s^{\,\x}\to
  G\backslash\framed_s\isomorphism\ch^4$ is proper, so its image is
  closed.  Theorem~\ref{thm-isomorphism-for-smooth} tells us that the
  image contains the open dense subset
  $g(\framed_0^{\,\x})=H^4_{\x}-\H$, so the map is surjective.
\end{proof}

\subsection{The local embedding $K\to\ch^4$}

The purpose of this subsection is to show that the complex period map
$g$ defines a local embedding $K=G^\R\backslash\framed_s^{\,\R}\to\ch^4$,
and use this to define a piecewise-hyperbolic metric on $K$.

\begin{lemma}
\label{lem-local-homeomorphism-for-framed-stable-moduli}
Suppose $f\in\framed_s^{\,\R}$, and $\alpha_1,\dots,\alpha_\ell$ are the
elements of $P\A$  that fix $f$.  Then the map
\begin{equation}
\label{eq-local-homeomorphism-for-framed-stable-moduli}
G^\R\backslash\bigl(\cup_{i=1}^\ell\framed_s^{\,\alpha_i}\bigr)
\to
\cup_{i=1}^\ell H^4_{\alpha_i}
\end{equation}
induced by $g$ is a homeomorphism.
\end{lemma}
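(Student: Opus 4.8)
The plan is to reduce the lemma to the two cases already analyzed in detail — one real node and one conjugate pair of nodes — together with the smooth case covered by Theorem~\ref{thm-isomorphism-for-smooth}, and to check that in each local model the map \eqref{eq-local-homeomorphism-for-framed-stable-moduli} is a homeomorphism by direct inspection of the explicit coordinate descriptions in \eqref{eq-local-descrip-FsR-when-1-real-node} and \eqref{eq-local-descrip-FsR-when-2-conjugate-nodes}. First I would dispose of injectivity and the local-homeomorphism property using the machinery already assembled: for each single $\alpha_i$, Lemma~\ref{lem-diffeomorphism-for-each-antiinvolution-stable} gives that $g$ restricts to an isomorphism $G^\R\backslash\framed_s^{\,\alpha_i}\isomorphism H^4_{\alpha_i}$. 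So on each ``sheet'' the map is already a homeomorphism; the only issue is how the sheets fit together.

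The heart of the matter is the combinatorics of which $\alpha_i$ fix $f$ and how their fixed loci $H^4_{\alpha_i}$ overlap near $x:=g(f)$. I would argue as follows. By Lemma~\ref{lem-local-model-for-Fs-to-Cs-real-case}, $\framed_s^{\,\R}$ near $f$ is built from the ramified covering $\framed_s\to\forms_s$ over the real locus of the complex conjugation acting by \eqref{eq-local-coordinates-complex-conjugation-forms}; the preimage of $\forms_s^\R$ decomposes, coordinate by coordinate on the $t_i$ with $i\le k$, according to which sixth root of unity is used in each ramified coordinate. Each such choice determines one lift $\alpha_i$ of $\kappa$, hence one $\framed_s^{\,\alpha_i}$, and the period map forgets $t_{k+1},\dots,t_{20}$, so it collapses exactly the directions transverse to $\ch^4$. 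Concretely: in the single-real-node model \eqref{eq-local-descrip-FsR-when-1-real-node}, $\framed_s^{\,\R}$ is six copies of $\R^{20}$ glued along a common $\R^{19}$ ($t_1=0$); applying $g$ (forgetting $t_5,\dots,t_{20}$) and then quotienting by $G^\R$ sends this to six copies of $H^4$ glued along a common $H^3$, which is precisely $\cup_i H^4_{\alpha_i}$ near $x$, with the $H^3$ equal to the common discriminant mirror $H^4_{\alpha_i}\cap r^\perp$ of Lemma~\ref{lem-intersections-of-mirrors-andH4s}\eqref{case-H3}. In the conjugate-pair model \eqref{eq-local-descrip-FsR-when-2-conjugate-nodes}, $\framed_s^{\,\R}$ is six copies of $\R^{20}$ glued along a common $\R^{18}$ ($t_1=t_2=0$); after $g$ and the $G^\R$-quotient this becomes six copies of $H^4$ glued along a common $H^2$, and that $H^2$ is $H^4_{\alpha_i}\cap R^\perp$ for the $G_2$ root system $R$ of Lemma~\ref{lem-intersections-of-mirrors-andH4s}\eqref{case-H2}. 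In general $S$ has $b$ real nodes and $2a$ non-real ones, and the local picture is a product of $b$ one-node models, $a$ conjugate-pair models, and a smooth factor; the general assertion follows by taking products of the established homeomorphisms, using that everything is compatible with the splitting of coordinates.

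For surjectivity and bijectivity of \eqref{eq-local-homeomorphism-for-framed-stable-moduli} globally (not just locally) one argues as in Lemma~\ref{lem-diffeomorphism-for-each-antiinvolution-stable}: the map $G^\R\backslash\framed_s^{\,\alpha_i}\to G\backslash\framed_s\isomorphism\ch^4$ is proper by Lemma~\ref{lem-real-moduli-to-complex-moduli-is-proper}, hence has closed image, and that image is $H^4_{\alpha_i}$; the union of these closed images is $\cup_i H^4_{\alpha_i}$, and properness of the assembled map follows from properness on each sheet together with local finiteness of the collection of sheets through any point. Injectivity of the assembled map: two points of $\cup_i\framed_s^{\,\alpha_i}$ with the same $G^\R$-orbit and the same image under $g$ must, by the single-sheet injectivity, lie on sheets $\alpha_i,\alpha_{i'}$ whose $H^4$'s actually coincide at that point, and one checks from the local models above that then the two framed forms already lie on a common sheet — i.e.\ the gluing in $\framed_s^{\,\R}$ matches the gluing of the $H^4_{\alpha_i}$ exactly. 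Since the map is a continuous bijection which is locally a homeomorphism, it is a homeomorphism.

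The main obstacle I expect is the injectivity/gluing-matching step: one must verify that the identifications of sheets of $\framed_s^{\,\R}$ along their common submanifolds correspond precisely, under $g$ and the $G^\R$-quotient, to the genuine set-theoretic overlaps $H^4_{\alpha_i}\cap H^4_{\alpha_{i'}}$, with no extra identifications and no missing ones. This is exactly where the explicit ramified local coordinates of Lemma~\ref{lem-local-model-for-Fs-to-Cs-real-case} and the classification of mirror intersections in Lemma~\ref{lem-intersections-of-mirrors-andH4s} must be brought together; the bookkeeping is the real content, whereas properness, local injectivity, and the local-homeomorphism property are each ``soft'' consequences of results already established.
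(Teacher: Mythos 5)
Your overall plan—use properness (Lemma~\ref{lem-real-moduli-to-complex-moduli-is-proper}) and the single-sheet homeomorphisms of Lemma~\ref{lem-diffeomorphism-for-each-antiinvolution-stable} to get surjectivity and properness, then reduce injectivity to showing that the sheets of $\framed_s^{\,\R}$ glue along exactly the same loci as the $H^4_{\alpha_i}$—matches the skeleton of the paper's argument. But the core step, showing that the gluings ``match exactly,'' is where you have a genuine gap. The paper establishes it by a purely global argument: it proves, for each pair $i,j$, that $g$ maps $\framed_s^{\,\alpha_i}\cap\framed_s^{\,\alpha_j}$ \emph{onto} $H^4_{\alpha_i}\cap H^4_{\alpha_j}$. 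The proof there is not coordinate bookkeeping but connectedness plus a rank count: the intersection is a smooth manifold (fixed set of the finite group $\langle\alpha_i,\alpha_j\rangle$) with totally real tangent spaces; since the kernel of $dg$ has complex dimension $16$ and the tangent spaces are totally real, $g$ restricted to the component $C$ through $f$ has constant real rank equal to $\dim_\R(H^4_{\alpha_i}\cap H^4_{\alpha_j})$, so $g(C)$ is open; it is also closed (by properness of the individual sheet maps), and $H^4_{\alpha_i}\cap H^4_{\alpha_j}$ is connected, so $g(C)$ is everything. Injectivity of the assembled map is then a two-line consequence.

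Your replacement for this step—``one checks from the local models above that the two framed forms already lie on a common sheet''—does not work as written, because the explicit coordinate descriptions \eqref{eq-local-descrip-FsR-when-1-real-node} and \eqref{eq-local-descrip-FsR-when-2-conjugate-nodes} are valid only in a neighborhood of $f$ (and of $g(f)$). The injectivity claim is global on $\cup_i H^4_{\alpha_i}$: you must handle $a_1\in G^\R\backslash\framed_s^{\,\alpha_1}$ and $a_2\in G^\R\backslash\framed_s^{\,\alpha_2}$ mapping to a common point $x$ that may lie far from $g(f)$. Applying Lemma~\ref{lem-local-model-for-Fs-to-Cs-real-case} at a representative $\tilde a_1$ instead does not obviously help either: $\alpha_2$ fixes $x=g(\tilde a_1)$ and hence preserves the $G$-orbit of $\tilde a_1$, but it need not fix $\tilde a_1$ itself, so $\alpha_2$ may not appear among the lifts of $\kappa$ in the local model at $\tilde a_1$. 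Note also that $H^4_{\alpha_1}\cap H^4_{\alpha_2}$ lies entirely inside $\H$ (it is contained in the fixed set of the nontrivial $\alpha_1\alpha_2\in\PGamma_{\!f}\isomorphism(\Z/6)^k$), so you cannot fall back on the smooth-case Theorem~\ref{thm-isomorphism-for-smooth} for generic points of the intersection. The missing ingredient is precisely the paper's surjectivity-onto-intersections claim, and the connectedness/rank argument that proves it; your proposal does not supply a substitute.
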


The left side of
\eqref{eq-local-homeomorphism-for-framed-stable-moduli} contains a
neighborhood of the image of $f$ in $K$, so the lemma implies that
$g:K\to\ch^4$ is a local embedding.  We will write $K_f$
for the right side of
\eqref{eq-local-homeomorphism-for-framed-stable-moduli}.
It is the part of $K$ relevant to $f$.

Before giving the proof, we observe that it's easy to work out
formulas for $\alpha_1,\dots,\alpha_\ell$ in the local coordinates
$t_1,\dots,t_{20}$ on $\framed_s$.  Since
lemma~\ref{lem-local-model-for-Fs-to-Cs-real-case} gives a formula for
$\kappa$ near $F\in\forms_s^\R$ in the local coordinates
$u_1,\dots,u_{20}$, and $\framed_s\to\forms_s$ is given by
$u_1=t_1^6,\dots,u_k=t_k^6,u_{k+1}=t_{k+1},\dots,u_{20}=t_{20}$, one
can simply write down the lifts of $\kappa$.  For example, in the
one-real-node case there are $6$ lifts, given by
$$
(t_1,\dots,t_{20})\mapsto(\bar t_1\zeta^i,\bar t_2,\dots,\bar t_{20})\;,
$$
where $\zeta=e^{\pi i/3}$, and in the conjugate-pair case there are also $6$ lifts, given by
$$
(t_1,\dots,t_{20})\mapsto(\bar t_2\zeta^i,\bar t_1\zeta^i,\bar t_3,\dots,\bar
t_{20})\;.
$$

\begin{proof}[Proof of Lemma~\ref{lem-local-homeomorphism-for-framed-stable-moduli}]
We first claim that for all $i$ and $j$, 
$$
g:\framed_s^{\,\alpha_i}\cap\framed_s^{\,\alpha_j}\to H^4_{\alpha_i}\cap H^4_{\alpha_j}
$$ is surjective.  To see this, let $C$ be the component of
$\framed_s^{\,\alpha_i}\cap\framed_s^{\,\alpha_j}$ containing $f$.
This is a component of the fixed-point set of the finite group
generated by $\alpha_i$ and $\alpha_j$.  In particular, it is a smooth
manifold whose tangent spaces are all totally real.  Since $C$ is
connected, its dimension everywhere is its dimension at $f$, which by
our local coordinates is $16+\dimension_\R (H^4_{\alpha_i}\cap
H^4_{\alpha_j})$.  Since the tangent spaces are totally real and the
kernel of the derivative of the period map has complex dimension~16,
the (real) rank of $g|_C$ equals $\dimension_\R (H^4_{\alpha_i}\cap
H^4_{\alpha_j})$ everywhere.  Therefore $g(C)$ is open.  It is also
closed, since $g$ induces a diffeomorphism from each
$G^\R\backslash\framed_s^{\,\alpha_i}$ to $H^4_{\alpha_i}$ for each $i$.  This
proves surjectivity, since $H^4_{\alpha_i}\cap H^4_{\alpha_j}$ is
connected.

Now we prove the lemma itself; 
the map \eqref{eq-local-homeomorphism-for-framed-stable-moduli} is surjective and
proper because $G^\R\backslash\framed_s^{\,\alpha_i}\to H^4_{\alpha_i}$ is surjective
and proper for each $i$.  To prove injectivity, suppose
$a_i\in G^\R\backslash\framed_s^{\,\alpha_i}$ for $i=1,2$ have the same image in $\ch^4$.
Then their common image lies in $H^4_{\alpha_1}\cap H^4_{\alpha_2}$, so by
the claim above there exists
$b\in G^\R\backslash(\framed_s^{\,\alpha_i}\cap\framed_s^{\,\alpha_j})$ with the same image.
Since each $G^\R\backslash\framed_s^{\,\alpha_i}\to H^4_{\alpha_i}$ is injective, each
$a_i$ coincides with $b$, so $a_1=a_2$.
\end{proof}

\subsection{The local metric structure on $\PGamma\backslash
  K\isomorphism\moduli_s^\R$}
At this point we know that $g$ locally embeds
$K=G^\R\backslash\framed_s^{\,\R}$ into $\ch^4$, and even have an
identification of small open sets in $K$ with open sets in unions of
copies of $H^4$ in $\ch^4$.  The induced path-metric on $K$ is the
largest metric which preserves the lengths of paths; under it, $K$ is
piecewise isometric to $H^4$.  $K$ is not a manifold---it may be
described locally by suppressing the coordinates $t_5,\dots,t_{20}$
from our local description of $\framed_s^{\,\R}$.  (See \eqref{eq-local-descrip-FsR-when-1-real-node} and
\eqref{eq-local-descrip-FsR-when-2-conjugate-nodes} for two examples.)  Nevertheless,
corollary~\ref{cor-locally-a-hyperbolic-orbifold} below shows us that
the path metric on $\moduli_s^\R=\PGamma\backslash K$ is locally
isometric to quotients of $H^4$ by finite groups.  This is the key
to defining the hyperbolic orbifold structure on $\moduli_s^\R$.

Our goal is to prove corollary~\ref{cor-locally-a-hyperbolic-orbifold}, that every point of
$\PGamma\backslash K$ has a neighborhood modeled on $H^4$ modulo a
finite group.  This requires a careful analysis with several different
subgroups of $\PGamma$ associated to $f\in\framed_s^{\,\R}$.  One of them
is $A_f$, the subgroup of $\PGamma$ fixing the image of $f$ in
$K=G^\R\backslash\framed_s^{\,\R}$.  This contains
$\PGamma_{\!f}\isomorphism(\Z/6)^k$, often strictly.  The third group is
$B_f$, the subgroup of $\PGamma_{\!f}$ generated by the order~6 complex
reflections associated to the {\it real} nodes of $S$, rather than all
the nodes.  

Lemma~\ref{lem-local-homeomorphism-for-framed-stable-moduli} says that
\begin{equation}
\label{eq-Xf-mod-Af-mapping-to-MsR}
A_f\backslash K_f
=
(A_f\times G^\R)\backslash\bigl(\cup_{i=1}^\ell\framed_s^{\,\alpha_i}\bigr)
\to
(\PGamma\times G^\R)\backslash\framed_s^{\,\R}
=
\moduli_s^\R
\end{equation}
is a homeomorphism in a neighborhood of the image of $f$ in $K_f\sset
K$, where $\alpha_1,\dots,\alpha_\ell$ are as in that lemma.  Therefore it
suffices to study $A_f\backslash K_f$.  It turns out that this is best
done by first treating the intermediate quotient $B_f\backslash K_f$.

So our next goal is to understand $B_f\backslash K_f$ in
coordinates.  The all-nodes-real case is much simpler than the general
case, and should allow the reader to understand all the ideas in the
rest of this section.

\begin{lemma}
\label{lem-preparation-for-orbifold-charts}
If $S$ has only real nodes, then $B_f\backslash K_f$ is isometric to
$H^4$.  If $S$ has a single pair of conjugate nodes, and possibly also
some real nodes, then $B_f\backslash K_f$ is isometric to the union of
six copies of $H^4$ identified along a common $H^2$.  If $S$ has two
pairs of conjugate nodes then $B_f\backslash K_f=K_f$ is the union of
$36$ copies of $H^4$, any two of which meet along an $H^2$ or at a
point.  

In each case, $A_f$ acts transitively on the indicated
$H^4$'s.  If $H$ is any one of them, and $(A_f/B_f)_H$ its stabilizer,
then the natural map
\begin{equation}
\label{eq-locally-hyperbolic-modulo-finite-group}
(A_f/B_f)_H\!\bigm\backslash\! H\to (A_f/B_f)\!\bigm\backslash\!(B_f\backslash K_f)
=A_f\backslash K_f
\end{equation}
is an isometry of path metrics.
\end{lemma}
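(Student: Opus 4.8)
The plan is to compute everything in the local coordinates of
Lemma~\ref{lem-local-model-for-Fs-to-Cs-real-case}: $\ch^4$ is the unit
ball in $\C^4=\{(t_1,\dots,t_4)\}$, the period map $g$ forgets
$t_5,\dots,t_{20}$, $u_i=t_i^6$ for $i\le k$, and $\kappa$ acts on
$\forms_s$ near $F$ as in \eqref{eq-local-coordinates-complex-conjugation-forms}.
Arrange the coordinates so that the $2a$ non-real nodes correspond to
$t_1,\dots,t_{2a}$, grouped into complex-conjugate pairs
$(t_1,t_2),(t_3,t_4),\dots$, and the $b$ real nodes to $t_{2a+1},\dots,t_k$.
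Lifting $\kappa$ through $u_i=t_i^6$ exactly as in the two examples
preceding this lemma, one finds that the anti-involutions $\alpha\in P\A$
fixing $f$ are parametrized by a phase $p_j\in\Z/6$ for each conjugate
pair and a phase $q_m\in\Z/6$ for each real node: in coordinates $\alpha$
sends $t_{2j-1}\mapsto\bar t_{2j}\zeta^{p_j}$, $t_{2j}\mapsto\bar t_{2j-1}\zeta^{p_j}$,
sends $t_m\mapsto\bar t_m\zeta^{q_m}$ for a real-node coordinate, and
$t_m\mapsto\bar t_m$ for the remaining (``free'') coordinates, where
$\zeta=e^{\pi i/3}$. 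Hence $H^4_\alpha$ is the totally geodesic $H^4$
cut out in the ball by $t_{2j-1}=\bar t_{2j}\zeta^{p_j}$ ($j\le a$),
$t_m\in e^{iq_m\pi/6}\R$ (real nodes) and $t_m\in\R$ (free coordinates),
and
\[
K_f=\{\,t_{2j-1}^6=\bar t_{2j}^6\ (1\le j\le a),\ \ t_m^6\in\R\ (2a<m\le k),\ \ t_m\in\R\ (k<m\le4)\,\}\cap\ch^4,
\]
a union of $6^{a+b}$ totally geodesic copies of $H^4$, two of which meet
exactly along the totally geodesic subspace obtained by setting to zero
the coordinates on which their phases disagree. (For $a=0$ and $a=1$
this recovers \eqref{eq-local-descrip-FsR-when-1-real-node} and
\eqref{eq-local-descrip-FsR-when-2-conjugate-nodes}.)

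Next I would locate $B_f$ inside $\PGamma_{\!f}$. By
Lemma~\ref{lem-local-model-for-complex-framed-stable-to-stable},
part~\ref{item-action-of-stabilizer-on-ts}, $\PGamma_{\!f}\isomorphism(\Z/6)^k$
acts by $t_i\mapsto\zeta^{c_i}t_i$, and $B_f$ is the factor $(\Z/6)^b$
acting only on the real-node coordinates $t_{2a+1},\dots,t_k$, leaving the
pair and free coordinates pointwise fixed. The basic two-dimensional fact
is that $\Z/6$ rotating the locus $\{t^6\in\R\}$ --- six geodesic lines
through the centre of the relevant $\ch^1$ --- has two orbits of lines,
and the stabilizer of a line is $\{1,\beta^3\}$, with $\beta^3$ acting on
any $H^4_\alpha$ through that line as the reflection in $\{t=0\}$. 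Since
two totally geodesic half-$H^n$'s glued along their common boundary
$H^{n-1}$ form, with the path metric, a copy of $H^n$, an induction on
$b$ --- in which quotienting out one more $\Z/6$ produces two half-copies
of the previous model glued along a totally geodesic hyperplane --- shows
that $B_f\backslash K_f$ is isometric to $H^4$ when $a=0$. For $a\ge1$ the
same induction carried out inside each of the $6^a$ totally geodesic
``pages'' indexed by the pair-phases $\vec p$ (which $B_f$ leaves
pointwise fixed) shows that $B_f\backslash K_f$ is the union of $6^a$
copies of $H^4$: for $a=1$, six of them meeting along a common $H^2$; for
$a=2$ (which forces $b=0$, so $B_f$ is trivial and $B_f\backslash K_f=K_f$),
thirty-six of them, any two meeting along an $H^2$ or at the point $g(f)$.
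This gives the first part of the lemma.

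For the remaining assertions, first note $B_f\triangleleft A_f$: an
element of $A_f$ permutes the components of $\H$ through $g(f)$, carrying
real-node mirrors to real-node mirrors, hence conjugates the generating
set of $B_f$ into itself. Each conjugate pair carries a factor
$(\Z/6)^2\sset\PGamma_{\!f}\sset A_f$ rotating its two coordinates, and
conjugating $\alpha$ by $t_{2j-1}\mapsto\zeta^c t_{2j-1},\ t_{2j}\mapsto\zeta^{c'}t_{2j}$
replaces $p_j$ by $p_j+c+c'$; hence $\PGamma_{\!f}$, and so $A_f$, acts
transitively on the $6^a$ copies of $H^4$ in $B_f\backslash K_f$, which is
the transitivity statement. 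Fix one such copy $H$ and let
$\Theta=(A_f/B_f)_H$ be its stabilizer. The totally geodesic inclusion
$H\hookrightarrow B_f\backslash K_f$ induces, by orbit--stabilizer, a
continuous bijection $\Theta\backslash H\to A_f\backslash K_f$, which is
$1$-Lipschitz as a quotient of a totally geodesic inclusion. For the
opposite inequality one folds paths: given a path in $B_f\backslash K_f$
joining two points of $H$ (after translating the terminal point by
$A_f/B_f$ if necessary), cut it at its successive crossings of the
lower-dimensional meeting loci and apply to each resulting piece an
element of $\PGamma_{\!f}/B_f$ that carries that piece's copy of $H^4$ to
$H$ while fixing the relevant meeting locus pointwise. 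This is possible
because the $(\Z/6)^2$ attached to any pair fixes every stratum on which
that pair's coordinates vanish and acts transitively on that pair's
phases; the folds agree on the crossing points and preserve lengths, so
the result is a path in $H$ of the same length. Hence
$\Theta\backslash H\to A_f\backslash K_f$ is an isometry of path metrics,
which is \eqref{eq-locally-hyperbolic-modulo-finite-group}.

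The one genuinely delicate point is the metric content of the middle two
paragraphs --- that the $B_f$-quotient of this fan of $6^{a+b}$ totally
geodesic $H^4$'s is \emph{isometric}, not merely homeomorphic, to the
stated model. Everything reduces to the two-dimensional prototype
($\Z/6$ rotating six geodesics through a point, with quotient a single
geodesic) together with the folding lemma for path metrics on unions of
totally geodesic subspaces meeting along totally geodesic strata; the
remaining bookkeeping (enumerating the lifts of $\kappa$, the orbit
structure of $\PGamma_{\!f}$ on phase vectors, and the dimension count in
the inductive step) is routine.
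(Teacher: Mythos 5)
Your proof follows the paper's strategy closely: the same local coordinates from Lemma~\ref{lem-local-model-for-Fs-to-Cs-real-case}, the same parametrization of the $\alpha\in P\A$ fixing $f$ by phase vectors, the same identification of $K_f$ as a fan of $6^{a+b}$ totally geodesic $H^4$'s, and the same use of $\PGamma_{\!f}/B_f\isomorphism(\Z/6)^{2a}$ for the transitivity statement. The first part differs cosmetically: you run an induction on the number $b$ of real nodes, reducing each step to gluing two half-spaces along a hyperplane, while the paper writes down an explicit fundamental domain for $B_f$ (the union of the $2^b$ sectors $K_{f,\e_1,\dots,\e_k}$) together with an explicit isometry to $H^4$. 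Both are fine.

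The gap is in the last paragraph, where you assert that ``the totally geodesic inclusion $H\hookrightarrow B_f\backslash K_f$ induces, by orbit--stabilizer, a continuous bijection $\Theta\backslash H\to A_f\backslash K_f$.'' Orbit--stabilizer gives surjectivity (via transitivity of $A_f/B_f$ on the $H^4$'s) but does \emph{not} give injectivity of $\Theta\backslash H\to A_f\backslash K_f$: two points $p,q\in H$ can be $A_f/B_f$-equivalent via an element that does not stabilize $H$. Injectivity is exactly the statement the paper singles out and reduces to the claim that, for every $y\in B_f\backslash K_f$, the stabilizer of $y$ in $A_f/B_f$ acts transitively on the set of $H^4$'s containing $y$; this is then checked using the $\PGamma_{\!f}/B_f$-action and the fact that those generators fix the meeting loci pointwise. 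Your folding argument cannot substitute for this: after you lift a path to $B_f\backslash K_f$ and fold it into $H$, the folded endpoint is some point of $H$ which is $A_f/B_f$-equivalent to the intended endpoint $q\in H$, but to conclude that it is $\Theta$-equivalent to $q$ you again need the injectivity you are trying to prove. So the folding, which is a perfectly good argument that the inverse map is $1$-Lipschitz \emph{once bijectivity is known} (and is a reasonable unpacking of the paper's terse ``it is obviously a piecewise isometry''), does not close the loop. To repair the proof, state and verify the paper's transitivity-of-point-stabilizers criterion: for $y$ not on a meeting stratum it is vacuous, and on a stratum where the $j$-th pair's coordinates vanish, the corresponding $(\Z/6)^2\sset\PGamma_{\!f}/B_f$ fixes that stratum pointwise (so lies in the stabilizer of $y$) and shifts $p_j$ arbitrarily, which gives the required transitivity; then injectivity follows by the standard argument, and your folding supplies the metric conclusion.
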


\begin{corollary}
\label{cor-locally-a-hyperbolic-orbifold}
Every point of $\PGamma\backslash K$ has a neighborhood isometric to
the quotient of an open set in $H^4$ by a finite group of isometries.
\end{corollary}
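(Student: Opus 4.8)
The plan is to deduce the corollary from Lemma~\ref{lem-preparation-for-orbifold-charts}, which already contains all the geometric content; what remains is bookkeeping. First I would fix $p\in\moduli_s^\R=\PGamma\backslash K$, choose $f\in\framed_s^{\,\R}$ lying over it, and let $\bar f$ be its image in $K$. Because $g$ locally embeds $K$ into $\ch^4$ equivariantly for the $\PGamma$-action, and $\PGamma$ acts properly discontinuously on $\ch^4$, the action of $\PGamma$ on $K$ is proper with finite point stabilizers; in particular the stabilizer $A_f$ of $\bar f$ is finite. (Concretely: $g$ is $G^\R$-invariant and $\PGamma$-equivariant, so every element of $A_f$ fixes the point $g(f)\in\ch^4$, and the $\PGamma$-stabilizer of a point of $\ch^4$ is finite.) Consequently a neighborhood of $p$ in $\moduli_s^\R$ is isometric, for the path metric, to a neighborhood of the image of $\bar f$ in $A_f\backslash K$; and by the homeomorphism \eqref{eq-Xf-mod-Af-mapping-to-MsR}, which is compatible with the path metrics by the way they were defined, this is the same as a neighborhood of the image of $\bar f$ in $A_f\backslash K_f$, where $K_f=\bigcup_{i=1}^\ell H^4_{\alpha_i}$. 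So it suffices to produce an isometry between a neighborhood of this point in $A_f\backslash K_f$ and the quotient of an open subset of $H^4$ by a finite group.

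Next I would route this through the intermediate quotient $B_f\backslash K_f$ of Lemma~\ref{lem-preparation-for-orbifold-charts}, where $B_f\sset\PGamma_{\!f}$ is generated by the order-$6$ complex reflections at the real nodes of $S$. One first checks that $B_f$ is normal in $A_f$, so that the tower $K_f\to B_f\backslash K_f\to A_f\backslash K_f$ is meaningful: $\PGamma_{\!f}\isomorphism(\Z/6)^k$ is normal in $A_f$ because conjugation by $A_f$ is conjugation by a $G^\R$-translate and $G^\R$ commutes with $\PGamma$, while $A_f$ respects both the decomposition of $\PGamma_{\!f}$ indexed by the nodes of $S$ and the partition of those nodes into real ones and conjugate pairs, hence preserves $B_f$. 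Lemma~\ref{lem-preparation-for-orbifold-charts} then tells us that $B_f\backslash K_f$ is either a single copy of $H^4$, or six copies of $H^4$ glued along a common $H^2$, or thirty-six copies of $H^4$ meeting pairwise in an $H^2$ or a point, according as $S$ has $0$, $1$, or $2$ pairs of conjugate nodes; that $A_f/B_f$ acts isometrically on $B_f\backslash K_f$, permuting the copies transitively; and that for any one such copy $H$ — each of which passes through the image of $\bar f$ — the natural map $(A_f/B_f)_H\backslash H\to A_f\backslash K_f$ of \eqref{eq-locally-hyperbolic-modulo-finite-group} is an isometry of path metrics, where $(A_f/B_f)_H$ is the stabilizer of $H$.

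Finally I would assemble the identifications. The group $(A_f/B_f)_H$ is a subgroup of $A_f/B_f$, itself a quotient of the finite group $A_f$, so $(A_f/B_f)_H$ is finite, and it acts by isometries on $H\isomorphism H^4$; chaining together the maps above exhibits a neighborhood of $p$ in $\moduli_s^\R$ as isometric to the quotient of an open subset of $H^4$ by this finite group, which is exactly the statement of Corollary~\ref{cor-locally-a-hyperbolic-orbifold}. I do not expect any serious obstacle: Lemma~\ref{lem-preparation-for-orbifold-charts} is the substantive input, and the only points requiring care are the finiteness of $A_f$ and the normality of $B_f$ in $A_f$, both of which follow at once from the properness of the $\PGamma$-action on $K$ together with the explicit local picture of $\framed_s^{\,\R}$ near $f$ supplied by Lemmas~\ref{lem-local-model-for-Fs-to-Cs-real-case} and~\ref{lem-local-homeomorphism-for-framed-stable-moduli}.
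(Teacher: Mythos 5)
Your proof is correct and takes essentially the same route as the paper's: both read the corollary off Lemma~\ref{lem-preparation-for-orbifold-charts}, using the isometry \eqref{eq-locally-hyperbolic-modulo-finite-group} together with the fact that its right-hand side contains a neighborhood of the image of $f$ in $\PGamma\backslash K$ via \eqref{eq-Xf-mod-Af-mapping-to-MsR}. You spell out a couple of points the paper leaves implicit (the finiteness of $A_f$ and the normality of $B_f$ in $A_f$), but the substance of the argument is the same.
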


\begin{proof}
The left term of \eqref{eq-locally-hyperbolic-modulo-finite-group} is
a quotient of $H^4$ by a finite group, and the right term contains a
neighborhood of the image of $f$ in $\PGamma\backslash K$.  The
corollary follows from the fact that
\eqref{eq-locally-hyperbolic-modulo-finite-group} is a local isometry.
\end{proof}

\begin{proof}[Proof of lemma~\ref{lem-preparation-for-orbifold-charts}.]
  We take $x=g(f)$ as before and refer to the coordinates
  $t_1,\dots,t_4$ from
  lemma~\ref{lem-local-model-for-complex-framed-stable-to-stable} that
  identify $\ch^4$ with $B^4$.  Recall that the cubic surface $S$ has
  $2a$ non-real and $b$ real nodes, with $k=2a+b$.  The stabilizer
  $\PGamma_{\!f}$ of $f$ in $\PGamma$ acts on $\ch^4$ by multiplying
  $t_1,\dots,t_k$ by 6th roots of unity, and $B_f$ acts by multiplying
  $t_{2a+1},\dots,t_{2a+b}$ by 6th roots of unity.  $K_f$ may be
  described in the manner used to obtain
  \eqref{eq-local-descrip-FsR-when-1-real-node} and
  \eqref{eq-local-descrip-FsR-when-2-conjugate-nodes}, with
  $t_5,\dots,t_{20}$ omitted.  With concrete descriptions of $K_f$ and
  $B_f$ in hand, one can work out $B_f\backslash K_f$.  Here are the
  results for the various cases.

First suppose $S$ has only real nodes.  Then 
$$
K_f=\{(t_1,\dots,t_4)\in B^4:t_1^6,\dots,t_k^6,t_{k+1},\dots,t_4\in\R\}\;.
$$
Each of the $2^k$ subsets 
\begin{align*}
K_{f,\e_1,\dots,\e_k}=\{(t_1,\dots,t_4)\in B^4:i^{\e_1}t_1,&\dots,i^{\e_k}t_k\in[0,\infty)\\
&\hbox{and } t_{k+1},\dots,t_4\in\R\}\;,
\end{align*}
indexed by $\e_1,\dots,\e_k\in\{0,1\}$, is isometric to the closed region in
$H^4$ bounded by $k$ mutually orthogonal hyperplanes.  Their union
$U$ is
a fundamental domain for $B_f$ in the sense that it maps
homeomorphically and piecewise-isometrically onto $B_f\backslash
K_f$.  Under its path metric, $U$ is isometric to $H^4$, say by the
following map, defined separately on each $K_{f,\e_1,\dots,\e_k}$ by
$$
(t_1,\dots,t_k)\mapsto
(-i^{\e_1}t_1,\dots,-i^{\e_k}t_k,t_{k+1},\dots,t_4)\;. 
$$
This identifies $B_f\backslash K_f$ with the standard $H^4$ in
$\ch^4$.  

If $S$ has a single pair of non-real nodes and no real nodes, then
$B_f$ is trivial and $B_f\backslash K_f=K_f$.  
The $\alpha_i$ are  the 6 maps
$$
\alpha_i:(t_1,\dots,t_4)\mapsto(\bar t_2\zeta^i,\bar t_1\zeta^i,\bar
t_3,\bar t_4)
$$
with $i\in\Z/6$, whose fixed-point sets are 
$$
H^4_{\alpha_i}=\{(t_1,\dots,t_4)\in B^4:t_2=\bar t_1\zeta^i\hbox{ and
}t_3,t_4\in\R\}\;. 
$$
It is obvious that any two of these $H^4$'s meet along the $H^2\sset B^4$
described by $t_1=t_2=0$ and $t_3,t_4\in\R$.  

If $S$ has two pairs of non-real nodes (hence no real nodes at all)
then the argument is essentially the same.  The difference is that
there are now 36 anti-involutions
$$
\alpha_{m,n}:(t_1,\dots,t_4)\mapsto(\bar t_2\zeta^m,\bar
t_1\zeta^m,\bar t_4\zeta^n, \bar t_3\zeta^n)\;,
$$
where $m,n\in\Z/6$, with fixed-point sets 
$$
H^4_{\alpha_{m,n}}=\{(t_1,\dots,t_4)\in B^4:t_2=\bar t_1\zeta^m, t_4=\bar
t_3\zeta^n\}\;. 
$$
If $(m',n')\neq(m,n)$ then $H^4_{\alpha_{m,n}}$ meets
$H^4_{\alpha_{m',n'}}$ in an $H^2$ if $m=m'$ or $n=n'$, and
otherwise only at the origin.  

If $S$ has a pair of non-real nodes and also a single real node then the
argument is a mix of the cases above.  $B_f\isomorphism\Z/6$ acts by
multiplying $t_3$ by powers of $\zeta$, and there are  36
anti-involutions, namely
$$
\alpha_{m,n}:(t_1,\dots,t_4)\mapsto(\bar t_2\zeta^m,\bar t_1\zeta^m,
\bar t_3\zeta^n, \bar t_4)\;.
$$
We have
$$
K_f=\{(t_1,\dots,t_4)\in B^4:t_2^6=\bar t_1^{\,6},\ t_3^6\in\R,\ t_4\in\R\}\;.
$$
The union $U$ of the subsets with $t_3$ or $it_3$ in $[0,\infty)$
is a fundamental domain for $B_f$; applying the identity map to
the first subset and $t_3\mapsto -it_3$ to the second identifies $U$
with 
$$
\{(t_1,\dots,t_4)\in B^4:t_2^6=\bar t_1^{\,6},t_3,t_4\in\R\}\;.
$$
That is, $B_f\backslash K_f$ is 
what $K_f$ was in the case of no real nodes, as claimed.  If  $S$ has two
non-real and two real nodes then the argument is only notationally
more complicated.

The remaining claims are trivial unless there are non-real nodes.  In
every case, the transitivity of $A_f$ on the $H^4$'s in $B_f\backslash
K_f$ is easy to see because $\PGamma_{\!f}\sset A_f$ contains
transformations multiplying $t_1,\dots,t_{2a}$ by powers of $\zeta$.
If $H$ is one of the $H^4$'s and $J=(A_f/B_f)_H$ is its stabilizer,
then it remains to prove that $J\backslash H\to A_f\backslash K_f$ is
an isometry.  Surjectivity follows from the transitivity of $A_f$ on
the $H^4$'s.  It is obviously a piecewise isometry, so all we must
prove is injectivity.  That is, if two points of $H$ are equivalent
under $A_f/B_f$, then they are equivalent under $J$.  To prove this it
suffices to show that for all $y\in B_f\backslash K_f$, the stabilizer
of $y$ in $A_f/B_f$ acts transitively on the $H^4$'s in $B_f\backslash
K_f$ containing $y$.  This is easy, using the stabilizer of $y$ in
$\PGamma_{\!f}/B_f\isomorphism (\Z/6)^{2a}$.
\end{proof}

\subsection{The hyperbolic orbifold structure}
We have equipped $\moduli_s^\R$ with a path metric which is locally
isometric to quotients of $H^4$ by finite groups.  It is easy to see
that if $X$ is such a metric space then there is a unique
real-hyperbolic orbifold structure on $X$ whose path metric is the
given one.  (The essential point is that if $U$ and $U'$ are connected
open subsets of $H^4$ and $\Gamma$ and $\Gamma'$ are finite groups of
isometries of $H^4$ preserving $U$ and $U'$ respectively, with
$\Gamma\backslash U$ isometric to $\Gamma'\backslash U'$, then there is an isometry of
$H^4$ carrying $U$ to $U'$ and $\Gamma$ to $\Gamma'$.)  Therefore
$\moduli_s^\R$ is a real hyperbolic orbifold.

For completeness, we give explicit orbifold charts.  Take $f$ as
before, and $H$ one of the $H^4$'s comprising $B_f\backslash K_f$.
Recall that $(A_f/B_f)_H$
is its stabilizer in $A_f/B_f$.  The orbifold chart is the restriction of the composition 
\begin{align*}
H&{}\to(A_f/B_f)_H\backslash H\\
&{}\isomorphism (A_f/B_f)\backslash(B_f\backslash K_f)\\
&{}=A_f\backslash K_f\\
&{}\isomorphism (A_f\times G^\R)\backslash(\cup_{i=1}^\ell\framed_s^{\,\alpha_i})\\
&\to (\PGamma\times G^\R)\backslash\framed_s^{\,\R}=\moduli_s^\R\;.
\end{align*}
 to a suitable open
subset of $H$.  The homeomorphism of the second line is part of
lemma~\ref{lem-preparation-for-orbifold-charts}, and that of the
fourth is
lemma~\ref{lem-local-homeomorphism-for-framed-stable-moduli}.  The map
in the last line is a homeomorphism in a neighborhood $U$ of the image
of $f$ in $(A_f\times G^\R)\backslash\bigl(\cup_{i=1}^\ell\framed_s^{\alpha_i}\bigr)$.
We take the domain of the orbifold chart to be the subset of $H$ which
is the preimage of $U$.

\begin{theorem}
\label{thm-MsR-uniformization}
With the orbifold structure given above, $\moduli_s^\R$ is a complete
real hyperbolic orbifold of finite volume, and there is a 
properly discontinuous group $\PGamma^\R$ of motions of $H^4$ such that
$\moduli_s^\R$ and 
$\PGamma^\R\backslash H^4$ are isomorphic hyperbolic orbifolds.
\end{theorem}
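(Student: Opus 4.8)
The plan is to combine three ingredients that have already been prepared: the local structure established in Lemma~\ref{lem-preparation-for-orbifold-charts} and Corollary~\ref{cor-locally-a-hyperbolic-orbifold}, which already gives a hyperbolic orbifold structure on $\moduli_s^\R=\PGamma\backslash K$; a comparison with the complex moduli space to get completeness; and the classical fact that a connected complete hyperbolic orbifold is a global quotient of $H^4$. Recall that $\moduli_s$ is a complete metric space: by Theorem~\ref{thm-main-theorem-stable-complex-case} the period map identifies it with $\PGamma\backslash\ch^4$, and $\PGamma$ acts properly discontinuously by isometries on the complete space $\ch^4$.

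First I would prove completeness of $\moduli_s^\R$ by showing that the period map induces a proper, distance--nonincreasing map $p\colon\moduli_s^\R\to\moduli_s$. Since the set $P\A$ of projective anti-involutions of $\Lambda$ is finite, $K=G^\R\backslash\framed_s^{\,\R}$ is the finite union of the spaces $G^\R\backslash\framed_s^{\,\x}$, each of which maps properly to $G\backslash\framed_s\isomorphism\ch^4$ by Lemma~\ref{lem-real-moduli-to-complex-moduli-is-proper} (with $F=\{1,\x\}$); hence $g\colon K\to\ch^4$ is proper, and so is the descended map $p\colon\moduli_s^\R=\PGamma\backslash K\to\PGamma\backslash\ch^4=\moduli_s$. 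Because the path metric on $K$ is by construction pulled back from $\ch^4$ along the local embedding $g$ (Lemma~\ref{lem-local-homeomorphism-for-framed-stable-moduli}), the map $p$ is $1$-Lipschitz. Given a Cauchy sequence in $\moduli_s^\R$, its image is Cauchy in $\moduli_s$ and hence converges; the $p$-preimage of a compact neighborhood of the limit is compact, so the sequence subconverges, and being Cauchy it converges. Thus $\moduli_s^\R$ is a complete metric space, so it is a complete hyperbolic orbifold.

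Next, finite volume. The nodal locus $\moduli_s^\R\setminus\moduli_0^\R$ is closed, and in each of the orbifold charts described in Lemma~\ref{lem-preparation-for-orbifold-charts} and the paragraph after Corollary~\ref{cor-locally-a-hyperbolic-orbifold} it is cut out by a union of proper geodesic subspaces of $H^4$ (hyperplanes $t_i=0$ from real nodes, copies of $H^2$ from pairs of conjugate nodes), hence has measure zero. Therefore $\moduli_0^\R$ is an open dense suborbifold of full measure and $\vol(\moduli_s^\R)=\vol(\moduli_0^\R)$. By Corollary~\ref{cor-componentscorollary} (cf.\ Theorem~\ref{thm-isomorphism-for-smooth}), $\moduli_0^\R=\coprod_{j=0}^4\PGamma_j^\R\backslash(H_j^4-\H)$, and deleting $\H$ again does not affect the volume, so $\vol(\moduli_s^\R)=\sum_{j=0}^4\vol(\PGamma_j^\R\backslash H_j^4)$, which is finite by the orbifold Euler-characteristic computation of section~\ref{sec-volume} (table~\ref{tab-monodromy-groups}).

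Finally, uniformization. Since $\moduli_s^\R$ is a connected complete hyperbolic orbifold, it is developable: the developing map from its orbifold universal cover to $H^4$ is a covering map by completeness, hence an isometry because $H^4$ is simply connected. Its deck group is then a properly discontinuous group $\PGamma^\R\subset\PO(4,1)$ of isometries of $H^4$ with $\moduli_s^\R\isomorphism\PGamma^\R\backslash H^4$ and $\PGamma^\R\isomorphism\orbpi_1(\moduli_s^\R)$; concreteness is deferred to sections~\ref{sec-gluing} and~\ref{sec-nonarithmeticity}. I expect the main obstacle to be the completeness step --- specifically, upgrading the properness statement of Lemma~\ref{lem-real-moduli-to-complex-moduli-is-proper}, which concerns a single anti-involution of a smooth manifold, to a statement about the singular space $K$ and its $\PGamma$-quotient, and verifying carefully that the pulled-back path metric behaves well (is nonexpanding, induces the expected topology) at the non-manifold points where several copies of $H^4$ are glued along lower-dimensional loci.
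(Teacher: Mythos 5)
Your overall strategy coincides with the paper's: prove completeness via properness of $K\to\ch^4$, deduce finite volume from the five smooth pieces, and then invoke the uniformization theorem for complete hyperbolic orbifolds. However, there is one concrete error in the completeness step: you assert that ``the set $P\A$ of projective anti-involutions of $\Lambda$ is finite.'' It is not. $P\A$ is an infinite set; it is the set of \emph{$\PGamma$-conjugacy classes} $C\A$ that is finite (with $5$ elements, by Corollary~\ref{cor-componentscorollary}). So $K=G^\R\backslash\framed_s^{\,\R}$ is an \emph{infinite} union of the pieces $G^\R\backslash\framed_s^{\,\x}$, and an infinite union of proper maps need not be proper. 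The properness of $g\colon K\to\ch^4$ therefore requires a local finiteness argument that your write-up omits: a compact subset of $\ch^4$ meets only finitely many of the hyperbolic subspaces $H^4_\x$ (because the $H^4_\x$ fall into finitely many $\PGamma$-orbits and $\PGamma$ acts properly discontinuously on $\ch^4$), and $g$ carries each $G^\R\backslash\framed_s^{\,\x}$ homeomorphically onto $H^4_\x$ by Lemma~\ref{lem-diffeomorphism-for-each-antiinvolution-stable}. This is exactly the phrasing the paper uses. Once that is corrected, your Cauchy-sequence argument (or, more directly, the general fact that a space admitting a proper map to a complete space is complete after passing to compatible quotients) goes through, and the rest of your proof --- the measure-zero computation for finite volume, and the appeal to developability of complete hyperbolic orbifolds --- matches the paper's argument in substance.
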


\begin{proof}
To prove $\moduli_s^\R$ complete, consider $K=G^\R\backslash\framed_s^{\,\R}$.
We know that $g$ maps $K$ to $\ch^4$; this is proper because any
compact set in $\ch^4$ meets only finitely many $H^4_{\x}$, $\x\in
P\A$, and $g$ carries each $G^\R\backslash\framed_s^{\,\x}$ homeomorphically to
$H^4_{\x}$ (lemma~\ref{lem-diffeomorphism-for-each-antiinvolution-stable}).  Since $K\to\ch^4$ is proper and $\PGamma\backslash\ch^4$
is complete, so is $\PGamma\backslash K$.

The uniformization theorem for complete hyperbolic orbifolds implies
the existence of $\PGamma^\R$ with the stated properties.  See
Proposition 13.3.2 of \cite{thurston} or Chapter IIIG of
\cite{bridson-haefliger} for discussion and proofs of this theorem.
The volume of $\moduli_s^\R$ is the sum of the volumes of the
$\PGamma_j^\R\backslash H^4_j$.
Since these have finite volume, so does $\moduli_s^\R$.
\end{proof}

\begin{remark}
It turns out  that the orbifold structures on $\moduli_s^\R$ and
$\PGamma^\R\backslash H^4$ differ on
$\moduli_s^\R-\moduli_0^\R$.  But they do define the same topological
orbifold structure, except along the locus of real surfaces having a
conjugate pair of nodes.  There, even the topological orbifold
structures differ.  
\end{remark}

\section{A fundamental domain for $\PGamma^\R$}
\label{sec-gluing}

In the previous section we equipped the moduli space $\moduli_s^\R$ of
stable real cubic surfaces with a complete hyperbolic orbifold
structure, so $\moduli_s^\R\isomorphism \PGamma^\R\backslash H^4$ for
some discrete group $\PGamma^\R$.  In this section we construct a
fundamental domain and the associated generators for $\PGamma^\R$.
Besides its intrinsic interest, this allows us to prove in
section~\ref{sec-nonarithmeticity} that $\PGamma^\R$ is nonarithmetic.  Throughout this
section, when we refer to $\moduli_s^\R$ as an orbifold, we refer to
the hyperbolic structure.  

\subsection{The tiling of $H^4$ by chambers}
We begin by explaining how the orbifold universal cover $H\isomorphism H^4$
of $\moduli_s^\R$ is tiled by copies of the polyhedra $C_j$ of
section~\ref{sec-H4-stabilizers}.  Consider the set of points in the orbifold
$\moduli_0^\R\sset\moduli_s^\R$ whose local group contains no
reflections, and its preimage under the orbifold covering map
$H\to\moduli_s^\R$.  Because the restriction of the hyperbolic
structure of $\moduli_s^\R$ to $\moduli_0^\R$ is the (incomplete)
structure described in section~\ref{sec-smoothmoduli}, each component of the preimage
is a copy of the interior of one of the $C_j$.  We call the closure of
such a component a chamber of type $j$.  It is clear that the union of
the chambers is $H$ and that their interiors are disjoint, so that
they tile $H$.

Recall from section~\ref{sec-topologysmoothmoduli} that we call a wall of a chamber a discriminant wall if it lies
over the discriminant, and an Eckardt wall otherwise.  By
theorem~\ref{thm-smooth-moduli-orbifolds}, it is a discriminant wall
if and only if it corresponds to a blackened node of $C_j$ in
figure~\ref{fig-small-Coxeter-diagrams}.  Because the orbifold
structure on $\moduli_s^\R$ restricts to that on $\moduli_0^\R$, every
point of an Eckardt wall is fixed by some reflection of $\PGamma^\R$.
Therefore $\PGamma^\R$ contains the reflections across the Eckardt
walls of the chambers.  The same argument shows that if a chamber has
type~1 or~2, so that it has a diagram automorphism, then some element
of $\PGamma^\R$ carries it to itself by this automorphism.

We have seen that across any Eckardt wall of a chamber lies another
chamber of the same type, in fact the mirror image of the first.  Now
we describe how the chambers meet across the discriminant walls.  This is
most easily understood by considering the 5 specific chambers
$C_j\sset H_j^4$ given in section~\ref{sec-H4-stabilizers}, regarding all the $H_j^4$'s
as lying in $\ch^4$.  Using the labeling of figure~\ref{fig-large-Coxeter-diagrams}, we refer
to the $k$th simple root of $C_j$ as $r_{jk}$ and to the corresponding
wall of $C_j$ as $C_{jk}$.  The following lemma leads to complete
information about how chambers meet across discriminant walls.

\begin{lemma}
\label{lem-intersections-in-ch4}
As subsets of $\ch^4$, we have $C_{04}=C_{14}$, $C_{13}=C_{24}$,
$C_{22}=C_{34}$ and $C_{31}=C_{44}$.  There is an element of
$\PGamma$ carrying $C_{37}$ isometrically to $C_{46}$.
\end{lemma}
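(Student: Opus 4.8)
The statement of Lemma~\ref{lem-intersections-in-ch4} has two parts. The first, the four equalities $C_{04}=C_{14}$, $C_{13}=C_{24}$, $C_{22}=C_{34}$, $C_{31}=C_{44}$, is a direct computation with the explicit simple roots listed in figure~\ref{fig-large-Coxeter-diagrams}. For each pair I would compare the given root vectors $r_{jk}$ (written in the coordinates \eqref{eq-standard-form-on-E4,1}) and check that, after the identifications $\Lambda^{\x_j}\sset\E^5$, the two roots are unit multiples of one another, hence span the same hyperplane in $\ch^4$; then verify that the two $H^4_j$'s meet exactly along that hyperplane so that the two walls coincide as subsets of $\ch^4$, not merely as subsets of their respective $H^4$'s. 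Concretely, $H^4_j$ and $H^4_{j+1}$ differ by replacing one ``$1$'' by a ``$3$'' in the form, i.e. negating one coordinate under $\x$; the common wall is the fixed locus of the larger group $\langle\x_j,\x_{j+1}\rangle$, which is a copy of $H^3$, and one checks the named roots both lie in $(\Lambda^{\x_j}\cap\Lambda^{\x_{j+1}})$ and cut out that $H^3$. This is the routine part.

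The second part — that some element of $\PGamma$ carries $C_{37}$ isometrically to $C_{46}$ — is the real content, and the one I expect to be the main obstacle, because here the two walls do \emph{not} coincide inside $\ch^4$: they lie in $H^4_3$ and $H^4_4$ respectively, and these two copies of $H^4$ are genuinely different (they are not adjacent in the chain, since $3$ and $4$ are adjacent but the relevant gluing is across a \emph{conjugate-pair} discriminant wall, cf.\ the remark after theorem~\ref{thm-MsR-uniformization} that the orbifold structures differ precisely along conjugate-pair-of-nodes loci). So I must exhibit an actual isometry $\gamma\in\PGamma=\PAut(\Lambda)$ with $\gamma(C_{37})=C_{46}$. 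The approach: read off the root $r_{37}$ and the root $r_{46}$ from figure~\ref{fig-large-Coxeter-diagrams}; both are norm-$1$ vectors of $\Lambda$ (the blackened nodes / triple-bond faces correspond to discriminant mirrors $r^\perp$ with $r^2=1$ after dividing by $\theta$, per lemma~\ref{lem-what-the-roots-are} and the remark following it). Since $\PGamma$ acts transitively on the norm-$1$ vectors of $\Lambda=\E^{4,1}$ up to units (this is standard for the Eisenstein lattice $\E^{4,1}$, or follows from Witt's theorem over $\E$ together with the fact recorded in \S\ref{subsec-def-Gamma}), there is $\gamma\in\PGamma$ with $\gamma(r_{37})=\pm r_{46}$, hence $\gamma(r_{37}^\perp)=r_{46}^\perp$. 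It then remains to check that $\gamma$ can be chosen to carry the chamber $C_3$, or at least its wall $C_{37}$ with its induced tiling data, onto $C_{46}$ — i.e. that $\gamma$ respects the chamber structure on the wall and matches the anti-involutions $\x_3$ and $\x_4$ appropriately.

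**Key steps, in order.** (1) Extract the explicit vectors $r_{04},r_{14},r_{13},r_{24},r_{22},r_{34},r_{31},r_{44}$ from figure~\ref{fig-large-Coxeter-diagrams} and verify the four equalities of walls in $\ch^4$ by matching vectors up to units and checking each pair cuts the same $H^3$. (2) For the last assertion, extract $r_{37}$ and $r_{46}$, confirm both are (after division by $\theta$) primitive norm-$1$ vectors of $\Lambda$ defining discriminant mirrors. (3) Invoke transitivity of $\PGamma$ on the set of discriminant mirrors of $\Lambda$ — equivalently on norm-$1$ vectors up to units — to get $\gamma\in\PGamma$ with $\gamma(r_{37}^\perp)=r_{46}^\perp$. (4) Check the finer claim that $\gamma$ (possibly after composing with an element of the stabilizer of $r_{46}^\perp$) carries the wall $C_{37}$ of $C_3$ onto the wall $C_{46}$ of $C_4$ as tiled pieces of the discriminant locus: this follows because both walls, viewed inside the moduli interpretation, parametrize the same family of one-nodal real cubic surfaces of the same combinatorial type — the node type is determined by which $G_2$-system or norm-$1$ root is involved (lemma~\ref{lem-intersections-of-mirrors-andH4s}), and the chamber tilings on both sides are the induced tilings of that family by Eckardt walls.

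**Main obstacle.** The hard part is step~(4): establishing not just that $r_{37}^\perp$ and $r_{46}^\perp$ are $\PGamma$-equivalent hyperplanes, but that the equivalence can be taken to match the \emph{chambers} $C_{37}$ and $C_{46}$ compatibly with the anti-involutions $\x_3,\x_4$ and the neighbouring chamber structure — this is what makes the gluing in section~\ref{sec-gluing} well-defined. I would handle it by identifying the stabilizer in $\PGamma$ of the hyperplane $r_{46}^\perp$ together with its induced action on the $H^3$ it cuts out (a group acting on a copy of $H^3$ whose quotient records the one-nodal real surfaces with that node type), showing this action is transitive enough on chambers in that $H^3$, and then adjusting $\gamma$ within this stabilizer so that it sends $C_{37}$ exactly to $C_{46}$; checking that $\gamma$ intertwines $\x_3|_{r_{37}^\perp}$ with $\x_4|_{r_{46}^\perp}$ amounts to matching the two real structures on the common $H^3$, which are conjugate because the corresponding nodal surfaces are real-isomorphic.
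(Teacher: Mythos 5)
Your first step matches the paper: for each pair of walls one checks that the roots $r_{jk}$ and $r_{j',k'}$ span the same line (e.g.\ $r_{14}=\theta r_{04}$), and then---and this is the part your sketch understates---one must project the simple roots of $C_j$ and of $C_{j'}$ onto the common hyperplane and verify that the two resulting $4$-tuples agree, so that the two $3$-dimensional \emph{polyhedra} coincide, not merely the hyperplanes containing them. So far you are on the paper's track.

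For the last assertion your route is genuinely different from the paper's, and it has a real gap. The paper does not invoke transitivity of $\PGamma$ on norm-$1$ vectors at all: instead it \emph{imposes} that $\gamma$ carry $\theta r_{37}\mapsto r_{46}$ and the four Eckardt roots $r_{35},r_{32},r_{33},r_{36}$ bounding $C_{37}$ to $r_{45},r_{41},r_{42},r_{43}$ bounding $C_{46}$. These five conditions determine $\gamma$ uniquely, and since they match every wall of one $3$-polyhedron with a wall of the other, $\gamma(C_{37})=C_{46}$ is automatic; the only thing left to check is that the resulting matrix has entries in $\E$ and preserves the Hermitian form, so that $\gamma\in\PGamma$. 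Your proposal reverses the logic: first get $\gamma_0\in\PGamma$ with $\gamma_0(r_{37}^\perp)=r_{46}^\perp$, then ``adjust'' inside the stabilizer of $r_{46}^\perp$ to match $C_{37}$ with $C_{46}$. The adjustment step is exactly where the difficulty lies, and you have not carried it out: you need $\gamma_0\x_3\gamma_0^{-1}$ and $\x_4$ to restrict to anti-involutions of $\Lambda\cap r_{46}^\perp\cong\E^{3,1}$ that are conjugate \emph{by an element of the stabilizer of $r_{46}^\perp$ in $\PGamma$}. Your justification---``the corresponding nodal surfaces are real-isomorphic''---is a geometric claim that is essentially what the gluing construction is in the process of establishing, so it cannot be cited here without more care; and the lattice-theoretic shortcut is blocked by the Caution following theorem~\ref{thm-classification-of-anti-involutions}, which warns that anti-involutions of $\E^{3,1}$ are \emph{not} classified by the mod-$\theta$ invariants that suffice for $\E^{4,1}$. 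So the required conjugacy is a nontrivial additional fact, not a formality. The paper's direct construction of $\gamma$ sidesteps the whole question of comparing anti-involutions of $\E^{3,1}$, which is why the authors remark that it is ``easier to just find a suitable element.''
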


\begin{proof}
The first assertion is just a calculation; it is even easy if
organized along the lines of the following treatment of the first
equality.  It is obvious that $r_{04}^\perp\sset H_0^4$ and
$r_{14}^\perp\sset H_1^4$ coincide, since $r_{14}=\theta\cdot
r_{04}$.  Simple roots describing $C_{04}$ may be obtained by
projecting the simple roots of $C_0$ into $r_{04}^\perp$, which
amounts to setting  the last coordinate equal to zero.  Simple roots describing
$C_{14}$ may be obtained by listing the walls of $C_1$ meeting
$C_{14}$, namely $C_{11}$, $C_{12}$, $C_{13}$ and $C_{16}$, and
projecting the corresponding roots into $r_{14}^\perp$, which again
amounts to setting the last coordinate to zero.  The two 4-tuples of
vectors so obtained coincide, so they define the same polyhedron in
$H_0^4\cap H_1^4\isomorphism H^3$.

Now we prove the second claim. 
Since only two discriminant walls remain unmatched, we expect $C_{37}$ to
coincide with some $\PGamma$-translate of $C_{46}$.  One can argue
that this must happen, but it is
easier to just find a suitable element $\gamma$ of $\PGamma$.   It
should take $\theta r_{37}$ to $r_{46}$; it should also carry
$r_{35}$, $r_{32}$, $r_{33}$ and $r_{36}$ to $r_{45}$, $r_{41}$,
$r_{42}$ and $r_{43}$ in the order stated.  These conditions determine
$\gamma$, which turns out to be
\renewcommand\-{\phantom{-}}
$$
\gamma=
\begin{pmatrix}
 {{10} + {6}\w}&{{4} + {2}\w}&{\-{1} - {4}\w}&{\-{1} - {4}\w}&{\-{1} - {4}\w}\cr
 {{\02} - {2}\w}&{1}&{-2-{2}\w}&{-2-{2}\w}&{-2-{2}\w}\cr
 {{\01} - {4}\w}&{ - {2}\w}&{-2-{2}\w}&{{-3} - {2}\w}&{{-3} - {2}\w}\cr
 {{\01} - {4}\w}&{ - {2}\w}&{{-3} - {2}\w}&{-2-{2}\w}&{{-3} - {2}\w}\cr
 {{\01} - {4}\w}&{ - {2}\w}&{{-3} - {2}\w}&{{-3} - {2}\w}&{-2-{2}\w}\cr
\end{pmatrix}\;,
$$
where we regard vectors as column vectors and $\gamma$ acts on the left.
Since $\gamma$ has entries in $\E$ and satisfies
$$
\gamma^T\cdot\diag[-1,1,1,1,1]\cdot\bar\gamma=\diag[-1,1,1,1,1]\;,
$$
it lies in $\PGamma$.  By construction, it carries $C_{37}$ to
$C_{46}$. 
\end{proof}

The lemma completes our picture of how the chambers meet along walls,
as follows.  Suppose $P$ (resp. $P'$) is a chamber of type~$0$
(resp.~$1$), with walls named $P_1,\dots,P_5$
(resp. $P_1',\dots,P_7'$) according to an isometric identification of
$P$ with $C_0$ (resp. $P'$ with $C_1$).  The lemma implies that
$P_4$ and $P_4'$ are identified under the map $H\to\moduli_s^\R$, so
there must be an element of $\PGamma^\R$ carrying $P_4$ to $P_4'$.
This implies that $P_4$ is a wall not only of $P$ but also of another
chamber, of type~$1$.  Applying this argument to the other cases of
the lemma implies that every discriminant wall of a chamber is also a discriminant
wall of another chamber, of known type.

\subsection{The polyhedron $Q$}
Now we construct what will turn out to be a fundamental domain for a
subgroup $\frac{1}{2}\PGamma^\R$ of index~2 in $\PGamma^\R$.  We
choose a chamber $P_0$ of type~$0$ and write $P_{0k}$ for its walls
corresponding to the $C_{0k}$ under the unique isometry
$P_0\isomorphism C_0$.  
(The detailed naming of walls is not needed for a conceptual understanding.)
Across its discriminant wall $P_{04}$ lies a chamber
$P_1$ of type~$1$; write $P_{1k}$ for its walls corresponding to
$C_{1k}$ under the unique isometry $P_1\isomorphism C_1$ that
identifies $P_{04}\sset P_1$ with $C_{14}$.  In particular,
$P_{04}=P_{14}$.  $P_1$ shares its discriminant wall $P_{17}$ with the image
$P_0'$ of $P_0$ under the diagram automorphism of $P_1$; we label the walls of
$P_0'$ by $P_{0k}'$ just as we did for $P_0$.  We write $P_2$ for the
chamber of type~$2$ on the other side of $P_{13}$.  There are two
isometries $P_2\isomorphism C_2$, both of which identify $P_{13}\sset
P_2$ with $C_{24}$, so we must work a little harder to fix our
labeling of the walls of $P_2$.  We choose the identification of $P_2$
with $C_2$ that identifies $P_{13}\cap P_{11}\sset P_2$ with
$C_{24}\cap C_{21}$, and label the walls $P_{2k}$ of $P_2$
accordingly.  Now, $P_2$ has three discriminant walls: it shares $P_{24}$
with $P_1$, and across $P_{22}$ and $P_{26}$ lie chambers of type~$3$.
We write $P_3$ for the one across $P_{22}$ and $P_3'$ for the one
across $P_{26}$; these chambers are exchanged by the diagram
automorphism.  Label the walls of $P_3$ by $P_{3k}$ according to the
unique isometry $P_3\isomorphism C_3$, and similarly for $P_3'$.
Finally, across $P_{31}$ lies a chamber $P_4$ of type~$4$, whose walls
we name $P_{4k}$ according to the isometry $P_4\isomorphism C_4$.
Similarly, $P_3'$ shares $P_{31}'$ with a chamber $P_4'$ which the
diagram automorphism exchanges with $P_4$.  We label the walls of
$P_4'$ accordingly.  Let $Q$ be the union of all eight chambers $P_0$,
$P_0'$, $P_1$, $P_2$, $P_3$, $P_3'$, $P_4$ and $P_4'$.  The
construction of $Q$ is summarized in figure~\ref{fig-assembly-of-Q}.

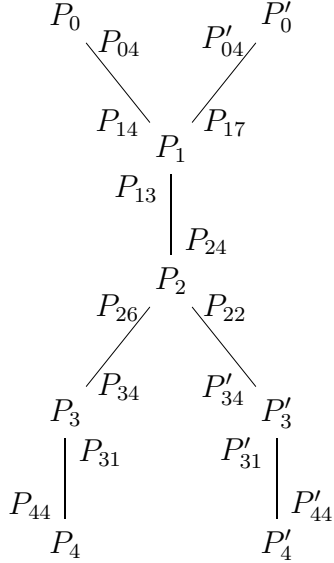
\begin{figure}
\begin{picture}(100,200)(0,0)
\put(10,200){\makebox(0,0)[c]{$P_0$}}
\put(90,200){\makebox(0,0)[c]{$P_0'$}}
\put(50,150){\makebox(0,0)[c]{$P_1$}}
\put(50,100){\makebox(0,0)[c]{$P_2$}}
\put(10,50){\makebox(0,0)[c]{$P_3$}}
\put(90,50){\makebox(0,0)[c]{$P_3'$}}
\put(10,0){\makebox(0,0)[c]{$P_4$}}
\put(90,0){\makebox(0,0)[c]{$P_4'$}}
\put(18,190){\line(4,-5){24}}
\put(82,190){\line(-4,-5){24}}
\put(22,185){\makebox(0,0)[bl]{$P_{04}$}}
\put(78,185){\makebox(0,0)[br]{$P_{04}'$}}
\put(38,165){\makebox(0,0)[tr]{$P_{14}$}}
\put(62,165){\makebox(0,0)[tl]{$P_{17}$}}
\put(50,140){\line(0,-1){30}}
\put(45,135){\makebox(0,0)[r]{$P_{13}$}}
\put(55,115){\makebox(0,0)[l]{$P_{24}$}}
\put(18,60){\line(4,5){24}}
\put(82,60){\line(-4,5){24}}
\put(22,65){\makebox(0,0)[tl]{$P_{34}$}}
\put(78,65){\makebox(0,0)[tr]{$P_{34}'$}}
\put(38,85){\makebox(0,0)[br]{$P_{26}$}}
\put(62,85){\makebox(0,0)[bl]{$P_{22}$}}
\put(10,40){\line(0,-1){30}}
\put(15,35){\makebox(0,0)[l]{$P_{31}$}}
\put(5,15){\makebox(0,0)[r]{$P_{44}$}}
\put(90,40){\line(0,-1){30}}
\put(85,35){\makebox(0,0)[r]{$P_{31}'$}}
\put(95,15){\makebox(0,0)[l]{$P_{44}'$}}
\end{picture}
\caption{Assembly of the polyhedron $Q$ from $8$ chambers.}
\label{fig-assembly-of-Q}
\end{figure}

We remark that the diagram automorphisms of $P_1$ and $P_2$ coincide,
in the sense that they are the same isometry of $H$, which we will
call $S$; this isometry preserves $Q$.  Throughout this section, ``the
diagram automorphism'' refers to $S$.

\begin{lemma}
\label{lem-Q-is-Coxeter-polyhedron}
$Q$ is a Coxeter polyhedron.
\end{lemma}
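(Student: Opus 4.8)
The plan is to verify the two defining properties of a Coxeter polyhedron for $Q$ directly: that $Q$ is convex, and that each of its dihedral angles is an integer submultiple of $\pi$ (walls being allowed to be parallel or ultraparallel, i.e. to have dihedral angle $0$).

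The first task is to place all eight chambers in a single copy of $H^4$. Each $P_i$ is isometric to one of the polyhedra $C_0,\dots,C_4$ of section~\ref{sec-H4-stabilizers}, and the seven walls along which they are glued in figure~\ref{fig-assembly-of-Q} are, as one reads off figure~\ref{fig-large-Coxeter-diagrams}, all \emph{discriminant} walls of ``real-node'' type --- perpendiculars of norm~$1$ or norm~$3$ roots (blackened nodes) --- no $G_2$-type (triple-bond) face being used. This is what makes $Q$ a bona fide subset of $H\cong H^4$: by the all-real-nodes case of lemma~\ref{lem-preparation-for-orbifold-charts}, the hyperbolic structure on $\moduli_s^\R$, and hence on $H$, is flat across such a wall, so the two chambers on either side of a gluing wall genuinely meet along it without branching. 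Starting from a fixed copy $P_0\cong C_0$, I would use lemma~\ref{lem-intersections-in-ch4} --- the equalities $C_{04}=C_{14}$, $C_{13}=C_{24}$, $C_{22}=C_{34}$, $C_{31}=C_{44}$ and the explicit $\gamma\in\PGamma$ carrying $C_{37}$ to $C_{46}$ --- to compute the isometries attaching $P_1$, then $P_0'$, $P_2$, $P_3$, $P_3'$, $P_4$, $P_4'$ along the tree of figure~\ref{fig-assembly-of-Q}. Transporting the simple roots of each $C_i$ by these composites yields an explicit normal vector for every facet of $Q$, with entries in $\E$, which is the sort of explicit data needed in section~\ref{sec-nonarithmeticity}.

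With these normals in hand I would, second, enumerate the facets of $Q$ and compute their Gram matrix. A facet of $Q$ is an Eckardt wall (hollow node) of one of the eight chambers, with the understanding that across a gluing wall a facet of one chamber may continue into a facet of its neighbour; the proof of lemma~\ref{lem-intersections-in-ch4}, which matches the projections into a shared discriminant wall of the simple roots of the two chambers meeting it, tells one exactly which facets to identify. I would then check: (i) every off-diagonal Gram entry is one forcing the two facets to coincide, to meet at angle $\pi/2$, $\pi/3$, $\pi/4$ or $\pi/6$, or to be parallel or ultraparallel; and (ii) the intersection of the half-spaces bounded by these facets is exactly $Q$, so that $Q$ is convex and (being contained in the finite-volume $\PGamma^\R\backslash H^4$) of finite volume. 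Properties (i) and (ii) together are precisely the assertion that $Q$ is a Coxeter polyhedron, with the diagram recorded in section~\ref{sec-gluing}.

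I expect the main difficulty to be step (ii): controlling the geometry along the codimension-two and lower-dimensional faces lying on the seven gluing walls. For each such face one must match the adjacent facet of the chamber on one side with the adjacent facet of the chamber on the other, via the projection data of lemma~\ref{lem-intersections-in-ch4}, and confirm from the explicit root vectors that the result is a flat continuation or a $\pi/m$-wedge; at the same time one must check that no two of the eight chambers meet except along faces predicted by the assembly, which is what yields convexity. The flatness of the hyperbolic structure on $\moduli_s^\R$ across real-node discriminant walls, from section~\ref{sec-stable-moduli}, is the conceptual guarantee that the eight chambers tile a neighbourhood of $Q$ in $H^4$ consistently; turning that qualitative fact into the explicit Gram-matrix verification is the real content of the lemma.
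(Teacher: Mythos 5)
Your proposal is correct in outline but takes a genuinely different route from the paper's proof, which is conceptual rather than computational. The paper never touches a Gram matrix in the proof of this lemma. It argues: (a) $\PGamma^\R$ contains the reflection across the hyperplane $H^3_W$ spanned by each Eckardt wall $W$ of a chamber (established earlier in the section from the compatibility of the orbifold structures on $\moduli_s^\R$ and $\moduli_0^\R$), while (b) no interior point of $Q$ is fixed by a reflection of $\PGamma^\R$. Together these two facts give convexity with respect to Eckardt hyperplanes for free — $Q$ cannot straddle such a hyperplane — and they also force any two meeting Eckardt walls to make an angle of the form $\pi/n$, since otherwise the dihedral group they generate would place a reflection-fixed point inside $Q$. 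The four remaining discriminant walls $P_{37}, P_{46}, P_{37}', P_{46}'$ are then dispatched by observing from the Coxeter diagram of $C_3$ (resp. $C_4$) that each is orthogonal to every wall it meets, all of which are Eckardt, so that near such a wall $Q$ agrees with a single chamber. This avoids entirely the hard step you correctly flag as the crux of your approach — verifying that $Q$ equals the intersection of its bounding half-spaces, i.e. local convexity across each gluing wall at every codimension-two face — because the reflection argument gives global convexity (with respect to each bounding hyperplane) directly, without case-by-case inspection.

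Your approach would work, and indeed the explicit computation you propose is exactly what the paper carries out in the subsection immediately following this lemma (simple roots $s_A,\dots,s_{A'}$, the isometries $T_j$, and figure~\ref{fig-gluedCoxeterDiagram}); but there it is used to \emph{describe} the Coxeter polyhedron already known to exist, not to prove it is one. The trade-off is this: the computational route makes no appeal to the orbifold structure of $\moduli_s^\R$ and is therefore self-contained once the eight chambers are placed in $H^4$, but it leaves you with the genuinely fiddly task you identify — matching facets across the seven gluing walls, checking that the transported roots are positive multiples of one another where they must coincide, and verifying local convexity at every face meeting a gluing wall. The paper's observation that the reflections across Eckardt hyperplanes already lie in $\PGamma^\R$ is what replaces all of that with three sentences. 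One small caution for the computational route: lemma~\ref{lem-intersections-in-ch4} gives set-theoretic equalities of walls in $\ch^4$ (with the $C_j$ sitting in distinct $H^4_{\x_j}$), so you still need the explicit isometries $T_j$ of $\ch^4$ carrying each $H^4_{\x_j}$ to a common $H^4$ before you can speak of one Gram matrix; the paper supplies these only after the lemma.
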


\begin{proof}
As a set, the boundary of $Q$ is the union of the Eckardt walls of the
$P_j$ and $P_j'$, together with $P_{37}$, $P_{46}$, $P_{37}'$ and
$P_{46}'$.  Suppose $W$ is an Eckardt wall of one of the $P_j$ or $P_j'$
and $H_W^3$ is the hyperplane in $H$ that it spans.  Then $Q$ lies
entirely in one of the closed half-spaces bounded by $H_W^3$, because 
$\PGamma^\R$ contains the reflection across $H_W^3$, while
no
point in the interior of $Q$ can be stabilized by a reflection of
$\PGamma^\R$.   We call $H_W^3\cap Q$ an Eckardt wall of $Q$.  Two
Eckardt walls of $Q$ that meet make interior angle $\pi/n$ for some
integer $n$, for otherwise some point in the interior of $Q$ would be
stabilized by a reflection.  

Now we claim that for $W=P_{37}$, $P_{46}$, $P_{37}'$ or $P_{46}'$,
the wall of $Q$ containing $W$ coincides with $W$, and its only
meetings with other walls of $Q$ are orthogonal intersections with
Eckardt walls.  We verify this for $W=P_{37}$; the key point is that
$P_{37}$ is orthogonal to all the walls of $P_3$ that it meets, namely
$P_{35}$, $P_{32}$, $P_{33}$ and $P_{36}$, and all these walls are
Eckardt walls of $P_3$.  By the above, we know that $Q$ lies in the
region bounded by the $H^3$'s containing $P_{35}$, $P_{32}$, $P_{33}$
and $P_{36}$, so the only walls of $Q$ which could meet $W$ are these
walls (or rather their extensions to walls of $Q$).  More precisely,
there is a neighborhood of $P_{37}$ in $H$ whose intersection with $Q$
coincides with its intersection with $P_3$.  All our claims follow
from this.  The same argument applies to $P_{46}$, and for the
remaining two walls we appeal to symmetry.
\end{proof}

\subsection{Simple roots for $Q$}
Since $Q$ is a Coxeter polyhedron, it may be described as the image in
$H^4$ of the set of vectors having $x\cdot s\leq0$ where $s$ varies
over a set of simple roots for $Q$.  There is one simple root for each
wall of $Q$, so we may find simple roots for $Q$ by taking all the
simple roots for the $P_j$ and $P_j'$, and discarding the ones
associated to the walls along which the $P_j$ and $P_j'$ meet.  We
will also discard duplicates, which occur when walls of two different $P_j$
or $P_j'$ lie in the same wall of $Q$.  

Therefore we will need to know simple roots for all the $P_j$ and
$P_j'$.  We identify $H$ with $H_1^4\sset\ch^4$, such that $P_0$ is
$C_0$.  Then $P_1$ is the image of $C_1\sset H_1^4\sset\ch^4$ under
the map
$$
T_1:(x_0,\dots,x_4)\mapsto(x_0,x_1,x_2,x_3,i x_4)\;,
$$
which is an isometry of $\ch^4$ (although not an element of $\PGamma$).  This
uses the facts that $C_{04}$ and $C_{14}$ coincide as subsets of
$\ch^4$ and $T_1$ carries $r_{14}$ to a negative multiple of $r_{04}$.
Similarly, using the intersections of $P_1$ with $P_2$, $P_2$ with
$P_3$, and $P_3$ with $P_4$ described in lemma~\ref{lem-intersections-in-ch4}, we find
\begin{align*}
P_2=T_2(C_2)\hbox{ where }T_2:(x_0,\dots,x_4)&\mapsto(x_0,\phantomi x_1,\phantomi x_2,i
x_3, ix_4),\\
P_3=T_3(C_3)\hbox{ where }T_3:(x_0,\dots,x_4)&\mapsto(x_0,\phantomi x_1,i x_2,i
x_3, ix_4)\hbox{, and}\\
P_4=T_4(C_4)\hbox{ where }T_4:(x_0,\dots,x_4)&\mapsto(x_0,i x_1,i x_2,i
x_3, ix_4)\;.
\end{align*}
For uniformity of notation we define $T_0$ to be the identity map.  In
all cases we have $P_{jk}=T_j(C_{jk})$; we selected our labelings of
the walls of the $P_j$ so that this would hold.  We write $s_{jk}$ for
$T_j(r_{jk})$, yielding simple roots for the $P_j$.  Given $r_{jk}$
from figure~\ref{fig-large-Coxeter-diagrams}, $s_{jk}$ is got by
replacing $\theta$ by $-\sqrt3$ wherever it appears.  

Since simple
roots for $P_1$ are now known, the matrix for the diagram automorphism
$S$ can be worked out, yielding
\begin{equation}
\label{eq-S}
S=
\begin{pmatrix}
 {3}&{2}&{1}&{0}&{-\sqrt3}\cr
 {-2}&{-1}&{-1}&{0}&{\sqrt3}\cr
 {-1}&{-1}&{-1}&{0}&{0}\cr
 {0}&{0}&{0}&{1}&{0}\cr
 {\sqrt3}&{\sqrt3}&{0}&{0}&{-1}\cr
\end{pmatrix}\;.
\end{equation}
Since $P_0'$, $P_3'$ and $P_4'$ are the images of $P_0$, $P_3$ and
$P_4$ under $S$, they are described by simple roots
$s_{jk}'=S\cdot s_{jk}$.  We now have explicit simple roots for all
eight chambers comprising $Q$.

To obtain simple roots for $Q$, we take all the $s_{jk}$ and $s_{jk}'$
and discard those involved in the gluing of
figure~\ref{fig-assembly-of-Q}, namely $s_{04}$, $s_{04}'$, $s_{14}$,
$s_{17}$, $s_{13}$, $s_{24}$, $s_{26}$, $s_{22}$, $s_{34}$, $s_{34}'$,
$s_{31}$, $s_{31}'$, $s_{44}$ and $s_{44}'$.  This leaves us with $36$
simple roots.  There is a great deal of duplication, for example
$s_{01}$ and $s_{43}$ are positive scalar multiples of each other.
After eliminating duplicates, only~$10$ remain, given in
table~\ref{tab-simple-roots-for-Q}.  We will indicate the walls of $Q$
by $A,\dots,E,E',\dots,A'$ and  corresponding simple roots by
$s_A,\dots,s_E,s_E',\dots,s_A'$.  We have scaled them so that $s_A$,
$s_B$, $s_B'$ and $s_A'$ have norm~$1$ and the rest have norm~$2$.  In
the table we also indicate which $P_{jk}$ and $P_{jk}'$ lie in each
wall of $Q$.  The diagram automorphism acts by exchanging primed and
unprimed letters.  With simple roots in hand, one can work out $Q$'s
dihedral angles, yielding figure~\ref{fig-gluedCoxeterDiagram} as the
Coxeter diagram of $Q$.

\begin{table}
\centeroverfull{%
\begin{tabular}{cll}
{\bf root}&{\bf coordinates}&{\bf $\hbox{\bf root}^\perp$ contains}\\
$s_A$&$(3,-1,\sqrt3,\sqrt3,\sqrt3)$&$P_{37}$\\
$s_B$&$(\sqrt3,1,1,1,1)$&$P_{46}$\\
$s_C$&$(1,-1,-1,-1,0)$&$P_{05}$, $P_{16}$, $P_{02}'$, $P_{33}'$, $P_{42}'$\\
$s_D$&$(\sqrt3,-\sqrt3,0,1,1)$&$P_{27}$, $P_{36}$, $P_{03}'$,
$P_{32}'$,  $P_{41}'$\\
$s_E$&$(0,1,-1,0,0)$&$P_{01}$, $P_{11}$, $P_{21}$, $P_{43}$, $P_{35}'$, $P_{45}'$\\
$s_E'$&$(1,0,0,0,\sqrt3)$&$P_{15}$, $P_{25}$, $P_{35}$, $P_{45}$,
$P_{01}'$, $P_{43}'$\\
$s_D'$&$(0,0,0,1,-1)$&$P_{03}$, $P_{23}$, $P_{32}$, $P_{41}$, $P_{36}'$\\
$s_C'$&$(0,0,1,-1,0)$&$P_{02}$, $P_{12}$, $P_{33}$, $P_{42}$, $P_{05}'$\\
$s_B'$&$(3+2\sqrt3,-2-\sqrt3,-2-\sqrt3,1,2+\sqrt3)$&$P_{46}'$\\
$s_A'$&$(4+\sqrt3,-2-\sqrt3,-2-\sqrt3,\sqrt3,\sqrt3)$&$P_{37}'$\\
\end{tabular}}
\smallskip\caption{Simple roots for the polyhedron $Q$.}
\label{tab-simple-roots-for-Q}
\end{table}

\setbox\Wzerobox=\hbox{%
\begin{picture}(0,0)
\largediagrams
\setlength{\unitlength}{1sp}
%
%
  \Ax=- .9pt   \Ay=  .5pt
  \Bx=- .9pt   \By=- .5pt
  \Cx=-1.5pt   \Cy= 1  pt
  \Dx=-1.5pt   \Dy=-1  pt
  \Ex=- .15pt  \Ey=- .7pt
  \multiply\Ax by\edgelengthC
  \multiply\Ay by\edgelengthC
  \multiply\Bx by\edgelengthC
  \multiply\By by\edgelengthC
  \multiply\Cx by\edgelengthC
  \multiply\Cy by\edgelengthC
  \multiply\Dx by\edgelengthC
  \multiply\Dy by\edgelengthC
  \multiply\Ex by\edgelengthC
  \multiply\Ey by\edgelengthC
  \heavybond{\Ax}{\Ay}{\Bx}{\By}
  \heavybond{-\Ax}{-\Ay}{-\Bx}{-\By}
  \bond{\Cx}{\Cy}{\Dx}{\Dy}
  \bond{-\Cx}{-\Cy}{-\Dx}{-\Dy}
  \bond{\Cx}{\Cy}{-\Dx}{-\Dy}
  \bond{-\Cx}{-\Cy}{\Dx}{\Dy}
  \bond{\Cx}{\Cy}{-\Ex}{-\Ey}
  \bond{-\Cx}{-\Cy}{\Ex}{\Ey}
  \dashedbond{\Ax}{\Ay}{\Cx}{\Cy}
  \dashedbond{-\Ax}{-\Ay}{-\Cx}{-\Cy}
  \dashedbond{\Bx}{\By}{\Cx}{\Cy}
  \dashedbond{-\Bx}{-\By}{-\Cx}{-\Cy}
  \dashedbond{\Bx}{\By}{\Dx}{\Dy}
  \dashedbond{-\Bx}{-\By}{-\Dx}{-\Dy}
  \dashedbond{\Ax}{\Ay}{\Ex}{\Ey}
  \dashedbond{-\Ax}{-\Ay}{-\Ex}{-\Ey}
  \dashedbond{\Ax}{\Ay}{-\Ax}{-\Ay}
  \dashedbond{\Bx}{\By}{-\Bx}{-\By}
  \dashedbond{\Ax}{\Ay}{-\Bx}{-\By}
  \dashedbond{-\Ax}{-\Ay}{\Bx}{\By}
  \triplebond{\Dx}{\Dy}{\Ex}{\Ey}{-.217pt}{.976pt}
  \triplebond{-\Dx}{-\Dy}{-\Ex}{-\Ey}{-.217pt}{.976pt}
  \solidnode{\Ax}{\Ay}
  \solidnode{-\Ax}{-\Ay}
  \solidnode{\Bx}{\By}
  \solidnode{-\Bx}{-\By}
  \hollownode{\Cx}{\Cy}
  \hollownode{-\Cx}{-\Cy}
  \hollownode{\Dx}{\Dy}
  \hollownode{-\Dx}{-\Dy}
  \hollownode{\Ex}{\Ey}
  \hollownode{-\Ex}{-\Ey}
  \nearnode\Ax\Ay{-70}{-70}{-2pt}{0pt}{tr}{$A$}
  \nearnode\Bx\By{-100}{0}{-2pt}{0pt}{r}{$B$}
  \nearnode\Cx\Cy{-100}{0}{-2pt}{0pt}{r}{$C$}
  \nearnode\Dx\Dy{-100}{0}{-2pt}{0pt}{r}{$D$}
  \nearnode\Ex\Ey{0}{-140}{0pt}{0pt}{t}{$E$}
  \nearnode{-\Ax}{-\Ay}{70}{70}{2pt}{0pt}{bl}{$A'$}
  \nearnode{-\Bx}{-\By}{100}{0}{2pt}{0pt}{l}{$B'$}
  \nearnode{-\Cx}{-\Cy}{100}{0}{2pt}{0pt}{l}{$C'$}
  \nearnode{-\Dx}{-\Dy}{100}{0}{2pt}{0pt}{l}{$D'$}
  \nearnode{-\Ex}{-\Ey}{0}{140}{0pt}{0pt}{b}{$E'$}
\end{picture}
}
\begin{figure}
\setlength{\unitlength}{1bp}
\begin{picture}(238,150)(-119,-75)
\unhbox\Wzerobox
\end{picture}
\caption{The polyhedron $Q$.}
\label{fig-gluedCoxeterDiagram}
\end{figure}

\subsection{$Q$ as a fundamental domain}
We already know that $\PGamma^\R$ contains the reflections across $C$,
$D$, $E$, $E'$, $D'$ and $C'$.  By lemma~\ref{lem-intersections-in-ch4}, $P_{37}$ and $P_{46}$
are identified in $\moduli_s^\R$, so there exists an element $\tau$ of
$\PGamma^\R$ carrying $A=P_{37}$ to $B=P_{46}$.  This transformation
must carry $P_3$ to the type~$3$ chamber on the other side of $P_{46}$
from $P_4$, and so it carries $s_A$ to $-s_B$.  By considering how the
walls of $Q$ meet $A$ and $B$, one sees that $\tau$ must fix each of $s_E'$,
$s_D'$ and $s_C'$, and carry $s_D$ to $s_E$.  
This determines $\tau$ uniquely:
\begin{equation}
\label{eq-translationmatrix}
\tau=
\begin{pmatrix}
 {{7} + {3}\sqrt3}&{{3} + \sqrt3}&{{-3} - {2}\sqrt3}&{{-3} - {2}\sqrt3}&{{-3} - {2}\sqrt3}\cr
 {{3} + \sqrt3\phantom{0}}&{1}&{{-1} - \sqrt3\phantom{0}}&{{-1} - \sqrt3\phantom{0}}&{{-1} - \sqrt3\phantom{0}}\cr
 {{3} + {2}\sqrt3}&{{1} + \sqrt3}&{{-1} - \sqrt3\phantom{0}}&{{-2} - \sqrt3\phantom{0}}&{{-2} - \sqrt3\phantom{0}}\cr
 {{3} + {2}\sqrt3}&{{1} + \sqrt3}&{{-2} - \sqrt3\phantom{0}}&{{-1} - \sqrt3\phantom{0}}&{{-2} - \sqrt3\phantom{0}}\cr
 {{3} + {2}\sqrt3}&{{1} + \sqrt3}&{{-2} - \sqrt3\phantom{0}}&{{-2} - \sqrt3\phantom{0}}&{{-1} - \sqrt3\phantom{0}}\cr
\end{pmatrix}\;.
\end{equation}
Of course, $\PGamma^\R$ also contains $\tau'=S\tau S$, which carries
$A'$ to 
$B'$. We define $\frac12\PGamma^\R$ to be the subgroup of $\PGamma^\R$
generated by $\tau$, $\tau'$ and the reflections in $C$, $D$, $E$, $E'$, $D'$ and $C'$.

\begin{lemma}
\label{lem-fundamental-domain-for-half-PGammaR}
$Q$ is a fundamental domain for $\frac12\PGamma^\R$.  More precisely,
the $\frac12\PGamma^\R$-images of $Q$ cover $H\isomorphism H^4$, and
the only identifications among points of $Q$ under
$Q\to\frac12\PGamma^\R\backslash H$ are that $A$ (resp. $A'$) is
identified with $B$ (resp. $B'$) by the action of $\tau$
(resp. $\tau'$). 
\end{lemma}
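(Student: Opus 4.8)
The plan is to verify the hypotheses of the Poincaré polyhedron theorem for the polyhedron $Q$ together with the side-pairing transformations just described: the six reflections in $C$, $D$, $E$, $E'$, $D'$, $C'$, each pairing a wall of $Q$ with itself, and the two transformations $\tau$ and $\tau'$ pairing $A$ with $B$ and $A'$ with $B'$ respectively. First I would record that $Q$ has exactly ten walls, labelled $A,B,C,D,E,E',D',C',B',A'$ as in table~\ref{tab-simple-roots-for-Q}, and that the eight listed transformations do indeed constitute a complete set of side-pairings: the six reflections are involutions pairing a wall to itself, while $\tau$ carries $s_A\mapsto-s_B$ and $\tau'=S\tau S$ carries $s_{A'}\mapsto -s_{B'}$, so $\tau$ pairs $A$ with $B$ and $\tau^{-1}$ does the reverse, and similarly for $\tau'$. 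Since $Q$ is a Coxeter polyhedron (lemma~\ref{lem-Q-is-Coxeter-polyhedron}), all dihedral angles along codimension-two faces between two Eckardt-type walls are of the form $\pi/n$, so the cycle condition along those ridges is automatic. The ridges requiring genuine checking are those lying in $A$, $B$, $A'$, or $B'$: by the construction of $Q$ (figure~\ref{fig-assembly-of-Q}) and the orthogonality observed in the proof of lemma~\ref{lem-Q-is-Coxeter-polyhedron}, $A=P_{37}$ meets only the Eckardt walls $P_{35},P_{32},P_{33},P_{36}$ of $P_3$, and does so orthogonally; the same holds for $B$, $A'$, $B'$. One then traces each such ridge cycle: starting from a ridge in $A$, apply $\tau$ to land in the corresponding ridge of $B$, observe which wall of $Q$ the other face of that ridge lies in, apply the appropriate reflection, and continue. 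Because $\tau$ was chosen precisely to fix $s_{E'},s_{D'},s_{C'}$ and to carry $s_D\mapsto s_E$, each such cycle closes up after finitely many steps with product equal to the identity and total angle summing to $2\pi$ (or $2\pi/m$ with the appropriate elliptic order, dictated by the reflection relations).

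The key step, and the one I expect to be the main obstacle, is the completeness/covering statement: that the $\frac12\PGamma^\R$-translates of $Q$ tile $H^4$ with no overlaps. Rather than verifying the metric completeness hypothesis of the Poincaré theorem directly, the cleaner route here is to use what we already know. By construction $Q$ is assembled from the eight chambers $P_0,P_0',P_1,P_2,P_3,P_3',P_4,P_4'$, and section~\ref{sec-stable-moduli} established that $H\cong H^4$ is tiled by chambers, each of type $j\in\{0,\dots,4\}$, with prescribed adjacencies across Eckardt and discriminant walls (the Eckardt walls being mirrors of reflections in $\PGamma^\R$, the discriminant walls gluing pairs of chambers of known types via lemma~\ref{lem-intersections-in-ch4}). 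So the group generated by \emph{all} reflections in Eckardt walls of all chambers, together with the discriminant-wall gluings, acts transitively on the set of chambers; hence on the set of copies of $Q$. The content to extract is that this transitive action is already realised by $\frac12\PGamma^\R$: every chamber is $\frac12\PGamma^\R$-equivalent to one of the eight chambers comprising our fixed $Q$. I would prove this by a connectedness argument on the chamber adjacency graph — any chamber is reached from $P_0$ by a finite path crossing walls, and each wall-crossing is implemented by one of our eight generators (a reflection in $C,D,E,E',D',C'$ for the Eckardt walls internal to or on the boundary of $Q$, the internal gluings of figure~\ref{fig-assembly-of-Q} which stay inside $Q$ and cost nothing, and $\tau^{\pm1}$ or $(\tau')^{\pm1}$ for the $A$–$B$ and $A'$–$B'$ discriminant walls). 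The subtlety is bookkeeping: one must check that crossing \emph{any} Eckardt wall of \emph{any} chamber, not just of $P_0$, is achieved by a conjugate of one of the six reflections that still lies in $\frac12\PGamma^\R$, and that crossing the discriminant walls $P_{37}/P_{46}$ and $P_{37}'/P_{46}'$ are the only discriminant crossings not already internal to $Q$. Both of these follow from table~\ref{tab-simple-roots-for-Q}, which lists exactly which $P_{jk}$ lie in each wall of $Q$.

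Once transitivity of $\frac12\PGamma^\R$ on chambers is in hand, covering is immediate: $\bigcup_{g\in\frac12\PGamma^\R}gQ=\bigcup(\text{all chambers})=H$. For the ``no overlaps'' half, i.e. that the only identifications on $Q$ are $A\leftrightarrow B$ via $\tau$ and $A'\leftrightarrow B'$ via $\tau'$, I would argue that a point in the interior of $Q$ lies in the interior of some chamber $P_j$ or $P_j'$ and in none of the walls of $Q$; if $g\in\frac12\PGamma^\R$ fixed such a point with $gQ\neq Q$ then $g$ would carry the chamber containing the point to a different chamber sharing an interior point, impossible since distinct chambers have disjoint interiors. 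For a boundary point, the local structure of the tiling near each wall of $Q$ — established in the proof of lemma~\ref{lem-Q-is-Coxeter-polyhedron} and in section~\ref{sec-stable-moduli} — shows $Q$ occupies a half-neighbourhood there except along the discriminant walls $A,B,A',B'$, where the two half-neighbourhoods on either side are swapped precisely by $\tau^{\pm1}$ or $(\tau')^{\pm1}$. This gives exactly the stated identifications and completes the proof that $Q$ is a fundamental domain for $\frac12\PGamma^\R$.
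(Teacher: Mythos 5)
Your proof diverges from the paper's at the crucial completeness step, and this is where the gaps appear. Both proofs invoke Poincar\'e's polyhedron theorem in Maskit's formulation and verify the cycle conditions from the same geometric data (Coxeter angles $\pi/n$ between Eckardt walls; pairwise disjointness of $A,B,A',B'$; orthogonality of the discriminant walls to the Eckardt walls they meet). But the paper then disposes of the remaining hypothesis, metric completeness of $Q/{\sim}$, in one line by appealing to the already-established completeness of $\moduli_s^\R\isomorphism\PGamma^\R\backslash H^4$ (theorem~\ref{thm-MsR-uniformization}) together with $\frac12\PGamma^\R\sset\PGamma^\R$. You instead discard that hypothesis and attempt a direct combinatorial tiling argument, which in principle could work but as written is incomplete in two places.

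The covering half needs a genuine induction on the chamber-adjacency graph: if a chamber $gP_j$ with $g\in\frac12\PGamma^\R$ has a neighbor across an Eckardt wall $gw$, that neighbor is $(gR_wg^{-1})(gP_j)=gR_wP_j$, and one must check that $R_w$ is one of the six wall-reflections of $Q$ (so $gR_w\in\frac12\PGamma^\R$) and that $R_wP_j$ is again a chamber of $Q$ transported by a group element; similarly for discriminant crossings via $\tau^{\pm1}$, $(\tau')^{\pm1}$. You flag this as ``bookkeeping'' but do not carry it out, and ``follows from table~\ref{tab-simple-roots-for-Q}'' is not an argument. The ``no overlaps'' half is worse: your argument addresses only a $g$ fixing a point of $Q^\circ$, and even there concludes $gQ=Q$ rather than $g=1$. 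Since the diagram automorphism $S$ satisfies $SQ=Q$, you would need $S\notin\frac12\PGamma^\R$ to finish; but the paper \emph{derives} $S\notin\frac12\PGamma^\R$ from this very lemma (theorem~\ref{thm-description-of-PGammaR}), so it is not available to you, and your route is circular. Poincar\'e's theorem escapes the circularity precisely because it deduces the fundamental-domain property from the local angle and side-pairing data plus completeness, without any a priori knowledge of which isometries lie in the group.
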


\begin{proof}
  All our claims follow from Poincar\'e's polyhedron theorem, as
  formulated in \cite[sec. IV.H]{maskit}.  There are 7 conditions to
  verify.  The key points are that any two Eckardt walls that
  intersect make an angle of the form $\pi/(\hbox{an integer})$, and
  that the 4 discriminant walls are disjoint from each other and
  orthogonal to the Eckardt walls that they meet.  These properties
  dispose of Maskit's conditions (i)--(vi).  Condition (vii) is that
  $Q$ modulo the identifications induced by $\tau$ and $\tau'$ is
  metrically complete.  This follows because we already know from
  theorem~\ref{thm-MsR-uniformization} that $\PGamma^\R\backslash H^4$ is complete, and
  $\frac{1}{2}\PGamma^\R\sset\PGamma^\R$.
\end{proof}

The main theorem of this section is now an easy consequence:

\begin{theorem}
\label{thm-description-of-PGammaR}
$\PGamma^\R=\bigl(\frac12\PGamma^\R\bigr)\semidirect\Z/2$, the $\Z/2$
being the diagram automorphism $S$.
\end{theorem}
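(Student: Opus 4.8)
The plan is to show $\frac12\PGamma^\R$ has index~$2$ in $\PGamma^\R$ (hence is normal) and that $S$ supplies a complementary $\Z/2$; the semidirect product structure is then formal. Throughout, write $R_X$ for the reflection across a wall $X$ of $Q$.

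First I would assemble three easy ingredients. (i) $S\in\PGamma^\R$: this was established earlier in the section, since $S$ is the (common) diagram automorphism of the chambers $P_1$ and $P_2$, and $\PGamma^\R$ contains an element realizing the diagram automorphism of any type-$1$ or type-$2$ chamber; that element restricts to $S$ on $P_1$, hence equals $S$. Also $S^2=1$, either by direct check from \eqref{eq-S} or because a diagram automorphism has order~$2$. (ii) $S$ normalizes $\frac12\PGamma^\R$: by definition $\frac12\PGamma^\R=\langle\tau,\ \tau'=S\tau S,\ R_C,R_D,R_E,R_{E'},R_{D'},R_{C'}\rangle$, and since $S$ exchanges primed and unprimed walls of $Q$ it conjugates $R_C\leftrightarrow R_{C'}$, $R_D\leftrightarrow R_{D'}$, $R_E\leftrightarrow R_{E'}$, and $\tau\leftrightarrow\tau'$, so it permutes a generating set. (iii) $S\notin\frac12\PGamma^\R$: by Lemma~\ref{lem-fundamental-domain-for-half-PGammaR}, $Q$ is a fundamental domain for $\frac12\PGamma^\R$ whose only point identifications lie on the discriminant walls $A,B,A',B'$; hence no two interior points of $Q$ are $\frac12\PGamma^\R$-equivalent, so the only element of $\frac12\PGamma^\R$ mapping $Q$ onto $Q$ is the identity. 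But $S$ maps $Q$ onto $Q$ (interchanging $P_0\leftrightarrow P_0'$, $P_3\leftrightarrow P_3'$, $P_4\leftrightarrow P_4'$ and preserving $P_1,P_2$) and is nontrivial, so $S\notin\frac12\PGamma^\R$.

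The step I expect to require the most care is pinning down the index $[\PGamma^\R:\frac12\PGamma^\R]$, which I would do by a covolume count. Both groups are lattices in $\PO(4,1)$ — $\PGamma^\R$ by Theorem~\ref{thm-MsR-uniformization}, and $\frac12\PGamma^\R$ because $Q$ is a finite-volume fundamental domain for it — so the index equals the ratio of covolumes. Now $Q$ is the union of the eight chambers $P_0,P_0',P_1,P_2,P_3,P_3',P_4,P_4'$, each isometric to some $C_j$, so $\vol(Q)=2\vol(C_0)+\vol(C_1)+\vol(C_2)+2\vol(C_3)+2\vol(C_4)$. Since $C_j$ is a fundamental domain for $W_j$, which sits inside $\PGamma_j^\R$ with index $\delta_j\in\{1,2\}$ ($\delta_j=2$ exactly for $j=1,2$), we have $\vol(C_j)=\delta_j\vol(\PGamma_j^\R\backslash H^4_j)$; substituting, every term picks up a factor of $2$, giving
$$
\vol(Q)=2\sum_{j=0}^4\vol(\PGamma_j^\R\backslash H^4_j)=2\vol(\moduli_s^\R)=2\vol(\PGamma^\R\backslash H^4),
$$
the middle equality being the decomposition of $\vol(\moduli_s^\R)$ into its five components. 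Hence $[\PGamma^\R:\frac12\PGamma^\R]=2$.

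Finally, index~$2$ forces $\frac12\PGamma^\R$ to be normal in $\PGamma^\R$, and by (i) and (iii), $S\in\PGamma^\R\setminus\frac12\PGamma^\R$ has order~$2$. Therefore $\PGamma^\R=\frac12\PGamma^\R\sqcup S\cdot\frac12\PGamma^\R$, which exhibits $\PGamma^\R$ as $\bigl(\frac12\PGamma^\R\bigr)\semidirect\langle S\rangle\isomorphism\bigl(\frac12\PGamma^\R\bigr)\semidirect\Z/2$, the $\Z/2$ being the diagram automorphism. The only real work is the bookkeeping in the volume step: correctly matching the eight chambers of $Q$ against the five components of $\moduli_0^\R$ and tracking the indices $\delta_j$; the rest is routine.
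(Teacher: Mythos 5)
Your proof is correct and follows the same approach as the paper: show $S\notin\frac12\PGamma^\R$ because $Q$ is a $\frac12\PGamma^\R$-fundamental domain preserved by $S$, note that $S$ normalizes $\frac12\PGamma^\R$, and then use a covolume comparison to pin down the group. The only difference is that the paper asserts "$\bigl\langle\frac12\PGamma^\R,S\bigr\rangle$ has the same covolume as $\PGamma^\R$" without computation, whereas you spell out the chamber count $\vol(Q)=2\sum_j\delta_j^{-1}\vol(C_j)\cdot\delta_j=2\vol(\moduli_s^\R)$ — a useful elaboration, but not a different argument.
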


\begin{proof}
Because $S$ sends $Q$ to itself, and $Q$ is a fundamental domain for
$\frac{1}{2}\PGamma^\R$, $S\notin\frac12\PGamma^\R$.  Since $S$
normalizes $\frac12\PGamma^\R$, we have 
$$
\bigl\langle\textstyle{\frac12\PGamma^\R}, S\bigr\rangle
=\frac12\PGamma^\R\semidirect\langle S\rangle\;.
$$
Since this larger group lies in $\PGamma^\R$ and has the same covolume
as $\PGamma^\R$, it equals $\PGamma^\R$.
\end{proof}

\begin{remark} Poincar\'e's polyhedron theorem readily gives a
  presentation for $\PGamma^\R$: there are generators $C,C',D,D',E,E'$
  (the reflections in the Eckardt walls of $Q$), $\tau,\tau'$ (the
  maps identifying $A$ with $B$, respectively $A'$ with $B'$), and $S$
  (the diagram automorphism) with the following relations.  (1) The
  subgroup generated by $C,D,E,C',D',E'$ has the Coxeter presentation
  indicated in the diagram. 
(2)
  $\tau$ commutes with $C',D',E'$ while $\tau D = E \tau$. 
(3) The
  relations obtained from (2) by interchanging the primed and unprimed
  letters.  
(4) $S$ is an involution and conjugation
  by it interchanges all the primed and unprimed generators. 
A presentation for $\frac12\PGamma^\R$ is obtained by
  deleting the generator $S$ and the relations (4).
\end{remark}

\subsection{The discriminant in
  $\moduli_s^\R\isomorphism\PGamma^\R\backslash H^4$}
We have now established
theorem~\ref{thm-main-theorem-stable}, except for the nonarithmeticity
and the fact that $\moduli_0^\R\sset\moduli_s^\R$ corresponds to
$\PGamma^\R\backslash(H^4-\H')$ where $\H'$ is a union of $H^2$'s and
$H^3$'s.  We will now address $\H'$; see the next section for the
nonarithmeticity.  The part of $Q$ that lies over the discriminant in
$\moduli_s^\R$ consists of (1) the walls $A$, $B$, $A'$ and $B'$, (2)
the faces corresponding to triple bonds in
figure~\ref{fig-gluedCoxeterDiagram}, and (3) the walls of the $P_j$
and $P_j'$ along which we
glued the 8 chambers to obtain $Q$.  We will refer to a wall of case
(3) as an `interior wall'.  Setting $\H'$ to be the preimage of
$\moduli_s^\R-\moduli_0^\R$ in $H^4$, we see that $\H'$ is the union
of the $\frac12\PGamma^\R$-translates of these three parts of $Q$.
The wall $A$ is orthogonal to all the walls of $Q$ that it meets, all
of which are Eckardt walls, so it's easy to see that the $H^3$
containing $A$ is covered by the $\frac12\PGamma^\R$-translates of
$A$.  The same argument applies with $B$, $A'$ or $B'$ in place of
$A$, and also applies in case (2), yielding $H^2$'s.

The essential facts for treating
case (3) are the following.  If $I$ is an interior wall, then every
wall $w$ of $Q$ with which $I$ has 2-dimensional intersection  is an
Eckardt wall of $Q$, and is either orthogonal to $I$ or makes angle
$\pi/4$ with it.  In the orthogonal case, it is obvious that $\H'$
contains the image of $I$ under reflection across $w$.  In the $\pi/4$ case,
one can check that there is another interior wall $I'$ with
$I'\cap w=I\cap w$, $\angle(w,I')=\pi/4$ and $I\perp I'$.  Then the
image of $I'$ under reflection across $w$ lies in the same $H^3$ as $I$ does.
Repeating this process, we see that the $H^3$ containing $I$ is tiled
by $\frac12\PGamma^\R$-translates of interior walls of $Q$.  It
follows that $\H'$ is a union of $H^2$'s and $H^3$'s.

We remark that the $H^3$ tiled by translates of interior walls can be viewed as a
3-dimensional analogue of our gluing process, describing moduli of
real 6-tuples in $\C P^1$; see \cite{allcock-lecnotes} and
\cite{toledo-lecnotes} for details.  In particular,
its stabilizer in $\PGamma^\R$ is the nonarithmetic group discussed
there.  Also, see \cite{toledo-lecnotes} for the 2-dimensional analogue.

\section{Nonarithmeticity}
\label{sec-nonarithmeticity}

This section is devoted to proving the following result:

\begin{theorem}
\label{thm-nonarithmetic}
$\PGamma^\R$ is a nonarithmetic lattice in $PO(4,1)$.
\end{theorem}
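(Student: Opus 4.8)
My plan is to assume $\PGamma^\R$ is arithmetic and derive a contradiction from the presence of a parabolic element together with a single trace computation. First I would record that $\PGamma^\R$ is \emph{not} cocompact. In the assembly of the fundamental domain $Q$ in section~\ref{sec-gluing}, the walls $A$ and $B$ are parallel --- they meet only at infinity, as recorded by the heavy bond joining them in figure~\ref{fig-gluedCoxeterDiagram} --- and the face-pairing $\tau$ of lemma~\ref{lem-fundamental-domain-for-half-PGammaR} carries $A$ to $B$. Hence $\tau$ fixes their common ideal point and has no fixed point in $H^4$, i.e.\ $\tau$ is parabolic and $\moduli_s^\R\isomorphism\PGamma^\R\backslash H^4$ has a cusp. (Non-cocompactness also follows from theorem~\ref{thm-MsR-uniformization} together with the fact that $\moduli_s\isomorphism\PGamma\backslash\ch^4$ has a cusp represented by a real semistable form.)

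Next, suppose $\PGamma^\R$ were arithmetic. Any non-cocompact arithmetic lattice in $\PO(4,1)$ is, up to commensurability, the integer orthogonal group $\PO(f,\Z)$ of a quadratic form $f$ of signature $(4,1)$ with rational coefficients: a parabolic element forces the defining form to be isotropic over its field of definition $K$, hence indefinite at every real place of $K$, and since arithmeticity of this type requires the form to be definite at every real place of $K$ except one, $K$ must be $\Q$; see \cite{vinberg}. Because the commensurator of $\PO(f,\Z)$ in $\PO(4,1)$ is $\PO(f,\Q)$, it would follow that $\PGamma^\R$ is conjugate in $\PO(4,1)$ into $\PO(f,\Q)$, so that every element of $\PGamma^\R$ lifts to an element of $\O(f,\Q)\subset\GL(5,\Q)$ with rational characteristic polynomial; in particular every element of $\PGamma^\R$ would have trace in $\Q$ (up to sign).

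This is what I would contradict. Each chosen generator of $\PGamma^\R$ --- the diagram automorphism $S$ of \eqref{eq-S}, the face-pairings $\tau,\tau'=S\tau S$ of \eqref{eq-translationmatrix}, and the reflections in the Eckardt walls $C,D,E,E',D',C'$ --- has rational trace, so one passes to a product. A direct multiplication of the matrices \eqref{eq-translationmatrix} and \eqref{eq-S} gives $\Trace(\tau S)=15+8\sqrt3$, which is irrational. Since the trace of a lift to $\O(4,1)$ of an isometry of $H^4$ is, up to sign, a conjugacy invariant, $\PGamma^\R$ cannot be conjugate into $\GL(5,\Q)$, contradicting the previous paragraph; hence $\PGamma^\R$ is nonarithmetic. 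The step I expect to be the real obstacle is precisely this last one --- producing an element with irrational trace: the pieces $\PGamma^\R_j$ and the Eckardt reflection subgroups $\WEck_j$ are all defined over $\Q$, and it is only the gluing maps $\tau,\tau',S$, with their $\sqrt3$ entries, that raise the invariant trace field to $\Q(\sqrt3)$. (An alternative finish, with the same input, is to compute that the invariant trace field is $\Q(\sqrt3)$ and observe that the Galois conjugate $\diag[-1,1,1,1,1]$ still has signature $(4,1)$, so the Galois-conjugate group is not relatively compact; Vinberg's arithmeticity criterion \cite{vinberg} then gives nonarithmeticity directly.)
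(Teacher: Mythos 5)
Your argument is correct (I checked the trace computation: $\Trace(\tau S)=15+8\sqrt3$ does come out, by direct multiplication of the matrices \eqref{eq-translationmatrix} and \eqref{eq-S}), but it is a genuinely different route from the paper's. The paper applies Deligne--Mostow's Corollary~12.2.8 directly: it exhibits $\PGamma^\R$ (up to orientation) inside the $\Z[\sqrt3]$-points of the orthogonal group of $\diag[-1,1,1,1,1]$, computes the adjoint trace field $E=\Q(\sqrt3)$ from $\Trace\Ad\bigl((R_CR_{D'}R_{E'})^2\bigr)=34+18\sqrt3$, and observes that the Galois conjugate of the form still has signature $(4,1)$, so the conjugate group is noncompact. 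Your argument instead first establishes non-cocompactness, invokes the classification of arithmetic lattices in $\PO(4,1)$ (all of quadratic-form type, since $4$ is even) together with Godement's criterion to force the field of definition down to $\Q$, and then kills that possibility by exhibiting an element with irrational trace. The trade-off: your route avoids Deligne--Mostow and the adjoint trace computation, but needs the classification of arithmetic lattices in $\PO(4,1)$ as an external input, and does not adapt to cocompact or complex-hyperbolic settings the way the paper's criterion does. Two small points you should tighten: (1) ``$\tau$ carries $A$ to $B$, hence fixes their common ideal point'' is not automatic from parallelism alone --- here it follows because $\tau$ also fixes $s_{C'},s_{D'},s_{E'}$, and the ideal vertex where $C',D',E'$ meet (the affine $\tilde G_2$ subdiagram) coincides with the tangency point of $A$ and $B$, since $s_{C'}+2s_{D'}+\sqrt3\,s_{E'}$ is proportional to the null vector $s_A+s_B$; non-cocompactness itself can also be read off more cheaply from the mere presence of parallel walls (heavy bonds) in the finite-volume polyhedron $Q$. (2) The reduction ``noncocompact arithmetic $\Rightarrow$ commensurable with $\PO(f,\Z)$, $f$ over $\Q$'' is the step that really uses that all arithmetic lattices in $\PO(4,1)$ are of simplest type; this should be cited explicitly rather than attributed only to \cite{vinberg}. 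Your ``alternative finish'' at the end is essentially the paper's argument.
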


Our main tool is
Corollary~12.2.8 of \cite{DeligneMostow}.   We recall the context:
$G$ is an adjoint connected absolutely simple non compact real Lie
group, ${\bf G}$ is an adjoint connected simple algebraic group over
$\R$ so that $G$ is the identity component of ${\bf G}(\R)$, and
$L$ is a lattice in $G$.  Let $E=\Q[\Trace\Ad L]$, the
field generated over $\Q$ by
$\{\Trace\Ad\gamma:\gamma\in L\}$.  Assume that there is a
totally real number field $F$ and a form ${\bf G}_F$
of ${\bf G}$ over $F$ so that a subgroup of finite index of $L$
is contained in ${\bf G}(\EuScript{O}_F)$, where
$\EuScript{O}_F$ is the ring of integers in $F$.  It
follows that
$E\subset F$.  With this context in mind, the statement 
we will use is:

\begin{theorem}[\protect{\cite[Corollary~12.2.8]{DeligneMostow}}]
\label{thm-Deligne-Mostow}
A lattice $L\subset G$ is arithmetic
in $G$ if and only if for each embedding $\sigma$ of $F$ in
$\R$, not inducing the identity embedding of $E$ in $\R$, the real
group  ${\bf G}_F\tensor_{F,\sigma}\R$ is compact.
\qed
\end{theorem}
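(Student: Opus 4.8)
The plan is to derive the criterion from the standard structure theory of arithmetic subgroups of semisimple groups, treating the two implications separately. For the ``if'' direction, assume ${\bf G}_F\tensor_{F,\sigma}\R$ is compact for every real place $\sigma$ of $F$ whose restriction to $E$ is not the identity embedding. I would pass to the $\Q$-group ${\bf H}=\mathrm{Res}_{F/\Q}{\bf G}_F$, so that ${\bf H}(\R)=\prod_\sigma{\bf G}_F\tensor_{F,\sigma}\R$ over the real places of $F$; by the theorem of Borel and Harish-Chandra, ${\bf H}(\Z)$ is a lattice in ${\bf H}(\R)$, and by hypothesis all of its factors are compact except those indexed by embeddings restricting to the identity on $E$. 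One then checks that exactly one of the remaining factors is non-compact and that it is isogenous to $G$: if two were non-compact, then since $\mathrm{Res}_{F/\Q}{\bf G}_F$ is $\Q$-simple, ${\bf G}(\EuScript{O}_F)$ would be an irreducible lattice in their product, so a finite-index subgroup of $L$ contained in ${\bf G}(\EuScript{O}_F)$ would project densely to each, contradicting the discreteness of $L$ in $G$. Projecting ${\bf H}(\R)$ onto the unique non-compact factor then exhibits the image of ${\bf H}(\Z)$ as an arithmetic lattice in $G$ (the kernel of the projection, a product of compact groups, is compact); since $L$ and ${\bf G}(\EuScript{O}_F)$ share a finite-index subgroup and are both lattices in $G$, they are commensurable, so $L$ is arithmetic.

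For the ``only if'' direction, suppose $L$ is arithmetic. By definition there is a connected semisimple $\Q$-group ${\bf H}$ and a surjection $\pi\colon{\bf H}(\R)^\circ\to G$ with compact kernel such that, after conjugation, $\pi\bigl({\bf H}(\Z)\bigr)$ is commensurable with $L$. Replacing ${\bf H}$ by the smallest normal $\Q$-subgroup mapping onto $G$, we may assume ${\bf H}$ is $\Q$-simple, so ${\bf H}=\mathrm{Res}_{E_0/\Q}{\bf H}_0$ for an absolutely simple group ${\bf H}_0$ over a number field $E_0$, with ${\bf H}(\R)=\prod_{v\mid\infty}{\bf H}_0\tensor_{E_0,v}\R$. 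Because $\pi$ has compact kernel and surjects onto the non-compact $G$, exactly one real place $v_0$ of $E_0$ gives a non-compact archimedean factor, isogenous to $G$, and the others are compact. The decisive step is to identify the trace field: one shows $E=\Q[\Trace\Ad L]$ is exactly $E_0$, embedded in $\R$ through $v_0$. For this, note that for $\gamma$ in a suitable finite-index subgroup $\Trace(\Ad_G\pi(\gamma))$ is the image under $v_0$ of $\Trace(\Ad_{{\bf H}_0}\gamma)\in E_0$ — the compact kernel of $\pi$ contributes a correction trivial on a finite-index subgroup, and the compact archimedean factors do not affect the adjoint action on the Lie algebra of $G$ — while Zariski density of $L$ in $G$ forces these traces to generate $E_0$; here one uses that $G$ is adjoint. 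Finally, I would compare the $E$-form ${\bf H}_0$ with the given $F$-form ${\bf G}_F$: since $L$ is Zariski dense, its commensurability class determines its algebraic hull together with its field of definition, and the latter is precisely the adjoint trace field $E$; hence ${\bf G}_F={\bf H}_0\tensor_EF$. Then for any real embedding $\sigma$ of $F$ not inducing the identity on $E$ we have ${\bf G}_F\tensor_{F,\sigma}\R={\bf H}_0\tensor_{E,\sigma|_E}\R$, which is one of the compact archimedean factors of ${\bf H}(\R)$, giving the required compactness.

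The step I expect to be the main obstacle is the one just sketched in the ``only if'' direction: proving that the adjoint trace field $E$ coincides with $E_0$ (with its distinguished real place $v_0$) and deducing the rigidity statement ${\bf G}_F={\bf H}_0\tensor_EF$. Both rest on the uniqueness of the field of definition of the algebraic hull of a Zariski-dense subgroup, which requires Galois descent together with control of the outer automorphisms of a semisimple group; this is the technical heart of \S12 of \cite{DeligneMostow}, and everything else in the argument is bookkeeping with restriction of scalars and the Borel--Harish-Chandra theorem.
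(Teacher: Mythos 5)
First, a point of orientation: the paper does not prove this statement at all. It is quoted, together with its surrounding hypotheses, directly from Corollary~12.2.8 of Deligne--Mostow and used as a black box; the paper's only original content here is the verification of its hypotheses for $\PGamma^\R$ in the proof of theorem~\ref{thm-nonarithmetic}. So there is no argument in the paper to compare yours against, and what follows assesses your reconstruction on its own terms.

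Your ``only if'' direction is a reasonable sketch of the standard argument (restriction of scalars, exactly one non-compact archimedean factor, identification of the trace field via Vinberg-type rigidity), with the genuinely hard step correctly flagged. The ``if'' direction, however, has a real gap. You claim that among the archimedean factors of $\mathrm{Res}_{F/\Q}{\bf G}_F$ indexed by embeddings restricting to the identity on $E$, exactly one is non-compact, arguing that otherwise a finite-index subgroup of $L$ inside ${\bf G}(\EuScript{O}_F)$ ``would project densely to each.'' A subgroup of an irreducible lattice need not inherit dense projections: take $L=\PSL_2(\Z)\subset G=\PSL_2(\R)$, $F=\Q(\sqrt2)$, and ${\bf G}_F=\PGL_2$ over $F$. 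Then $E=\Q$, the hypothesis of the criterion is vacuous, $L$ lies in ${\bf G}(\Z[\sqrt2])$, both archimedean factors of $\mathrm{Res}_{F/\Q}{\bf G}_F$ are non-compact and both correspond to embeddings restricting to the identity on $E$, yet $L$ is discrete in the first factor and is of course arithmetic. In this situation ${\bf G}(\EuScript{O}_F)$ is not a lattice in $G$ at all, so your concluding step (``$L$ and ${\bf G}(\EuScript{O}_F)$ are both lattices in $G$, hence commensurable'') also fails. The missing ingredient is the descent of the form to the trace field: one must first replace ${\bf G}_F$ by a form ${\bf G}_E$ over $E$ with ${\bf G}_E\tensor_E F\isomorphism{\bf G}_F$ and with $L$ commensurable into ${\bf G}_E(\EuScript{O}_E)$ --- this is the content of Deligne--Mostow 12.2.1--12.2.2, i.e., Vinberg's theorem that $E$ is the minimal field of definition of the adjoint hull of a Zariski-dense subgroup. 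Over $E$ the hypothesis controls every non-identity archimedean place, and only then does the Borel--Harish-Chandra argument you outline go through.
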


\begin{proof}[Proof of theorem~\ref{thm-nonarithmetic}:]
  To apply theorem~\ref{thm-Deligne-Mostow}, we take $G$ to be the
  connected component of $SO(4,1)$ and $L$ to be the subgroup of
  $\PGamma^\R$ that acts on $H^4$ by orientation-preserving
  isometries.  Note that $\Isom H^4=\PO(4,1)=\SO(4,1)$, so that $L$
  is indeed a subgroup of $G$.  We take ${\bf G}$ to be the special
  orthogonal group of the form $\diag\{-1,1,1,1,1\}$ and $F =
  \Q(\sqrt{3})$.  Then $\EuScript{O}_F=\Z[\sqrt3]$.  Note that ${\bf
    G}$ is defined over $\Q$, hence over $F$, and that $L\subset{\bf
    G}(\Z[\sqrt3])$.  For the last statement we need two observations.
  First, the matrices \eqref{eq-S} and \eqref{eq-translationmatrix} of
  $S$ and $\tau$ have entries in $\Z[\sqrt3]$.  Second, each root
  $s_A,\dots,s_E,s_E',\dots,s_A'$ in table~\ref{tab-simple-roots-for-Q} has coordinates in
  $\Z[\sqrt3]$ and norm $1$ or~$2$; it follows that the matrix of its
  reflection has entries in $\Z[\sqrt3]$.

Next we show that 
$E=\Q(\sqrt3)$ = $F$.  It is clear that $E$ is either $\Q$ or
$\Q(\sqrt{3})$.  To prove that  
$E=\Q(\sqrt3)$ it suffices to exhibit a single $\gamma\in L$
with $\Trace\Ad\gamma\notin\Q$.  Almost any $\gamma$ will do; we
take $\gamma=(R_CR_{D'}R_{E'})^2$, where the $R$'s are the reflections
in the corresponding simple roots from table~\ref{tab-simple-roots-for-Q}.  One can compute a matrix
for $\gamma$ and its square and compute their traces, yielding
$\Trace(\gamma)=13+6\sqrt3$ and $\Trace(\gamma^2)=209+120\sqrt3$.
Since the adjoint representation of $\O(4,1)$ is the exterior
square of the standard one, we can use the formula
$$
\Trace\Ad(\gamma)=\frac{1}{2}\left((\Trace(\gamma))^2-\Trace(\gamma^2)\right)
=34+18\sqrt3\notin\Q\;.
$$
This proves  $E=\Q(\sqrt3)$.

Finally, if $\sigma$ denotes the non-identity embedding of $F$ in
$\R$, then since $E=F$ it does not induce the identity embedding of
$E$.  Since the form $\diag\{-1,1,1,1,1\}$ defining ${\bf G}$ is fixed
by $\sigma$, the group ${\bf G}_F\tensor_{F,\sigma}\R$ is again the
non-compact group $SO(4,1)$.  Thus $L$ is not arithmetic.
\end{proof} 

\medskip

\begin{remarks}

  (1) In the introduction we said that our gluing construction is
  philosophically that of Gromov and Piatetski-Shapiro \cite{GPS}.
  But in technical detail it is quite different.  They glue hyperbolic
  manifolds with boundary whose fundamental groups are Zariski dense
  in $PO(n,1)$ and lie in non-commensurable arithmetic lattices.  We
  glue orbifolds with boundary and corners whose orbifold fundamental
  groups 
  are not Zariski dense in $PO(4,1)$ (but do lie in arithmetic lattices that are   not all commensurable).  In particular, we cannot
  directly apply their methods to prove non-arithmeticity.

  (2) We wonder whether the unimodular lattice $\diag\{-1,\breakok1,\breakok1,\breakok1,\breakok1\}$
  over $\Z[\sqrt3]$ plays some deeper geometric or arithmetic role.
  For example, $\PGamma^\R$ maps to $\PO(5,\F_3)\isomorphism W(E_6)$ by
  reduction modulo $\sqrt3$.  On each component of the smooth
  moduli space, the action on this $\F_3$ vector space is the same as
  the action on $V$ from sections~\ref{sec-five-families}
  and~\ref{sec-Segre}.  But it is not clear what this really means.
  
  (3) The group generated by reflections in the facets of $Q$, while
  being quite different from $\PGamma^\R$, also preserves this
  $\Z[\sqrt3]$-lattice and is also nonarithmetic.
\end{remarks}

\end{document}